\documentclass[12pt,final,a4paper,reqno,psamsfonts]{amsart}
\usepackage[latin1]{inputenc}
\usepackage{amssymb,amscd,amsxtra,verbatim,ifthen,graphicx}
\usepackage{showlabels} 
\usepackage[english]{babel}
\usepackage[all]{xy}
\usepackage{hyperref}
\usepackage{enumitem}

\selectlanguage{english}

\setlength{\textwidth}{15.6cm}
\setlength{\marginparsep}{4mm}
\setlength{\textheight}{23.97cm}
\setlength{\topskip}{\baselineskip}
\setlength{\oddsidemargin}{27mm}
\setlength{\evensidemargin}{\oddsidemargin}
\setlength{\marginparwidth}{\evensidemargin}
\addtolength{\marginparwidth}{-\marginparsep}
\addtolength{\marginparwidth}{-6mm}
\addtolength{\oddsidemargin}{-1in}
\addtolength{\evensidemargin}{-1in}
\setlength{\headheight}{\baselineskip}
\setlength{\headsep}{\baselineskip}
\setlength{\footskip}{\baselineskip}
\addtolength{\footskip}{\baselineskip}
\setlength{\topmargin}{297mm}
\addtolength{\topmargin}{-\textheight}
\addtolength{\topmargin}{-\footskip}
\addtolength{\topmargin}{-\headheight}
\addtolength{\topmargin}{-\headsep}
\setlength{\topmargin}{0.5\topmargin}
\addtolength{\topmargin}{-1in}



\mathchardef\ordinarycolon\mathcode`\: \mathcode`\:=\string"8000
\begingroup \catcode`\:=\active
  \gdef:{\mathrel{\mathop\ordinarycolon}}
\endgroup



\theoremstyle{plain}
\newtheorem{theorem}{Theorem}[section]
\newtheorem{proposition}[theorem]{Proposition}

\newtheorem{lemma}[theorem]{Lemma}

\newtheorem{corollary}[theorem]{Corollary}

\theoremstyle{definition}
\newtheorem{definition}[theorem]{Definition}

\theoremstyle{remark}
\newtheorem{remark}[theorem]{Remark}

\newtheorem{example}[theorem]{Example}



\newcommand{\im}{\mathrm{im}}
\newcommand{\pr}{\mathrm{pr}}

\newcommand{\Fix}{\mathrm{Fix}}

\newcommand{\reg}{\mathrm{reg}}

\newcommand{\princ}{\mathrm{princ}}

\newcommand{\Ad}{\mathrm{Ad}}
\newcommand{\ad}{\mathrm{ad}}
\newcommand{\copol}{\mathrm{copol}}
\newcommand{\cohom}{\mathrm{cohom}}
\newcommand{\covar}{\mathrm{covar}}

\newcommand{\hor}{\mathrm{hor}}
\newcommand{\basic}{\mathrm{basic}}

\renewcommand{\span}{\mathrm{span}}

\newcommand{\Gln}{\mathbf{GL}}

\newcommand{\On}{\mathbf{O}}

\newcommand{\Son}{\mathbf{SO}}

\newcommand{\Un}{\mathbf{U}}

\newcommand{\Sun}{\mathbf{SU}}


\newcommand{\iso}{\mathrm{Iso}}

\newcommand{\R}{\mathbf{R}}
\newcommand{\C}{\mathbf{C}}

\newcommand{\Z}{\mathbf{Z}}
\newcommand{\N}{\mathbf{N}}

\newdir^{ (}{{}*!/-5pt/@^{(}}
%
%

\newcommand{\kc}{{\mathcal C}}
\newcommand{\kd}{{\mathcal D}}
\newcommand{\ke}{{\mathcal E}}
\newcommand{\kf}{{\mathcal F}}

\newcommand{\kh}{{\mathcal H}}

\newcommand{\kj}{{\mathcal J}}

\newcommand{\km}{{\mathcal M}}

\newcommand{\kp}{{\mathcal P}}

\newcommand{\ks}{{\mathcal S}}

\newcommand{\kv}{{\mathcal V}}

\newcommand{\gothg}{{\mathfrak g}}
\newcommand{\gothh}{{\mathfrak h}}

\newcommand{\gothk}{{\mathfrak k}}
\newcommand{\gothl}{{\mathfrak l}}
\newcommand{\gothm}{{\mathfrak m}}
\newcommand{\gothn}{{\mathfrak n}}

\newcommand{\gothp}{{\mathfrak p}}

\newcommand{\goths}{{\mathfrak s}}

\newcommand{\gothz}{{\mathfrak z}}

\begin{document}
\title{Reductions, Resolutions and the Copolarity of Isometric Group Actions}
\author{Frederick Magata}
\maketitle

\begin{abstract}We present some results on reductions and the copolarity of isometric group actions, which we obtained in our thesis \cite{Mag1}. We also describe a resolution construction for isometric actions with respect to a reduction and give examples.
\end{abstract}

\selectlanguage{english}

\section{Introduction}
A \emph{reduction} of an isometric action $(G,M)$ consists of a \emph{fat section} $\Sigma$ and a \emph{fat Weyl group} $W$ acting on $\Sigma$.
\emph{Fat section} were first introduced, under a different name, in \cite{GOT}. The motivation for this comes from \emph{polar actions} and their \emph{sections}. In this situation $\Sigma$ is a complete, connected and embedded submanifold which intersects every $G$-orbit and is perpendicular to them in every intersection point. Such actions have many fascinating properties and are well studied in the literature (see for instance \cite{Dad,PT2,BCO,Kol} for key results and further references). In \cite{GOT} Gorodski, Olmos and Tojeiro tried to measure the defect of an arbitrary isometric action from being polar. This led them to the notion of a fat section and the integer valued invariant \emph{copolarity} of an isometric action. In this picture, polar actions are precisely the copolarity-$0$ actions. The authors obtained a classification of all irreducible copolarity-$1$ representations which in turn enabled them to characterize all irreducible orthogonal \emph{taut} representations as those of copolarity $0$ or $1$.

Our paper is organized as follows: In Section \ref{s:fatdef} we define fat sections, fat Weyl groups, reductions and the copolarity of isometric actions and show some basic properties. In Section \ref{s:properties} the main result is that the orbit space of any reduction is isometric to the orbit space of the original action. Together with the implications obtained from this result this serves as a justification for our definition of a reduction. 

In Section \ref{s:slicerep} we show that the copolarity of the slice representation in any point cannot exceed that of the global action. As consequences of this we show in Section \ref{s:stability} that a point on a fat section is $G$-regular if and only if it is $W$-regular with respect to the corresponding reduction, and that the copolarity does not change if we pass from an isometric action to any of its reductions. 

In Section \ref{s:varcomp} we show that reductions also behave well with respect to \emph{variational completeness}: an isometric action is variationally complete if and only if some/every reduction is variationally complete. At the end of this section we also generalize results on \emph{variational co-completeness} from \cite{GOT}.

In Section \ref{s:resolution} we generalize the \emph{resolution construction} from \cite{GS} using arbitrary fat sections. As an application one can construct from any $G$-space $M$ with sectional curvature bounded from below by some constant $\kappa\le0$ another $G$-space $\tilde M$ with the same curvature bound from below and such that both spaces have isomorphic orbit spaces.

Section \ref{s:chevalley} deals with Chevalley's restriction theorem for reductions, which we are able to show under additional assumptions. A class of isometric actions on compact Lie groups associated with Riemannian symmetric spaces where these conditions are met is explained in Section \ref{s:symspace}. 

In Section \ref{s:infinite} we investigate a class of affine isometric actions on Hilbert spaces related with the examples of Section \ref{s:symspace}. Using an adapted notion of copolarity, we are able to show an interesting dichotomy: Depending on whether the base action is hyperpolar or just polar, the copolarity of the infinite dimensional action is either $0$ or $\infty$.

Finally, Section \ref{s:srf} describes how fat sections can be defined for \emph{singular Riemannian foliations}.

\section[Fat Sections, Fat Weyl Groups and the Copolarity]{Fat Sections, Fat Weyl Groups and the Copolarity of Isometric Actions}\label{s:fatdef}
By an \emph{isometric action} of a Lie group $G$ on a Riemannian manifold $M$ we mean a smooth and proper
homomorphism $\Phi:G\to\iso(M)$. An action is also denoted by the associated
map $\varphi:G\times M\to M, (g,q)\mapsto g\cdot q:=\Phi(g)(q)$, or
just by $(G,M)$. \emph{$G$-Regular points} are points lying on principal orbits. Their collection is denoted by $M^\reg$. All other points are
called singular. Thus, points lying on exceptional orbits are also singular in our sense.

\begin{definition}\label{d:copolar}
Let $M$ be a complete Riemannian manifold and let $(G,M)$ be an isometric action.
A submanifold $\Sigma\subseteq M$ is called a \textbf{fat section} of $(G,M)$ if:
\begin{enumerate}[label=(\Alph{*}), ref=(\Alph{*})]
\item\label{pr:1} $\Sigma$ is complete, connected, embedded and totally
geodesic in $M$,

\item\label{pr:2} $\Sigma$ intersects every orbit of the $G$-action,

\item\label{pr:3} for all $G$-regular $p\in \Sigma$ we have $\nu_p(G\cdot
p)\subseteq T_p\Sigma$,

\item\label{pr:4} for all $G$-regular $p\in \Sigma$ and $g\in G$ such that $g\cdot
p\in \Sigma$ we have $g\cdot \Sigma=\Sigma$.
\end{enumerate}
In this situation, following \cite{GOT}, we also call $\Sigma$ a \textbf{$k$-section},
where $k$ denotes the codimension  of $\nu_p(G\cdot p)$ in
$T_p\Sigma$ for any regular point $p\in\Sigma$. The integer
$$\copol(G,M):=\min\{k\in \textbf{N}\mid \text{there is a } k\text{-section } \Sigma\subseteq M\}$$
is called the \textbf{copolarity} of the $G$-action on $M$. If $\Sigma\subseteq M$
is a $\copol(G,M)$-section, then we say that $\Sigma$ is
\textbf{minimal}. If a submanifold $\Sigma\subseteq M$ satisfies only
properties \ref{pr:1}-\ref{pr:3} above, it is called
\textbf{pre-section}. Finally, if $M$ is a minimal section of $(G,M)$,
we say that $(G,M)$ has \textbf{trivial copolarity}.
\end{definition}
\begin{remark}{\ }
\begin{enumerate}
\item An isometric action $(G,M)$ is called \emph{polar} if there exists a complete, connected and embedded submanifold $\Sigma$, called \emph{section}, which intersects every orbit and such that in the intersection points the orbits are perpendicular to $\Sigma$. It follows that such a $\Sigma$ is totally geodesic and satisfies property \ref{pr:4} in the above definition. Hence, $\copol(G,M)=0$ and a section in the polar sense is a (minimal) $0$-section in the sense of Definition \ref{d:copolar}. Conversely, an isometric action with copolarity zero is polar and all minimal sections are sections in the polar sense. The copolarity therefore measures the failure of an isometric action to be polar.
\item For a given Riemannian manifold $M$, one can define the \textbf{copolarity of $M$} as:
$$\copol(M):=\copol(\iso(M),M).$$
Just like the symmetry rank, symmetry degree and the cohomogeneity of a Riemannian manifold (see for instance \cite{Wil2} for the definitions), the copolarity is also a measure for the amount of symmetry a Riemannian manifold carries. 
For instance, homogeneous spaces and cohomogeneity one manifolds are manifolds of copolarity zero.
\end{enumerate}
\end{remark}

Situations in which the copolarity of an action is nontrivial and not equal to zero and where the minimal sections can be explicitly computed are described in Section \ref{s:symspace} and \cite{Mag1,Mag2}. To give some flavor:
\begin{example}\label{e:standard} The $k$-fold direct sum of the standard representation of $\Son(n)$ on $\R^n$ has nontrivial copolarity equal to $\frac{k(k-1)}{2}$ for $2\le k\le n-1$ and a minimal section is given by $\R^{k^2}$, which is embedded into $\R^{kn}$ as block matrices with nonzero entries in the upper $(k\times k)$-block only.
\end{example}

\begin{example}
Consider the following action of $T^2\!\times\! S(\Un(1)\!\times\!\Un(2))$ on $\Sun(3)$. The first factor acts by matrix multiplication from the left and the second factor by matrix multiplication from the right by the inverted matrix. The copolarity in this case is equal to $1$ and a minimal section is given by $\Son(3)\subset\Sun(3)$.
\end{example}

The following three lemmas are frequently used throughout the paper.
Lemma \ref{l:sliceequiv} and \ref{l:sliceregular} are \cite[Lemma 5.1 and Lemma 5.2]{GOT}. Originally, the second of these was stated for orthogonal representations only, but its proof also works in the general case.
\begin{lemma}\label{l:slice_intersect}
Let $(G,M)$ be an isometric action and suppose that $M$ is connected and finite dimensional. If $p\in M^\reg$, then $\exp_p(\nu_p(G\cdot p))$ intersects every $G$-orbit.
\end{lemma}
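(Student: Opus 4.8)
The statement to prove is: for an isometric action $(G,M)$ with $M$ connected and finite dimensional, if $p\in M^\reg$, then the image of the normal exponential $\exp_p(\nu_p(G\cdot p))$ meets every $G$-orbit. The natural approach is via a distance-minimization argument. Fix an arbitrary point $q\in M$. Since the $G$-action is proper and $M$ is complete (it is Riemannian and we may take it complete, or argue on the closed orbit $G\cdot q$, which is a closed submanifold by properness), the orbit $G\cdot q$ is a closed subset of $M$, so there exists $g\in G$ realizing the distance $d(p,G\cdot q)=d(p,g\cdot q)$. Let $\gamma$ be a minimizing geodesic from $p$ to $g\cdot q$; its length is $d(p,G\cdot q)$.

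The key step is the first-variation argument: since $g\cdot q$ is a point on the orbit $G\cdot q$ that is closest to $p$, the geodesic $\gamma$ must leave $p$ in a direction orthogonal to the orbit through $p$. Concretely, for any Killing field $X$ induced by the $G$-action, the curve $t\mapsto d(\exp_p(tv), G\cdot q)$ (or directly $t\mapsto d(p, \exp(tX)\cdot(g\cdot q))$) has a critical point, and the first variation formula forces $\gamma'(0)\perp T_p(G\cdot p)$, i.e. $\gamma'(0)\in\nu_p(G\cdot p)$. Therefore $g\cdot q=\exp_p(\gamma'(0)\cdot\text{length})\in\exp_p(\nu_p(G\cdot p))$, which shows that the translate $g\cdot q$ of the chosen point of the orbit lies in $\exp_p(\nu_p(G\cdot p))$; hence $\exp_p(\nu_p(G\cdot p))$ intersects $G\cdot q$. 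Since $q$ was arbitrary, the normal exponential image meets every orbit.

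I expect the main subtlety to be two-fold. First, one must ensure that a minimizer $g\cdot q$ of $d(p,\cdot)$ on the orbit actually exists: this is where properness of the action and completeness enter, guaranteeing $G\cdot q$ is closed and that a point realizing the infimum of distance exists (on a complete manifold, the distance to a nonempty closed set is attained once one restricts to a compact intersection with a large closed ball, using finite-dimensionality). Second, the first-variation argument only gives orthogonality to the orbit $G\cdot q$ \emph{at} $g\cdot q$; to conclude orthogonality to $G\cdot p$ \emph{at} $p$, one uses that the function is $G$-invariant in the appropriate sense, or equivalently that moving $p$ along its own orbit and moving $g\cdot q$ along $G\cdot q$ are governed by the same one-parameter subgroups, so a nonzero tangential component of $\gamma'(0)$ along $T_p(G\cdot p)$ would allow one to shorten $\gamma$ — contradicting minimality of $d(p,G\cdot q)$.

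Since $p$ is $G$-regular, one does not even need the full strength at $p$ for later applications, but the argument above does not require regularity of $p$ for the intersection claim itself; regularity of $p$ is relevant because then $\exp_p(\nu_p(G\cdot p))$ has the expected dimension and behaves like a (generalized) section candidate. I would state the lemma's proof by: (1) fix $q$, invoke properness to get closedness of $G\cdot q$; (2) extract a distance-minimizing $g$ and geodesic $\gamma$; (3) apply the first-variation formula against orbit Killing fields to get $\gamma'(0)\perp T_p(G\cdot p)$; (4) conclude $g\cdot q\in\exp_p(\nu_p(G\cdot p))\cap G\cdot q$.
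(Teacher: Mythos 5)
Your argument is correct, and it is the standard proof of this fact. The paper itself offers no proof of Lemma~\ref{l:slice_intersect} --- it is stated as a known ingredient (the two lemmas that follow are explicitly attributed to \cite{GOT}, and this one is classical) --- so there is nothing in the text to compare against; your distance-minimization argument is exactly the expected one. The chain is sound: properness makes $G\cdot q$ closed, completeness plus finite dimensionality (Hopf--Rinow) gives a point $g\cdot q$ realizing $d(p,G\cdot q)$ and a minimizing geodesic $\gamma$ to it, and since $d(\cdot,G\cdot q)$ is constant along $G\cdot p$, the first variation of arc length against the Killing fields (which span $T_p(G\cdot p)$) forces $\gamma'(0)\in\nu_p(G\cdot p)$, so $g\cdot q\in\exp_p(\nu_p(G\cdot p))\cap G\cdot q$. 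Two small points worth making explicit if you write this up: completeness of $M$ is not stated in the lemma but is a standing hypothesis of the paper (Definition~\ref{d:copolar}) and is genuinely needed here; and, as you observe, $G$-regularity of $p$ plays no role in the proof --- the conclusion holds for every $p\in M$.
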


\begin{lemma}\label{l:sliceequiv}
Let $(G,M)$ be an isometric action and let $q\in M$ be arbitrary. For $v\in\nu_q(G\cdot q)$ the following assertions are equivalent:
\begin{enumerate}
\item $v$ is $G_q$-regular.

\item There exists $\varepsilon>0$ such that $\exp_q(tv)$ is
$G$-regular for $0<t<\varepsilon$.

\item $\exp_q(t_0v)$ is $G$-regular for some $t_0>0$.
\end{enumerate}
\end{lemma}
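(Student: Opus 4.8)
The plan is to reduce everything to the slice theorem, which is available because the action is proper. Replacing $M$ by the connected component of $q$ changes nothing, so we may assume $M$ connected; then $(G,M)$ has a unique principal orbit type $(H^*)$, every isotropy group contains a conjugate of $H^*$, and every isotropy group, being compact, has finitely many components. Write $V:=\nu_q(G\cdot q)$ and fix $\varepsilon>0$ so small that $\exp_q$ carries the (automatically $G_q$-invariant) ball $B:=B_\varepsilon(0)\subseteq V$ diffeomorphically onto a slice $S$ at $q$; then $G\cdot S$ is an open tube, $G\cdot S\cong G\times_{G_q}B$, and $G_{\exp_q(w)}=(G_q)_w$ for every $w\in B$, where $(G_q)_w$ is the isotropy of $w$ in the slice representation of $G_q$ on $V$. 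Since $G\cdot S$ is open in the connected manifold $M$ it meets $M^\reg$, so the principal orbit type of the restricted action on $G\cdot S$ is again $(H^*)$; together with $G\cdot S\cong G\times_{G_q}B$ this yields the \emph{orbit-type dictionary}: for $w\in B$, the point $\exp_q(w)$ is $G$-regular if and only if $w$ is $G_q$-regular, i.e. lies on a principal orbit of the slice representation. Two elementary remarks will also be used: (a) the slice representation is linear, so $(G_q)_{tv}=(G_q)_v$ for $t>0$ and hence ``$v$ is $G_q$-regular'' is unchanged under positive rescaling of $v$; (b) any $g\in(G_q)_v$ fixes the whole ray $\{tv:t\ge 0\}$, hence fixes $\exp_q(tv)$ for all $t\ge 0$, so $(G_q)_v\subseteq G_{\exp_q(tv)}$ for every $t\ge 0$.

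If $v=0$ then $\exp_q(tv)=q$ for all $t$ and all three assertions say, by the dictionary applied to $w=0$, that $q$ is $G$-regular; so assume $v\ne 0$ and run the cycle (i)$\Rightarrow$(ii)$\Rightarrow$(iii)$\Rightarrow$(i). For (i)$\Rightarrow$(ii): put $\varepsilon':=\varepsilon/\|v\|$; for $0<t<\varepsilon'$ we have $tv\in B$, and $tv$ is $G_q$-regular by remark (a), so $\exp_q(tv)$ is $G$-regular by the dictionary. The implication (ii)$\Rightarrow$(iii) is immediate.

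The step (iii)$\Rightarrow$(i) is the only one that needs an idea, since the parameter $t_0$ in (iii) need not be small enough for $\exp_q(t_0v)$ to lie in the tube $G\cdot S$, so the dictionary does not apply to it directly. I would argue as follows. Pick any $t_1>0$ with $t_1\|v\|<\varepsilon$; then $G_{\exp_q(t_1v)}=(G_q)_{t_1v}=(G_q)_v$ by the slice theorem and remark (a), and this group, being an isotropy group, contains a conjugate $g_1H^*g_1^{-1}$ of $H^*$. Since $\exp_q(t_0v)$ is $G$-regular, $G_{\exp_q(t_0v)}=g_0H^*g_0^{-1}$ for some $g_0\in G$, and $(G_q)_v\subseteq g_0H^*g_0^{-1}$ by remark (b). Hence $g_1H^*g_1^{-1}\subseteq(G_q)_v\subseteq g_0H^*g_0^{-1}$ with the two extreme groups conjugate. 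But a compact Lie group that contains a conjugate of itself must equal it (equal dimensions force equal identity components, then the equal finite numbers of components force equality), so $(G_q)_v=g_0H^*g_0^{-1}$ is a principal isotropy group; thus $\exp_q(t_1v)$ is $G$-regular, and by the dictionary $t_1v$, hence by remark (a) $v$, is $G_q$-regular.

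The main obstacle is exactly this last step: transporting regularity from the possibly far-away point $\exp_q(t_0v)$ back into the slice. The inclusion $(G_q)_v\subseteq G_{\exp_q(t_0v)}$ from remark (b), combined with the rigidity of conjugate compact subgroups, is what makes it work; the rest is bookkeeping with the slice theorem and with the uniqueness of the principal orbit type on a connected manifold.
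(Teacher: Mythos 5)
Your proof is correct. Note that the paper itself does not prove this lemma at all: it is quoted as \cite[Lemma 5.1]{GOT}, so there is no in-paper argument to compare against. Your route is the natural self-contained one: the slice theorem gives $G_{\exp_q(w)}=(G_q)_w$ on a small ball, linearity of the slice representation makes $G_q$-regularity scale-invariant, and the only genuinely nontrivial implication, (iii)$\Rightarrow$(i) for a possibly large $t_0$, is handled correctly by sandwiching $(G_q)_v$ between two conjugates of the principal isotropy group $H^*$ (one from the principal orbit theorem, one from the observation that $(G_q)_v$ fixes the whole geodesic ray) and invoking the rigidity of nested conjugate compact subgroups. One small presentational point: the ``orbit-type dictionary'' you state for the tube ($\exp_q(w)$ is $G$-regular iff $w$ is $G_q$-regular) is standard but is not quite immediate from $G\cdot S\cong G\times_{G_q}B$ alone, since it requires passing between $G$-conjugacy and $G_q$-conjugacy of the isotropy groups; the clean way to justify it is exactly the same rigidity argument you use in the last step (the principal isotropy of the slice representation both contains and is contained in conjugates of $H^*$, hence is conjugate to $H^*$). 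Either cite it (e.g.\ Bredon or Palais--Terng) or spell out that one extra line; with that, the argument is complete.
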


\begin{lemma}\label{l:sliceregular}
Let $\Sigma$ be a fat section of $(G,M)$. For all $q\in\Sigma$ there is a $G_q$-regular $v\in T_q\Sigma\cap\nu_q(G\cdot q)$. Furthermore, $v$
can be chosen such that $p=\exp_q v$ is $G$-regular and arbitrarily close to $q$.
\end{lemma}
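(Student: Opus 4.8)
The statement packages two things: a $G_q$-regular vector $v\in T_q\Sigma\cap\nu_q(G\cdot q)$, and the extra flexibility that $p=\exp_q v$ be $G$-regular and as close to $q$ as we like. I would obtain the former and read the latter off Lemma \ref{l:sliceequiv}: if $v$ is $G_q$-regular then $\exp_q(tv)$ is $G$-regular for all small $t>0$, and since $\Sigma$ is complete and totally geodesic (property \ref{pr:1}), $\exp_q(tv)\in\Sigma$ for all such $t$, so replacing $v$ by $tv$ puts $p$ arbitrarily close to $q$. The case that $q$ is itself $G$-regular is immediate: then $G_q$ acts trivially on $\nu_q(G\cdot q)$, every vector there is $G_q$-regular, and $\nu_q(G\cdot q)\subseteq T_q\Sigma$ by property \ref{pr:3}. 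So the content is the singular case.

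For singular $q$ I would localise to the slice representation. For small $\varepsilon>0$ the exponential map identifies $S_q:=\exp_q(\nu_q(G\cdot q)\cap B_\varepsilon(0))$ with a slice through $q$ on which $G_q$ acts linearly, a tube around $G\cdot q$ is modelled on $G\times_{G_q}S_q$, and the $G$-orbit type of $\exp_q v$ equals the $G_q$-orbit type of $v$; because $\Sigma$ is totally geodesic, $\Sigma\cap S_q=\exp_q\big((T_q\Sigma\cap\nu_q(G\cdot q))\cap B_\varepsilon(0)\big)$. Hence it suffices to exhibit a point of $\Sigma\cap S_q$ lying on a principal $G$-orbit: its $\exp_q$-preimage is then a $G_q$-regular vector of $T_q\Sigma\cap\nu_q(G\cdot q)$, which is what we want.

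To produce such a point I would combine three facts. (i) $\Sigma$ contains a $G$-regular point $p$ (property \ref{pr:2}); by property \ref{pr:3} and completeness/total geodesy $\Sigma_p:=\exp_p(\nu_p(G\cdot p))\subseteq\Sigma$, and by Lemma \ref{l:slice_intersect} $\Sigma_p$ meets every $G$-orbit. (ii) Each radial geodesic $t\mapsto\exp_p(tw)$, $w\in\nu_p(G\cdot p)$, is a geodesic of $M$ contained in $\Sigma$. (iii) Properness of the action lets one pick, among $w$ with $\exp_p(w)$ on a prescribed principal orbit meeting the tube around $G\cdot q$, one of minimal length, so that the corresponding radial geodesic reaches that orbit as directly as possible and stays near $G\cdot q$. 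Placing a principal-orbit point of $\Sigma$ into the tube this way and then invoking property \ref{pr:4} — perpendicularity of a $G$-regular orbit to $\Sigma$ forces the relevant $G$-translate of $\Sigma$ to equal $\Sigma$ — moves that point inside $S_q$, finishing the argument. The main obstacle, and where I expect the real work to lie, is exactly this last transfer: $\Sigma$ meets each orbit globally but a priori far from $q$, and the group element witnessing an intersection need not preserve $\Sigma$; property \ref{pr:4} (together with properness, to keep everything inside the tube) is what pins the construction to $q$. A workable fallback for this step is a limiting argument: show $G$-regular points $s_n\in\Sigma$ accumulate at $q$, take unit vectors $u_n\in\nu_{s_n}(G\cdot s_n)\subseteq T_{s_n}\Sigma$, pass to $u\in T_q\Sigma\cap\nu_q(G\cdot q)$ (continuity of the fundamental vector fields gives $u\perp T_q(G\cdot q)$), and then upgrade $u$ to a $G_q$-regular vector by choosing $s_n,u_n$ generically within their slices — the upgrade being the delicate point, since the $G$-regular neighbourhoods of the $s_n$ shrink as $s_n\to q$.
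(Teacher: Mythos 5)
First, a point of reference: the paper does not actually prove this lemma --- it is quoted as \cite[Lemma 5.2]{GOT} with the remark that the proof given there for orthogonal representations carries over. So your attempt can only be measured against a correct argument, not against a proof in the text. Your framing is sound as far as it goes: the ``arbitrarily close'' clause does follow from Lemma \ref{l:sliceequiv} by rescaling $v$ (using $\exp_q(\R v)\subseteq\Sigma$), and the case of $G$-regular $q$ is immediate from property \ref{pr:3}. But the one step that carries the entire content of the lemma --- producing a $G$-regular point of the form $\exp_q v$ with $v\in T_q\Sigma\cap\nu_q(G\cdot q)$ at the \emph{given} singular $q$ --- is not established. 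Your main route ends with ``invoking property \ref{pr:4} \dots forces the relevant $G$-translate of $\Sigma$ to equal $\Sigma$'', but \ref{pr:4} cannot be invoked there: it yields $g\cdot\Sigma=\Sigma$ only once one already knows that a $G$-regular point of $\Sigma$ \emph{and} its $g$-translate both lie in $\Sigma$. What you actually have is a $G$-regular $x\in\Sigma$ in the tube $G\cdot S_q$, i.e.\ $x=g\cdot s$ with $s\in S_q$; to conclude $s\in\Sigma$ you would need $g^{-1}\cdot\Sigma=\Sigma$, and verifying the hypothesis of \ref{pr:4} for this $g$ requires already knowing $s\in\Sigma$ --- the argument is circular. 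The same obstruction defeats the natural variant (drop a minimal geodesic from a regular $x\in\Sigma$ to $G\cdot q$; by transnormality of the orbit foliation and properties \ref{pr:1}, \ref{pr:3} it stays in $\Sigma$, and its foot point carries a regular normal vector tangent to $\Sigma$): this proves the statement at \emph{some} point of $(G\cdot q)\cap\Sigma$, not at the prescribed $q$, and transporting it to $q$ again requires knowing that the relating group element normalizes $\Sigma$. Your fallback limiting argument stalls at exactly the same place, as you yourself note. Closing this anchoring-at-$q$ gap is precisely the work done in \cite[Lemma 5.2]{GOT}, and nothing in the proposal substitutes for it.

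A secondary, fixable point: the identity $\Sigma\cap S_q=\exp_q\bigl((T_q\Sigma\cap\nu_q(G\cdot q))\cap B_\varepsilon(0)\bigr)$ does not follow from total geodesy alone; only the inclusion $\supseteq$ is immediate. Your strategy uses $\subseteq$, since you locate a regular point in $\Sigma\cap S_q$ and then pull it back along $\exp_q$. To justify it, note that the radial geodesic from $q$ to a $G$-regular $s\in\Sigma\cap S_q$ is perpendicular to $G\cdot q$ at $q$, hence perpendicular to $G\cdot s$ at $s$; by property \ref{pr:3} its tangent at $s$ lies in $T_s\Sigma$, so by completeness and total geodesy the whole geodesic lies in $\Sigma$ and its initial vector is in $T_q\Sigma$ after all. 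This repair is routine, but it should be made explicit rather than attributed to total geodesy.
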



The following proposition lists several properties related to the copolarity of an isometric action. They are either observations already made in \cite{GOT} or immediate consequences of them and Definition \ref{d:copolar}.
\begin{proposition}\label{p:copolarproperties}
Let $M,N$ be finite dimensional Riemannian manifolds and $G,H$ Lie groups which act
smoothly and isometrically on $M$, resp. $N$. Let furthermore
$p\in M$ be an arbitrary $G$-regular point.
\begin{enumerate}
\item If $(G,M)$ and $(H,N)$ are orbit-equivalent (i.e. there is an isometry from $M$ onto $N$, mapping $G$-orbits onto $H$-orbits), then
$\copol(G,M)=\copol(H,N)$.

\item $\copol(G,M)=\copol(G^\circ,M)$, where $G^\circ$ denotes the identity
component.

\item For any two fat sections $\Sigma_1, \Sigma_2$ containing
$p$, the \textbf{connected intersection} (i.e. the connected
component of $p$ of the intersection $\Sigma_1\cap\Sigma_2$) is again a fat
section. Hence, a minimal section through $p$ is unique.

\item The minimal section through $p$ is the connected
intersection of all fat sections containing $p$, and also the
connected intersection of all pre-sections through $p$.

\item The $G$-translates of a fat section $\Sigma$ foliate $M^\reg$, the $G$-regular points of $M$.

\item $G$ is transitive on the set of all minimal sections of $(G,M)$.

\item The intersection of a principal orbit $G\cdot p$ with a fat section $\Sigma$ is
an embedded submanifold of $M$. Moreover, if $N_G(\Sigma)$ denotes the normalizer of $\Sigma$ in $G$, then
$$\Sigma\cap(G\cdot p)=N_G(\Sigma)\cdot p, \text{ if } p\in\Sigma.$$
\item If $\Sigma$ is a fat section, then $\Sigma^\reg:=\Sigma\cap M^\reg$ is open and dense in $\Sigma$.
\end{enumerate}
\end{proposition}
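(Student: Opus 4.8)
The plan is to obtain each item from the two genuinely substantial facts (iii) and (v)---which I would quote from \cite{GOT}---together with the defining properties \ref{pr:1}--\ref{pr:4} and Lemmas \ref{l:slice_intersect} and \ref{l:sliceregular}. I would first recall the mechanisms behind (iii) and (v). For (iii), let $\Sigma_0$ be the connected component of $p$ in $\Sigma_1\cap\Sigma_2$; properties \ref{pr:1}, \ref{pr:3}, \ref{pr:4} for $\Sigma_0$ are the routine statements that an intersection of complete totally geodesic submanifolds is complete and totally geodesic near $p$, that $T_q\Sigma_0$ still contains $\nu_q(G\cdot q)$ at $G$-regular $q\in\Sigma_0$ (intersect the inclusions for $\Sigma_1$ and $\Sigma_2$), and that \ref{pr:4} passes to intersections; the real content is property \ref{pr:2}, where one uses that at a $G$-regular point the normal slice $\exp_q(\nu_q(G\cdot q))$ lies, by completeness and total geodesy, inside both $\Sigma_i$ hence inside $\Sigma_0$, and already meets every $G$-orbit by Lemma \ref{l:slice_intersect}. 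The dimension identity $\copol(\Sigma_0)=\dim\Sigma_0-\dim\nu_p(G\cdot p)\le\copol(\Sigma_i)$ then forces a minimal section through $p$ to be unique. For (v), if $g_1\Sigma$ and $g_2\Sigma$ meet in a $G$-regular point, then $g_2^{-1}g_1$ maps a $G$-regular point of $\Sigma$ into $\Sigma$, so $g_2^{-1}g_1\Sigma=\Sigma$ by \ref{pr:4}; with \ref{pr:2} this gives a partition of $M^\reg$, and the smooth foliation chart near a regular point of $\Sigma$ is standard.

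Granting (iii) and (v), most of the rest is short. For (iv): by finite-dimensionality the descending chain of connected intersections of fat sections through $p$ stabilizes at a fat section contained in all of them, which by the dimension count above is minimal, hence---by the uniqueness from (iii)---is the minimal section through $p$; that this also equals the connected intersection of all \emph{pre}-sections through $p$ follows since every fat section is a pre-section and, conversely, any pre-section through $p$ contains a fat section through $p$. For (vi): given minimal $\Sigma_1,\Sigma_2$ and a $G$-regular $p\in\Sigma_1$, the section $\Sigma_2$ meets the principal orbit $G\cdot p$ in a $G$-regular point $g\cdot p$, so $g^{-1}\Sigma_2$ is again a minimal section (translation by a group element preserves \ref{pr:1}--\ref{pr:4} and the integer $k$) and contains $p$, whence $g^{-1}\Sigma_2=\Sigma_1$ by uniqueness. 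For (vii): if $p\in\Sigma$ is $G$-regular and $g\cdot p\in\Sigma$ then $g\in N_G(\Sigma)$ by \ref{pr:4}, so $\Sigma\cap(G\cdot p)=N_G(\Sigma)\cdot p$; since $\Sigma$ is closed in $M$ and the action is proper, $N_G(\Sigma)$ is a closed subgroup acting properly, so this orbit is an embedded submanifold of $M$, and the case $p\notin\Sigma$ follows by replacing $p$ with any point of $G\cdot p\cap\Sigma$ (nonempty by \ref{pr:2}). For (viii): $\Sigma^\reg=\Sigma\cap M^\reg$ is open because $M^\reg$ is, and dense because Lemma \ref{l:sliceregular} yields, arbitrarily close to any $q\in\Sigma$, a $G$-regular point on the geodesic through $q$ tangent to $T_q\Sigma\cap\nu_q(G\cdot q)$, which stays in $\Sigma$ by total geodesy.

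For (i) and (ii) I would argue indirectly. Properties \ref{pr:1}--\ref{pr:3} of a section are transported verbatim by an orbit-preserving isometry $F\colon M\to N$ (it preserves the orbit foliation, hence principal orbits and their normal spaces, hence also pre-sections in both directions), and likewise under passage to $G^\circ$ (since $\lie(G)=\lie(G^\circ)$, the $G$- and $G^\circ$-orbits have the same tangent and normal spaces); the subtlety lies only with \ref{pr:4}, which is not manifestly $F$-equivariant and which involves more group elements for $G$ than for $G^\circ$. I would bypass this using (iv): the copolarity equals $\dim S-\dim\nu_p(G\cdot p)$, where $S$ is the minimal pre-section through a fixed $G$-regular $p$, and this quantity is preserved both under $F$ (it preserves dimensions of submanifolds and of normal spaces at principal points) and under $G\leftrightarrow G^\circ$ (same $\nu_p$, and one checks the pre-sections coincide); hence $\copol(G,M)=\copol(H,N)$ and $\copol(G,M)=\copol(G^\circ,M)$.

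The step I expect to be the genuine obstacle is the ``pre-section'' half of (iv)---showing that an arbitrary pre-section through $p$ cannot be smaller than the minimal section, equivalently that it contains a fat section through $p$---and, closely tied to it, property \ref{pr:2} for the connected intersection in (iii) when the prescribed point $p$ is singular rather than regular. These are exactly the places where one must appeal to the finer structure theory of \cite{GOT} rather than to a one-line deduction from Definition \ref{d:copolar}.
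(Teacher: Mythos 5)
The paper gives no proof of this proposition at all: it merely asserts that the items are ``either observations already made in \cite{GOT} or immediate consequences of them and Definition \ref{d:copolar}''. So there is nothing in the paper to compare your argument against step by step; the relevant comparison is with the arguments of \cite{GOT} that the paper is silently invoking, and your reconstruction matches them. The mechanisms you use are the right ones: property \ref{pr:2} for the connected intersection in (iii) via the normal slice $\exp_q(\nu_q(G\cdot q))$ and Lemma \ref{l:slice_intersect}; the coincide-or-disjoint dichotomy from property \ref{pr:4} for (v); Lemma \ref{l:sliceregular} plus total geodesy for (viii); and, most importantly, routing (i) and (ii) through pre-sections rather than fat sections, which is exactly the correct move because property \ref{pr:4} is not manifestly transported by an orbit equivalence (an element $h\in H$ corresponds to no particular $g\in G$) nor by passing to $G^{\circ}$. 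You also correctly isolate the one genuinely nontrivial input --- that a pre-section of minimal dimension through a regular point automatically satisfies property \ref{pr:4}, equivalently that every pre-section through $p$ contains the minimal fat section --- and this is precisely the structural result of \cite{GOT} that cannot be extracted from Definition \ref{d:copolar} alone; flagging it rather than faking it is the honest thing to do, and it is the same black box the paper relies on.

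Two smaller points you pass over in (i) and (ii) deserve a sentence each if this were written out. For (i) you need that an orbit-preserving isometry carries principal orbits to principal orbits; this is standard but not a tautology. For (ii) you need that $G$-regular and $G^{\circ}$-regular points coincide, and that a $G$-pre-section meets every $G^{\circ}$-orbit: property \ref{pr:2} for $G$ is a priori \emph{weaker} than for $G^{\circ}$, since a $G$-orbit may split into several $G^{\circ}$-orbits, so one must again invoke Lemma \ref{l:slice_intersect}, this time for the connected group $G^{\circ}$, at a regular point of the pre-section. Neither issue is a genuine obstruction, but as stated your claim that ``the pre-sections coincide'' for $G$ and $G^{\circ}$ is asserted rather than proved.
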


Clearly, $M$ itself is always a fat section of $(G,M)$ (hence, we speak of trivial copolarity if $M$ is the only fat section). More interesting fat sections can often be found using
\begin{proposition}[{\cite[Section 3.2]{GOT}}]\label{p:fixpointset}
If $(G,M)$ is isometric and $p\in M^\reg$, then $\Sigma:=\Fix(G_p, M)^\circ$, i.e. the connected component of $p$ of the fixed point set of $G_p$ is a $k$-section, where $k=\dim(T_p(G\cdot p)^{G_p})$. 
\end{proposition}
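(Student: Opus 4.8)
The strategy is to verify the four defining properties \ref{pr:1}--\ref{pr:4} for $\Sigma=\Fix(G_p,M)^\circ$ and then compute the codimension $k$. The starting observation is the standard fact that the fixed point set of an isometry (or a family of isometries) is a disjoint union of closed totally geodesic submanifolds; applying this to the group $G_p$ of isometries of $M$, the connected component $\Sigma$ through $p$ is automatically a connected, embedded, totally geodesic submanifold. Completeness of $\Sigma$ follows because a closed totally geodesic submanifold of a complete manifold is complete in the induced metric. This disposes of \ref{pr:1} essentially for free.

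For property \ref{pr:2}, I would use Lemma \ref{l:slice_intersect}: since $p$ is $G$-regular, $\exp_p(\nu_p(G\cdot p))$ meets every $G$-orbit, so it suffices to show $\exp_p(\nu_p(G\cdot p))\subseteq\Sigma$. This in turn reduces, by totally-geodesicness and completeness of $\Sigma$, to the tangent-space inclusion $\nu_p(G\cdot p)\subseteq T_p\Sigma$. Now $T_p\Sigma=T_p\Fix(G_p,M)=(T_pM)^{G_p}$, the fixed subspace of the isotropy representation of $G_p$ on $T_pM$. Since $G_p$ acts trivially on the normal space of its own fixed orbit direction and, more to the point, the principal isotropy $G_p$ fixes $p$ and therefore acts on $T_pM$ preserving the splitting $T_pM=T_p(G\cdot p)\oplus\nu_p(G\cdot p)$; the key point is that $G_p$ acts trivially on $\nu_p(G\cdot p)$ when $p$ is principal. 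I would justify this via the slice theorem: near $p$ the action is modelled on $G\times_{G_p}\nu_p(G\cdot p)$, and $p$ being principal forces every nearby orbit to also be principal, which means $G_p$ fixes the slice $\nu_p(G\cdot p)$ pointwise (equivalently, the slice representation of a principal isotropy is trivial). Hence $\nu_p(G\cdot p)\subseteq(T_pM)^{G_p}=T_p\Sigma$, giving both \ref{pr:2} and \ref{pr:3} at once.

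Property \ref{pr:4} is where the argument needs the most care, so I expect it to be the main obstacle. Suppose $q\in\Sigma$ is $G$-regular and $g\in G$ satisfies $g\cdot q\in\Sigma$; I must show $g\cdot\Sigma=\Sigma$. Since $g\cdot q\in\Sigma=\Fix(G_p,M)^\circ$, one has $G_p\subseteq G_{g\cdot q}=gG_qg^{-1}$. Because $q$ and $g\cdot q$ lie on $\Sigma$ and are $G$-regular, while $p$ is $G$-regular, all three isotropy groups $G_p, G_q, G_{g\cdot q}$ are principal isotropy groups, hence conjugate and in fact, being subgroups of one another up to conjugation and all of the same dimension and component count, we can arrange $G_p=G_q=G_{g\cdot q}$ after noting that on the connected totally geodesic $\Sigma$ the isotropy is locally constant on the regular part (this uses that $\Sigma^\reg$ is connected-enough, or one argues along a path in $\Sigma^\reg$ from $p$ to $q$, which exists by openness/denseness considerations as in Proposition \ref{p:copolarproperties}(viii); strictly one should pass through the regular stratum). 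Then $gG_qg^{-1}=G_{g\cdot q}=G_p=G_q$, so $g$ normalizes $G_p$. An isometry $g$ normalizing $G_p$ maps $\Fix(G_p,M)$ to itself, hence maps some component to some component; since $g\cdot q\in\Sigma$ and $q\in\Sigma$ with $g$ an isometry preserving $\Fix(G_p,M)$, connectedness forces $g\cdot\Sigma=\Sigma$. The delicate points are (a) that one really can reduce to the case $G_p=G_q$ for regular $q\in\Sigma$ — this should follow from the slice model together with the fact that the principal isotropy is determined up to conjugacy and is locally constant along $\Sigma^\reg$ — and (b) handling the possibility that $G_p$ is not connected, where one must track components of the fixed-point set carefully.

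Finally, the codimension count: for $G$-regular $p$ we have just shown $\nu_p(G\cdot p)\subseteq T_p\Sigma=(T_pM)^{G_p}$, so the codimension $k$ of $\nu_p(G\cdot p)$ in $T_p\Sigma$ equals $\dim(T_pM)^{G_p}-\dim\nu_p(G\cdot p)=\dim\bigl(T_p(G\cdot p)\oplus\nu_p(G\cdot p)\bigr)^{G_p}-\dim\nu_p(G\cdot p)=\dim\bigl(T_p(G\cdot p)\bigr)^{G_p}$, where I used that $G_p$ acts trivially on $\nu_p(G\cdot p)$ so $(T_pM)^{G_p}=\bigl(T_p(G\cdot p)\bigr)^{G_p}\oplus\nu_p(G\cdot p)$. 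This is exactly the asserted value $k=\dim\bigl(T_p(G\cdot p)^{G_p}\bigr)$, completing the proof.
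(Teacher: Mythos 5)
The paper does not actually prove this proposition; it is quoted from \cite[Section 3.2]{GOT}, so there is no in-text argument to compare against. Your proof is correct and is the standard one: fixed-point sets of isometries are closed totally geodesic submanifolds (giving \ref{pr:1}), triviality of the slice representation at a principal point gives $\nu_p(G\cdot p)\subseteq (T_pM)^{G_p}=T_p\Sigma$ and hence \ref{pr:2} via Lemma \ref{l:slice_intersect} and \ref{pr:3}, the normalizing argument gives \ref{pr:4}, and the $G_p$-invariant splitting $T_pM=T_p(G\cdot p)\oplus\nu_p(G\cdot p)$ gives the codimension count. Two small streamlinings: the worry about ``local constancy of the isotropy along $\Sigma^{\reg}$'' is unnecessary, because for any $G$-regular $q\in\Sigma\subseteq\Fix(G_p,M)$ one has the \emph{inclusion} $G_p\subseteq G_q$ outright, and an inclusion of compact groups that are conjugate (both being principal isotropy groups) is an equality by comparing dimensions and numbers of components --- no path argument and no care about disconnectedness of $G_p$ is needed. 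The same observation $G_q=G_p$ is also what you need to check property \ref{pr:3} at an arbitrary regular $q\in\Sigma$ rather than only at the base point $p$, which is the one place where your write-up, taken literally, verifies less than the definition demands; with $G_q=G_p$ one gets $T_q\Sigma=(T_qM)^{G_q}\supseteq\nu_q(G\cdot q)$ exactly as at $p$.
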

\begin{definition}\label{d:can_fat}
We call a fat section as in Proposition \ref{p:fixpointset} \textbf{canonical section}. Furthermore, we say a fat section is \textbf{sufficiently small} if it is contained in some canonical section. In particular, canonical sections and minimal sections are sufficiently small.
\end{definition}
\begin{remark}[\cite{GOT}, Section 3.2]{\ }
Canonical sections need not be minimal sections. For instance, for $k=2, n=3$ in Example \ref{e:standard} the principal isotropy groups are trivial, but minimal sections are proper subspaces of the representation space. Nevertheless, for an isometric action $(G,M)$ the acting group $G$ can often be enlarged to a group $G'$, which also acts isometrically on $M$ with the same orbits as $G$, and such that $(G',M)$ has canonical minimal sections. By Proposition \ref{p:copolarproperties} (ii) both actions have the same copolarity and minimal sections. It is interesting to note that for every polar representation the sections can be obtained in this way (\cite[Theorem 1.3]{S}), and this is also the case for the representations in Example \ref{e:standard} (see also \cite[Chapter 7]{Mag1}).
\end{remark}

\begin{definition}
Let $\Sigma$ be a fat section of the isometric action $(G,M)$. We
put
$$W=W(\Sigma):=N_G(\Sigma)\!/\!Z_G(\Sigma)$$
and call it the \textbf{fat Weyl group} of $\Sigma$. The isometric action $(W,\Sigma)$ is called a \textbf{reduction} of $(G,M)$ (induced by $\Sigma$). For minimal sections, $(W,\Sigma)$ is called a \textbf{minimal reduction}.
\end{definition}
\begin{remark}{\ }
\begin{enumerate}
\item One can also define fat sections without requiring them to be embedded. For our purposes however it will be
important that $\Sigma$ is closed in $M$, because then $N_G(\Sigma)$ is a Lie subgroup of $G$. Hence $W(\Sigma)$ is also a Lie group.

\item Every compact Lie group can be realized as a fat Weyl group. This generalizes \cite[Remark 5.6.20]{PT2} and is described at the end of Section \ref{s:resolution}.
\end{enumerate}
\end{remark}

\begin{example}
A minimal reduction of Example \ref{e:standard} is $(\On(k),\R^{k^2})$.
\end{example}
The next proposition is easy to check.
\begin{proposition}\label{p:cohom_copol}
Let $(G,M)$ be an isometric action and let $\Sigma\subseteq M$ be a sufficiently small section. Then $H:=Z_G(\Sigma)$ is a principal isotropy group of $(G,M)$. In particular, all principal isotropy groups along $\Sigma$ coincide. It follows that $W=W(\Sigma)$ acts freely on $\Sigma^\reg$, and if $\Sigma$ is a minimal section, then $\copol(G,M)=\dim(W)$ and
$$\dim\Sigma=\cohom(G,M)+\copol(G,M).$$
\end{proposition}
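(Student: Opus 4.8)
The plan is to peel the statement into its four assertions --- (a) $Z_G(\Sigma)$ is a principal isotropy group and all principal isotropy groups along $\Sigma$ agree; (b) $W$ acts freely on $\Sigma^{\reg}$; (c) the dimension formula; (d) $\copol(G,M)=\dim W$ when $\Sigma$ is minimal --- and to treat them in that order, since each later one leans on the earlier ones.

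For (a) I would first unravel ``sufficiently small'': $\Sigma\subseteq\Sigma_0:=\Fix(G_{p_0},M)^\circ$ for some $G$-regular $p_0$. Choosing any $G$-regular $q\in\Sigma$ (possible, as $\Sigma^{\reg}$ is dense in $\Sigma$ by Proposition \ref{p:copolarproperties}(viii)), the inclusion $q\in\Sigma_0$ forces $G_{p_0}\subseteq G_q$; since $p_0$ and $q$ are regular, $G_{p_0}$ and $G_q$ are conjugate principal isotropy groups, and a compact group contained in a conjugate copy of itself must coincide with it (equal dimension gives the same identity component, then the equal finite number of components gives equality). Hence $G_q=G_{p_0}$ for every $G$-regular $q\in\Sigma$, which is the second claim of (a). Since $G_{p_0}$ fixes all of $\Sigma_0\supseteq\Sigma$ pointwise we get $G_{p_0}\subseteq Z_G(\Sigma)$, while $Z_G(\Sigma)\subseteq G_q=G_{p_0}$ trivially; thus $H=Z_G(\Sigma)=G_{p_0}$ is a principal isotropy group. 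Then (b) drops out: if $g\in N_G(\Sigma)$ fixes a $G$-regular $q\in\Sigma$, then $g\in G_q=Z_G(\Sigma)$, so $g$ represents the identity in $W=N_G(\Sigma)/Z_G(\Sigma)$.

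For the metric part I would assume $\Sigma$ is minimal, i.e.\ a $\copol(G,M)$-section, fix a $G$-regular $q\in\Sigma$, and set $c:=\cohom(G,M)=\dim\nu_q(G\cdot q)$, $n:=\dim M$. Property \ref{pr:3} gives $\nu_q(G\cdot q)\subseteq T_q\Sigma$; decomposing a vector of $T_q\Sigma$ into its components tangent and normal to the orbit then shows $T_q\Sigma=\nu_q(G\cdot q)\oplus(T_q\Sigma\cap T_q(G\cdot q))$, and by the definition of a $k$-section the second summand has dimension $k=\copol(G,M)$. This already yields $\dim\Sigma=c+k=\cohom(G,M)+\copol(G,M)$, which is (c). For (d) I would next observe that $\nu_q(G\cdot q)\subseteq T_q\Sigma$ also forces $T_q\Sigma+T_q(G\cdot q)=T_qM$, i.e.\ the embedded submanifolds $\Sigma$ and $G\cdot q$ (the orbit is embedded because the action is proper) meet transversally at $q$; hence near $q$ their intersection is an embedded submanifold of dimension $\dim\Sigma+\dim(G\cdot q)-n=(c+k)+(n-c)-n=k$, with tangent space $T_q\Sigma\cap T_q(G\cdot q)$ at $q$. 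But Proposition \ref{p:copolarproperties}(vii) identifies this intersection with the orbit $N_G(\Sigma)\cdot q$, whose dimension is $\dim N_G(\Sigma)-\dim N_G(\Sigma)_q=\dim N_G(\Sigma)-\dim Z_G(\Sigma)=\dim W$, using $N_G(\Sigma)_q=N_G(\Sigma)\cap G_q=Z_G(\Sigma)$ from (a). Equating the two computations of this dimension gives $\dim W=k=\copol(G,M)$.

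The bookkeeping with compact isotropy groups in (a)--(b) and the linear algebra of the orbit decomposition are routine; the one point carrying real geometric content is the transversality of a principal orbit and the minimal section at a common regular point used in (d), which is exactly what pins the \emph{a priori} group-theoretic quantity $\dim W$ to the geometric invariant $\copol(G,M)$. An equivalent route to the same fact uses Proposition \ref{p:copolarproperties}(v): the $G$-translates of $\Sigma$ foliate $M^{\reg}$, and this foliation is everywhere transverse to the foliation by orbits.
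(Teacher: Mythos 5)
Your proof is correct; the paper itself offers no argument here (it states only ``The next proposition is easy to check''), and your write-up supplies exactly the routine verification that is being left to the reader: the conjugacy-plus-containment trick for compact isotropy groups to get $Z_G(\Sigma)=G_{p_0}$, the orthogonal splitting $T_q\Sigma=\nu_q(G\cdot q)\oplus(T_q\Sigma\cap T_q(G\cdot q))$ for the dimension formula, and the identification of $\Sigma\cap(G\cdot q)$ with the $N_G(\Sigma)$-orbit via Proposition \ref{p:copolarproperties}(vii) to pin $\dim W$ to the copolarity. All citations are to results stated before the proposition, so there is no circularity.
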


\section{Properties of Reductions}\label{s:properties}
In this section we generalize several results of \cite[Section 5.2]{GOT}, where orthogonal representations are considered, to arbitrary isometric actions. Interestingly, we obtain the results in a reversed order than in loc.~cit. We start with a metric observation concerning orbit spaces, which is a stronger result than \cite[Theorem 5.9]{GOT}.
In the following let $(G,M)$ be an isometric group action and let $\Sigma$ be a fat section with fat Weyl group $W=W(\Sigma)$.
\begin{theorem}\label{t:orbitspaceisometry}
The orbit spaces $W\backslash\Sigma$ and $G\backslash M$, both endowed with their respective orbital distance metric, are canonically isometric via the map
$$\tilde\iota :W\backslash\Sigma\to G\backslash M,\ W\cdot q\mapsto G\cdot q.$$
\end{theorem}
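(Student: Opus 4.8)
The plan is to show that $\tilde\iota$ is a well-defined bijection and that it is distance-preserving in both directions; the latter splits into showing $\tilde\iota$ is $1$-Lipschitz (easy) and that its inverse is $1$-Lipschitz (the real work). First I would check well-definedness and injectivity: if $q,q'\in\Sigma$ lie in the same $G$-orbit, I want them in the same $W$-orbit. By Lemma \ref{l:sliceregular} one can push $q$ slightly off to a $G$-regular point $p=\exp_q v$ with $v\in T_q\Sigma\cap\nu_q(G\cdot q)$, and similarly perturb $q'$; combined with property \ref{pr:4} of a fat section and Proposition \ref{p:copolarproperties}(vii) (which identifies $\Sigma\cap G\cdot p$ with $N_G(\Sigma)\cdot p$), a limiting argument gives $q'\in N_G(\Sigma)\cdot q$, hence $Wq=Wq'$. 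Surjectivity is immediate from property \ref{pr:2}. So $\tilde\iota$ is a bijection of sets.

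Next, the orbital distance metric on $G\backslash M$ is $d_{G\backslash M}(Gx,Gy)=\inf\{d_M(x',y'): x'\in Gx,\ y'\in Gy\}=\inf_{g\in G} d_M(x,gy)$, and similarly for $W\backslash\Sigma$ using $d_\Sigma$. Since $\Sigma$ is totally geodesic and complete, $d_\Sigma$ restricted to $\Sigma$ agrees with $d_M$ between points of $\Sigma$ (geodesics of $\Sigma$ are geodesics of $M$, and completeness/total geodesy means a minimizing $M$-geodesic between two points of $\Sigma$ stays in $\Sigma$ — or at least there is one in $\Sigma$ of the same length). For $q,q'\in\Sigma$, any $W$-translate of $q'$ is in particular a $G$-translate, so $d_{G\backslash M}(Gq,Gq')\le d_{W\backslash\Sigma}(Wq,Wq')$; this is the $1$-Lipschitz direction for $\tilde\iota$.

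The hard part is the reverse inequality $d_{W\backslash\Sigma}(Wq,Wq')\le d_{G\backslash M}(Gq,Gq')$, i.e.\ showing the orbital distance in $M$ between two points of $\Sigma$ is already realized (or approximated) by a path inside $\Sigma$ joining the $W$-orbits. The approach: take a minimizing geodesic $\gamma$ in $M$ from $q$ to some $g\cdot q'$ realizing (up to $\varepsilon$) the orbital distance. A minimizing geodesic in $M$ between points of the \emph{same} $G$-orbit meets orbits perpendicularly (standard fact: horizontal geodesics, cf.\ the slice theorem / first variation), and more generally one reduces to the case where $\gamma$ starts at a $G$-regular point by Proposition \ref{p:copolarproperties}(viii), Lemma \ref{l:sliceregular} and a perturbation/continuity argument. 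For a $G$-regular starting point $p\in\Sigma$, property \ref{pr:3} gives $\nu_p(G\cdot p)\subseteq T_p\Sigma$, so the initial velocity of a horizontal minimizing geodesic lies in $T_p\Sigma$; since $\Sigma$ is totally geodesic, the whole geodesic stays in $\Sigma$ until it possibly hits a lower-dimensional stratum, and one argues (using that $M^\reg$ is open dense, Proposition \ref{p:copolarproperties}(viii), and that strata are crossed transversally) that it can be taken to remain in $\Sigma$ throughout, ending at a point of $\Sigma\cap Gq' = N_G(\Sigma)q' \subseteq$ a single $W$-orbit's preimage. This yields a path in $\Sigma$ from $q$ to a point of $N_G(\Sigma)\cdot q'$ of length $\le d_{G\backslash M}(Gq,Gq')+\varepsilon$, hence $d_{W\backslash\Sigma}(Wq,Wq')\le d_{G\backslash M}(Gq,Gq')$. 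Letting $\varepsilon\to0$ and combining with the previous paragraph finishes the proof.

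The main obstacle is making the "the minimizing geodesic stays in $\Sigma$" step rigorous: a minimizing $M$-geodesic between two points of a $G$-orbit need not be horizontal along its whole length if it passes through singular orbits, and one must handle the stratification carefully — the cleanest route is probably to first prove the inequality for $G$-regular endpoints (where horizontality and property \ref{pr:3} apply directly and $\Sigma$ being totally geodesic traps the geodesic), then extend to arbitrary $q,q'\in\Sigma$ by approximating them with nearby $G$-regular points of $\Sigma$ using Lemma \ref{l:sliceregular} and Proposition \ref{p:copolarproperties}(viii), together with continuity of both orbital distance metrics.
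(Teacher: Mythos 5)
Your proposal is correct and takes essentially the same route as the paper's proof: minimizing geodesics between orbits are perpendicular to them, property \ref{pr:3} together with \ref{pr:1} traps such a geodesic in $\Sigma$ when it starts at a $G$-regular point of $\Sigma$, this gives the isometry on the dense set $\Sigma^\reg$, and the general case follows by continuity and completeness of the two orbit spaces. The one point you flag as the "main obstacle" is actually a non-issue: since $\Sigma$ is complete and totally geodesic, an $M$-geodesic starting in $\Sigma$ with initial velocity in $T_q\Sigma$ remains in $\Sigma$ for all time, so no argument about crossing singular strata is needed once the endpoints are taken regular.
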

\begin{proof}
First of all, $\tilde\iota$ is a well defined map: If $q,q'\in\Sigma$ are such that $W\cdot q=W\cdot q'$, then there exists $n\in N_G(\Sigma)\subseteq G$ such that $n\cdot q=q'$. Hence $G\cdot q=G\cdot q'$. Since $\Sigma$ intersects all $G$ orbits, it is clear that $\tilde\iota$ is surjective. Furthermore, the next diagram commutes:
$$\xymatrix{
\Sigma\ar@{->>}[r]^{\iota}\ar@{->>}[d]_{\pi_W} & M\ar@{->>}[d]^{\pi_G}\\
W\backslash\Sigma\ar@{->>}[r]_{\tilde\iota} & G\backslash M.
}$$
As $\Sigma$ is embedded into $M$, the inclusion $\iota$ is continuous. The diagram then implies that $\tilde\iota$ is continuous, too. The distance between two points $G\cdot q$ and $G\cdot q'$ in $G\backslash M$ is the length of a minimal geodesic segment $\gamma$ in $M$ connecting the orbits $G\cdot q$ and $G\cdot q'$. Each such segment is perpendicular to both orbits. If $q,q'$ are both $G$-regular and $q\in\Sigma$, then by properties \ref{pr:1} and \ref{pr:3} of a fat section, $\gamma$ is a segment in $\Sigma$. We may thus further assume $q'\in\Sigma$, and thus $\gamma$ minimizes the distance between $W\cdot q$ and $W\cdot q'$. It follows that $\tilde\iota$ restricted to the open and dense subset $\Sigma^\reg$ (see Proposition \ref{p:copolarproperties} (viii)) is an isometry. By continuity and using that $W\backslash\Sigma$ and $G\backslash M$ are complete metric spaces, we see that $\tilde\iota$ is a surjective isometry.
\end{proof}

\begin{corollary}\label{c:c_isom}
The map $\iota^*:\kc^0(M)^G\to\kc^0(\Sigma)^W,\, f\mapsto f|_\Sigma$ is an isomorphism of Banach algebras, where both spaces are equipped with the corresponding $\|.\|_\infty$-norm.
\end{corollary}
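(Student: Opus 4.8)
The plan is to derive this as a direct corollary of Theorem~\ref{t:orbitspaceisometry}. Recall that continuous $G$-invariant functions on $M$ are in canonical bijection with continuous functions on the orbit space $G\backslash M$ (with its quotient topology), and similarly $\kc^0(\Sigma)^W\cong\kc^0(W\backslash\Sigma)$; under these identifications the sup-norm of an invariant function equals the sup-norm of the induced function on the orbit space. So first I would make this dictionary explicit: an $f\in\kc^0(M)^G$ factors as $f=\bar f\circ\pi_G$ for a unique $\bar f\in\kc^0(G\backslash M)$, and $\|f\|_\infty=\|\bar f\|_\infty$; likewise on the $\Sigma$ side.

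Next I would use the commuting square from the proof of Theorem~\ref{t:orbitspaceisometry}. For $f\in\kc^0(M)^G$, the restriction $f|_\Sigma=f\circ\iota$ is clearly $W$-invariant (since $W$ acts on $\Sigma$ through $N_G(\Sigma)\subseteq G$ and $f$ is $G$-invariant), so $\iota^*$ indeed lands in $\kc^0(\Sigma)^W$; and $\iota^*$ is obviously an algebra homomorphism. The commutativity $\pi_G\circ\iota=\tilde\iota\circ\pi_W$ shows that, on the level of orbit spaces, $\iota^*$ corresponds to the pullback $\tilde\iota^*:\kc^0(G\backslash M)\to\kc^0(W\backslash\Sigma)$. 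Since $\tilde\iota$ is a homeomorphism (being an isometry of metric spaces by Theorem~\ref{t:orbitspaceisometry}), $\tilde\iota^*$ is an isomorphism of algebras, and because $\tilde\iota$ is in fact an isometry it also preserves sup-norms. Chasing this back through the dictionary gives that $\iota^*$ is a norm-preserving algebra isomorphism; in particular it is bijective and isometric, hence bicontinuous, so it is an isomorphism of Banach algebras.

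The only mildly delicate point — and the step I would flag as the one requiring care rather than the others — is the identification of the topology on the orbit space with the metric topology coming from the orbital distance. One needs that for a proper isometric action the quotient topology on $G\backslash M$ agrees with the topology induced by the orbital distance metric, so that $\kc^0$ of the orbit space is unambiguous and the sup-norm statement makes sense; this is standard for proper isometric actions on complete manifolds, and the same applies to $(W,\Sigma)$ since $\Sigma$ is complete and $W$ acts properly. Granting that, surjectivity of $\iota^*$ (the one assertion not formally immediate from "$\tilde\iota$ is a bijection") follows because any $h\in\kc^0(\Sigma)^W$ descends to $\bar h\in\kc^0(W\backslash\Sigma)$, then transports to $\bar h\circ\tilde\iota^{-1}\in\kc^0(G\backslash M)$, and pulls back to the desired $G$-invariant continuous extension on $M$. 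I would keep the write-up short: once the orbit-space picture is in place, every claim — homomorphism, bijectivity, isometry of norms, hence Banach-algebra isomorphism — is a one-line consequence of Theorem~\ref{t:orbitspaceisometry}.
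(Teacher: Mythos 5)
Your proposal is correct and follows essentially the same route as the paper: both deduce the result from the commuting square relating $\iota^*$ to $\tilde\iota^*$ via the identifications $\kc^0(M)^G\cong\kc^0(G\backslash M)$ and $\kc^0(\Sigma)^W\cong\kc^0(W\backslash\Sigma)$, using that $\tilde\iota$ is a norm-preserving homeomorphism by Theorem~\ref{t:orbitspaceisometry}. Your additional remark about the agreement of the quotient topology with the orbital-distance topology is a sensible point of care that the paper leaves implicit, but it does not change the argument.
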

\begin{proof}
Consider the following commuting diagram of Banach algebras associated with the diagram from Theorem \ref{t:orbitspaceisometry}:
$$\xymatrix{
\kc^0(G\backslash M)\ar@{->}[r]^{\tilde\iota^*}\ar@{->}[d]_{\pi_G^*} & \kc^0(W\backslash\Sigma)\ar@{->}[d]^{\pi_W^*}\\
\kc^0(M)^G\ar@{->}[r]_{\iota^*} & \kc^0(\Sigma)^W.
}$$
The top arrow is an isomorphism of Banach algebras, because $G\backslash M\approx W\backslash \Sigma$ and since the assignment $\tilde\iota^*(f)=f\circ\tilde\iota$ is clearly norm preserving. 
The vertical maps are Banach algebra isomorphisms by definition of the orbit space. Hence the bottom arrow is also an isomorphism of Banach algebras.
\end{proof}

\begin{corollary}\label{c:orbitparam}
The fat Weyl group $W$ parameterizes intersections of $G$-orbits with $\Sigma$: For all $q\in\Sigma$ we have $W\cdot q=(G\cdot q)\cap\Sigma$. In particular, $(G\cdot q)\cap\Sigma$ is an extrinsic homogenous submanifold of the spaces $G\cdot q$, $\Sigma$ and $M$ for every $q\in\Sigma$.
\end{corollary}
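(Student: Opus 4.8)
The plan is to establish the set equality $W\cdot q=(G\cdot q)\cap\Sigma$ by proving the two inclusions separately, and then to read off the ``extrinsically homogeneous submanifold'' assertion from the description of the right-hand side as a single orbit.

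The easy inclusion comes first. As a subset of $\Sigma$, the $W$-orbit $W\cdot q$ coincides with $N_G(\Sigma)\cdot q$, since $Z_G(\Sigma)$ fixes $\Sigma$ pointwise. Because $N_G(\Sigma)\subseteq G$ and $N_G(\Sigma)$ leaves $\Sigma$ invariant, we get at once $W\cdot q=N_G(\Sigma)\cdot q\subseteq(G\cdot q)\cap\Sigma$. (For $G$-regular $q$ the reverse inclusion is already contained in Proposition \ref{p:copolarproperties}~(vii); the content here is the extension to arbitrary, possibly singular, $q\in\Sigma$.)

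The reverse inclusion is where the real work sits, and the key is to invoke Theorem \ref{t:orbitspaceisometry}. Given $x\in(G\cdot q)\cap\Sigma$ we have $G\cdot x=G\cdot q$, that is, $\tilde\iota(W\cdot x)=\tilde\iota(W\cdot q)$; since $\tilde\iota$ is an isometry of metric spaces it is in particular injective, so $W\cdot x=W\cdot q$ and hence $x\in W\cdot q$. The only point that needs care is that one applies injectivity of $\tilde\iota$ at an arbitrary point of $\Sigma$, not merely at a regular one; this is legitimate because Theorem \ref{t:orbitspaceisometry} provides the global isometry and any isometry separates distinct points. (Alternatively one could argue directly, using Lemma \ref{l:sliceregular} to pass to a nearby $G$-regular point on $\Sigma$ and then property \ref{pr:4}, but routing the argument through the orbit-space isometry is cleaner and is the reason this statement is placed after Theorem \ref{t:orbitspaceisometry}.)

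For the final clause I would observe that $N_G(\Sigma)$ is a closed subgroup of $G$, so the restriction of the proper $G$-action to $N_G(\Sigma)$ is again proper and its orbits are embedded submanifolds of $M$; moreover $N_G(\Sigma)$ acts by isometries of $M$ and leaves invariant both $\Sigma$ (which is totally geodesic, hence a Riemannian submanifold) and the orbit $G\cdot q$. Consequently $(G\cdot q)\cap\Sigma=N_G(\Sigma)\cdot q$ is a single $N_G(\Sigma)$-orbit, an embedded submanifold of $M$ contained in the embedded submanifolds $\Sigma$ and $G\cdot q$ (so embedded in each of them), on which $N_G(\Sigma)$ acts transitively by isometries of each of the three ambient spaces --- which is precisely extrinsic homogeneity in $\Sigma$, in $G\cdot q$ and in $M$. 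I do not expect a genuine obstacle in this last part beyond bookkeeping: one must use properness of the restricted action to get embeddedness in $M$, and then note that embeddedness in $M$ descends to embeddedness in any embedded submanifold containing the orbit.
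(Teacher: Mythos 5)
Your proof is correct and follows essentially the route the paper intends: the set equality is read off from the injectivity of the orbit-space isometry $\tilde\iota$ of Theorem \ref{t:orbitspaceisometry} (which is precisely why the statement is placed as a corollary of that theorem, reversing the order of \cite{GOT}), and the homogeneity clause from the identification $(G\cdot q)\cap\Sigma=N_G(\Sigma)\cdot q$ with an orbit of the closed subgroup $N_G(\Sigma)$. No gaps.
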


\begin{corollary}\label{c:isotropy_transitive}
For every $q\in M$ the isotropy group $G_q$ is transitive on the set of all $G$-translates of $\Sigma$ containing $q$. In particular, $G_q$ is transitive on the set of minimal sections through $q$.
\end{corollary}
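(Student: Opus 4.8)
The plan is to reduce this corollary to the transitivity of $G$ on \emph{minimal} sections and to the fact that every $G$-translate of $\Sigma$ containing $q$ can be detected inside a \emph{single} orbit. First I would note that if $\Sigma_1 = g_1\cdot\Sigma$ and $\Sigma_2 = g_2\cdot\Sigma$ are two $G$-translates of $\Sigma$ both containing $q$, then they are in particular both fat sections of $(G,M)$ (being isometric images of a fat section under an isometry respecting the $G$-action), and $q$ lies in both. By Proposition~\ref{p:copolarproperties}~(iii), the connected intersection $\Sigma_1\cap_q\Sigma_2$ is again a fat section through $q$; but this is a slightly different statement than what we want, so instead I would argue directly with Corollary~\ref{c:orbitparam}.

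The key step is the following. Pick a $G$-regular point $p\in\Sigma_1$ close to $q$: more precisely, apply Lemma~\ref{l:sliceregular} inside the reduction induced by $\Sigma_1$ to obtain a $G_q$-regular $v\in T_q\Sigma_1\cap\nu_q(G\cdot q)$ with $p=\exp_q v$ being $G$-regular. Through the $G$-regular point $p$ there is a \emph{unique} minimal section $\Sigma_p^{\min}$, by Proposition~\ref{p:copolarproperties}~(iii)--(iv); and $\Sigma_p^{\min}\subseteq\Sigma_1$ since the minimal section through $p$ is the connected intersection of \emph{all} fat sections through $p$ and $\Sigma_1$ is one of them. Now $q\in\Sigma_1$, and $q$ lies in the closure of $\Sigma_1^{\reg}$; in fact $q=\exp_q 0\in\overline{\Sigma_p^{\min}}=\Sigma_p^{\min}$ because $\Sigma_p^{\min}$ is complete and $v\in T_q\Sigma_1\cap\nu_q(G\cdot q)$, so the geodesic $t\mapsto\exp_q(tv)$ lies in the minimal section through each of its regular points, hence in $\Sigma_p^{\min}$ for $t$ in an interval containing $0$ and a point with $\exp_q(tv)=p$. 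Thus $q\in\Sigma_p^{\min}\subseteq\Sigma_1$, and likewise we produce a minimal section $\Sigma_{p'}^{\min}\subseteq\Sigma_2$ with $q\in\Sigma_{p'}^{\min}$.

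Now I would invoke Proposition~\ref{p:copolarproperties}~(vi): $G$ is transitive on the set of all minimal sections of $(G,M)$. So there is $h\in G$ with $h\cdot\Sigma_p^{\min}=\Sigma_{p'}^{\min}$. To upgrade this to an element of the isotropy group $G_q$, observe that both minimal sections contain $q$, and the minimal section through a regular point is unique; so the two minimal sections through $q$ being compared are $\Sigma_p^{\min}$ and $\Sigma_{p'}^{\min}$, and $h$ sends $q$ to a point $h\cdot q\in\Sigma_{p'}^{\min}$ lying on the same $G$-orbit as $q$. By Corollary~\ref{c:orbitparam} applied to the fat section $\Sigma_{p'}^{\min}$ (or to $\Sigma_2$), $(G\cdot q)\cap\Sigma_{p'}^{\min}=W(\Sigma_{p'}^{\min})\cdot q$, so there is $n\in N_G(\Sigma_{p'}^{\min})$ with $n\cdot(h\cdot q)=q$. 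Then $nh\in G_q$ and $nh\cdot\Sigma_p^{\min}=n\cdot\Sigma_{p'}^{\min}=\Sigma_{p'}^{\min}$, giving transitivity of $G_q$ on minimal sections through $q$. For the general (not necessarily minimal) $G$-translates $\Sigma_1,\Sigma_2$ of $\Sigma$, one runs essentially the same argument: the minimal section $\Sigma_p^{\min}$ determined by a suitable regular $p$ depends only on which $G$-translate of $\Sigma$ we started in \emph{and} which regular point we picked, but after translating by the element of $G_q$ found above we can further adjust by $Z_G(\Sigma)$ — which by Proposition~\ref{p:cohom_copol} is a principal isotropy group stabilizing $\Sigma$ — to match $\Sigma_1$ with $\Sigma_2$ exactly; alternatively, note $g_1^{-1}\cdot q$ and $g_2^{-1}\cdot q$ lie on the same orbit in $\Sigma$ and apply Corollary~\ref{c:orbitparam} to $\Sigma$ directly to find $n\in N_G(\Sigma)$ with $g_2 n g_1^{-1}\cdot q = q$, whence $g_2 n g_1^{-1}\in G_q$ carries $\Sigma_1=g_1\cdot\Sigma$ to $g_2 n\cdot\Sigma=g_2\cdot\Sigma=\Sigma_2$.

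The main obstacle I anticipate is the bookkeeping in the last paragraph: making sure that the element produced by Corollary~\ref{c:orbitparam} genuinely lands in $N_G(\Sigma)$ (so that it moves the whole translate $g_1\cdot\Sigma$ onto $g_2\cdot\Sigma$ and not merely a point of it), and double-checking that regularity hypotheses of Lemma~\ref{l:sliceregular} and Corollary~\ref{c:orbitparam} are available — i.e. that we may reduce to comparing orbits through a point for which $(G\cdot q)\cap\Sigma$ is controlled. In fact the cleanest route avoids minimal sections entirely: given $\Sigma_i=g_i\cdot\Sigma\ni q$, set $q_i:=g_i^{-1}\cdot q\in\Sigma$; then $G\cdot q_1=G\cdot q_2$, so by Corollary~\ref{c:orbitparam} there is $n\in N_G(\Sigma)$ with $n\cdot q_1=q_2$, and then $g:=g_2 n g_1^{-1}$ satisfies $g\cdot q = q$ and $g\cdot\Sigma_1 = g_2 n\cdot\Sigma = g_2\cdot\Sigma=\Sigma_2$, which is exactly the claim; the statement about minimal sections is the special case $\Sigma$ minimal, combined with uniqueness of the minimal section through $q$ (Proposition~\ref{p:copolarproperties}~(iii)) to see that every minimal section through $q$ is indeed a $G$-translate of a fixed minimal section, by Proposition~\ref{p:copolarproperties}~(vi).
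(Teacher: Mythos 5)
Your final ``cleanest route'' --- producing $n\in N_G(\Sigma)$ via Corollary~\ref{c:orbitparam} from $q_1=g_1^{-1}\cdot q$ and $q_2=g_2^{-1}\cdot q$ lying on the same $G$-orbit in $\Sigma$, and checking that $g_2ng_1^{-1}\in G_q$ carries $g_1\cdot\Sigma$ to $g_2\cdot\Sigma$ --- is correct and is exactly the paper's proof (the paper simply normalizes $g_1=e$ first). The preceding detour through minimal sections of nearby regular points is unnecessary and can be dropped.
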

\begin{proof}
Since $\Sigma$ intersects every orbit we may assume $q\in\Sigma$. Let $g\in G$ be such that $q\in g\cdot\Sigma$. We have to show that there is some $\tilde g\in G_q$ such that $\tilde g\cdot\Sigma=g\cdot\Sigma$ holds.
Since we have $q\in g\cdot\Sigma$, it follows that $g^{-1}\cdot q\in\Sigma$. By Corollary \ref{c:orbitparam} there is some $n\in N_G(\Sigma)$ such that $g^{-1}\cdot q=n\cdot q$ and it follows that $\tilde g:=gn\in G_q$ and $\tilde g\cdot\Sigma=g\Sigma$.
\end{proof}


By property \ref{pr:3} of a fat section $T_p(G\cdot p)$ decomposes orthogonally for every $G$-regular $p\in\Sigma$ into $(T_p(G\cdot p)\cap T_p\Sigma)\oplus\nu_p\Sigma$. More generally:
\begin{proposition}\label{p:orthogonal_decomp}
In all points $q$ of a fat section $\Sigma$ the tangent space $T_qM$ decomposes compatibly and orthogonally in two ways:
$$T_qM=T_q\Sigma\oplus\nu_q\Sigma=T_q(G\cdot q)\oplus\nu_q(G\cdot q).$$
This means that the following decompositions are orthogonal:
\begin{eqnarray*}
T_q\Sigma&=&(T_q\Sigma\cap T_q(G\cdot q))\oplus (T_q\Sigma\cap \nu_q(G\cdot q)),\\ 
\nu_q\Sigma&=&(\nu_q\Sigma\cap T_q(G\cdot q))\oplus (\nu_q\Sigma\cap \nu_q(G\cdot q)).
\end{eqnarray*}
\end{proposition}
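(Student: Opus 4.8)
The plan is to reduce the statement to one inclusion and prove it by a continuity argument along a geodesic inside $\Sigma$. First I would record the linear-algebra reformulation: the two orthogonal decompositions of $T_qM$ are compatible in the stated sense exactly when orthogonal projection onto $T_q(G\cdot q)$ carries $T_q\Sigma$ into itself, and the $\nu_q\Sigma$-decomposition then follows from the $T_q\Sigma$-decomposition by taking orthogonal complements in $T_qM$; so it suffices to treat $T_q\Sigma$. Since $W\cdot q=(G\cdot q)\cap\Sigma$ by Corollary \ref{c:orbitparam}, we have $T_q(W\cdot q)\subseteq T_q(G\cdot q)\cap T_q\Sigma$ and $T_q\Sigma\cap\nu_q(G\cdot q)\subseteq\nu^\Sigma_q(W\cdot q)$, the normal space of the $W$-orbit inside $\Sigma$ (both inclusions being immediate). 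I would prove the reverse inclusion $\nu^\Sigma_q(W\cdot q)\subseteq T_q\Sigma\cap\nu_q(G\cdot q)$; granting it, $T_q\Sigma=T_q(W\cdot q)\oplus\bigl(T_q\Sigma\cap\nu_q(G\cdot q)\bigr)$ with $T_q(W\cdot q)=T_q(G\cdot q)\cap T_q\Sigma$, which is precisely the asserted orthogonal decomposition.

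For $G$-regular $q$ this is immediate from property \ref{pr:3} (it is the remark made just before the proposition: there $\nu_q(G\cdot q)\subseteq T_q\Sigma$, whence $\nu^\Sigma_q(W\cdot q)=\nu_q(G\cdot q)$). For general $q$ I would choose, by Lemma \ref{l:sliceregular}, a $G_q$-regular $v\in T_q\Sigma\cap\nu_q(G\cdot q)$ and set $\gamma(t):=\exp_q(tv)$; by Lemma \ref{l:sliceequiv}, $\gamma(t)$ is $G$-regular for $0<t<\varepsilon$, and since $\Sigma$ is totally geodesic, $\gamma$ runs inside $\Sigma^{\reg}$. Note $v\in\nu^\Sigma_q(W\cdot q)$, as $v\in T_q\Sigma$ and $v\perp T_q(G\cdot q)\supseteq T_q(W\cdot q)$. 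At each regular point $\gamma(t)$, property \ref{pr:3} gives both $\nu_{\gamma(t)}(G\cdot\gamma(t))\subseteq T_{\gamma(t)}\Sigma$ and $\nu_{\gamma(t)}\Sigma\subseteq T_{\gamma(t)}(G\cdot\gamma(t))$; the latter makes $G\cdot\gamma(t)$ and $\Sigma$ meet transversally there, so $T_{\gamma(t)}(W\cdot\gamma(t))=T_{\gamma(t)}(G\cdot\gamma(t))\cap T_{\gamma(t)}\Sigma$ (using Corollary \ref{c:orbitparam}), and hence $\nu^\Sigma_{\gamma(t)}(W\cdot\gamma(t))=\nu_{\gamma(t)}(G\cdot\gamma(t))$.

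Now I would let $t\to0$. Parallel transport along $\gamma$ carries $T_{\gamma(t)}\Sigma$ onto $T_q\Sigma$ (and $\nu_{\gamma(t)}\Sigma$ onto $\nu_q\Sigma$), as $\Sigma$ is totally geodesic; under this identification the limit $S:=\lim_{t\to0}\nu_{\gamma(t)}(G\cdot\gamma(t))$ lies in $T_q\Sigma\cap\nu_q(G\cdot q)$. Applying the standard slice-theorem description of the limit of orbit tangent spaces — for a proper isometric action $(H,X)$ one has $\lim_{t\to0}T_{\gamma(t)}(H\cdot\gamma(t))=T_q(H\cdot q)\oplus T_v(H_q\cdot v)$, where $T_v(H_q\cdot v)\subseteq\nu_q(H\cdot q)$ is the tangent at $v$ of the slice-representation orbit — to $(G,M)$ and to the proper isometric action $(W,\Sigma)$, and passing to orthogonal complements, I get $S=\nu_q(G\cdot q)\ominus T_v(G_q\cdot v)$ and $\lim_{t\to0}\nu^\Sigma_{\gamma(t)}(W\cdot\gamma(t))=\nu^\Sigma_q(W\cdot q)\ominus T_v(W_q\cdot v)$, with $W_q=N_G(\Sigma)_q/Z_G(\Sigma)$. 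By the identity of the previous paragraph these two limits coincide, so $\nu^\Sigma_q(W\cdot q)=S\oplus T_v(W_q\cdot v)$. Finally $T_v(W_q\cdot v)\subseteq T_q\Sigma\cap\nu_q(G\cdot q)$, because $W_q\cdot v=\{dn_q(v):n\in N_G(\Sigma)_q\}$ and each $dn_q$ preserves $T_q\Sigma$ (as $n$ preserves $\Sigma$) and $\nu_q(G\cdot q)$ (as $n\in G_q$), with $v$ lying in both; hence $\nu^\Sigma_q(W\cdot q)\subseteq T_q\Sigma\cap\nu_q(G\cdot q)$, which is the inclusion I wanted.

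I expect the main obstacle to be the limiting step: justifying the formula for the limit of orbit tangent spaces at the possibly singular point $q$ (a dimension count together with the fact that $G_q$ acts linearly on the slice in the exponential chart), checking that it applies verbatim to the reduced action $(W,\Sigma)$, and carrying out the bookkeeping that the two ``excess'' slice directions $T_v(G_q\cdot v)$ and $T_v(W_q\cdot v)$ are matched through the identity $\nu^\Sigma_{\gamma(t)}(W\cdot\gamma(t))=\nu_{\gamma(t)}(G\cdot\gamma(t))$ established at regular points. Everything else is linear algebra and the properties of fat sections and reductions already available.
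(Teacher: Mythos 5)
Your argument is correct, but it is worth noting that the paper itself does not write out a proof at all: it only remarks that the proof is ``basically the same as'' that of \cite[Lemma 5.10]{GOT}, which is carried out for orthogonal representations and (in the spirit of Lemma \ref{l:jacobi_decomp} and the proof of Theorem \ref{t:slicerep}) transfers information from a nearby $G$-regular point back to $q$ by decomposing restrictions of Killing fields along the geodesic $\gamma_v$ into their tangential and normal parts relative to $\Sigma$. You exploit the same basic mechanism --- at a regular point of $\Sigma$ property \ref{pr:3} makes the statement trivial, so one pushes along $\exp_q(tv)$ for a $G_q$-regular $v\in T_q\Sigma\cap\nu_q(G\cdot q)$ --- but your transfer device is different: a Grassmannian limit of the normal spaces $\nu_{\gamma(t)}(G\cdot\gamma(t))=\nu^\Sigma_{\gamma(t)}(W\cdot\gamma(t))$ under parallel transport, evaluated twice via the standard formula $\lim_{t\to0}T_{\gamma(t)}(H\cdot\gamma(t))=T_q(H\cdot q)\oplus T_v(H_q\cdot v)$, once for $(G,M)$ and once for the reduction $(W,\Sigma)$. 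The individual steps all check out: the reduction to the single inclusion $\nu^\Sigma_q(W\cdot q)\subseteq T_q\Sigma\cap\nu_q(G\cdot q)$ is sound (commuting projections), the identification $\nu^\Sigma_p(W\cdot p)=\nu_p(G\cdot p)$ at regular $p$ follows from property \ref{pr:3} together with Corollary \ref{c:orbitparam} and Proposition \ref{p:copolarproperties} (vii), the limit formula is justified by the Killing-field/dimension-count argument you indicate (and applies to $(W,\Sigma)$ because $W$ acts properly on the complete manifold $\Sigma$), and the invariance of $T_q\Sigma$ and $\nu_q(G\cdot q)$ under $N_G(\Sigma)_q$ handles the residual summand $T_v(W_q\cdot v)$. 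None of the cited results (Theorem \ref{t:orbitspaceisometry}, Corollary \ref{c:orbitparam}, Lemmas \ref{l:sliceequiv} and \ref{l:sliceregular}) depends on the proposition, so there is no circularity. What your route buys is a proof that is genuinely self-contained in the manifold setting and bypasses Jacobi fields entirely; what it costs is the extra machinery of limits of orbit tangent spaces, which the Killing-field decomposition argument intended by the paper avoids.
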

The proof is basically the same as of \cite[Lemma 5.10]{GOT}.
\begin{definition}\label{d:kd_ke}
For a given fat section $\Sigma$ and for every $q\in\Sigma$ we define
\begin{eqnarray*}
\kd_q &:=& T_q(G\cdot q)\cap T_q\Sigma \text{ and}\\
\ke_q &:=& T_q(G\cdot q)\cap\nu_q\Sigma.
\end{eqnarray*}
Following \cite{GOT} we extend $\kd$ and $\ke$ to $G$-invariant distributions on $M^\reg$ using property \ref{pr:4} of a fat section. This yields $T_p(G\cdot p)=\kd_p\oplus\ke_p$ for all $p\in M^\reg$.
\end{definition}
\begin{remark}
Due to Proposition \ref{p:orthogonal_decomp}, $T_q(G\cdot q)=\kd_q\oplus\ke_q$ is an orthogonal decomposition for all $q\in\Sigma$ and both $\kd$ and $\ke$ are $W$-invariant (singular) distributions along $\Sigma$.
\end{remark}
\begin{theorem}\label{t:totally_geod_orbits} Let $\Sigma$ be a fat section of $(G,M)$. For every $q\in\Sigma$, the submanifold $W\cdot q\subseteq G\cdot q$ is totally geodesic in $G\cdot q$. Furthermore, for every $\eta\in\nu_q(G\cdot q)\cap T_q\Sigma$ the shape operator $A_\eta$ of $G\cdot q$ leaves the decomposition $T_q(G\cdot q)=\kd_q\oplus\ke_q$ invariant.
\end{theorem}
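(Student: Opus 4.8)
The plan is to exploit the two orthogonal decompositions of $T_qM$ from Proposition \ref{p:orthogonal_decomp} together with the fact that $\Sigma$ is totally geodesic in $M$. First I would establish the claim about the shape operator, since the total-geodesy of $W\cdot q$ in $G\cdot q$ will follow from it. Fix $q\in\Sigma$ and $\eta\in\nu_q(G\cdot q)\cap T_q\Sigma$. I want to show $A_\eta\kd_q\subseteq\kd_q$ and $A_\eta\ke_q\subseteq\ke_q$, where $A_\eta:T_q(G\cdot q)\to T_q(G\cdot q)$ is the shape operator of the orbit $G\cdot q$ with respect to the normal vector $\eta$. Since $A_\eta$ is self-adjoint, it suffices to prove just one of the two inclusions, say $A_\eta\kd_q\subseteq\kd_q$; the other follows because $\ke_q=\kd_q^\perp$ inside $T_q(G\cdot q)$ (this is the orthogonality in the Remark following Definition \ref{d:kd_ke}).

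For this, recall $\kd_q=T_q(G\cdot q)\cap T_q\Sigma$. Take $X\in\kd_q$ and extend it to a vector field tangent to $G\cdot q$; the key point is that because $q\in\Sigma$ is (by Lemma \ref{l:sliceregular}) a limit of $G$-regular points of $\Sigma$, and property \ref{pr:4} forces the $G$-translates of $\Sigma$ to foliate $M^\reg$, one can arrange the extension so that it is tangent to a $G$-translate of $\Sigma$ at nearby regular points; equivalently one uses that $\kd$ extends to the $G$-invariant distribution of Definition \ref{d:kd_ke}. Then for $X\in\kd_q$ one computes, with $\nabla$ the Levi-Civita connection of $M$ and $X^\top,X^\perp$ the components tangent/normal to $G\cdot q$,
$$\langle A_\eta X, Y\rangle = -\langle \nabla_X \eta, Y\rangle \quad\text{for } Y\in T_q(G\cdot q).$$
To see $A_\eta X\in\kd_q$ I must check $A_\eta X\in T_q\Sigma$, i.e.\ $\langle A_\eta X, \zeta\rangle=0$ for all $\zeta\in\nu_q\Sigma$. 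Since $\Sigma$ is totally geodesic in $M$ (property \ref{pr:1}) and both $X$ and $\eta$ are tangent to $\Sigma$ at $q$, the covariant derivative $\nabla_X\eta$, computed within $M$, has the same $T_q\Sigma$-versus-$\nu_q\Sigma$ decomposition as the intrinsic one on $\Sigma$; in particular $\nabla_X\eta$ has no component that would pair nontrivially against $\nu_q\Sigma\cap T_q(G\cdot q)=\ke_q$ other than through the second fundamental form of $\Sigma$, which vanishes. This is precisely the step that makes the two decompositions "compatible" as in Proposition \ref{p:orthogonal_decomp}, and it yields $\langle A_\eta X,\zeta\rangle=0$ for $\zeta\in\ke_q$; combined with $A_\eta X\in T_q(G\cdot q)$, which already kills the $\nu_q(G\cdot q)$-directions, we get $A_\eta X\perp\nu_q\Sigma$, hence $A_\eta X\in T_q\Sigma\cap T_q(G\cdot q)=\kd_q$.

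Finally, the total geodesy of $W\cdot q$ in $G\cdot q$: by Corollary \ref{c:orbitparam}, $W\cdot q=(G\cdot q)\cap\Sigma$, so $W\cdot q$ is the intersection of the orbit with a submanifold that is totally geodesic in $M$. Its tangent space at any point $q'\in W\cdot q$ is $\kd_{q'}=T_{q'}(G\cdot q)\cap T_{q'}\Sigma$, and its normal space inside $G\cdot q$ is $\ke_{q'}\subseteq\nu_{q'}(G\cdot q)\cap T_{q'}\Sigma$. The second fundamental form of $W\cdot q$ in $G\cdot q$ is the $\ke_{q'}$-component of $\nabla_X Y$ for $X,Y\in\kd_{q'}$; since $X,Y$ are tangent to the totally geodesic $\Sigma$, the full $\nabla_XY$ lies in $T_{q'}\Sigma$, and one reads off that its orbit-normal part lies in $\nu_{q'}(G\cdot q)\cap T_{q'}\Sigma$ rather than in $\ke_{q'}=\nu_{q'}\Sigma\cap T_{q'}(G\cdot q)$ — but by the orthogonal decomposition these two subspaces of $T_{q'}\Sigma$ meet only in $0$. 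Hence the second fundamental form of $W\cdot q$ in $G\cdot q$ vanishes. The main obstacle is the second paragraph's compatibility argument: carefully justifying that the shape-operator computation may be carried out using vector fields tangent to $\Sigma$ (equivalently, to the foliation by $G$-translates of $\Sigma$ on $M^\reg$) and taking a limit to the possibly singular point $q$, so that no hidden second-fundamental-form term of $\Sigma$ sneaks in. Everything else is a direct bookkeeping of the two orthogonal splittings.
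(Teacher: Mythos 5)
Your argument is correct, and it rests on the same two geometric inputs as the paper's proof: the compatible orthogonal decompositions of Proposition \ref{p:orthogonal_decomp} and the total geodesy of $\Sigma$. The difference is one of packaging. The paper observes that Proposition \ref{p:orthogonal_decomp} says precisely that $T_x(G\cdot q)$ is invariant under the orthogonal reflection at $T_x\Sigma$ for every $x\in W\cdot q=(G\cdot q)\cap\Sigma$, and then quotes a general lemma (\cite[Exercise 8.6.3]{BCO}): for a totally geodesic $\Sigma$, this reflection-invariance forces $\Sigma\cap N$ to be totally geodesic in $N$ and forces $A_\eta$ to preserve $T(\Sigma\cap N)$ for $\eta\in\nu N\cap T\Sigma$. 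What you have written is, in effect, a direct proof of that lemma in the case $N=G\cdot q$: the computation $\langle A_\eta X,\zeta\rangle=0$ for $X\in\kd_q$, $\zeta\in\ke_q$ via an extension of $\eta$ that is simultaneously normal to the orbit and tangent to $\Sigma$, the self-adjointness of $A_\eta$ to get the second invariant subspace for free, and the observation that $\nabla_XY\in T\Sigma$ projects into $\kd$ when $X,Y$ are tangent to $W\cdot q$. Your route buys self-containedness at the cost of having to justify the extensions; the paper's buys brevity at the cost of a citation. Two small points. First, the detour through $G$-regular points and the foliation of $M^{\reg}$ in your shape-operator paragraph is unnecessary: the extensions you need live along $W\cdot q$ itself, where $\kd$, $\ke$ and $\nu(G\cdot q)\cap T\Sigma$ have constant rank because $W\cdot q$ is an orbit of $N_G(\Sigma)$ (Corollary \ref{c:orbitparam}), so no limiting argument from regular points is required --- and none would be available at a genuinely singular $q$ anyway. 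Second, in your last paragraph the normal space of $W\cdot q$ inside $G\cdot q$ is $\ke_{q'}=T_{q'}(G\cdot q)\cap\nu_{q'}\Sigma$, not a subspace of $\nu_{q'}(G\cdot q)\cap T_{q'}\Sigma$ as first written; you use the correct identification in the very next sentence, so this is only a slip, not a gap.
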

\begin{proof}
$W\cdot q$ is a submanifold of $M$, and by Proposition \ref{p:orthogonal_decomp} we have
$$T_x(G\cdot q)=\kd_x\oplus\ke_x=(T_x(G\cdot x)\cap T_x\Sigma)\oplus (T_x(G\cdot x)\cap \nu_x\Sigma)$$
for all $x\in W\cdot q$. Therefore $T_x(G\cdot q)$ is invariant under the orthogonal reflection on $T_x\Sigma$. Now the claim follows from the next Lemma, which is \cite[Exercise 8.6.3]{BCO}.
\end{proof}
\begin{lemma}
Let $\Sigma, N$ and $\Sigma\cap N$ be submanifolds of the Riemannian manifold $M$ and suppose that $\Sigma$ is totally geodesic. Suppose that $T_pN$ is invariant under the orthogonal reflection at $T_p\Sigma$ for all $p\in \Sigma\cap N$, then $\Sigma\cap N$ is totally geodesic as a submanifold of $N$ and $A_\eta$, the shape operator of $N$, leaves $T_p(\Sigma\cap N)$ invariant for all $p\in\Sigma\cap N$ and $\eta\in\nu_pN\cap T_p\Sigma$.
\end{lemma}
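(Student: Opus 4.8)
The plan is to work locally at a point $p\in\Sigma\cap N$ and compare second fundamental forms. Write $\ko$ for $N$, $\sigma^N$ for its second fundamental form, and let $R:T_pM\to T_pM$ be the orthogonal reflection at $T_p\Sigma$, i.e.\ $R=\id$ on $T_p\Sigma$ and $R=-\id$ on $\nu_p\Sigma$. By hypothesis $R$ preserves $T_pN$, so it also preserves $\nu_p N$, and since $\Sigma$ is totally geodesic $R$ extends (via the linear isometry $d(\text{reflection})$ along $\Sigma$, or simply because $T\Sigma$ is parallel) to a local isometry of $M$ fixing $\Sigma$ pointwise near $p$. First I would set up this local isometric involution $\rho$ of a neighborhood of $p$ in $M$ with $\Fix(\rho)=\Sigma$ and $d\rho_p=R$; its existence uses exactly that $\Sigma$ is totally geodesic (so one can define $\rho$ by $\exp_x(\xi)\mapsto\exp_x(-\xi)$ for $x\in\Sigma$, $\xi\in\nu_x\Sigma$, and this is smooth and isometric).

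Next I would observe that, because $d\rho$ preserves $TN$ along $N\cap(\text{nbhd})$ — which follows from the reflection hypothesis at every point of $\Sigma\cap N$ together with continuity/analyticity along connected pieces, or one simply assumes $\rho$ restricts to an isometry of $N$ near each such point — the submanifold $N$ is $\rho$-invariant, hence $\rho$ restricts to an isometric involution of $N$ with fixed-point set containing $\Sigma\cap N$. A standard fact is that the fixed-point set of an isometry is totally geodesic; applying this to $\rho|_N$ on $N$ shows each connected component of $(\Sigma\cap N)$ near $p$ is totally geodesic in $N$, which is the first assertion. To identify $\Fix(\rho|_N)$ with $\Sigma\cap N$ one checks $T_p\Fix(\rho|_N)=\ker(d\rho_p|_{T_pN}-\id)=T_pN\cap T_p\Sigma=T_p(\Sigma\cap N)$, using that $\Sigma\cap N$ is assumed to be a submanifold of the expected dimension.

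For the shape-operator statement, fix $\eta\in\nu_pN\cap T_p\Sigma$ and $X,Y\in T_p(\Sigma\cap N)$. The clean route is to relate $A^N_\eta$ to $\rho$-invariance: since $\rho$ is an isometry of $M$ preserving $N$, it intertwines the second fundamental form of $N$, so $\sigma^N(d\rho X,d\rho Y)=d\rho\,\sigma^N(X,Y)$. Taking $X,Y\in T_p(\Sigma\cap N)\subseteq\Fix(d\rho_p)$ gives $d\rho_p\,\sigma^N(X,Y)=\sigma^N(X,Y)$, so $\sigma^N(X,Y)\in\Fix(d\rho_p)\cap\nu_pN=T_p\Sigma\cap\nu_pN$. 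Equivalently, for $\eta\in\nu_pN\cap\nu_p\Sigma$ (the $-1$-eigenspace part of $\nu_pN$) we get $\langle A^N_\eta X,Y\rangle=\langle\sigma^N(X,Y),\eta\rangle=0$, i.e.\ $A^N_\eta$ kills $T_p(\Sigma\cap N)$; and for $\eta\in\nu_pN\cap T_p\Sigma$, a symmetric argument (using that $A^N_\eta$ is self-adjoint and that $d\rho_p$ acts as $-\id$ on $\nu_p\Sigma$, so it sends the normal space of $\Sigma\cap N$ inside $N$ to itself) shows $A^N_\eta$ preserves $T_p(\Sigma\cap N)$. I would phrase the final step as: $A^N_\eta$ commutes with $d\rho_p|_{T_pN}$ (both being built $\rho$-equivariantly), hence preserves each eigenspace of $d\rho_p|_{T_pN}$, and $T_p(\Sigma\cap N)$ is precisely the $+1$-eigenspace.

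The main obstacle is the construction and regularity of the local reflection $\rho$: one must check that $\exp_x(\xi)\mapsto\exp_x(-\xi)$ is well defined and smooth on a tubular neighborhood of $\Sigma$ (this is where completeness/total geodesy of $\Sigma$ and the normal-exponential diffeomorphism onto a neighborhood enter), that it is an isometry (the Jacobi-field/Gauss-lemma computation, using that $\Sigma$ is totally geodesic so its shape operators vanish), and that $d\rho_p$ is exactly the claimed reflection $R$. Granting that, invariance of $N$ under $\rho$ is immediate from the hypothesis (as $\rho$ near $p$ is determined by $R$ on $T_pN$ via $\exp$, and $R(T_pN)=T_pN$), and everything else is the standard fixed-point-set-is-totally-geodesic argument plus equivariance of the second fundamental form. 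Since the excerpt cites this as \cite[Exercise 8.6.3]{BCO}, I would keep the write-up brief and refer there for the detailed verification of the reflection's properties.
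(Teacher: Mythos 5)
Your proposal hinges on promoting the pointwise reflection $R$ to a local isometric involution $\rho$ of $M$ with $\Fix(\rho)=\Sigma$, and this is where it breaks down: the geodesic reflection $\exp_x(\xi)\mapsto\exp_x(-\xi)$ in a totally geodesic submanifold is \emph{not} in general an isometry of the ambient manifold. Total geodesy is necessary but far from sufficient; one also needs the curvature tensor of $M$ (and its derivatives) to be invariant under $R$ at each point of $\Sigma$, which is exactly the condition singling out \emph{reflective} submanifolds. Already for $\Sigma$ a single point (trivially totally geodesic) your $\rho$ is the geodesic symmetry, which is an isometry only at points where $M$ is locally symmetric; for $\Sigma$ a geodesic in a generic metric on $\R^3$ the reflection is likewise not isometric. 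The Jacobi-field check you defer to would therefore fail. There is a second, independent gap: even granting an isometric $\rho$, the hypothesis only says $R$ preserves $T_pN$ \emph{at points of} $\Sigma\cap N$; it says nothing about a neighborhood of $N$ being carried into itself by $\rho$, and $N$ is not assumed totally geodesic, so it is not determined by its tangent spaces along $\Sigma\cap N$. In the paper's application $N$ is a $G$-orbit and the tangent condition is verified only along $W\cdot q=\Sigma\cap N$, so the lemma really must be proved from the pointwise hypothesis alone.

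The statement is in fact elementary from the Gauss formula and needs no ambient symmetry. The hypothesis gives the fourfold orthogonal splitting $T_pM=(T_pN\cap T_p\Sigma)\oplus(T_pN\cap\nu_p\Sigma)\oplus(\nu_pN\cap T_p\Sigma)\oplus(\nu_pN\cap\nu_p\Sigma)$ at each $p\in\Sigma\cap N$, with $T_p(\Sigma\cap N)=T_pN\cap T_p\Sigma$. For $X,Y$ tangent to $\Sigma\cap N$, total geodesy of $\Sigma$ forces $\nabla^M_XY\in T_p\Sigma$; writing $\nabla^M_XY=\nabla^{\Sigma\cap N}_XY+\sigma'(X,Y)+\sigma^N(X,Y)$ with $\sigma'$ the second fundamental form of $\Sigma\cap N$ in $N$, one has $\sigma'(X,Y)\in T_pN\cap\nu_p\Sigma$ and hence $\sigma'(X,Y)+\bigl(\sigma^N(X,Y)\bigr)^{\nu_p\Sigma}=0$ with the two summands lying in the mutually orthogonal spaces $T_pN$ and $\nu_pN$; both vanish, so $\Sigma\cap N$ is totally geodesic in $N$. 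For the shape operator, extend $\eta\in\nu_pN\cap T_p\Sigma$ along $\Sigma\cap N$ as a section of the subbundle $\nu N\cap T\Sigma$; then $\nabla^M_X\eta\in T_p\Sigma$ again by total geodesy of $\Sigma$, and its $T_pN$-component, namely $-A_\eta X$, lies in $T_pN\cap T_p\Sigma=T_p(\Sigma\cap N)$. Your $\rho$-equivariance heuristic correctly predicts the answer, but the argument as written does not establish it.
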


\begin{remark}
For a polar action the Weyl-group orbits are discrete sets of points and thus trivially totally geodesic. However, if the copolarity is positive and non-trivial, then the orbits of the fat Weyl group are proper positive-dimensional totally geodesic submanifolds in their ambient orbit. So one should expect that the theorem imposes certain restrictions on actions having non-trivial positive copolarity.
\end{remark}

\section{Copolarity and Reductions of the Slice Representation}\label{s:slicerep}
We now generalize \cite[Theorem 5.6]{GOT} from representations to arbitrary isometric group actions, without making any further assumptions. Therefore, our proof follows a rather different approach than the one in loc. cit.
\begin{lemma}\label{l:jacobi_decomp}
Let $\Sigma$ be a totally geodesic submanifold of the Riemannian manifold $M$ and let $\gamma\subseteq\Sigma$ be a geodesic. Then every Jacobi field $J$ along $\gamma$ splits uniquely into Jacobi fields $Y$ and $Z$ along $\gamma$ such that $Y$ is a Jacobi field in $\Sigma$ and $Z$ is perpendicular to $\Sigma$. Furthermore, every derivative of $Z$ is perpendicular to $\Sigma$.
\end{lemma}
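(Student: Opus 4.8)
The plan is to use the second fundamental form of $\Sigma$ in $M$, which vanishes since $\Sigma$ is totally geodesic, together with the Gauss and Codazzi equations to compare the Jacobi equations in $M$ and in $\Sigma$. First I would fix the orthogonal splitting $T_{\gamma(t)}M = T_{\gamma(t)}\Sigma \oplus \nu_{\gamma(t)}\Sigma$ along $\gamma$, which is preserved by parallel transport along $\gamma$ precisely because $\Sigma$ is totally geodesic (the tangential and normal parallel transports of $M$ restrict to those of $\Sigma$ and the normal bundle). Hence any vector field $V$ along $\gamma$ decomposes as $V = V^\top + V^\perp$ in a way compatible with covariant differentiation: $(V')^\top = (V^\top)'$ and $(V')^\perp = (V^\perp)'$, where $'$ denotes $\nabla_{\dot\gamma}$. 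In particular the curvature endomorphism $R(\cdot,\dot\gamma)\dot\gamma$ maps $T\Sigma$ to $T\Sigma$ (Gauss equation, since $\dot\gamma$ is tangent to $\Sigma$) and maps $\nu\Sigma$ to $\nu\Sigma$ (again since $A\equiv 0$, the relevant mixed curvature terms vanish).

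Next I would take an arbitrary Jacobi field $J$ along $\gamma$, so $J'' + R(J,\dot\gamma)\dot\gamma = 0$, and set $Y := J^\top$ and $Z := J^\perp$. Projecting the Jacobi equation onto $T\Sigma$ and onto $\nu\Sigma$ and using the two compatibility facts above, one gets $Y'' + R(Y,\dot\gamma)\dot\gamma = 0$ with $Y$ tangent to $\Sigma$ — which, by the Gauss equation and $A\equiv 0$, is exactly the Jacobi equation of $\Sigma$ along $\gamma$ — and $Z'' + R(Z,\dot\gamma)\dot\gamma = 0$ with $Z$ normal to $\Sigma$, i.e. $Z$ is a Jacobi field of $M$ perpendicular to $\Sigma$. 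This gives existence of the splitting. Uniqueness is immediate from uniqueness of the pointwise orthogonal decomposition: if $J = Y_1 + Z_1 = Y_2 + Z_2$ with $Y_i$ tangent and $Z_i$ normal, then $Y_1 - Y_2 = Z_2 - Z_1$ lies in $T\Sigma \cap \nu\Sigma = 0$.

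For the last assertion, that every derivative $Z^{(m)} := \nabla_{\dot\gamma}^m Z$ is perpendicular to $\Sigma$, I would argue by induction on $m$. The case $m=0$ holds by construction. For the inductive step, if $Z^{(m)}$ is a section of $\nu\Sigma$ along $\gamma$, then $Z^{(m+1)} = \nabla_{\dot\gamma} Z^{(m)}$ is again a section of $\nu\Sigma$ because $\dot\gamma$ is tangent to $\Sigma$ and the normal bundle of a totally geodesic submanifold is parallel along curves in $\Sigma$ (equivalently, $(\,\cdot\,)^\perp$ commutes with $\nabla_{\dot\gamma}$, as noted above). Actually one need only handle $m=0,1,2$ directly: $Z' \perp \Sigma$ as just explained, and $Z'' = -R(Z,\dot\gamma)\dot\gamma \perp \Sigma$ since $R(\cdot,\dot\gamma)\dot\gamma$ preserves $\nu\Sigma$; higher derivatives then follow by differentiating the Jacobi equation and reusing the same two facts.

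The only genuinely delicate point is establishing that the orthogonal splitting $T M|_\gamma = T\Sigma|_\gamma \oplus \nu\Sigma|_\gamma$ is parallel along $\gamma$, i.e. that $\nabla_{\dot\gamma}$ preserves both summands; everything else is formal manipulation of the Jacobi equation and the trivial fact $T\Sigma \cap \nu\Sigma = 0$. This parallelism is exactly where total geodesy enters: for a curve $\gamma$ in a totally geodesic $\Sigma$, the ambient Levi-Civita connection applied to a vector field tangent to $\Sigma$ along $\gamma$ has no normal component (that component is the second fundamental form $\mathrm{II}(\dot\gamma, \cdot)$, which vanishes), and dually the ambient connection applied to a normal field has no tangential component (its tangential part is $-A_\eta\dot\gamma = 0$). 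I would state this as the key lemma, prove it in one or two lines from $\mathrm{II}\equiv 0$, and then the theorem follows as above.
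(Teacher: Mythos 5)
Your proposal is correct and follows essentially the same route as the paper: decompose $J$ pointwise into tangential and normal parts, observe that both the covariant derivative along $\gamma$ and the curvature operator $R(\cdot,\dot\gamma)\dot\gamma$ preserve the splitting $T\Sigma\oplus\nu\Sigma$ because $\Sigma$ is totally geodesic, and project the Jacobi equation onto the two summands. The paper justifies $R(Z,\dot\gamma)\dot\gamma\perp\Sigma$ via the symmetry $\langle R(Z,\dot\gamma)\dot\gamma,v\rangle=\langle R(v,\dot\gamma)\dot\gamma,Z\rangle$ rather than your Codazzi-type remark, but this is only a cosmetic difference.
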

\begin{proof}
Consider the orthogonal decomposition
$$J(t)=\underbrace{Y(t)}_{\in T_{\gamma(t)}\Sigma}+\underbrace{Z(t)}_{\in\nu_{\gamma(t)}\Sigma}$$
of $J$. Then $Y$ and $Z$ defined in this way are smooth vector fields along $\gamma$. Since $J$ satisfies the Jacobi equation we have:
\[0=J''+R(J,\dot\gamma,\dot\gamma)=Y''+R(Y,\dot\gamma,\dot\gamma)+Z''+R(Z,\dot\gamma,\dot\gamma).\tag{$\Delta$}\]
Clearly, $Y''$ is tangential to $\Sigma$. Since $\Sigma$ is totally geodesic, $R(Y,\dot\gamma,\dot\gamma)$ is also tangential to $\Sigma$. Since parallel transports of vectors normal to a totally geodesic submanifold stay perpendicular to the submanifold, it follows from the characterization of the covariant derivative by parallel transport that $Z''$ is perpendicular to $\Sigma$. Finally, the expression $R(Z,\dot\gamma,\dot\gamma)$ is perpendicular to $\Sigma$, because for all $v\in T\Sigma$ we have, using the symmetry properties of the curvature tensor,
$$\langle R(Z,\dot\gamma,\dot\gamma),v\rangle=\langle \underbrace{R(v,\dot\gamma,\dot\gamma)}_{\in T\Sigma},Z\rangle=0.$$
By $(\Delta)$, both $Y$ and $Z$ are Jacobi fields and $Y$ is even a Jacobi field of $\Sigma$.
\end{proof}

\begin{theorem}[Slice Theorem]\label{t:slicerep}
If $(G,M)$ is isometric, then for all $q\in M$:
$$\copol(G_q,\nu_q(G\cdot q))\le\copol(G,M).$$
More generally, if $\Sigma$ is a fat section of $(G,M)$ and $q\in\Sigma$, then $V_q:=\nu_q(G\cdot q)\cap T_q\Sigma$ is a fat section of $(G_q,\nu_q(G\cdot q))$. If $W$ is the fat Weyl group of $\Sigma$, then $W_q$ projects canonically onto the fat Weyl group of $V_q$.
\end{theorem}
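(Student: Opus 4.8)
The plan is to reduce the Slice Theorem to the geometry already encoded in the fat section $\Sigma$ and the Jacobi field decomposition of Lemma \ref{l:jacobi_decomp}. First I would prove the second, more refined statement, since the first inequality is an immediate consequence: once $V_q=\nu_q(G\cdot q)\cap T_q\Sigma$ is shown to be a fat section of the slice representation $(G_q,\nu_q(G\cdot q))$, its codimension datum $k'$ (the codimension of $\nu_0(G_q\cdot v)$ in $T_v V_q$ for $v\in V_q$ regular) must satisfy $k'=\copol(G_q,\nu_q(G\cdot q))\le$ something, and comparing the normal spaces of $G\cdot p$ and $G_q\cdot v$ for $p=\exp_q v$ close to $q$ (using Lemma \ref{l:sliceregular} to find such a $G_q$-regular $v$ with $p$ $G$-regular) identifies $k'$ with the codimension of $\nu_p(G\cdot p)$ in $T_p\Sigma$, which is $\ge\copol(G,M)$ only if $\Sigma$ is not minimal; so one should run the argument with a minimal section $\Sigma$ to get the sharp inequality, or simply observe that $k'\le k$ for the chosen $\Sigma$ and take the minimum.

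For the core claim that $V_q$ is a fat section of $(G_q,\nu_q(G\cdot q))$, I would verify properties \ref{pr:1}--\ref{pr:4} of Definition \ref{d:copolar} in turn. Property \ref{pr:2} (that $V_q$ meets every $G_q$-orbit in $\nu_q(G\cdot q)$) is the delicate one and is where Lemma \ref{l:slice_intersect} enters: for $v\in\nu_q(G\cdot q)$, consider the geodesic $t\mapsto\exp_q(tv)$; its relation to $\Sigma$ should be controlled by decomposing $v$ orthogonally according to Proposition \ref{p:orthogonal_decomp}, $\nu_q(G\cdot q)=\kd_q^\perp$-part and so on, and pushing $v$ into $V_q$ by a suitable element of $G_q$. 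Concretely, since $\Sigma$ intersects $G\cdot\exp_q(v)$, there is $g\in G$ with $g\cdot\exp_q(v)\in\Sigma$; if $v$ is $G_q$-regular then for small $t$ the point $\exp_q(tv)$ is $G$-regular (Lemma \ref{l:sliceequiv}), and one translates the whole short geodesic into $\Sigma$, showing $g\in G_q$ after a limiting argument $t\to 0$ (here $g$ fixes $q=\lim\exp_q(tv)$ because $g$ maps a convergent sequence in $\Sigma$ to a convergent sequence in $\Sigma$ and $\Sigma$ is embedded), whence $\dd_g v\in T_q\Sigma\cap\nu_q(G\cdot q)=V_q$; the non-regular case follows by continuity/density.

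Properties \ref{pr:1}, \ref{pr:3}, \ref{pr:4} should be comparatively routine. For \ref{pr:1}: $V_q$ is a linear subspace of the Euclidean space $\nu_q(G\cdot q)$, hence complete, connected, embedded and totally geodesic. For \ref{pr:3}: if $v\in V_q$ is $G_q$-regular, then $p:=\exp_q v$ is $G$-regular (Lemma \ref{l:sliceregular}), and one must show $\nu_0(G_q\cdot v)\subseteq T_v V_q=V_q$; this is exactly the place where Lemma \ref{l:jacobi_decomp} does the work — normal Jacobi fields along the radial geodesic identify $\nu_p(G\cdot p)\subseteq T_p\Sigma$ (property \ref{pr:3} of $\Sigma$) with a subspace of $T_q\Sigma$, and the part of it normal to $G\cdot q$ is precisely $\nu_0(G_q\cdot v)$ inside $\nu_q(G\cdot q)$, which therefore lands in $V_q$. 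For \ref{pr:4}: if $g\in G_q$ with $g\cdot v\in V_q$, then $g\cdot p\in\Sigma$ for $p=\exp_q v$ $G$-regular, so property \ref{pr:4} of $\Sigma$ gives $g\cdot\Sigma=\Sigma$, hence $g\in N_G(\Sigma)\cap G_q=N_{G_q}(V_q)$ and $\dd_g$ preserves $T_q\Sigma\cap\nu_q(G\cdot q)=V_q$. Finally, for the fat Weyl group statement, note $N_{G_q}(V_q)\subseteq N_G(\Sigma)\cap G_q\subseteq N_G(\Sigma)$ and $Z_{G_q}(V_q)\supseteq Z_G(\Sigma)\cap G_q$; the inclusion $N_G(\Sigma)\cap G_q=N_{G_q}(V_q)$ (using that $g\in G_q$ normalizing $\Sigma$ must send $V_q=T_q\Sigma\cap\nu_q(G\cdot q)$ to itself) induces the canonical projection $W_q=(N_G(\Sigma)\cap G_q)/(Z_G(\Sigma)\cap G_q)\to N_{G_q}(V_q)/Z_{G_q}(V_q)=W(V_q)$, which is surjective. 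The main obstacle, as indicated, is property \ref{pr:2}: controlling which group elements carrying a slice geodesic into $\Sigma$ actually lie in $G_q$, which requires the embeddedness of $\Sigma$ and a careful $t\to0$ limiting argument together with Lemmas \ref{l:slice_intersect} and \ref{l:sliceequiv}.
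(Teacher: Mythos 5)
Your overall architecture matches the paper's: verify properties \ref{pr:1}--\ref{pr:4} for $V_q$, identify $N_{G_q}(V_q)=N_G(\Sigma)\cap G_q$ to get the surjection $W_q\twoheadrightarrow W(V_q)$, and obtain the copolarity inequality by comparing dimensions for a minimal $\Sigma$ (the clean way to finish that comparison is to note that $(G,M)$ and $(G_q,\nu_q(G\cdot q))$ have the same cohomogeneity and $\dim V_q\le\dim\Sigma$). Your treatments of \ref{pr:1}, \ref{pr:4} and of the Weyl group statement agree with the paper's. However, you have inverted which property is delicate, and both of the two nontrivial steps contain gaps.

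For property \ref{pr:2} you propose a direct translation argument, and the step ``$g$ fixes $q$ after a limiting argument $t\to0$'' is false as stated: closedness of $\Sigma$ only gives $g\cdot q\in\Sigma$, not $g\cdot q=q$, since $g$ may move $q$ to another point of $(G\cdot q)\cap\Sigma$. This is repairable by composing $g$ with an element of $N_G(\Sigma)$ carrying $g\cdot q$ back to $q$ (Corollary \ref{c:orbitparam}), but the paper avoids the whole construction: once \ref{pr:3} is known, a $G_q$-regular $v\in V_q$ exists by Lemma \ref{l:sliceregular}, its normal space $\nu_v(G_q\cdot v)$ lies in the linear subspace $V_q$ by \ref{pr:3}, and Lemma \ref{l:slice_intersect} applied to the slice representation then shows $V_q$ meets every $G_q$-orbit. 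So \ref{pr:2} is essentially free. The genuinely delicate point is \ref{pr:3}, and there your sketch skips the decisive step. After Lemma \ref{l:jacobi_decomp} gives $d\exp_q(v)(w)\in\nu_p\Sigma\subseteq T_p(G\cdot p)$ for $w\in V_q^\bot\subseteq\nu_q\Sigma$, you still only know this vector is tangent to the full orbit $G\cdot p$; to conclude $w\in T_v(G_q\cdot v)$ one must first show $d\exp_q(v)(w)\in T_p(G_q\cdot p)$, using that it lies in $T_pS_q$ and $(G\cdot p)\cap S_q=G_q\cdot p$ (i.e.\ replacing the $G$-Killing field realizing it by a $G_q$-Killing field), and then invoke $T_p(G_q\cdot p)=d\exp_q(v)(T_v(G_q\cdot v))$ together with the bijectivity of $d\exp_q(v)$. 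Your alternative phrasing --- that the part of $d\exp_q(v)^{-1}(\nu_p(G\cdot p))$ normal to $G\cdot q$ ``is precisely'' $\nu_v(G_q\cdot v)$ --- is unjustified: $d\exp_q(v)$ is not an isometry, so orthogonal complements inside $T_pS_q$ and inside $\nu_q(G\cdot q)$ do not correspond under it. Without one of these two repairs, the containment $\nu_v(G_q\cdot v)\subseteq V_q$ is not established, and since your proof of \ref{pr:2} is meant to rest on regularity and density arguments downstream of \ref{pr:3}, the gap propagates.
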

\begin{proof}
Let $\Sigma$ be a fat section through $q$. Since $V_q$ is a linear subspace of $\nu_q(G\cdot q)$, property \ref{pr:1} of a fat section is already satisfied. Property \ref{pr:2} follows from \ref{pr:3}: There exist $G_q$-regular points in $V_q$ by Lemma \ref{l:sliceregular}. By property \ref{pr:3} and Lemma \ref{l:slice_intersect} it follows that $V_q$ intersects every $G_q$-orbit.\newline We also have property \ref{pr:4}: If $v\in V_q$ is $G_q$-regular, then, after scaling if necessary, we may assume that $p:=\exp_q(v)$ lies in a slice $S_q$ through $q$. Let $g\in G_q$ satisfy $g\cdot v\in V_q$. Then $g\cdot p\in\Sigma$. The $G_q$ regular points in $S_q$ are $G$-regular if viewed as points of $M$. Hence, $p$ is also $G$-regular and therefore $g\cdot\Sigma=\Sigma$. It follows that
$$g\cdot V_q=T_q(g\cdot(\Sigma\cap S_q))=T_q(\Sigma\cap S_q)=V_q.$$
It remains to show property \ref{pr:3} of a fat section. Equivalent to \ref{pr:3} is
$$V_q^\bot\subseteq T_v(G_q\cdot v)$$
for all $G_q$-regular $v\in V_q$. Here $V_q^\bot$ denotes the orthogonal complement of $V_q$ in $\nu_q(G\cdot q)$. As in the proof of property \ref{pr:4} assume that $p=\exp_q(v)$ lies in a slice $S_q$ through $q$. Since $p$ is a $G$-regular and in $\Sigma$, 
property \ref{pr:3} of $\Sigma$ implies $\nu_p\Sigma\subseteq T_p(G\cdot p)$.
Let $w\in V_q^\bot$ be arbitrary. Then $d\exp_q(v)(w)\in\nu_p\Sigma$.
In fact, $d\exp_q(v)(w)=J(1)$ for the Jacobi field $J$ along $\gamma_v(t)=\exp_q(t\cdot v)$ and initial values $J(0)=0$ and $J'(0)=w\in\nu_q\Sigma$. (see \cite[Chapter IX, Theorem 3.1]{L}). By Lemma \ref{l:jacobi_decomp}, $J$ is perpendicular to $\Sigma$. In particular, $$J(1)\in\nu_p\Sigma\subseteq T_p(G\cdot p).$$
Now let $X$ be a $G$-Killing field with $d\exp_q(v)(w)=X_p$. Since $d\exp_q(v)(w)\in T_p S_q$
and $(G\cdot p)\cap S_q=G_q\cdot p$ we may further assume that $X$ is a $G_q$-Killing field. Therefore
$$d\exp_q(v)(w)=X_p\in T_p(G_q\cdot p).$$
Since 
$G_q\cdot p=\exp_q(G_q\cdot v)$, we get
$$T_p(G_q\cdot p)=d\exp_q(v)(T_v(G_q\cdot v)).$$
It follows that $w\in T_v(G_q\cdot v)$, because $d\exp_q(v)$ is bijective.

We have therefore proved that $V_q$ is a fat section of $(G_q,\nu_q(G\cdot q))$. The actions $(G,M)$ and $(G_q,\nu_q(G\cdot q))$ have the same cohomogeneity, and $\dim V_q\le\dim\Sigma$. Choosing $\Sigma$ as a minimal section, it therefore follows that the copolarity of the slice representation is less than or equal to the copolarity of the $G$-action on $M$.

The fat Weyl group of $V_q$ is given by
$$W(V_q)=N_{G_q}(V_q)\!/\!Z_{G_q}(V_q).$$
We first show $N_{G_q}(V_q)=N_{G_q}(\Sigma)=(N_G(\Sigma))_q.$ Let $g\in N_G(\Sigma)\cap G_q=N_{G_q}(\Sigma)$ be arbitrary. Then $g$ leaves both $T_q\Sigma$ and $\nu_q(G\cdot q)$ invariant. Therefore, $T_q\Sigma\cap\nu_q(G\cdot q)=V_q$ is also left invariant and it follows that $g\in N_{G_q}(V_q)$. Conversely, for $g\in N_{G_q}(V_q)$, again as in the proof of property \ref{pr:4}, it follows that $g\cdot\Sigma=\Sigma$ and hence $g\in N_{G_q}(\Sigma)$.
Now it is easy to see that
$$Z_G(\Sigma)=Z_{G_q}(\Sigma)\subseteq Z_{G_q}(V_q).$$
The commuting diagram below implies that $W(\Sigma)_q$ projects canonically onto $W(V_q)$:
$$\xymatrix{
N_{G_q}(\Sigma)\ar@{=}[r]\ar@{->>}_\pr[d] & N_{G_q}(V_q)\ar@{->>}^\pr[d]\\
W(\Sigma)_q\ar@{.>>}[r] & W(V_q).
}$$
\end{proof}
\begin{remark}
For a minimal section $\Sigma$, we do not know wether $V_q$ is necessarily a minimal section of the slice representation $(G_q,\nu_q(G\cdot q))$, or not. However, the above proof shows:
\end{remark}
\begin{corollary}\label{c:slicerep}
If $\Sigma$ is a pre-section of $(G,M)$, then $V_q=\nu_q(G\cdot q)\cap T_q\Sigma$ is a pre-section of $(G_q,\nu_q(G\cdot q))$. If $\Sigma$ is a sufficiently small section, then $V_q$ is also sufficiently small and $W(V_q)=W_q$.
\end{corollary}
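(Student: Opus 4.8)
The plan is to extract the two claimed statements from the argument already carried out in the proof of Theorem \ref{t:slicerep}. The first claim, that a pre-section $\Sigma$ induces a pre-section $V_q=\nu_q(G\cdot q)\cap T_q\Sigma$, requires checking exactly properties \ref{pr:1}--\ref{pr:3} for $V_q$ as a submanifold of $\nu_q(G\cdot q)$ under the action of $G_q$. Here I would simply observe that the verification of \ref{pr:1}, \ref{pr:2} and \ref{pr:3} in the proof of Theorem \ref{t:slicerep} never used property \ref{pr:4} of $\Sigma$: property \ref{pr:1} is automatic since $V_q$ is linear; for \ref{pr:2} one invokes Lemma \ref{l:sliceregular} (which only needs $\Sigma$ to satisfy \ref{pr:1}--\ref{pr:3}, i.e. to be a pre-section — this point should be checked, but the statement of Lemma \ref{l:sliceregular} as given does require a fat section, so one may instead note that the existence of a $G_q$-regular $v\in V_q$ together with \ref{pr:3} and Lemma \ref{l:slice_intersect} still goes through) together with \ref{pr:3} and Lemma \ref{l:slice_intersect}; and the Jacobi-field argument establishing \ref{pr:3} for $V_q$ uses only that $\Sigma$ is totally geodesic (Lemma \ref{l:jacobi_decomp}) plus property \ref{pr:3} of $\Sigma$, never \ref{pr:4}. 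Crucially, the step "$p$ is $G$-regular $\Rightarrow$ $g\cdot\Sigma=\Sigma$", which did use \ref{pr:4}, appears only in the verification of \ref{pr:4} for $V_q$ and in the identification of the Weyl group, so it is harmless for the pre-section claim.

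For the second claim, assume $\Sigma$ is a sufficiently small section, i.e. $\Sigma\subseteq\Fix(G_{p_0},M)^\circ$ for some $G$-regular $p_0$ (Definition \ref{d:can_fat}). Since $V_q$ is already known to be a pre-section of $(G_q,\nu_q(G\cdot q))$, and since it is in fact a fat section by Theorem \ref{t:slicerep} (property \ref{pr:4} holds there), I need to exhibit $V_q$ as sitting inside a canonical section of the slice representation. The natural candidate is $\Fix((G_q)_w,\nu_q(G\cdot q))^\circ$ for a suitable $G_q$-regular $w\in V_q$. By Proposition \ref{p:cohom_copol}, along the sufficiently small section $\Sigma$ all principal isotropy groups of $(G,M)$ coincide with $H:=Z_G(\Sigma)$; picking $w\in V_q$ with $p:=\exp_q(w)$ both $G$-regular and close to $q$ (Lemma \ref{l:sliceregular}), we have $G_p=H$, and since $(G_q)_w=(G_q)_p=(G_p)_q=H$ (using $p\in S_q$ and that $H$ fixes $q$), the candidate canonical section of the slice representation is $\Fix(H,\nu_q(G\cdot q))^\circ$. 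One then checks $V_q\subseteq\Fix(H,\nu_q(G\cdot q))$: indeed $H=Z_G(\Sigma)$ fixes $\Sigma$ pointwise, hence acts trivially on $T_q\Sigma$, and $H\subseteq G_q$ acts on $\nu_q(G\cdot q)$, so $H$ fixes $V_q=T_q\Sigma\cap\nu_q(G\cdot q)$ pointwise; connectedness of $V_q$ puts it inside the identity component. This shows $V_q$ is sufficiently small.

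Finally, for the equality $W(V_q)=W_q$: from the proof of Theorem \ref{t:slicerep} we already have $N_{G_q}(V_q)=N_{G_q}(\Sigma)=(N_G(\Sigma))_q$ and $Z_G(\Sigma)=Z_{G_q}(\Sigma)\subseteq Z_{G_q}(V_q)$, giving a canonical surjection $W(\Sigma)_q\epimorph W(V_q)$. To upgrade this to an isomorphism it suffices to show $Z_{G_q}(V_q)\subseteq Z_G(\Sigma)$, equivalently $Z_{G_q}(V_q)=H$. But now that $\Sigma$ is sufficiently small, Proposition \ref{p:cohom_copol} gives $H=Z_G(\Sigma)$ as a principal isotropy group, and applying the same proposition to the sufficiently small section $V_q$ of $(G_q,\nu_q(G\cdot q))$ yields $Z_{G_q}(V_q)$ as a principal isotropy group of the slice representation; by Lemma \ref{l:sliceequiv} a $G_q$-regular $w\in V_q$ has $\exp_q(t w)$ $G$-regular for small $t$, so $(G_q)_w$ equals a principal isotropy group of $(G,M)$, namely $H$, forcing $Z_{G_q}(V_q)=(G_q)_w=H=Z_G(\Sigma)$. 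Hence $W(V_q)=W_q$.

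The main obstacle I anticipate is the bookkeeping around Lemma \ref{l:sliceregular}: as stated it presupposes a \emph{fat} section, whereas for the pre-section claim we only have \ref{pr:1}--\ref{pr:3} available. I would resolve this either by remarking that the existence part of Lemma \ref{l:sliceregular} (a $G_q$-regular vector in $\nu_q(G\cdot q)\cap T_q\Sigma$, with nearby $G$-regular endpoint) uses only \ref{pr:1}--\ref{pr:3} — which is visibly true from how it is used — or, more cautiously, by noting that in the pre-section case the intersection-of-orbits property \ref{pr:2} for $V_q$ can be derived directly: \ref{pr:3} of $\Sigma$ forces $V_q^\bot\subseteq T_v(G_q\cdot v)$ for $G_q$-regular $v$ (same Jacobi-field computation), so $\exp_q(\nu_v(G_q\cdot v))\subseteq\exp_q(V_q)$ near $v$, and Lemma \ref{l:slice_intersect} applied in $\nu_q(G\cdot q)$ finishes it. Everything else is a direct harvest of identities already proved above.
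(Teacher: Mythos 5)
Your proposal is correct and follows essentially the same route as the paper, which proves the corollary simply by observing that the proof of Theorem \ref{t:slicerep} already establishes it: properties \ref{pr:1}--\ref{pr:3} of $V_q$ are verified there without ever invoking property \ref{pr:4} of $\Sigma$, and for a sufficiently small $\Sigma$ the identifications $N_{G_q}(V_q)=(N_G(\Sigma))_q$ and $Z_{G_q}(V_q)=Z_G(\Sigma)$ (via Proposition \ref{p:cohom_copol} and the fact that $Z_{G_q}(V_q)\subseteq (G_q)_w=G_{\exp_q w}=Z_G(\Sigma)$ for a $G_q$-regular $w\in V_q$) upgrade the surjection $W_q\twoheadrightarrow W(V_q)$ to an equality. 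Your explicit handling of the two details the paper leaves implicit --- that Lemma \ref{l:sliceregular} only needs \ref{pr:1}--\ref{pr:3}, and that $V_q\subseteq\Fix(Z_G(\Sigma),\nu_q(G\cdot q))^\circ$ exhibits $V_q$ inside a canonical section --- is exactly the intended reading.
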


\section{Stability of Copolarity under Reductions}\label{s:stability}
We next show that the copolarity of a reduction $(W,\Sigma)$ is equal to that of $(G,M)$. We start with a Lemma, which may be interesting in its own right.
\begin{lemma}\label{l:W-reg_is_G-reg}
If $\Sigma$ is a fat section of an isometric action $(G,M)$, then the $G$-regular points in $\Sigma$ are $W(\Sigma)$-regular and viceversa.
\end{lemma}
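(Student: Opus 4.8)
The plan is to prove both implications by reducing everything to a characterization of regularity in terms of the existence of a normal slice through a nearby regular point, making essential use of Proposition~\ref{p:cohom_copol} and Corollary~\ref{c:orbitparam}.

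First I would recall that, by Proposition~\ref{p:cohom_copol}, along a sufficiently small section $\Sigma$ the principal isotropy group $H=Z_G(\Sigma)$ is constant, $W=W(\Sigma)$ acts freely on $\Sigma^{\reg}$, and $\dim\Sigma=\cohom(G,M)+\copol(G,M)=\cohom(W,\Sigma)$ exactly when $\Sigma$ is minimal; but for the present Lemma it is cleaner to argue intrinsically rather than pass to a minimal section, since a general fat section need not be sufficiently small. So the first step is: for $p\in\Sigma^{\reg}$, Theorem~\ref{t:orbitspaceisometry} identifies a neighborhood of $W\cdot p$ in $W\backslash\Sigma$ isometrically with a neighborhood of $G\cdot p$ in $G\backslash M$. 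Since $p$ is $G$-regular, $G\cdot p$ has a neighborhood in $G\backslash M$ which is a manifold (the principal stratum); pulling back through $\tilde\iota$, $W\cdot p$ has a manifold neighborhood in $W\backslash\Sigma$ of the same dimension, and by the slice theorem for the $W$-action this forces $p$ to lie on a principal $W$-orbit, i.e. $p$ is $W$-regular. This gives the implication ``$G$-regular $\Rightarrow$ $W$-regular'' directly from Theorem~\ref{t:orbitspaceisometry}.

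For the converse, ``$W$-regular $\Rightarrow$ $G$-regular'', I would argue by contradiction: suppose $q\in\Sigma$ is $W$-regular but not $G$-regular. By Lemma~\ref{l:sliceregular} there is a $G_q$-regular $v\in T_q\Sigma\cap\nu_q(G\cdot q)=V_q$ with $p=\exp_q v$ both $G$-regular and arbitrarily close to $q$; by the first part $p$ is then also $W$-regular. Now consider the slice representation at $q$: by the Slice Theorem~\ref{t:slicerep}, $V_q=\nu_q(G\cdot q)\cap T_q\Sigma$ is a fat section of $(G_q,\nu_q(G\cdot q))$, and $W_q$ projects onto its fat Weyl group $W(V_q)$. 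The hypothesis that $q$ is $W$-regular means $W_q=Z_G(\Sigma)=H$ modulo $Z_G(\Sigma)$ is the principal $W$-isotropy, so $W_q$ acts on $V_q$ with the smallest possible isotropy at $0$, hence the origin $0\in V_q$ is a $W(V_q)$-regular point of the reduction $(W(V_q),V_q)$ of the slice representation. But $0$ is the fixed point of the whole linear action of $G_q$ on $\nu_q(G\cdot q)$, so $0$ is $G_q$-regular only if $G_q$ is a principal isotropy group — which is exactly the statement that $q$ is $G$-regular, contradiction. To make this last step rigorous one compares isotropy dimensions: $(G_q)_v=H$ for the $G_q$-regular $v$, while $q$ being $W$-regular and the parameterization $W\cdot q=(G\cdot q)\cap\Sigma$ of Corollary~\ref{c:orbitparam} force $N_G(\Sigma)_q/Z_G(\Sigma)$ to be finite, whence $N_G(\Sigma)_q$ has the same dimension as $H$; combined with property~\ref{pr:3} applied at the nearby regular point $p$ one concludes $\dim G_q=\dim G_p$, and since $q$ lies in the closure of the orbit type of $p$ this equality of dimensions together with $G_q\supseteq$ a conjugate of $G_p$ forces $G_q$ to be principal.

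The main obstacle I anticipate is this last comparison of isotropy groups along the geodesic from $q$ to the nearby $G$-regular point $p$: going from equality of \emph{dimensions} of $G_q$ and $G_p$ to the conclusion that $G_q$ is itself a \emph{principal} isotropy group requires ruling out that $q$ sits on an exceptional orbit (a finite extension of the principal type), and this is precisely where the freeness of $W$ on $\Sigma^{\reg}$ and the parameterization $(G\cdot q)\cap\Sigma=W\cdot q$ must be leveraged: $W$-regularity of $q$ says the $W$-orbit through $q$ already has the generic cardinality/dimension behaviour, and pulling this back through $\tilde\iota$ rules out the exceptional possibility. An alternative, possibly cleaner route to the same end is to invoke Theorem~\ref{t:orbitspaceisometry} a second time together with the fact that a point is $G$-regular iff its image in $G\backslash M$ has a Euclidean neighborhood of dimension $\cohom(G,M)$, and likewise for $W$; since the two orbit spaces are isometric and $\cohom$ agrees under a minimal reduction, $W$-regularity and $G$-regularity become the same open condition on the (shared) manifold part of the orbit space. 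I would present the contradiction argument as the main line and remark on the orbit-space shortcut.
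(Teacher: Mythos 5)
Your overall architecture (orbit-space isometry for one direction, slice representation for the other) is reasonable, but both halves have genuine gaps. For ``$G$-regular $\Rightarrow$ $W$-regular'': the inference ``$W\cdot p$ has a manifold neighborhood of the right dimension in $W\backslash\Sigma$, hence $p$ is $W$-regular'' is false as a topological statement --- $\mathbf{Z}_2$ acting on $\R^2$ by $-\id$ has orbit space homeomorphic to $\R^2$, yet the origin is not regular. To salvage it you would need the metric fact that the quotient of a Euclidean ball by a nontrivial compact group acting linearly and effectively is never \emph{isometric} to a smooth Riemannian manifold near the origin; you neither state nor prove this, and it is not weight-free. The paper sidesteps the orbit space entirely here: it connects any two $G$-regular points of $\Sigma$ by a $G$-transversal geodesic, which lies in $\Sigma$ by properties \ref{pr:1} and \ref{pr:3}, deduces from property \ref{pr:4} that the connecting group element lies in $N_G(\Sigma)$, and concludes that all $G$-regular points of $\Sigma$ have the same $W$-orbit type; combined with openness and density of both regular sets in $\Sigma$ (Proposition \ref{p:copolarproperties} (viii)), this forces the $G$-regular points to be $W$-regular.

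For ``$W$-regular $\Rightarrow$ $G$-regular'' the endgame does not close. Deducing ``$0$ is $G_q$-regular'' from ``$0$ is $W(V_q)$-regular'' is precisely an instance of the lemma you are trying to prove, applied to the slice representation, so as written the argument is circular. Your fallback via isotropy dimensions fails at exactly the point you flag yourself: $\dim G_q=\dim G_p$ together with $G_q\supseteq gG_pg^{-1}$ leaves the exceptional-orbit case open, and ``pulling back through $\tilde\iota$ rules out the exceptional possibility'' is an assertion, not an argument (one would again need that exceptional orbits produce metric singularities of the orbit space). Moreover, the claim that $W$-regularity of $q$ forces $N_G(\Sigma)_q/Z_G(\Sigma)$ to be finite is false for a general fat section --- take $\Sigma=M$, where $W_q$ is essentially $G_q$ and can be positive-dimensional. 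The paper's resolution is short and you had all the ingredients in hand: $W$-regularity of $q$ means $W_q$ acts \emph{trivially} on $V_q=\nu^\Sigma_q(W\cdot q)$; since $W_q$ surjects onto $W(V_q)$ by Theorem \ref{t:slicerep} and $W(V_q)$ acts effectively on $V_q$ by definition, $W(V_q)$ is trivial; hence the slice representation is polar with trivial generalized Weyl group, and by \cite[Corollary 5.6.22]{PT2} a polar representation with trivial Weyl group is trivial, so $G_q$ acts trivially on $\nu_q(G\cdot q)$ and $q$ is $G$-regular. You should replace your dimension comparison by this effectiveness argument.
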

\begin{proof}
According to Proposition \ref{p:copolarproperties} (viii) the set of $G$-regular points is open and dense in $\Sigma$. If we can show that the $G$-regular points in $\Sigma$ all have the same $W(\Sigma)$-orbit type, then they must be $W(\Sigma)$-regular. This is because the $W(\Sigma)$-regular points are open and dense in $\Sigma$, too.
Let $p\in\Sigma$ be an arbitrary $G$-regular point. Property \ref{pr:4} of a fat section implies $Z_G(\Sigma)\subseteq G_p\subseteq N_G(\Sigma)$, and thus $(N_G(\Sigma))_p=G_p$. Let $q$ be another $G$-regular point in $\Sigma$. Connect $q$ with $G\cdot p$ by a $G$-transversal geodesic $\gamma$. Then by properties \ref{pr:1} and \ref{pr:3} of a fat section, $\gamma$ is a geodesic of $\Sigma$. We may assume that $\gamma(0)=q$ and $\gamma(1)=g\cdot p$ for some $g\in G$. By property \ref{pr:4} again,we have $g\in N_G(\Sigma)$. Since $G_q=G_{g\cdot p}=gG_p g^{-1}$ we have that both $p$ and $q$ are of the same $W(\Sigma)$-orbit-type.

Conversely, let $q\in\Sigma$ be an arbitrary $W(\Sigma)$-regular point. By Theorem \ref{t:slicerep}, $V_q$ is a fat section of $(G_q,\nu_q(G\cdot q))$ and $W_q$ projects canonically onto the fat Weyl group $W(V_q)$ of $V_q$. Proposition \ref{p:orthogonal_decomp} shows that $V_q$ is also the representation space for the slice representation of $(W(\Sigma),\Sigma)$ in $q$. By assumption, $W_q$ acts trivially on $V_q$. Since $W(V_q)$ acts effectively on $V_q$ by definition, the group $W(V_q)$ must be trivial. In particular, $(G_q,\nu_q(G\cdot q))$ is a polar representation with generalized Weyl group $W(V_q)$. According to \cite[Corollary 5.6.22]{PT2} the latter is a Weyl group in the classical sense. However, a polar representation with trivial Weyl group must be trivial itself. Thus $G_q$ acts trivially on $\nu_q(G\cdot q)$, and in conclusion $q$ is $G$-regular.
\end{proof}

\begin{theorem}[Stability theorem]\label{t:stability}
Let $(G,M)$ be an isometric action and let $\Sigma$ be an arbitrary fat section. Then a subset $\Sigma'\subseteq\Sigma$ is a fat section of $(G,M)$ if and only if it is a fat section of $(W(\Sigma),\Sigma)$. It follows that
$$\copol(G,M)=\copol(W(\Sigma),\Sigma).$$
If $\Sigma$ is a minimal section, then the copolarity of $(W(\Sigma),\Sigma)$ is trivial.
\end{theorem}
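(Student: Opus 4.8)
First, I would establish the equivalence of fat sections, which is really the heart of the matter. Suppose $\Sigma' \subseteq \Sigma$. The key point is that both $(G,M)$ and $(W(\Sigma),\Sigma)$ share the same orbit space (Theorem \ref{t:orbitspaceisometry}) and, by Lemma \ref{l:W-reg_is_G-reg}, the same regular points in $\Sigma$. So properties \ref{pr:1} and \ref{pr:2} transfer immediately: $\Sigma'$ being totally geodesic in $\Sigma$ and $\Sigma$ totally geodesic in $M$ gives $\Sigma'$ totally geodesic in $M$, and vice versa (since $\Sigma$ is totally geodesic, a submanifold of $\Sigma$ is totally geodesic in $\Sigma$ iff it is in $M$); and $\Sigma'$ meets every $G$-orbit iff it meets every $W$-orbit of $\Sigma$, because each $G$-orbit meets $\Sigma$ in exactly one $W$-orbit by Corollary \ref{c:orbitparam}. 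For property \ref{pr:3} at a regular $p \in \Sigma'$, the orthogonal decomposition of Proposition \ref{p:orthogonal_decomp} identifies $\nu_p(W \cdot p) \cap T_p\Sigma$ with $\nu_p(G \cdot p) \cap T_p\Sigma = V_p$, and since $p$ is $G$-regular this equals all of $\nu_p(G\cdot p)$ — more precisely, the normal space to the $W$-orbit inside $\Sigma$ coincides with the normal space to the $G$-orbit inside $M$ intersected with $T_p\Sigma$, which is what \ref{pr:3} for $\Sigma$ (applied to $M$) versus for $\Sigma'$ (applied to $\Sigma$) both refer to. So \ref{pr:3} for $\Sigma'$ in $\Sigma$ and in $M$ say the same thing.

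The step I expect to be the main obstacle is property \ref{pr:4}, the translation condition, because the groups $G$ and $W(\Sigma) = N_G(\Sigma)/Z_G(\Sigma)$ are genuinely different and one must check that "$g \in G$ with $g \cdot p \in \Sigma'$ forces $g\cdot \Sigma' = \Sigma'$" matches "$w \in W$ with $w \cdot p \in \Sigma'$ forces $w \cdot \Sigma' = \Sigma'$". Here I would argue: if $p, p' = g\cdot p \in \Sigma'$ are $G$-regular, then since $\Sigma$ is a fat section and $p \in \Sigma$, property \ref{pr:4} of $\Sigma$ gives $g \in N_G(\Sigma)$, so $g$ descends to an element $w \in W$ with $w \cdot p = p'$; conversely any such $w$ lifts to $g \in N_G(\Sigma) \subseteq G$. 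Thus the set of group elements moving $p$ to $p'$ within $\Sigma'$ is "the same" under the projection $N_G(\Sigma) \to W$, and $g \cdot \Sigma' = \Sigma'$ iff $w \cdot \Sigma' = \Sigma'$ (the $Z_G(\Sigma)$-ambiguity acts trivially on $\Sigma \supseteq \Sigma'$). Running this for all pairs of regular points establishes \ref{pr:4} in both directions.

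Given the equivalence of fat sections, the copolarity statement is essentially bookkeeping. A minimal $k$-section $\Sigma' \subseteq \Sigma$ for $(G,M)$ is then a fat section for $(W,\Sigma)$, and conversely; moreover the codimension integer $k$ is the same computed either way, since at a common regular point $p$ the relevant spaces $T_p\Sigma'$ and $\nu_p(G\cdot p)\cap T_p\Sigma = \nu_p(W\cdot p)\cap T_p\Sigma$ do not depend on whether we view things in $M$ or in $\Sigma$ (Proposition \ref{p:orthogonal_decomp}). Hence the minima agree: $\copol(G,M) = \copol(W(\Sigma),\Sigma)$. Finally, if $\Sigma$ itself is a minimal section of $(G,M)$, then no proper fat subsection $\Sigma' \subsetneq \Sigma$ can be a fat section of $(G,M)$ (else it would have strictly smaller codimension data, contradicting minimality — or use Proposition \ref{p:copolarproperties} (iii),(iv) that the minimal section through a regular point is unique and is the connected intersection of all fat sections), so by the equivalence just proved $\Sigma$ has no proper fat subsection as a $W$-space either; that is, $\Sigma$ is the only fat section of $(W(\Sigma),\Sigma)$, which is exactly triviality of the copolarity of $(W(\Sigma),\Sigma)$.
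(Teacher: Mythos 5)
Your proposal is correct and follows essentially the same route as the paper: identify $G$-regular and $W$-regular points via Lemma \ref{l:W-reg_is_G-reg}, identify orbit intersections via Corollary \ref{c:orbitparam}, use $\nu_p(G\cdot p)=\nu_p^\Sigma(W\cdot p)$ at regular points (Proposition \ref{p:orthogonal_decomp}) for property \ref{pr:3}, and pass elements through $N_G(\Sigma)\to W$ for property \ref{pr:4}. Your added detail on the copolarity bookkeeping (that a minimal section of $(G,M)$ may be taken inside $\Sigma$ by Proposition \ref{p:copolarproperties}) is a point the paper leaves implicit, but it is not a divergence in method.
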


\begin{proof}
First of all, if $\Sigma'$ is complete and connected, totally geodesic and embedded in $\Sigma$, then it also has these properties as a submanifold of $M$ and viceversa. If $\Sigma'$ intersects every $G$-orbit, then it also intersects every $W(\Sigma)$-orbit, because the latter are the intersections of $G$-orbits with $\Sigma$ and we have $\Sigma'\subseteq\Sigma$ (Corollary \ref{c:orbitparam}). Conversely, if $\Sigma'$ intersects every $W(\Sigma)$-orbit, then it also intersects every $G$-orbit, because every $G$-orbit contains a $W(\Sigma)$-orbit. Next, by Lemma \ref{l:W-reg_is_G-reg}, we need not distinguish between $G$-regular and $W(\Sigma)$-regular points in $\Sigma'$. We have for every regular $p\in\Sigma$:
$$\nu_p(G\cdot p)=\nu^\Sigma_p(W(\Sigma)\cdot p).$$
Therefore, $\nu_p(G\cdot p)\subseteq T_p\Sigma'$ is equivalent to $\nu^\Sigma_p(W\cdot p)\subseteq T_p\Sigma'$, for every regular $p\in\Sigma'$.
Finally, let $p\in\Sigma'$ be regular and let $g\in G$ be such that $g\cdot p\in\Sigma'$. Since $\Sigma'\subseteq\Sigma$, it follows that $g\in N_G(\Sigma)$. Now it is clear that $\Sigma'$ has property \ref{pr:4} of a fat section with respect to $(G,M)$ if and only if it it has this property with respect to $(W(\Sigma),\Sigma)$.
\end{proof}

\section{A Remark on Variational Completeness and Co-Completeness}\label{s:varcomp}
\noindent A main result of this section is that variational completeness of an isometric action is inherited to every reduction of that action, and conversely variational completeness of a reduction extends to the variational completeness of the original action.
As a slight excursion we also generalize \cite[Theorem 4.1]{GOT} in such a way that we relax the condition that the fat section $\Sigma$ has to be flat to the condition that $\Sigma$ has no conjugate points. This applies to more general situations, like $\sec(\Sigma)\le0$.
\begin{definition}
Let $N$ be a submanifold of $M$. An \textbf{$N$-geodesic} $\gamma:[0,\varepsilon)\to M$ is a geodesic of $M$ which emanates perpendicularly from $N$. An \textbf{$N$-Jacobi field} $J$ is a Jacobi field (along an $N$-geodesic $\gamma$) which is induced by a variation of $N$-geodesics.
\end{definition}
One can show that if $\gamma(0)=p\in N$ and $v=\gamma'(0)$, then $J$ is an $N$-Jacobi field if and only if it is a Jacobi field satisfying $J(0)\in T_pN$ and $J'(0)+A_vJ(0)\in\nu_pN$. Here $A_v$ denotes the shape operator of $N$ in the direction of $v$. Furthermore, the vector space $\kj^N(\gamma)$ of all $N$-Jacobi fields along $\gamma$ is isomorphic to $T_pM=T_pN\oplus\nu_pN$ via $J\mapsto J(0)+(J'(0)+A_vJ(0))$.

We fix a fat section $\Sigma$ of $(G,M)$ and let $N:=G\cdot p$ denote a fixed principal orbit with $p\in\Sigma$. For $v\in\nu_p N$ let $\gamma_v(t):=\exp_p(tv)$. The following lemmas as well as their proofs are \cite[Lemma 4.3 and Lemma 4.4]{GOT}. The second one characterizes under which conditions an $N$-Jacobi field is perpendicular to a given fat section, whereas the first one shows that every $N$-Jacobi field, induced by a $G$-Killing field and with the proper initial values, always satisfies this condition. Note that $\sec(\Sigma)$ may be arbitrary.

\begin{lemma}[{\cite[Lemma 4.3]{GOT}}]\label{l:Killing_Jacobi}
Let $J$ be an $N$-Jacobi field along $\gamma_v$ with $J(0)\in\ke_p$. If $J$ is the restriction of a $G$-Killing field on $M$ to $\gamma_v$, then $J$ satisfies $J'(0)+A_v J(0)=0$.
\end{lemma}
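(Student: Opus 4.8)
The plan is to work directly with the characterization of $N$-Jacobi fields recalled in the text: an $N$-Jacobi field $J$ along $\gamma_v$ is determined by the pair $(J(0),\,J'(0)+A_vJ(0))\in T_pN\oplus\nu_pN$, and conversely every such pair arises. So the statement to prove is that if $J$ comes from a $G$-Killing field $X$ (i.e. $J=X\circ\gamma_v$) and $J(0)=X_p\in\ke_p$, then the normal component $J'(0)+A_vJ(0)$ vanishes. Since $J(0)\in\ke_p=T_p(G\cdot p)\cap\nu_p\Sigma$ lies in $T_pN$, the pair is $(J(0),\,0)$ exactly when $J'(0)+A_vJ(0)=0$, so the whole content is this one vanishing identity.

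First I would use that $v\in\nu_pN\cap T_p\Sigma$ (we are on a fat section, so $\nu_pN\subseteq T_p\Sigma$ by property \ref{pr:3}, hence $\gamma_v\subseteq\Sigma$ by property \ref{pr:1}), and that $J=X\circ\gamma_v$ for a Killing field $X$. The restriction of a Killing field to a geodesic is automatically a Jacobi field, and its covariant derivative along $\gamma_v$ satisfies $J'(t)=\nabla_{\dot\gamma_v}X=(\nabla X)(\dot\gamma_v)$; evaluating at $t=0$ gives $J'(0)=\nabla_v X$. On the other hand, $X$ being tangent to the orbit distribution means $X$ is everywhere tangent to the orbit through that point; restricting attention to $N=G\cdot p$, the field $X|_N$ is a vector field on $N$, and the shape operator enters through the second fundamental form of $N\subseteq M$. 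The key computation is to split $\nabla_v X$ into its $T_pN$ and $\nu_pN$ parts and identify the normal part with $-A_vJ(0)$.

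The cleanest route is: let $X^\top$ denote $X$ regarded as a vector field on $N$ and decompose, for $w\in T_pN$, the ambient derivative as $\nabla_w X = \nabla^N_w X^\top + \mathrm{II}(w, X_p)$ where $\mathrm{II}$ is the second fundamental form of $N$; but here we differentiate in the normal direction $v$, not a tangent direction, so instead I would use the Weingarten relation. Write $\langle \nabla_v X, \xi\rangle$ for a normal field $\xi$ extending a vector in $\nu_pN$: since $\langle X,\xi\rangle$ can be arranged to vanish along $N$ (as $X$ is tangent to $N$ there), we get $\langle\nabla_v X,\xi\rangle = -\langle X_p,\nabla_v\xi\rangle = \langle X_p, A_v^\xi \cdot(\text{tangential part of }v)\rangle$; but $v$ is purely normal, so $\nabla_v\xi$ has trivial contribution along those lines, and the surviving term reproduces exactly $-A_v J(0)$ after using that $X_p=J(0)\in T_pN$ and the symmetry $\langle A_v a, b\rangle=\langle A_v b, a\rangle$. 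Running this carefully yields $J'(0)+A_vJ(0)\in$ the normal space, and pairing against the tangent space shows the tangential part is already zero, so in fact $J'(0)+A_vJ(0)=0$ once we check it also has no normal component — which follows because $J$ is an $N$-Jacobi field and those have $J'(0)+A_vJ(0)\in\nu_pN$, combined with the just-derived fact that this vector is orthogonal to $\nu_pN$.

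The main obstacle I anticipate is bookkeeping the two roles of $X$: as an ambient Killing field whose covariant derivative is a skew-symmetric endomorphism ($\langle\nabla_w X, w'\rangle = -\langle w,\nabla_{w'}X\rangle$), and as a field tangent to the orbit $N$ so that the second fundamental form of $N$ is available. The skew-symmetry of $\nabla X$ together with $X_p\in T_pN$ is what forces cancellation of the tangential part of $J'(0)$ against the orbit, while the Killing/Weingarten interplay produces precisely $-A_v$ applied to $X_p$ in the normal directions. I would also want to note explicitly that the identity is pointwise at $t=0$, so no global flatness or curvature hypothesis on $\Sigma$ is needed — consistent with the remark in the text that $\sec(\Sigma)$ is arbitrary. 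Assembling these pieces gives $J'(0)+A_vJ(0)=0$, which is the assertion.
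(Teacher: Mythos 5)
There is a genuine gap, and it is fatal to the whole strategy rather than a fixable detail. Your argument is purely extrinsic submanifold geometry of the orbit $N$: at no point do you actually use the fat section $\Sigma$, nor the full force of the hypothesis $J(0)\in\ke_p=T_pN\cap\nu_p\Sigma$ (you only use $J(0)\in T_pN$). But the statement is false under the weaker hypothesis $J(0)\in T_pN$: for a $G$-Killing field $X$ one always has $J'(0)+A_vJ(0)=\nabla^{\perp}_{X_p}\hat v\in\nu_pN$, where $\hat v$ is the equivariant normal field extending $v$, and this normal-connection derivative is generically nonzero. (Concretely, in Example \ref{e:standard} with $n=3$, $k=2$, $p=(e_1,e_2)$, $X=E_{21}-E_{12}$, so $X_p=(e_2,-e_1)\in\kd_p$, and $v=(e_1,0)$, one computes $J'(0)+A_vJ(0)=\tfrac12(e_2,e_1)\neq0$.) Hence no proof that ignores $\Sigma$ can succeed. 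The specific broken step is the identity $\langle\nabla_vX,\xi\rangle=-\langle X_p,\nabla_v\xi\rangle$: it comes from differentiating $\langle X,\xi\rangle=0$ in the direction $v$, but that function is only known to vanish \emph{along} $N$, while $v\in\nu_pN$ points off $N$, so the differentiation is not legitimate. (Taken at face value, your formula would even give $\langle\nabla_vX,\xi\rangle=0$ for \emph{every} Killing field, since the tangential part of $v$ is zero --- proving a false statement.) What the correct version of your Weingarten/skew-symmetry computation yields is only this: for $w\in T_pN$, $\langle\nabla_vX,w\rangle=-\langle\nabla_wX,v\rangle=-\langle A_vX_p,w\rangle$, i.e.\ $J'(0)+A_vJ(0)\in\nu_pN$. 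That is exactly the assertion that $J$ is an $N$-Jacobi field, which holds for any Killing restriction and carries none of the content of the lemma.

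The missing content is precisely where $\Sigma$ must enter. Since $v\in\nu_pN\subseteq T_p\Sigma$ and $\Sigma$ is totally geodesic, $\gamma_v$ lies in $\Sigma$, and one splits $X|_\Sigma=X^{\top}+X^{\perp}$ into parts tangent and normal to $\Sigma$; total geodesy gives $\nabla_vX^{\top}\in T_p\Sigma$ and $\nabla_vX^{\perp}\in\nu_p\Sigma$. Combining $J'(0)+A_vJ(0)\in\nu_pN\subseteq T_p\Sigma$ with $A_vJ(0)\in\ke_p\subseteq\nu_p\Sigma$ (here Theorem \ref{t:totally_geod_orbits} is used: $A_v$ preserves $\ke$), one finds $J'(0)+A_vJ(0)=\nabla_vX^{\top}$, and the remaining point is that the tangential projection $X^{\top}$, which takes values in $\kd=T(W\cdot{})$ along $\Sigma^{\reg}$ and vanishes at the regular point $p$, has vanishing derivative there; this uses the $G$-invariance of the foliation by translates of $\Sigma$ (property \ref{pr:4}) and is the actual substance of \cite[Lemma 4.3]{GOT}, whose proof the present paper cites rather than reproduces. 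Your write-up contains none of this, so the lemma is not proved.
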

\begin{lemma}[{\cite[Lemma 4.4]{GOT}}]\label{l:Jacobi_orth}
Let $J$ be an $N$-Jacobi field along $\gamma_v$ such that $J(0)\in\ke_p$. Then $J$ is always orthogonal to $\Sigma$ if and only if $J'(0)+A_v J(0)=0$.
\end{lemma}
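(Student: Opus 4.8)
The plan is to prove Lemma \ref{l:Jacobi_orth}, which characterizes when an $N$-Jacobi field $J$ along $\gamma_v$ with $J(0)\in\ke_p$ is orthogonal to the fat section $\Sigma$.

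\medskip

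First I would set up the correct initial-value picture. Recall that $N=G\cdot p$ is a principal orbit with $p\in\Sigma$, and $v\in\nu_pN$. Since $\Sigma$ is a fat section and $p$ is $G$-regular, property \ref{pr:3} gives $\nu_p(G\cdot p)\subseteq T_p\Sigma$, so $\gamma_v$ is a geodesic \emph{of} $\Sigma$ (using also that $\Sigma$ is totally geodesic, property \ref{pr:1}). An $N$-Jacobi field is determined by the pair $(J(0),\,J'(0)+A_vJ(0))\in T_pN\oplus\nu_pN$; the hypothesis $J(0)\in\ke_p=T_p(G\cdot q)\cap\nu_p\Sigma$ means the ``position'' component is a tangent-to-orbit vector that is normal to $\Sigma$, while the ``velocity'' component $J'(0)+A_vJ(0)$ lies in $\nu_pN=\nu_p(G\cdot p)\subseteq T_p\Sigma$.

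\medskip

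For the ``if'' direction, suppose $J'(0)+A_vJ(0)=0$. Then $J(0)\in\ke_p\subseteq\nu_p\Sigma$ and $J'(0)=-A_vJ(0)$. I would show $J'(0)\in\nu_p\Sigma$ as well: since $\Sigma$ is totally geodesic in $M$ and $\gamma_v$ is a geodesic of $\Sigma$, the shape operator $A_v$ of $N=G\cdot p$ restricted appropriately interacts with $T_p\Sigma$ via Theorem \ref{t:totally_geod_orbits} (for $\eta=v\in\nu_p(G\cdot p)\cap T_p\Sigma$, $A_v$ preserves $\kd_p\oplus\ke_p$, hence preserves $\ke_p\subseteq\nu_p\Sigma$); so $A_vJ(0)\in\ke_p\subseteq\nu_p\Sigma$, giving $J'(0)\in\nu_p\Sigma$. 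Thus $J$ is a Jacobi field along the $\Sigma$-geodesic $\gamma_v$ with both $J(0)$ and $J'(0)$ normal to the totally geodesic submanifold $\Sigma$. By the uniqueness part of Lemma \ref{l:jacobi_decomp} (the decomposition $J=Y+Z$ with $Y$ tangent, $Z$ normal), the tangential Jacobi field $Y$ of $\Sigma$ with $Y(0)=0$, $Y'(0)=0$ vanishes identically, so $J=Z$ is everywhere perpendicular to $\Sigma$.

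\medskip

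For the ``only if'' direction, suppose $J$ is everywhere orthogonal to $\Sigma$. Then $J(0)\in\nu_p\Sigma$ automatically (consistent with $J(0)\in\ke_p$), and I would argue that $J'(0)$ must also be normal: differentiating $\langle J(t),v(t)\rangle=0$ for an arbitrary $\Sigma$-tangent parallel field $v(t)$ along $\gamma_v$ — which stays tangent to $\Sigma$ since $\Sigma$ is totally geodesic — gives $\langle J'(0),v(0)\rangle=0$, so $J'(0)\in\nu_p\Sigma$. Combined with $A_vJ(0)\in\nu_p\Sigma$ (as in the previous paragraph), we get $J'(0)+A_vJ(0)\in\nu_p\Sigma$. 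But by definition of an $N$-Jacobi field, $J'(0)+A_vJ(0)\in\nu_pN=\nu_p(G\cdot p)\subseteq T_p\Sigma$. Since $T_p\Sigma\cap\nu_p\Sigma=0$, we conclude $J'(0)+A_vJ(0)=0$.

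\medskip

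The main obstacle I anticipate is the bookkeeping with the two shape operators (that of $N=G\cdot p$ and the vanishing second fundamental form of $\Sigma$) and making sure the term $A_vJ(0)$ is correctly placed in $\nu_p\Sigma$; this is exactly where Theorem \ref{t:totally_geod_orbits}, applied with normal direction $\eta=v\in\nu_p(G\cdot p)\cap T_p\Sigma$, is needed. Once that placement is secured, both implications reduce cleanly to the orthogonal splitting of Jacobi fields in Lemma \ref{l:jacobi_decomp} together with the transversality $T_p\Sigma\cap\nu_p\Sigma=0$.
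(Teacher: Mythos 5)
Your proof is correct. Both directions are sound: the key placement $A_vJ(0)\in\ke_p\subseteq\nu_p\Sigma$ does follow from Theorem \ref{t:totally_geod_orbits} applied with $\eta=v\in\nu_p(G\cdot p)\cap T_p\Sigma$ (using property \ref{pr:3} at the regular point $p$ to see $v\in T_p\Sigma$), the ``if'' direction then reduces to the uniqueness statement in the splitting of Lemma \ref{l:jacobi_decomp} (the tangential part $Y$ has $Y(0)=Y'(0)=0$, hence vanishes), and the ``only if'' direction correctly combines $J'(0)+A_vJ(0)\in\nu_p\Sigma$ with the defining condition $J'(0)+A_vJ(0)\in\nu_pN\subseteq T_p\Sigma$ and transversality. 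Note that the paper itself gives no proof here: it simply cites \cite[Lemma 4.4]{GOT}, where the statement is established for orthogonal representations (so that geodesics are lines and Jacobi fields are affine), and asserts that the proof carries over. Your argument is therefore a genuine contribution relative to what is printed: it is a self-contained proof in the general Riemannian setting, and it makes explicit which ingredients of the present paper (Lemma \ref{l:jacobi_decomp} and Theorem \ref{t:totally_geod_orbits}) replace the flat-space computation of \cite{GOT}. The only point worth flagging is that in the ``if'' direction you should say a word about why the tangential component $Y$ of $J$ satisfies $Y'(0)\in T_p\Sigma$ (so that $J'(0)\in\nu_p\Sigma$ forces $Y'(0)=0$); this is immediate because $\Sigma$ is totally geodesic, so the ambient covariant derivative of a $\Sigma$-tangent field along $\gamma_v$ remains tangent to $\Sigma$, but it is the one step you leave implicit.
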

%
With these lemmas we get a refined decomposition of $\kj^N(\gamma)$:
\begin{proposition}\label{p:Jacobi}
Let $\tilde N:=W(\Sigma)\cdot p$ and denote the $\tilde N$-Jacobi fields in $\Sigma$ by $\kj^{\tilde N}(\gamma)$. Then
$$\kj^N(\gamma)=\kj_0^N(\gamma)\oplus\kj_{\kd}^N(\gamma)\oplus\kj_{\ke}^N(\gamma), \text{ where}$$
$$\begin{array}{lclll}
\kj_0^N(\gamma)&:=&\{J\in\kj^N(\gamma)\mid J(0)=0,J'(0)\in\nu_pN\}&=&\kj_0^{\tilde N}(\gamma),\\
\kj_{\kd}^N(\gamma)&:=&\{J\in\kj^N(\gamma)\mid J(0)\in\kd_p,\, J'(0)=-A_v J(0)\}&=&\kj_{\kd}^{\tilde N}(\gamma),\\
\kj_{\ke}^N(\gamma)&:=&\{J\in\kj^N(\gamma)\mid J(0)\in\ke_p,\, J'(0)=-A_v J(0)\}& & \\
&=&\{X|_\gamma\mid X \text{ is a } G \text{-Killing field and } X_p\in\ke_p\}. & &
\end{array}$${\ }\smallskip\\
In particular, if $J=J_0+J_\kd+J_\ke$ is an $N$-Jacobi field represented with respect to the above decomposition, then, in view of Lemma \ref{l:jacobi_decomp}, $J_0+J_\kd$ is the part of $J$ which is everywhere tangential to $\Sigma$ and $J_\ke$ is part of $J$ which is everywhere perpendicular to $\Sigma$. 
\end{proposition}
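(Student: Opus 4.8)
The plan is to decompose the space $\kj^N(\gamma)$ of $N$-Jacobi fields along $\gamma=\gamma_v$ according to the position of the initial value $J(0)$ in the orthogonal splitting $T_pN = \kd_p\oplus\ke_p$ together with the "degenerate" part $J(0)=0$. Recall that $\kj^N(\gamma)\cong T_pM = T_pN\oplus\nu_pN$ via $J\mapsto J(0)+\bigl(J'(0)+A_vJ(0)\bigr)$, and that $\nu_pN\subseteq T_p\Sigma$ since $p$ is $G$-regular and $\Sigma$ is a fat section (property \ref{pr:3}). Using the orthogonal decomposition $T_pN=\kd_p\oplus\ke_p$ and $T_pM = T_p\Sigma\oplus\nu_p\Sigma$ (Proposition \ref{p:orthogonal_decomp}), I would split a general $N$-Jacobi field $J$ into three pieces determined by requiring respectively $J(0)=0$ with $J'(0)\in\nu_pN$; $J(0)\in\kd_p$ with $J'(0)+A_vJ(0)=0$; and $J(0)\in\ke_p$ with $J'(0)+A_vJ(0)=0$. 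Since $\nu_pN$, $\kd_p$, $\ke_p$ are mutually orthogonal and together span $T_pM$, the map $J\mapsto J(0)+(J'(0)+A_vJ(0))$ shows this is a direct-sum decomposition.

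Next I would identify each summand. For $\kj_0^N(\gamma)$: a Jacobi field with $J(0)=0$ and $J'(0)\in\nu_pN\subseteq T_p\Sigma$ is, by Lemma \ref{l:jacobi_decomp}, entirely tangent to $\Sigma$ (its perpendicular part has both initial value and initial derivative zero, hence vanishes); and conversely, since $\tilde N=W(\Sigma)\cdot p$ and $\Sigma$ is totally geodesic, $\nu_p^\Sigma\tilde N = \nu_pN$ (this is the identity $\nu_p(G\cdot p)=\nu_p^\Sigma(W(\Sigma)\cdot p)$ used in the proof of Theorem \ref{t:stability}), so these are exactly the $\tilde N$-Jacobi fields in $\Sigma$ with $J(0)=0$. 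The equality $\kj_0^N(\gamma)=\kj_0^{\tilde N}(\gamma)$ follows. For $\kj_{\kd}^N(\gamma)$: if $J(0)\in\kd_p = T_pN\cap T_p\Sigma$, then again by Lemma \ref{l:jacobi_decomp} the field splits as $Y+Z$ with $Y$ tangent to $\Sigma$ and $Z\perp\Sigma$; the condition $J'(0)=-A_vJ(0)$ is precisely the $N$-Jacobi condition, and one must check it is equivalent to the $\tilde N$-Jacobi condition inside $\Sigma$, i.e. that the shape operator $A_v^\Sigma$ of $\tilde N$ in $\Sigma$ agrees with the $\Sigma$-tangential part of $A_v$ on $\kd_p$ — this is where Theorem \ref{t:totally_geod_orbits} (that $A_\eta$ leaves $\kd_q\oplus\ke_q$ invariant for $\eta\in\nu_q(G\cdot q)\cap T_q\Sigma$, in particular $\eta=v$) is needed, since it guarantees $A_vJ(0)\in\kd_p\subseteq T_p\Sigma$ has no $\nu_p\Sigma$-component and that the relevant second fundamental forms match. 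One also needs $A_v^\Sigma=A_v|_{\kd_p}$ because $\Sigma$ is totally geodesic in $M$. For $\kj_{\ke}^N(\gamma)$: given $J(0)\in\ke_p$ and $J'(0)=-A_vJ(0)$, Lemma \ref{l:Jacobi_orth} says exactly that $J$ is everywhere orthogonal to $\Sigma$, and Lemma \ref{l:Killing_Jacobi} together with the standard fact that Killing fields restrict to $N$-Jacobi fields (via the orbit variation) gives the identification with restrictions of $G$-Killing fields $X$ with $X_p\in\ke_p$; the dimension count $\dim\ke_p = \dim\{X\text{-Killing}: X_p\in\ke_p\}/\{\text{those vanishing at }p\text{ along }\gamma\}$ matches because the evaluation map from such Killing fields is surjective onto $\ke_p$ with kernel acting trivially near $p$.

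Finally, the last sentence of the statement is immediate: by Lemma \ref{l:jacobi_decomp} each $J\in\kj^N(\gamma)$ splits uniquely as a $\Sigma$-tangent Jacobi field plus a $\Sigma$-perpendicular one; the summands $\kj_0^N$ and $\kj_{\kd}^N$ are built from fields that are everywhere tangent to $\Sigma$ (by the argument above using Lemma \ref{l:jacobi_decomp} and totally-geodesicness), while $\kj_{\ke}^N$ consists of fields everywhere perpendicular to $\Sigma$ by Lemma \ref{l:Jacobi_orth}; uniqueness of the $Y+Z$ decomposition then forces $J_0+J_\kd$ to be the tangential part and $J_\ke$ the normal part.

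The main obstacle I expect is the $\kj_{\kd}^N(\gamma)=\kj_{\kd}^{\tilde N}(\gamma)$ identification: one must verify carefully that the $N$-Jacobi condition $J'(0)=-A_vJ(0)$ (with $A_v$ the shape operator of the big orbit $N=G\cdot p$ in $M$) translates exactly into the $\tilde N$-Jacobi condition for the small orbit $\tilde N = W(\Sigma)\cdot p$ inside $\Sigma$ — this hinges on the compatibility of shape operators supplied by Theorem \ref{t:totally_geod_orbits} and on $\Sigma$ being totally geodesic, and on checking that nothing leaks into or out of the $\nu_p\Sigma$-directions. Everything else is a matter of assembling Lemmas \ref{l:jacobi_decomp}, \ref{l:Killing_Jacobi}, \ref{l:Jacobi_orth} and Proposition \ref{p:orthogonal_decomp}.
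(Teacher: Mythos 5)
Your proposal is correct and follows essentially the same route as the paper: the decomposition via the isomorphism $\kj^N(\gamma)\simeq T_pN\oplus\nu_pN$ together with $T_pN=\kd_p\oplus\ke_p$, the use of Lemma \ref{l:jacobi_decomp} and Theorem \ref{t:totally_geod_orbits} (invariance of $\kd_p$ under $A_v$ and $\tilde A_v=A_v|_{\kd_p}$) to identify $\kj_0^N$ and $\kj_\kd^N$ with their $\tilde N$-counterparts, and Lemmas \ref{l:Killing_Jacobi} and \ref{l:Jacobi_orth} for the $\ke$-summand. The point you flag as the main obstacle is handled in the paper exactly as you anticipate, so no gap remains.
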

\begin{proof}
The decomposition follows from the isomorphism $\kj^N(\gamma)\simeq T_pN\oplus\nu_pN$ and because of $T_p(G\cdot p)=\kd_p\oplus\ke_p$ (see Definition \ref{d:kd_ke}). Note that Theorem \ref{t:totally_geod_orbits} implies that $A_v$ leaves $\kd_p$ invariant. This shows that every element $J_\kd$ of $\kj_\kd^N(\gamma)$ is everywhere tangential to $\Sigma$, because $J_\kd(0)\in\kd_p$ and $J_\kd'(0)=-A_vJ_\kd(0)\in\kd_p$. It is also clear that every element $J_0$ of $\kj_0^N(\gamma)$ is tangential to $\Sigma$, because of $J_0(0)=0$ and $J_0'(0)\in\nu_pN$. We next show that $J_0$ and $J_\kd$ are $\tilde N$-Jacobi fields. First of all, $\gamma$ is a geodesic in $M$ which starts in $\Sigma$ and since $\gamma'(0)\in\nu_pN\subseteq T_p\Sigma$ it is also tangential to $\Sigma$. Since $\Sigma$ is totally geodesic in $M$, it follows that $\gamma$ is a geodesic of $\Sigma$ and furthermore, $\gamma$ is a $\tilde N$-geodesic. Using Lemma \ref{l:jacobi_decomp} we see that $J_0$ and $J_\kd$ are Jacobi fields on $\Sigma$. For $J_0$ we now have to show  $J'(0)\in\nu_p\tilde N$. But this is clear since we have $\nu_pN=\nu_p\tilde N$. Concerning $J_\kd$, we have that $J_\kd(0)\in\kd_p=T_p\tilde N$ and if $\tilde A$ denotes the shape operator of $\tilde N$, then
$$J_\kd'(0)+\tilde A_vJ_\kd(0)=J_\kd'(0)+A_vJ_\kd(0)=0,$$
where we have used that $\tilde A_v=A_v|_{\kd_p}$, because $\tilde
N$ is totally geodesic in $N$, by Theorem
\ref{t:totally_geod_orbits} again. By tracing the previous arguments
backwards, we obtain that in fact the equalities
$\kj^N_0(\gamma)=\kj^{\tilde N}_0(\gamma)$ and
$\kj^N_\kd(\gamma)=\kj^{\tilde N}_\kd(\gamma)$ hold. The statements
concerning $\kj_\ke^N(\gamma)$ are direct consequences of Lemma
\ref{l:Killing_Jacobi} and Lemma \ref{l:Jacobi_orth}.

\end{proof}
\begin{definition}
An isometric action $(G,M)$ is \textbf{variationally complete} if for every $G$-orbit $N$, every $N$-geodesic $\gamma$ and every $N$-Jacobi field along $\gamma$, which vanishes for some $t_0>0$, is the restriction of a $G$-Killing field to $\gamma$.
\end{definition}
It suffices to consider principal orbits only in order to show that an isometric action is variationally complete. This fact seems to be known in the literature. For instance, in \cite{GOT} this is implicitly assumed in the characterization of variational completeness via $\covar(G,M)=0$ (see below). A proof can be found in \cite[Remark 5.5]{LT2}\begin{footnote}{I would like to thank Alexander Lytchak for giving me this reference.}\end{footnote}.

\begin{theorem}\label{t:varcomp}
An isometric action $(G,M)$ is variationally complete if and only if a minimal reduction $(W(\Sigma),\Sigma)$ is variationally complete.
\end{theorem}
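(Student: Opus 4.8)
The plan is to reduce the question about variational completeness to the refined Jacobi field decomposition established in Proposition \ref{p:Jacobi}. The key observation is that the notion of variational completeness involves only three ingredients: orbits (equivalently principal orbits, by the remark preceding the theorem), $N$-geodesics emanating perpendicularly from them, and $N$-Jacobi fields along those geodesics. Theorem \ref{t:orbitspaceisometry} together with Corollary \ref{c:orbitparam} already tells us that the orbit structures correspond; what is needed is a matching of the relevant Jacobi fields, and this is exactly what Proposition \ref{p:Jacobi} provides.

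First I would fix a minimal section $\Sigma$, set $W=W(\Sigma)$, pick a principal orbit $N=G\cdot p$ with $p\in\Sigma$, and let $\tilde N=W\cdot p$ be the corresponding $W$-orbit in $\Sigma$. By the discussion after Proposition \ref{p:Jacobi}, every $N$-Jacobi field $J$ along an $N$-geodesic $\gamma$ decomposes as $J=J_0+J_\kd+J_\ke$, where $J_0+J_\kd$ is everywhere tangent to $\Sigma$ and is precisely a $\tilde N$-Jacobi field in $\Sigma$, while $J_\ke$ is everywhere perpendicular to $\Sigma$ and is the restriction of a $G$-Killing field with $X_p\in\ke_p$. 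Moreover $\gamma$ itself is simultaneously an $N$-geodesic in $M$ and a $\tilde N$-geodesic in $\Sigma$, since $\gamma'(0)\in\nu_pN=\nu_p\tilde N\subseteq T_p\Sigma$ and $\Sigma$ is totally geodesic. A crucial point, which I would spell out, is that for a \emph{minimal} section the copolarity of $(W,\Sigma)$ is trivial (Theorem \ref{t:stability}), i.e. $\Sigma$ is its own minimal section; equivalently the $\ke$-part of the decomposition is ``as large as possible,'' but for the Jacobi-field bookkeeping what matters is just that $\kd$ and $\ke$ partition $T_p N$ and that $J_\ke$ always comes from a Killing field.

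For the forward direction, suppose $(G,M)$ is variationally complete and let $J$ be a $\tilde N$-Jacobi field in $\Sigma$ along $\gamma$ vanishing at some $t_0>0$. Viewing $J$ as an $N$-Jacobi field in $M$ (it is one, with $J(0)\in T_p\tilde N=\kd_p\subseteq T_pN$ and $J'(0)+A_vJ(0)=0$ by the characterization of $\tilde N$-Jacobi fields together with $\tilde A_v=A_v|_{\kd_p}$), variational completeness of $(G,M)$ gives a $G$-Killing field $X$ with $X|_\gamma=J$; then $X|_\gamma$ is tangent to all orbits, hence $X_p\in T_pN$, and one checks $X_p\in\kd_p$ (since $J_\ke=0$ as $J$ is tangent to $\Sigma$); projecting $X$ appropriately — or rather using that $N_G(\Sigma)/Z_G(\Sigma)=W$ acts on $\Sigma$ and that $\kd$ is the tangent distribution to the $W$-orbits — yields a $W$-Killing field on $\Sigma$ whose restriction to $\gamma$ is $J$. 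Conversely, suppose the minimal reduction is variationally complete and let $J$ be an $N$-Jacobi field along an $N$-geodesic $\gamma$ with $J(t_0)=0$. Decompose $J=J_0+J_\kd+J_\ke$. The summand $J_\ke$ is already the restriction of a $G$-Killing field. Since $J(t_0)=0$ and the three summands have values in the mutually orthogonal distributions (tangent-to-$\Sigma$ versus normal-to-$\Sigma$, and within the tangential part the $0$- and $\kd$-components are the ``$\Sigma$-Jacobi field'' part by Lemma \ref{l:jacobi_decomp}), each summand vanishes at $t_0$; in particular $J_0+J_\kd$ is a $\tilde N$-Jacobi field in $\Sigma$ vanishing at $t_0$, so by variational completeness of $(W,\Sigma)$ it is the restriction of a $W$-Killing field $\tilde X$ on $\Sigma$. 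Lifting $\tilde X$ to a Killing field on $M^\reg$ via the foliation by $G$-translates of $\Sigma$ (Proposition \ref{p:copolarproperties}(v)) and extending, one obtains a $G$-Killing field restricting to $J_0+J_\kd$ on $\gamma$; adding the Killing field producing $J_\ke$ gives a $G$-Killing field restricting to $J$.

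**The main obstacle** I anticipate is the lifting/projection step connecting $W$-Killing fields on $\Sigma$ with $G$-Killing fields on $M$, and in particular checking that a Killing field of $\Sigma$ obtained from variational completeness of the reduction genuinely extends to a Killing field of $M$ (rather than merely a Jacobi field along $\gamma$ that integrates to something on $\Sigma$ alone). Here one must use that $\kd$ is the restriction to $\Sigma$ of the $G$-invariant distribution on $M^\reg$ from Definition \ref{d:kd_ke}, together with the foliation of $M^\reg$ by $G$-translates of $\Sigma$, so that a field tangent to $\kd$ along $\gamma$ can be recognized as the restriction of a genuine $G$-Killing field; the identification $\kj_\kd^N(\gamma)=\kj_\kd^{\tilde N}(\gamma)$ from Proposition \ref{p:Jacobi} is what makes this work, but the Killing-field (as opposed to Jacobi-field) statement needs the characterization of variational completeness and a careful invocation of $\tilde N$ being totally geodesic in $N$ (Theorem \ref{t:totally_geod_orbits}). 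The other, more routine, subtlety is ensuring the argument only quantifies over principal orbits, which is justified by the cited \cite[Remark 5.5]{LT2}.
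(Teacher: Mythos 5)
Your proposal follows the same skeleton as the paper's proof (decompose via Proposition \ref{p:Jacobi}, use that $J_\ke$ is already induced by a $G$-Killing field, split the vanishing condition at $t_0$ by orthogonality to $\Sigma$, and in the forward direction regard a $\tilde N$-Jacobi field as an $N$-Jacobi field), but the forward direction has a genuine gap at exactly its one non-routine step. Having obtained a $G$-Killing field $X$ with $X|_\gamma=J$, you must produce a \emph{$W$-Killing field on $\Sigma$} (i.e.\ a field generated by the $W$-action) restricting to $J$ along $\gamma$. Your suggestion --- ``projecting $X$ appropriately, using that $\kd$ is the tangent distribution to the $W$-orbits'' --- does not accomplish this: the generator $\xi\in\gothg$ of $X$ need not lie in $\lie(N_G(\Sigma))$, so $X$ need not be tangent to $\Sigma$ off of $\gamma$, and knowing $X|_\gamma=J\subseteq T\Sigma$ along one transversal geodesic does not produce a global Killing field of the reduction. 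Moreover $J(t)$ for $t>0$ need not lie in $\kd_{\gamma(t)}$ at all (it can have components in $\nu_{\gamma(t)}(G\cdot\gamma(t))\cap T_{\gamma(t)}\Sigma$), so the ``tangent to $W$-orbits'' heuristic fails pointwise. The paper closes this gap by invoking a specific result (Theorem 1 of \cite{Mag1}): the orthogonal projection $\pr_\Sigma X$ of a $G$-Killing field onto a \emph{minimal} section is a $W$-Killing field; since $X(\gamma(t))=J(t)$ is already tangent to $\Sigma$, the projection agrees with $J$ along $\gamma$. This is also where minimality of $\Sigma$ is actually used --- not, as you suggest, in the triviality of $\copol(W,\Sigma)$.

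Two smaller points. In the converse direction, your proposed ``lifting $\tilde X$ via the foliation of $M^\reg$ and extending'' is an unnecessary detour that would itself require justification (extending from $M^\reg$ to $M$ and checking the Killing equation); the paper's route is purely algebraic: a $W$-Killing field on $\Sigma$ is generated by an element of $\lie(N_G(\Sigma))\subseteq\gothg$, hence is the restriction to $\Sigma$ of an honest $G$-Killing field on $M$. Also, orthogonality at $t_0$ only separates the tangential part $J_0(t_0)+J_\kd(t_0)$ from the normal part $J_\ke(t_0)$; your claim that ``each summand vanishes at $t_0$'' overstates what is available (and what is needed) --- only $J_0+J_\kd$ and $J_\ke$ vanish separately, which suffices.
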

\begin{proof}
In the following let $p\in\Sigma$ be a regular point. Due to Lemma \ref{l:W-reg_is_G-reg}, $G$- and $W$-regular points are the same. Put
$$N:=G\cdot p \text{ and } \tilde N:=W(\Sigma)\cdot p$$
and let $\gamma$ be an arbitrary $\tilde N$-geodesic starting in $p$.

Suppose that $(G,M)$ is variationally complete. If $J\in\kj^{\tilde N}(\gamma)$ satisfies $J(t_0)=0$ for some $t_0>0$, then we can view $J$ as an $N$-Jacobi field along the $N$-geodesic $\gamma$, according to Proposition \ref{p:Jacobi}. By variational completeness of $(G,M)$, there is a $G$-Killing field $X$ such that $J=X|_\gamma$. Let now $\pr_\Sigma X$ denote the orthogonal projection of $X$ onto $\Sigma$. By \cite[Theorem 1]{Mag1} this is a $W$-Killing field on $\Sigma$ (here we use that $\Sigma$ is a minimal section). Since $X(\gamma(t))=J(t)\in T_{\gamma(t)}\Sigma$ and therefore $J(t)=\pr_\Sigma X(\gamma(t))$, we may conclude that $J$ is the restriction of a $W$-Killing field to $\gamma$.

For the converse direction, suppose now that $(W(\Sigma),\Sigma)$ is variationally complete. Let $p\in M$ be an arbitrary regular point and $\gamma$ an $N$-geodesic starting in $p$. Without loss of generality, we may assume that $p\in\Sigma$ and that $\gamma$ is an $\tilde N$-geodesic (a suitable translate $g\cdot\Sigma$ contains $p$ and hence $\gamma$, and the minimal reduction $(W(g\cdot\Sigma),g\cdot\Sigma)$ is also variationally complete).
We decompose an arbitrary $N$-Jacobi field $J$, which vanishes for some $t_0>0$, according to Proposition \ref{p:Jacobi} into the three parts $J=J_0+J_\kd+J_\ke$. The proposition tells us that $J_\ke$ is already induced by a $G$-Killing field. From
$$0=J(t_0)=\underbrace{J_0(t_0)+J_\kd(t_0)}_{\in T_p\Sigma}+\underbrace{J_\ke(t_0)}_{\in \nu_p\Sigma}$$
and the variational completeness of $(W(\Sigma),\Sigma)$ it follows that $J_0+J_\kd$ is induced by an $N(\Sigma)$-Killing field. But such a field is also a $G$-Killing field and it follows that $J$ is the restriction of a $G$-Killing field to $\gamma$.
\end{proof}

\begin{corollary}\label{c:varcomp}
An isometric action $(G,M)$ is variationally complete if and only if some (and hence any) reduction $(W(\Sigma),\Sigma)$ is variationally complete.
\end{corollary}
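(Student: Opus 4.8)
The plan is to bootstrap Corollary \ref{c:varcomp} from Theorem \ref{t:varcomp}, which already settles the case of a \emph{minimal} reduction. The statement to prove is that variational completeness of $(G,M)$ is equivalent to that of \emph{any} reduction $(W(\Sigma),\Sigma)$, not just a minimal one. The natural strategy is a two-step transitivity argument: first observe that a fat section $\Sigma$ may itself contain a minimal section $\Sigma_{\min}$ of $(G,M)$, and second apply Theorem \ref{t:varcomp} twice, once to the pair $(G,M)$ and once to the pair $(W(\Sigma),\Sigma)$, noting that $\Sigma_{\min}$ is also a minimal section for the reduction.

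First I would invoke Theorem \ref{t:stability}: a subset $\Sigma'\subseteq\Sigma$ is a fat section of $(G,M)$ if and only if it is a fat section of $(W(\Sigma),\Sigma)$, and $\copol(G,M)=\copol(W(\Sigma),\Sigma)$. Choosing $\Sigma'=\Sigma_{\min}$ to be a minimal section of $(G,M)$ contained in $\Sigma$ (one exists: take any $G$-regular $p\in\Sigma$ and the minimal section through $p$, which by Proposition \ref{p:copolarproperties}(iv) is the connected intersection of all fat sections through $p$, hence lies inside $\Sigma$), the stability theorem tells us that $\Sigma_{\min}$ is simultaneously a minimal section of $(G,M)$ and — since the copolarities agree — a minimal section of $(W(\Sigma),\Sigma)$. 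Moreover the fat Weyl group of $\Sigma_{\min}$ computed inside $(W(\Sigma),\Sigma)$ coincides with the fat Weyl group $W(\Sigma_{\min})$ computed inside $(G,M)$; this follows because $N_{W(\Sigma)}(\Sigma_{\min})$ and $Z_{W(\Sigma)}(\Sigma_{\min})$ are, via the defining quotient $N_G(\Sigma)/Z_G(\Sigma)$, identified with $N_G(\Sigma_{\min})/Z_G(\Sigma_{\min})$ (using $Z_G(\Sigma)\subseteq Z_G(\Sigma_{\min})$ and that normalizing $\Sigma_{\min}$ inside $N_G(\Sigma)$ is the same as normalizing it inside $G$, as in the argument of Corollary \ref{c:orbitparam} / Theorem \ref{t:stability}). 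So $(W(\Sigma_{\min}),\Sigma_{\min})$ is a common minimal reduction of both $(G,M)$ and $(W(\Sigma),\Sigma)$.

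Now the corollary follows by a chain of equivalences: by Theorem \ref{t:varcomp} applied to $(G,M)$, this action is variationally complete if and only if $(W(\Sigma_{\min}),\Sigma_{\min})$ is; and by Theorem \ref{t:varcomp} applied to the action $(W(\Sigma),\Sigma)$ with its minimal reduction $(W(\Sigma_{\min}),\Sigma_{\min})$, the action $(W(\Sigma),\Sigma)$ is variationally complete if and only if $(W(\Sigma_{\min}),\Sigma_{\min})$ is. Hence $(G,M)$ is variationally complete iff $(W(\Sigma),\Sigma)$ is, and this holds for every fat section $\Sigma$, which is exactly the claim (the parenthetical ``some (and hence any)'' is then immediate since the minimal case of Theorem \ref{t:varcomp} shows at least one reduction has the property precisely when $(G,M)$ does).

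The main obstacle I expect is the bookkeeping in the middle step: one must be careful that applying Theorem \ref{t:varcomp} to $(W(\Sigma),\Sigma)$ is legitimate, i.e.\ that $(W(\Sigma),\Sigma)$ genuinely is an isometric action of a Lie group on a complete Riemannian manifold (it is, by the Remark following the definition of the fat Weyl group, since $\Sigma$ is closed so $W(\Sigma)$ is a Lie group and acts properly), and that $\Sigma_{\min}$ really is a \emph{minimal} section for it and not merely a fat section — this is where the equality of copolarities from Theorem \ref{t:stability} is essential. The identification $W(\Sigma_{\min})^{(G,M)}=W(\Sigma_{\min})^{(W(\Sigma),\Sigma)}$ of the two fat Weyl groups (so that ``variationally complete'' refers to the same action on $\Sigma_{\min}$ in both applications of Theorem \ref{t:varcomp}) is the one point that needs a short explicit verification rather than a citation, but it is of the same routine nature as the normalizer computations already carried out in the proofs of Corollary \ref{c:orbitparam} and Theorem \ref{t:stability}.
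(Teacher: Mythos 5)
Your proposal is correct and follows essentially the same route as the paper: the paper's own proof likewise uses Theorem \ref{t:stability} to produce a common minimal reduction $(W(\Sigma'),\Sigma')$ with $\Sigma'\subseteq\Sigma$ and then applies Theorem \ref{t:varcomp} twice. Your additional verification that the two fat Weyl groups of $\Sigma_{\min}$ agree is a detail the paper leaves implicit, but it is the right point to check and your argument for it is sound.
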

\begin{proof}
According to Theorem \ref{t:stability}, $(G,M)$ and $(W(\Sigma),\Sigma)$ have a common minimal reduction $(W(\Sigma'),\Sigma')$ with $\Sigma'\subseteq\Sigma$. Hence, we may apply Theorem \ref{t:varcomp} to $(G,M)$ and $(W(\Sigma'),\Sigma')$ and then to $(W(\Sigma),\Sigma)$ and $(W(\Sigma'),\Sigma')$ and vice versa.
\end{proof}
\begin{remark}
Theorem \ref{t:varcomp} and Corollary \ref{c:varcomp} can also be deduced from \cite[Theorem 1.3]{LT2} and our Theorem \ref{t:orbitspaceisometry}. In fact, the first result states that variational completeness only depends on the metric properties of $G\backslash M$, which by the second result is isometric to $W\backslash\Sigma$ for any reduction $(W,\Sigma)$ of $(G,M)$.
\end{remark}
\begin{corollary}
If $(G,M)$ is a polar and variationally complete action, then
every section is free of conjugate points. In particular, if $M$
is a Riemannian manifold of non-negative Ricci curvature or compact
and of non-negative scalar curvature, then a variationally complete
action on $M$ is polar, if and only if it is hyperpolar.
\end{corollary}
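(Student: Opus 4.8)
The plan is to combine the previous corollary with the well-known fact that a variationally complete action on a space of non-negative curvature forces sections to be free of conjugate points, and then invoke Corollary \ref{c:varcomp} to transfer the hypothesis to a minimal reduction. First I would let $(G,M)$ be polar and variationally complete, and let $\Sigma$ be a section (which is flat for polar actions in the strong sense, but here I only want to conclude the absence of conjugate points, so I will argue directly). Since $(G,M)$ is variationally complete, by Corollary \ref{c:varcomp} the minimal reduction $(W(\Sigma),\Sigma)$ is variationally complete; but for a polar action $W(\Sigma)$ is discrete, so $(W(\Sigma),\Sigma)$ being variationally complete is the statement that every geodesic $\gamma$ in $\Sigma$ carries no Jacobi field vanishing at two parameter values $0<t_0$ other than those induced by Killing fields of a discrete group, i.e. none at all; this is precisely the assertion that $\Sigma$ has no conjugate points. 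This gives the first sentence.

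\textbf{The second sentence.} Here I would use the curvature hypotheses on $M$ together with the classical fact that a complete manifold with no conjugate points and of non-negative Ricci (or: compact with non-negative scalar) curvature must be flat — this is a consequence of the Bonnet--Myers-type argument refined by the work on manifolds without conjugate points (e.g. the results in the circle around Green's theorem and the resolution of the Hopf conjecture in the relevant cases: a complete manifold without conjugate points and $\Ric \ge 0$ is Ricci-flat, and compactness together with $\sigma_{\mathrm{scal}} \ge 0$ and no conjugate points forces flatness). Applying this to $\Sigma$, which is totally geodesic in $M$ hence inherits the appropriate curvature lower bound and, by the first part, has no conjugate points, we conclude $\Sigma$ is flat. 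A polar action whose sections are flat is by definition hyperpolar, which gives one implication; the converse (hyperpolar $\Rightarrow$ polar) is immediate from the definitions.

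\textbf{Main obstacle.} The delicate point is the passage ``no conjugate points $+$ non-negative curvature $\Rightarrow$ flat'' for $\Sigma$: one must be careful that $\Sigma$, being totally geodesic, indeed inherits $\Ric_\Sigma \ge 0$ from $\Ric_M \ge 0$ (true, since totally geodesic submanifolds have curvature tensor the restriction of the ambient one, so sectional curvatures only go up, hence Ricci is controlled), and that compactness of $M$ together with $\sigma_{\mathrm{scal}}(M) \ge 0$ transfers to a statement about $\Sigma$ strong enough to invoke the flatness result — here $\Sigma$ need not be compact, so one should phrase the argument via completeness and the scalar-curvature integral bound along $\Sigma$, or simply note that for polar actions a section is automatically a complete flat-or-not totally geodesic submanifold and cite the appropriate rigidity theorem (this is essentially the content of \cite[Theorem 4.1]{GOT} in the form generalized in this section, with ``flat'' weakened to ``no conjugate points''). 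I would make this precise by citing the relevant rigidity result and checking the curvature inheritance; everything else is a routine assembly of Corollary \ref{c:varcomp} and the definitions of polar and hyperpolar.
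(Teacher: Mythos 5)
Your proposal follows essentially the same route as the paper: variational completeness passes to the reduction $(W(\Sigma),\Sigma)$, whose group is discrete for a polar action, so every Jacobi field on $\Sigma$ vanishing at two points must vanish (no conjugate points), and then the rigidity theorems of Mendonca--Zhou (complete, $\Ric\ge 0$, no conjugate points implies flat) resp.\ Green (compact, non-negative total scalar curvature, no conjugate points implies flat) applied to the totally geodesic section give flatness, hence hyperpolarity. One remark: your parenthetical justification that the Ricci bound is inherited because ``sectional curvatures only go up'' is not a correct argument --- for a totally geodesic submanifold the tangential sectional curvatures \emph{equal} the ambient ones and the submanifold Ricci is only a partial trace --- but the paper asserts the same curvature inheritance without further justification, so this does not distinguish your argument from the paper's.
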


\begin{proof}
Polarity implies that the generalized Weyl group $W(\Sigma)$ of
any section $\Sigma$ is discrete. Furthermore, a Lie group acts
variationally complete if and only if its identity component does. However,
if the trivial group acts variationally complete, this only means
that every Jacobi field which vanishes in two different points,
vanishes entirely. Hence, there are no conjugate points in
$\Sigma$. By being totally geodesic, $\Sigma$ inherits the curvature conditions of $M$. By a result of Mendonca and Zhou,
\cite[Corollary 1]{MZ}, resp. Green \cite{G}, we deduce from the
above that $\Sigma$ has to be flat.
\end{proof}

\begin{remark}
Conlon proved in \cite{C} that hyperpolar actions are
variationally complete. In general, the converse is false. Take, for instance, the action of the trivial group on a non-flat space of non-positive curvature. This action is variationally complete and polar, but not hyperpolar. However, Lytchak and Thorbergsson proved in \cite{LT}, that variationally
complete actions on manifolds of non-negative curvature are
hyperpolar.
\end{remark}

We briefly recall the notion of variational co-completeness, which has been introduced in \cite{GOT}. Let $N=G\cdot p$ denote an arbitrary principal orbit and consider the isomorphism $\kj^N(\gamma)\simeq T_pN\oplus\nu_pN$. For a subspace $U_p\subseteq T_pM$ consider the condition:\medskip\\
\begin{tabular}{ll}
(P) & for every $N$-geodesic $\gamma$ and every $J\in\kj^N(\gamma)$, vanishing in some $t_0>0$ with\\
 & $(J(0),J'(0)+A_v J(0))\,\bot\, U_p$ it follows that $J=X|_\gamma$ for some $G$-Killing field $X$.
\end{tabular}
\smallskip\\
If $U_p$ satisfies condition $(P)$, then $g_*U_p$ satisfies this condition in $g\cdot p$. Furthermore, $U_p=T_pM$ always satisfies condition $(P)$.
\begin{definition}
We write $\covar_N(G,M)\le\dim U_p$, if $U_p$ satisfies condition
$(P)$. We say that the \textbf{variational co-completeness} of
$(G,M)$ is less than or equal to $k$, if $\covar_N(G,M)\le k$ holds
for all principal orbits $N$. We also write $\covar(G,M)\le k$.
\end{definition}
A canonical choice for $U_p$ is always
$T_p\Sigma=\nu_pN\oplus\kd_p$, where $\Sigma$ denotes a fat section
through $p$. This is due to Proposition \ref{p:Jacobi}. In
particular, we always have
$$\covar(G,M)\le\cohom(G,M)+\copol(G,M).$$
This estimate can sometimes be considerably improved as in the following result, which is a generalization of \cite[Theorem 4.1]{GOT}. We note however that one only has to replace the condition $\sec(\Sigma)=0$ in the proof of \cite[Lemma 4.2]{GOT} by the condition that $\Sigma$ has no conjugate points. This occurs, for instance, whenever $\sec(M)\le0$.
\begin{theorem}
Let $(G,M)$ be an isometric action and $\Sigma\subseteq M$ a $k$-section. If $\Sigma$ is free of conjugate points in the induced metric, then $\covar(G,M)\le k$. In particular,
$$\covar(G,M)\le\copol(G,M).$$
\end{theorem}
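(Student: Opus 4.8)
The plan is to follow the strategy of \cite[Theorem 4.1]{GOT}, using the refined Jacobi-field decomposition of Proposition \ref{p:Jacobi} together with the co-completeness estimate $\covar(G,M)\le\cohom(G,M)+\copol(G,M)$ obtained from taking $U_p=T_p\Sigma=\nu_pN\oplus\kd_p$. The point is to show that, when $\Sigma$ has no conjugate points, the canonical subspace $U_p$ can be shrunk from $T_p\Sigma$ down to a subspace $U_p'$ of dimension $k=\dim\kj_\ke^N(\gamma)$-many… more precisely, of dimension equal to the defect $k$ of the $k$-section. So first I would fix a principal orbit $N=G\cdot p$ with $p\in\Sigma$, recall from Proposition \ref{p:Jacobi} the decomposition $\kj^N(\gamma)=\kj_0^N(\gamma)\oplus\kj_\kd^N(\gamma)\oplus\kj_\ke^N(\gamma)$, and note that under the isomorphism $\kj^N(\gamma)\simeq T_pN\oplus\nu_pN$, $J\mapsto(J(0),J'(0)+A_vJ(0))$, the summand $\kj_\ke^N(\gamma)$ corresponds exactly to $\ke_p\times\{0\}$.

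Next I would isolate the key lemma: if $\Sigma$ is free of conjugate points, then every $N$-Jacobi field $J$ with $J(0)\in T_p\Sigma$, $J'(0)+A_vJ(0)\in\nu_p\Sigma$ (i.e.\ with $(J(0),J'(0)+A_vJ(0))\perp\ke_p$ in the sense of the pairing), and with $J(t_0)=0$ for some $t_0>0$, must vanish identically — hence is trivially the restriction of the zero Killing field. This is the analogue of \cite[Lemma 4.2]{GOT}, with the flatness hypothesis $\sec(\Sigma)=0$ replaced by absence of conjugate points. The argument: such a $J$ decomposes by Lemma \ref{l:jacobi_decomp} as $J=Y+Z$ with $Y$ tangent to $\Sigma$ and $Z$ perpendicular; the hypotheses on $J(0)$ and $J'(0)+A_vJ(0)$ force $Z$ to have $Z(0)=0$ and $Z'(0)=0$ (using that $\gamma'(0)\in\nu_pN\subseteq T_p\Sigma$ so $A_v$ respects the splitting on the relevant pieces), so $Z\equiv0$ by uniqueness; and $Y$ is then a Jacobi field of $\Sigma$ vanishing at $\gamma(0)$ in the $\tilde N$-geodesic sense and again at $\gamma(t_0)$, whence $Y\equiv0$ since $\Sigma$ has no conjugate points. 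Once this lemma is in hand, it says precisely that $U_p':=\ke_p$ satisfies condition $(P)$: if $(J(0),J'(0)+A_vJ(0))\perp\ke_p$ and $J$ vanishes somewhere, then $J$ is induced by a ($G$-)Killing field (the zero one). Therefore $\covar_N(G,M)\le\dim\ke_p=k$ for every principal orbit $N$, giving $\covar(G,M)\le k$, and specializing $\Sigma$ to a minimal section yields $\covar(G,M)\le\copol(G,M)$.

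The main obstacle I expect is the bookkeeping around the shape operator $A_v$ and the splitting: one must check carefully that the condition $(J(0),J'(0)+A_vJ(0))\perp\ke_p$, unwound through the isomorphism, really does translate into $Z(0)=0$ and $Z'(0)=0$ for the normal part $Z$, and in particular that $A_v$ does not mix the $\Sigma$-tangential and $\Sigma$-normal components in a way that spoils this — here one uses that $\Sigma$ is totally geodesic (so its shape operator in $M$ vanishes) and that $\gamma$ is tangent to $\Sigma$, together with Lemma \ref{l:jacobi_decomp}'s statement that all derivatives of $Z$ stay normal to $\Sigma$. A second, more minor point is to justify that ``$\Sigma$ free of conjugate points'' is exactly the hypothesis needed where \cite[Lemma 4.2]{GOT} invoked flatness — but since the only use of flatness there is to conclude that a Jacobi field of $\Sigma$ vanishing at two points is identically zero, and this is the definition of having no conjugate points, the replacement is immediate. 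The final ``in particular'' is then just the observation that a $\copol(G,M)$-section is a $k$-section with $k=\copol(G,M)$.
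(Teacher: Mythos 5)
Your overall strategy --- shrink the canonical subspace $U_p=T_p\Sigma$ using the decomposition of Proposition \ref{p:Jacobi}, and observe that the only role of flatness in the argument of \cite[Lemma 4.2]{GOT} is to force a Jacobi field of $\Sigma$ vanishing at two points to vanish identically --- is the right one, and it is what the paper intends. But you have chosen the wrong subspace, and this breaks the argument in two places. The subspace that satisfies condition $(P)$ is $U_p'=\kd_p=T_pN\cap T_p\Sigma$, not $\ke_p=T_pN\cap\nu_p\Sigma$ (this is exactly the content of the Corollary following the theorem). First, the dimension count: by Definition \ref{d:copolar} the integer $k$ is the codimension of $\nu_p(G\cdot p)$ in $T_p\Sigma$, and since $T_p\Sigma=\kd_p\oplus\nu_pN$ at a regular point this gives $k=\dim\kd_p$; in general $\dim\ke_p=\dim(G\cdot p)-k\neq k$ (for $k=2$, $n=3$ in Example \ref{e:standard} one has $\dim\kd_p=1$ but $\dim\ke_p=2$), so even if your key lemma held you would only obtain $\covar_N(G,M)\le\dim(G\cdot p)-k$, which is not the claimed bound.

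Second, and more seriously, the key lemma is false as stated. Under the isomorphism $J\mapsto(J(0),J'(0)+A_vJ(0))$, orthogonality to $\ke_p$ just says $J(0)\in\kd_p$, i.e.\ it kills the summand $\kj^N_\ke(\gamma)$ --- but that is precisely the summand which Proposition \ref{p:Jacobi} already identifies as restrictions of $G$-Killing fields, so you are discarding the harmless part and keeping $J=J_0+J_\kd$. Now $J_\kd$ is an $\tilde N$-Jacobi field in $\Sigma$ with $J_\kd(0)\in\kd_p=T_p\tilde N$ possibly nonzero; its vanishing at some $t_0>0$ detects a focal point of the orbit $\tilde N=W\cdot p$ in $\Sigma$, not a conjugate point of $\Sigma$, and absence of conjugate points does not exclude focal points of a positive-dimensional submanifold (already in a flat $\Sigma$ a non-totally-geodesic orbit $\tilde N$ has focal points). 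Your phrase ``$Y$ vanishes at $\gamma(0)$ in the $\tilde N$-geodesic sense'' is exactly where this is papered over: an $\tilde N$-Jacobi field need not vanish at $0$. The correct choice $U_p'=\kd_p$ repairs everything: orthogonality to $\kd_p$ means $J(0)\in\ke_p$, so $J=J_0+J_\ke$; evaluating at $t_0$ and splitting into the $\Sigma$-tangential and $\Sigma$-normal parts gives $J_0(t_0)=0$ and $J_\ke(t_0)=0$; then $J_0$ is a Jacobi field of $\Sigma$ vanishing at $0$ and at $t_0$, hence $J_0\equiv0$ by the no-conjugate-point hypothesis, and $J=J_\ke$ is Killing-induced by Proposition \ref{p:Jacobi}. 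This yields $\covar_N(G,M)\le\dim\kd_p=k$, and the ``in particular'' follows by taking $\Sigma$ minimal.
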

 We even obtain Corollary 4.5 of loc. cit. under these relaxed conditions:
\begin{corollary}
Let $(G,M)$ be an isometric action and let $\Sigma$ be a pre-section, wich we assume to have no conjugate points. Let $N$ be a principal orbit and let $p\in N\cap\Sigma$. Then $\kd_p=T_p N\cap T_p\Sigma$ has property $(P)$.
\end{corollary}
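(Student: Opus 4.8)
The plan is to split an arbitrary $N$-Jacobi field meeting the hypothesis of $(P)$ into a piece that visibly comes from a $G$-Killing field and a piece that lives inside $\Sigma$, and then to kill the second piece using the absence of conjugate points in $\Sigma$. First I would unwind condition $(P)$ for $U_p=\kd_p$. Since $\kd_p\subseteq T_pN$ and, for any $J\in\kj^N(\gamma)$, the vector $J'(0)+A_vJ(0)$ lies in $\nu_pN$, the orthogonality $(J(0),J'(0)+A_vJ(0))\perp\kd_p$ is equivalent to $J(0)\perp\kd_p$; together with $J(0)\in T_pN=\kd_p\oplus\ke_p$ (orthogonal, by Definition \ref{d:kd_ke} and Proposition \ref{p:orthogonal_decomp}) this says precisely $J(0)\in\ke_p$. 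Since $p\in N$ is $G$-regular and $\Sigma$ is a pre-section, property \ref{pr:3} gives $v=\gamma'(0)\in\nu_pN\subseteq T_p\Sigma$, and by property \ref{pr:1} the $N$-geodesic $\gamma$ then lies entirely in $\Sigma$, where it is a geodesic.

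Next I would use the isomorphism $\kj^N(\gamma)\simeq T_pN\oplus\nu_pN$, $J\mapsto(J(0),J'(0)+A_vJ(0))$, to write $J=J_1+J_2$ with $J_1\leftrightarrow(J(0),0)$ and $J_2\leftrightarrow(0,J'(0)+A_vJ(0))$. Because $J(0)\in\ke_p\subseteq T_p(G\cdot p)$, there is a $G$-Killing field $X_1$ with $X_1(p)=J(0)$; its restriction $X_1|_\gamma$ is an $N$-Jacobi field, and Lemma \ref{l:Killing_Jacobi} forces $(X_1|_\gamma)'(0)+A_v(X_1|_\gamma)(0)=0$, so $X_1|_\gamma\leftrightarrow(J(0),0)$, i.e.\ $J_1=X_1|_\gamma$; moreover Lemma \ref{l:Jacobi_orth} shows $J_1$ is orthogonal to $\Sigma$ at every point. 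For $J_2$ we have $J_2(0)=0$ and $J_2'(0)=J'(0)+A_vJ(0)\in\nu_pN\subseteq T_p\Sigma$; applying Lemma \ref{l:jacobi_decomp} to the totally geodesic $\Sigma$ and writing $J_2=Y+Z$ with $Z\perp\Sigma$, the conditions $Z(0)=0$ and $Z'(0)=0$ (the latter because $J_2'(0)\in T_p\Sigma$ while $Z'(0)\in\nu_p\Sigma$) give $Z\equiv0$, so $J_2=Y$ is a Jacobi field of $\Sigma$.

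Finally I would exploit the vanishing at $t_0$: from $0=J(t_0)=J_1(t_0)+J_2(t_0)$ with $J_1(t_0)\in\nu_{\gamma(t_0)}\Sigma$ and $J_2(t_0)\in T_{\gamma(t_0)}\Sigma$ the two summands vanish separately, so $J_2(t_0)=0$. Thus $J_2$ is a Jacobi field of $\Sigma$ vanishing at the two distinct parameters $0$ and $t_0$; since $\Sigma$ has no conjugate points in the induced metric, $J_2\equiv0$, whence $J=J_1=X_1|_\gamma$ is the restriction of a $G$-Killing field, as $(P)$ demands. The only delicate point — and the place where the argument genuinely departs from the fat-section setting of Proposition \ref{p:Jacobi} — is that $\Sigma$ is merely a pre-section, so there is no fat Weyl group and Proposition \ref{p:Jacobi} is unavailable; instead one must check directly, via Lemma \ref{l:jacobi_decomp}, that the $J_2$-component stays in $\Sigma$, and one must compare $J_1$ and $J_2$ only through the pointwise orthogonal splitting $T_{\gamma(t)}M=T_{\gamma(t)}\Sigma\oplus\nu_{\gamma(t)}\Sigma$.
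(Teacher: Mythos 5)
Your proof is correct and is essentially the argument the paper intends: the paper delegates this corollary to the proof of \cite[Lemma 4.2 and Corollary 4.5]{GOT} with flatness of $\Sigma$ replaced by absence of conjugate points, and your decomposition of $J$ into the Killing-induced, $\Sigma$-orthogonal part $J_1$ (via Lemmas \ref{l:Killing_Jacobi} and \ref{l:Jacobi_orth}) and the $\Sigma$-tangent Jacobi field $J_2$ killed by the no-conjugate-points hypothesis is precisely that argument, written out in full. Your closing observation that Proposition \ref{p:Jacobi} is unavailable for a mere pre-section and must be replaced by a direct appeal to Lemma \ref{l:jacobi_decomp} is also the right caveat.
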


\section[Global Resolutions]{Global Resolutions of Isometric Actions with Respect to Fat Sections}\label{s:resolution}
In this section we define the (global) resolution $M_\Sigma$ of an isometric action $(G,M)$ with respect to an arbitrary fat section $\Sigma$. This is related to the \emph{core resolution construction} of Grove and Searle in \cite{GS}. The reason, why $M_\Sigma$ is called a resolution, is that it is a $G$-space whose isotropy groups are smaller than those of $(G,M)$. Roughly speaking, the $G$-orbits on $M_\Sigma$ are less singular than the $G$-orbits on $M$. In the following, let $(G,M)$ be an isometric action and let $\Sigma$ be a fat section. Put $N=N_G(\Sigma)$ and $H=Z_G(\Sigma)$. Then $W=N\!/\!H$ is the fat Weyl group of $\Sigma$. Since $\Sigma$ is a $W$-space, we may form the associated bundle $G\!/\!H\times_W\Sigma\twoheadrightarrow G\!/\!N$ with fibre $\Sigma$, where $G\!/\!H\times_W\Sigma$ is the orbit space under the diagonal $W$-action on $G\!/\!H\times \Sigma$ given by $nH\cdot (gH,s):=(gn^{-1}H,n\cdot s)$.
Its total space is a $G$-space with respect to the $G$-action $l\cdot[gH,s]:=[lgH,s]$.

\begin{definition}
The \textbf{resolution} of $(G,M)$ with respect to $\Sigma$ is defined as
$$M_\Sigma:=G\!/\!H\times_W\Sigma.$$
If $\Sigma$ is a minimal section, we call $M_\Sigma$ a \textbf{minimal resolution}.
\end{definition}

We now list some features related to $M_\Sigma$ (c.f. \cite{GS}, Theorem 2.1):
\begin{theorem}\label{t:weylcover}
Let $\varphi:G\times M\to M$ denote the group action $(G,M)$. Then
\begin{enumerate}
\item The group action $\varphi$ induces a smooth and surjective $G$-equivariant map:
$$\tilde\varphi:M_\Sigma\twoheadrightarrow M,[gH,s]\mapsto g\cdot s.$$

\item The isotropy group of a point $[eH,s]\in M_\Sigma=G\!/\!H\times_W\Sigma$ is given by:
$$G_{[eH,s]}=N\cap G_s=N_{G_s}(\Sigma).$$

\item $\Sigma$ is canonically $N$-equivariantly immersed into $M_\Sigma$ via the map $s\mapsto [eH,s]$. The image $\tilde\Sigma$ is embedded into $M_\Sigma$ because it is a fibre of $M_\Sigma\twoheadrightarrow G\backslash N$, and furthermore it intersects every $G$-orbit on $M_\Sigma$. It follows that $\tilde\varphi$ restricts to a $W$-equivariant diffeomorphism between $\tilde\Sigma$ and $\Sigma$.

\item The set of $G$-regular points $(M_\Sigma)^\reg$ can be identified with $G\!/\!H\times_W\Sigma^\reg$, and $\tilde\varphi$ restricts to a $G$-equivariant diffeomorphism from $(M_\Sigma)^\reg$ onto $M^\reg$.
This yields a bundle with structure group $W$ and totally geodesic fibres $g\cdot\Sigma^\reg,\, g\in G$:
$$\pi:M^\reg\twoheadrightarrow G\!/\!N,\, g\cdot s\mapsto gN.$$

\item The orbit spaces $G\backslash M_\Sigma$ and $G\backslash M$ are canonically homeomorphic.

\item $d\tilde\varphi_{[eH,s]}:T_{[eH,s]}M_\Sigma\to T_sM$ is a linear isomorphism if and only if
\begin{equation}\tag{$*$}
T_s(G\cdot s)+T_s\Sigma=T_s M.
\end{equation}
This is furthermore equivalent to $G_s\subseteq N$ and also to $(G_s)^\circ=(N\cap G_s)^\circ$.\linebreak
$\tilde\varphi$ is a $G$-equivariant diffeomorphism if and only if $(*)$ is satisfied for all $s\in\Sigma$.

\item The $G$-translates of $\tilde\Sigma$ foliate $M_\Sigma$.
\end{enumerate}
\end{theorem}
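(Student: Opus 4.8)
The plan is to prove the seven statements in a convenient order, not the listed one, using throughout that $W=N/H$ acts freely on $G/H$ (if $gn^{-1}H=gH$ then $n\in H$), so that $M_\Sigma$ is a smooth manifold and $q:G/H\times\Sigma\twoheadrightarrow M_\Sigma$ is a principal $W$-bundle, in particular a surjective submersion. For (i) the map $G/H\times\Sigma\to M$, $(gH,s)\mapsto g\cdot s$, is well defined because $H=Z_G(\Sigma)$ fixes $\Sigma$ pointwise, and it is constant on $W$-orbits since $(gn^{-1})\cdot(n\cdot s)=g\cdot s$; hence it factors as $\tilde\varphi\circ q$, so $\tilde\varphi$ is smooth; $G$-equivariance is the identity $\tilde\varphi([lgH,s])=lg\cdot s$, and surjectivity is property \ref{pr:2}. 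For (ii) I would solve $l\cdot[eH,s]=[eH,s]$ directly: this means $(n^{-1}H,n\cdot s)=(lH,s)$ for some $n\in N$, i.e. $n\in N\cap G_s$ and $l\in n^{-1}H\subseteq N\cap G_s$ (using $H\subseteq G_s$); conversely $n:=l^{-1}$ works, and $N\cap G_s=N_{G_s}(\Sigma)$ by definition of $N$.

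For (iii) put $j:\Sigma\to M_\Sigma$, $s\mapsto[eH,s]$. The identity $[nH,s]=[eH,n\cdot s]$ (apply $n^{-1}H\in W$) gives the $N$-equivariance and shows that $\tilde\Sigma:=j(\Sigma)$ is the fibre over $eN$ of the associated bundle $M_\Sigma\to G/N$, hence an embedded submanifold; it meets every $G$-orbit because $g^{-1}\cdot[gH,s]=[eH,s]$; and $\tilde\varphi\circ j=\mathrm{id}_\Sigma$ exhibits $\tilde\varphi|_{\tilde\Sigma}$ as the ($W$-equivariant) inverse diffeomorphism. Injectivity of $j$ and that it is an immersion I would get from a tangent-space computation at $(eH,s)$: the tangent to the $W$-orbit there is $\{(-\bar X,X^\ast_s):X\in\mathfrak n\}$, and it meets $\{0\}\oplus T_s\Sigma$ only in $0$ since $\bar X=0$ forces $X\in\mathfrak h$, whence $X^\ast_s=0$. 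Statement (vii) is then immediate: by $G$-equivariance of $M_\Sigma\to G/N$, the translate $g\cdot\tilde\Sigma$ is the fibre over $gN$, so the $G$-translates of $\tilde\Sigma$ are precisely the pairwise disjoint fibres of a locally trivial bundle and therefore foliate $M_\Sigma$.

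For (vi), differentiating the action map on $G/H\times\Sigma$ at $(eH,s)$ — noting $\mathfrak h\subseteq\mathfrak g_s$, so the fundamental-field map descends to $\mathfrak g/\mathfrak h=T_{eH}(G/H)$ — shows that the image of $d\tilde\varphi_{[eH,s]}$ is $T_s(G\cdot s)+T_s\Sigma$; since $\dim M_\Sigma=\dim M$ (forced by the bijective submersion onto $M^\reg$ established in (iv) below), $d\tilde\varphi_{[eH,s]}$ is an isomorphism iff it is onto iff $(\ast)$ holds. For the equivalences I would use Proposition \ref{p:orthogonal_decomp} to write $T_s(G\cdot s)+T_s\Sigma=T_s(G\cdot s)\oplus V_s$ with $V_s=\nu_s(G\cdot s)\cap T_s\Sigma$, so $(\ast)\Leftrightarrow V_s=\nu_s(G\cdot s)$; then Theorem \ref{t:slicerep} (whose proof gives $N\cap G_s=N_{G_s}(V_s)$) yields $(\ast)\Rightarrow G_s\subseteq N$; trivially $G_s\subseteq N\Rightarrow(G_s)^\circ=(N\cap G_s)^\circ$; and $(G_s)^\circ=(N\cap G_s)^\circ\Rightarrow(\ast)$ because a proper $V_s\subsetneq\nu_s(G\cdot s)$ would, by property \ref{pr:3} applied to the fat section $V_s$ of the slice representation, force $V_s^\bot\subseteq T_v(G_s\cdot v)$ at a $G_s$-regular $v\in V_s$, so $\mathfrak g_s$, hence $(G_s)^\circ$, cannot leave $V_s$ invariant. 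The last assertion follows since $d\tilde\varphi$ is an isomorphism at $[gH,s]$ iff at $[eH,s]$ (equivariance), and a bijective local diffeomorphism is a diffeomorphism; the required bijectivity of $\tilde\varphi$ when $(\ast)$ holds everywhere comes from Corollary \ref{c:orbitparam} together with property \ref{pr:4}.

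The heart of the theorem is (iv), and this is where I expect the main obstacle. First $\tilde\varphi^{-1}(M^\reg)=G/H\times_W\Sigma^\reg$, since $M^\reg$ is $G$-invariant and the second coordinate of any representative lies in $\Sigma$. On this open dense $G$-invariant set $\tilde\varphi$ is onto $M^\reg$ and injective (if $g\cdot s=g'\cdot s'$ with $s,s'\in\Sigma^\reg$, then $s'\in(G\cdot s)\cap\Sigma=N\cdot s$ by Corollary \ref{c:orbitparam}, and property \ref{pr:4} lets one move the representatives into $N$), and a submersion there because $\nu_s(G\cdot s)\subseteq T_s\Sigma$ for $G$-regular $s\in\Sigma$ gives $(\ast)$; a bijective submersion is a diffeomorphism, which yields the $G$-equivariant diffeomorphism onto $M^\reg$, and transporting $M_\Sigma\to G/N$ along it gives the $W$-bundle $\pi:M^\reg\to G/N$ with totally geodesic fibres $g\cdot\Sigma^\reg$ (Proposition \ref{p:copolarproperties}(v)). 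It remains to identify this open set with $(M_\Sigma)^\reg$: for $s\in\Sigma^\reg$ one has $G_{[eH,s]}=N\cap G_s=G_s$ by property \ref{pr:4}, and every $G_{[gH,s']}=g(N\cap G_{s'})g^{-1}$ contains a conjugate of $G_p$ for a $G$-regular $p\in\Sigma$ in a slice at $s'$ (Lemma \ref{l:sliceregular} and property \ref{pr:4}), with $G_p$ conjugate to $G_s$ (proof of Lemma \ref{l:W-reg_is_G-reg}); hence $G_s$ is a principal isotropy group of $(G,M_\Sigma)$ and $G/H\times_W\Sigma^\reg\subseteq(M_\Sigma)^\reg$. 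Conversely, if $[eH,s']\in(M_\Sigma)^\reg$ then $N\cap G_{s'}$ is a principal isotropy group of $(G,M_\Sigma)$, hence conjugate to $G_p$ while containing it, so $N\cap G_{s'}=G_p$ (a compact group conjugate to a subgroup of itself equals it); then $W_{s'}=(N\cap G_{s'})/H=G_p/H=W_p$ is a principal isotropy group of $(W,\Sigma)$, so $s'$ is $W$-regular and therefore $G$-regular by Lemma \ref{l:W-reg_is_G-reg}. Finally (v) is formal: $\tilde\varphi$ induces a continuous bijection $G\backslash M_\Sigma\to G\backslash M$ (bijectivity on orbits again via Corollary \ref{c:orbitparam}), which is a homeomorphism because both sides are canonically identified with $W\backslash\Sigma$ — the left via $\tilde\Sigma$, the right by Theorem \ref{t:orbitspaceisometry} — compatibly with $\tilde\varphi$.
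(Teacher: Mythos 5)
Your proof is correct, and while its overall skeleton matches the paper's (factoring through the principal $W$-bundle $G\!/\!H\times\Sigma\to M_\Sigma$ for (i), the direct isotropy computation for (ii), the injectivity argument on the regular part via Corollary \ref{c:orbitparam} and property \ref{pr:4}, the computation $\im(d\tilde\varphi_{[eH,s]})=T_s(G\cdot s)+T_s\Sigma$, and the continuous inverse for (v) routed through $W\backslash\Sigma$ and Theorem \ref{t:orbitspaceisometry}), you diverge in three places, each time with a cleaner or more complete argument. For (vii) the paper invokes Corollary \ref{c:isotropy_transitive} to show that $g\cdot\tilde\Sigma$ is the only translate through $[gH,s]$, whereas you observe that the $G$-translates of $\tilde\Sigma$ are exactly the fibres of $M_\Sigma\to G\!/\!N$; both work, yours is shorter. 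For the equivalences in (vi) the paper proves $(\ast)\Leftrightarrow G_s\subseteq N$ directly in both directions and then obtains the equivalence with $(G_s)^\circ=(N\cap G_s)^\circ$ by a separate dimension count through its condition $(\ast\ast\ast)$; your cyclic chain $(\ast)\Rightarrow G_s\subseteq N\Rightarrow (G_s)^\circ=(N\cap G_s)^\circ\Rightarrow(\ast)$, with the last implication obtained by playing property \ref{pr:3} of the fat section $V_s$ of the slice representation against the $(G_s)^\circ$-invariance of $V_s$, avoids that bookkeeping, and your direct bijectivity argument for the global diffeomorphism claim replaces the paper's density argument. Finally, for the identification $(M_\Sigma)^\reg=G\!/\!H\times_W\Sigma^\reg$ in (iv) the paper only says this ``follows from (ii) and (iii)''; your argument via principal isotropy groups (every $G_{[gH,s']}$ contains a conjugate of $G_p$ for a nearby $G$-regular $p\in\Sigma$, and conversely $N\cap G_{s'}$ principal forces $W_{s'}$ principal, hence $s'\in\Sigma^\reg$ by Lemma \ref{l:W-reg_is_G-reg}) supplies the detail the paper omits.
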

\begin{proof}
(i): If $[gH,s]=[\tilde gH,\tilde s]\in M_\Sigma$, then there is some $n\in N$ and $h\in H$ with
$$(\tilde g,\tilde s)=(gn^{-1}h,n\cdot s).$$
It follows that
$$\tilde g\cdot \tilde s=g\underbrace{n^{-1}hn}_{\in H}\cdot s=g\cdot s$$
and we have shown that $\tilde\varphi$ is well defined. Since $\Sigma$ intersects every orbit, it follows that $\varphi$ restricted to $G\times\Sigma$ maps onto $M$. Furthermore, $H$ acts trivially on $\Sigma$, and thus
$$G\!/\!H\times \Sigma^\reg\to M^\reg,\ (gH,s)\mapsto g\cdot s$$
(again denoted by $\varphi$) is still surjective. The following diagram commutes:
$$\xymatrix{
G\!/\!H\times\Sigma\ar@{->>}[rd]^{\varphi}\ar@{->>}_\pr[d]&\\
M_\Sigma\ar@{->}[r]_{\tilde\varphi}&M.
}$$
From this we can read off that $\tilde\varphi$ is also surjective and $G$-equivariant, and since the vertical map is a surjective submersion, it follows that $\tilde\varphi$ is smooth.

(ii): Let $g\in G_{[eH,s]}$ be arbitrary. Then there exists some $n\in N$ and $h\in H$ such that $(g,s)=(n^{-1}h,n\cdot s)$. This implies $n\in G_s$ and therefore $gh^{-1}\in G_s$. Since $H\subseteq G_s$, it follows that $g\in G_s\cap N$. If conversely $g\in G_s\cap N$, then
$$g\cdot[eH,s]=[gH,s]=[eH,g^{-1}\cdot s]=[eH,s],$$
showing that $g\in G_{[eH,s]}$. 

(iii): This statement is easily verified.

(iv): The first part follows from (ii) and (iii). It remains to show that $\tilde\varphi|_{(M_\Sigma)^\reg}$ is injective with smooth inverse. Suppose that $g\cdot s=\tilde g\cdot\tilde s$ for $g,\tilde g\in G$ and $s,\tilde s\in\Sigma^\reg$. Then $\tilde s=\tilde g^{-1}g\cdot s$ and property \ref{pr:4} of a fat section implies $n:=\tilde g^{-1}g\in N$. Hence,
$$[gH,s]=[gn^{-1}H,n\cdot s]=[\tilde g,\tilde s].$$
By property \ref{pr:3} of a fat section, $\Sigma$ is transversal to every principal orbit. Using (vi) it follows that $\tilde\varphi|_{(M_\Sigma)^\reg}$ is a submersion and thus a diffeomorphism.

(v): The map $f:G\backslash M_\Sigma\to G\backslash M,\ G\cdot[eH,s]\mapsto G\cdot s$ is well defined and makes
$$\xymatrix{
M_\Sigma\ar@{->>}[r]^{\tilde\varphi}\ar@{->>}_\pr[d]& M\ar@{->>}^\pr[d]\\
G\backslash M_\Sigma\ar@{->}_f[r]&G\backslash M
}$$
commute. Hence $f$ is continuous and surjective. It is also easy to see that $f$ is injective. To show that $f^{-1}$ is continuous, we write it as a composition of continuous maps:
$$\begin{array}{ccccccc}
G\backslash M & \stackrel{\tilde\iota^{-1}}{\to} & W\backslash\Sigma & \to & W\backslash\tilde\Sigma & \to & G\backslash M_\Sigma,\\
G\cdot s&\mapsto & W\cdot s &\mapsto &W\cdot[eH,s]&\mapsto & G\cdot[eH,s].
\end{array}
$$
Here $\tilde\iota$ is the map from Theorem \ref{t:orbitspaceisometry} and the other two maps are the continuous injections induced by the continuous maps $\Sigma\hookrightarrow\tilde\Sigma$, resp. $\tilde\Sigma\hookrightarrow M_\Sigma$, both of which appear in (iii).

(vi): From the diagram in the proof of (i) we see that $d\tilde\varphi_{[eH,s]}:T_{[eH,s]}M_\Sigma\to T_sM$ is surjective if and only if
$d\varphi_{(eH,s)}:T_{(eH,s)}G\!/\!H\times\Sigma\to T_s M$ is surjective. We have
$$d\varphi_{(eH,s)}(X+\gothh,v)=X_s+v,$$
where $X_s$ is the value of the Killing field induced by $X\in\gothg$ on $M$ in $s$. This yields
$\im(d\varphi_{(eH,s)})=T_s(G\cdot s)+T_s\Sigma$, and thus $d\varphi_{(eH,s)}$ is onto if and only if $(*)$ holds.

By Proposition \ref{p:orthogonal_decomp}
\begin{equation}\tag{$**$}
T_s(G\cdot s)+ T_s\Sigma=T_s(G\cdot s)\oplus(T_s\Sigma\cap\nu_s(G\cdot s)).
\end{equation}
Since the decomposition on the right is orthogonal, $(*)$ is equivalent to $\nu_s(G\cdot s)\subseteq T_s\Sigma.$
This in turn is equivalent to the statement that $G_s\subseteq N$. In fact, since the $G_s$-regular points in $\nu_s(G\cdot s)$ correspond to $G$-regular points in $M$ under the exponential map, it follows from property \ref{pr:4} of a fat section that, if $\nu_s(G\cdot s)\subseteq T_s\Sigma$ holds, then $G_s\subseteq N$. Conversely, if $G_s\subseteq N$ then, according to the Slice Theorem \ref{t:slicerep}, $\nu^\Sigma_s(W\cdot s)$ is a $G_s$-invariant subspace of $\nu_s(G\cdot s)$. However, this just means $\nu_s(G\cdot s)=\nu^\Sigma_s(W\cdot s)\subseteq T_s\Sigma.$

Again by Proposition \ref{p:orthogonal_decomp}, we have $T_s(G\cdot s)+T_s\Sigma=T_s\Sigma\oplus (T_s(G\cdot s)\cap\nu_s(\Sigma)).$
Thus, $(*)$ is furthermore equivalent to
\begin{equation}\tag{$***$}
\dim M=\dim\Sigma+(\dim(G\cdot s)-\dim(W\cdot s))
\end{equation}
Let $(\cdot)_\princ$ denote a principal isotropy group for the action in parentheses. Then
$$\dim\Sigma=\cohom(G,M)+\dim W-\underbrace{\dim (W,\Sigma)_\princ}_{=\dim (G,M)_\princ-\dim H}$$
The right hand side of $(***)$ is therefore equal to:
\begin{eqnarray*}
&&\cohom(G,M)+\dim W-\dim (W,\Sigma)_\princ+(\dim G-\dim G_s-\dim W+\dim W_s)\\
&=& \dim M+\dim H-\dim G_s+\underbrace{\dim W_s}_{=\dim(G_s\cap N)-\dim H}\\
&=&\dim M+\dim(G_s\cap N)-\dim G_s.
\end{eqnarray*}
It follows that $(***)$ is equivalent to $\dim(G_s\cap N)=\dim G_s$, or $(G_s)^\circ=(G_s\cap N)^\circ$.

Suppose that $\tilde\varphi$ is a local diffeomorphism. Since $\tilde\varphi$ restricted to $(M_\Sigma)^\reg$ is a diffeomorphism onto $M^\reg$ and since the regular points form an open and dense subset of their surrounding space, it follows that $\tilde\varphi$ is a diffeomorphism from $M_\Sigma$ onto $M$.

(vii): Let $q:=[gH,s]\in M_\Sigma$ be arbitrary. Due to Corollary \ref{c:isotropy_transitive}, $G_q=g(N\cap G_s)g^{-1}$ is transitive on the set of $G$-translates of $\tilde\Sigma$ that contain $q$. Clearly, $g\cdot\tilde\Sigma$ contains $q$. For an arbitrary $gng^{-1}\in G_q$, where $n\in N(\Sigma)\cap G_s$, we have
$(gng^{-1})\cdot(g\cdot\tilde\Sigma)=(gn)\cdot\tilde\Sigma=g\cdot\tilde\Sigma.$
Therefore, the only $G$-translate through $q$ is $g\cdot\tilde\Sigma$.
\end{proof}

\begin{corollary}
If $(G,M)$ has only principal or exceptional orbits, then $M_\Sigma\simeq M$.
\end{corollary}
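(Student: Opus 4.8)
The plan is to deduce the statement from part (vi) of Theorem~\ref{t:weylcover}, which tells us that $\tilde\varphi:M_\Sigma\to M$ is a $G$-equivariant diffeomorphism as soon as condition $(*)$ -- equivalently $\nu_s(G\cdot s)\subseteq T_s\Sigma$ -- holds at \emph{every} point $s\in\Sigma$; the last paragraph of the proof of (vi) already upgrades ``local diffeomorphism'' to ``diffeomorphism'' once this is known. So the whole problem reduces to checking $(*)$ at an arbitrary $s\in\Sigma$.

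The only input needed from the hypothesis is the standard fact that, if every $G$-orbit is principal or exceptional, then all isotropy groups of $(G,M)$ have one and the same dimension $d:=\dim(G,M)_\princ$. From this I would first show that $(G_s)^\circ$ acts trivially on the slice $\nu_s(G\cdot s)$: the slice representation $(G_s,\nu_s(G\cdot s))$ has principal isotropy $(G_s)_v$ for $v$ in an open dense subset of $\nu_s(G\cdot s)$, and choosing such a $v$ small enough we have $G_{\exp_s(v)}=(G_s)_v$; by the hypothesis $\dim(G_s)_v=d=\dim G_s$, hence $(G_s)^\circ\subseteq(G_s)_v$. Since the fixed-point set of $(G_s)^\circ$ in $\nu_s(G\cdot s)$ is a linear subspace and, by linearity of the action, contains a dense set, it is all of $\nu_s(G\cdot s)$. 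In particular every $G_s$-orbit in the slice is $0$-dimensional.

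Finally I would feed this into the Slice Theorem~\ref{t:slicerep}: $V_s:=\nu_s(G\cdot s)\cap T_s\Sigma$ is a fat section of $(G_s,\nu_s(G\cdot s))$, and by Lemma~\ref{l:sliceregular} there is a $G_s$-regular $v\in V_s$. Property~\ref{pr:3} for the fat section $V_s$ gives $\nu_v(G_s\cdot v)\subseteq T_vV_s=V_s$, i.e.\ the orthogonal complement $V_s^\perp$ of $V_s$ in $\nu_s(G\cdot s)$ lies in $T_v(G_s\cdot v)$; but $G_s\cdot v$ is $0$-dimensional, so $T_v(G_s\cdot v)=0$ and hence $V_s^\perp=0$. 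Thus $\nu_s(G\cdot s)=V_s\subseteq T_s\Sigma$, which is exactly $(*)$ at $s$, and as $s$ was arbitrary, Theorem~\ref{t:weylcover}(vi) gives that $\tilde\varphi$ is a $G$-equivariant diffeomorphism, i.e.\ $M_\Sigma\simeq M$. The only place that needs care is the passage from ``no singular orbits'' to the triviality of $(G_s)^\circ$ on each slice (in particular keeping the auxiliary point $\exp_s(v)$ inside a genuine tube so that $G_{\exp_s(v)}=(G_s)_v$); everything else is formal once Theorem~\ref{t:weylcover}(vi) and the Slice Theorem are available. Alternatively one can avoid the slice representation altogether: use Lemma~\ref{l:sliceregular} to choose $q=\exp_s v\in\Sigma$ that is $G$-regular with $v\in T_s\Sigma$, observe $(G_q)^\circ=(G_s)^\circ$ by equality of dimensions and $G_q\subseteq N_G(\Sigma)$ by property~\ref{pr:4}, whence $(G_s)^\circ\subseteq N$ and therefore $(G_s)^\circ=(N\cap G_s)^\circ$, and then invoke the equivalence of this with $(*)$ established in the proof of (vi).
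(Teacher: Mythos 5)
Your proof is correct, and in fact it contains two arguments. The ``alternative'' you sketch in the last sentences is precisely the paper's own proof: pick a $G$-regular $p=\exp_s v\in\Sigma$ in a slice around $s$ via Lemma~\ref{l:sliceregular}, use that only principal and exceptional orbits occur to get $(G_s)^\circ=(G_p)^\circ$, use property~\ref{pr:4} to get $G_p\subseteq N_G(\Sigma)$, conclude $(G_s)^\circ=(N\cap G_s)^\circ$, and invoke the equivalence in Theorem~\ref{t:weylcover}(vi). Your main route is a genuinely different (and longer) path to the same criterion: you first show that the hypothesis forces $(G_s)^\circ$ to act trivially on the whole slice $\nu_s(G\cdot s)$ (constancy of isotropy dimension plus linearity of the slice representation), and then feed this into the Slice Theorem~\ref{t:slicerep} and property~\ref{pr:3} of the fat section $V_s$ to deduce $V_s^\perp\subseteq T_v(G_s\cdot v)=0$, i.e.\ $\nu_s(G\cdot s)\subseteq T_s\Sigma$, which is the form of $(*)$ the paper also isolates in the proof of (vi). What this buys is a slightly stronger intermediate statement (triviality of the identity component of every slice representation, and $V_s=\nu_s(G\cdot s)$ at every point of $\Sigma$), at the cost of invoking the Slice Theorem where the paper only needs the elementary observation $G_p\subseteq G_q$ for a nearby regular point. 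Both versions are sound; the only step in your main route that needs the care you already flag is keeping the regular vector $v$ small enough that $G_{\exp_s(v)}=(G_s)_v$ before scaling the fixed-point statement out to all of $\nu_s(G\cdot s)$ by linearity.
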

\begin{proof}
(Compare with {\cite[Corollary 2.4]{GS}}). Let $q\in\Sigma$ be arbitrary. According to Lemma \ref{l:sliceregular} there is some $G$-regular point $p\in\Sigma$ in a slice around $q$. We thus have $G_p\subseteq G_q$, and by assumption $(G_q)^\circ=(G_p)^\circ$. Since $p\in\Sigma$ is $G$-regular, property \ref{pr:4} of a fat section implies $G_p\subseteq N$. This yields:
$(G_p)^\circ\subseteq(N(\Sigma)\cap G_q)^\circ\subseteq(G_q)^\circ=(G_p)^\circ$, and the claim follows from Theorem \ref{t:weylcover} (vi).
\end{proof}

So far we have considered $M_\Sigma$ only as a smooth manifold without any Riemannian metric on it. It is natural to demand that $G$ should act isometrically on $M_\Sigma$. Furthermore, the Riemannian metric on $M_\Sigma$ should be induced by a product metric on $G\!/\!H\times\Sigma$. Hence, we consider $(G$-$W)$-invariant metrics on $G\!/\!H$ (cf. Section \ref{s:invariant}).
\begin{proposition}\label{p:resolution}
Suppose that $G\!/\!H$ carries a $(G$-$W)$-invariant Riemannian metric and $\Sigma$ the Riemannian metric induced by $M$. Then $M_\Sigma$, endowed with the Riemannian metric submersed from $G\!/\!H\times\Sigma$, has the following properties:
\begin{enumerate}
\item $(G,M_\Sigma)$ is an isometric action.
\item If $\Sigma$ is a $k$-section of $(G,M)$, then $\tilde\Sigma=\{[eH,s]\mid s\in\Sigma\}$ is a $k$-section of $(G,M_\Sigma)$ and $W(\tilde\Sigma)=W(\Sigma)$. In particular, the foliation of $M_\Sigma$ given by the $G$-translates of $\tilde\Sigma$ 
    has totally geodesic leaves.
\item $(M_\Sigma)_{\tilde\Sigma}\simeq M_\Sigma$ ($G$-equivalent).
\item If $\Sigma$ is a minimal section of $(G,M)$, then $\copol(G,M_\Sigma)\le\copol(G,M)$.
\end{enumerate}
\end{proposition}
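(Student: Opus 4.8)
The plan is to read everything off the Riemannian submersion $q:(G/H)\times\Sigma\to M_\Sigma$ that defines the metric of $M_\Sigma$, combined with Theorem~\ref{t:weylcover}; recall that $H=Z_G(\Sigma)$ is compact, being a closed subgroup of an isotropy group of the proper action $(G,M)$.

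For (i), I would note that the product metric on $(G/H)\times\Sigma$ is invariant under the left $G$-action $l\cdot(gH,s)=(lgH,s)$ (left translations of $(G/H,\beta)$ are isometries since $\beta$ is $G$-invariant, and $G$ acts trivially on $\Sigma$) and under the diagonal $W$-action defining $M_\Sigma$ (right $W$-translations of $(G/H,\beta)$ are isometries since $\beta$ is $(G$-$W)$-invariant, and $N$ preserves $\Sigma$ and acts there by restrictions of isometries of $M$). These two actions commute, and the $W$-action is free and proper, so the submersed metric on $M_\Sigma$ is well defined and $G$ acts on it isometrically. Properness of $(G,M_\Sigma)$ I would obtain by passing to the $W$-quotient from the proper $G$-action on $(G/H)\times\Sigma$, which is proper because $H$ is compact.

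For (ii): by Theorem~\ref{t:weylcover}(iii), $\tilde\Sigma$ is embedded in $M_\Sigma$, is carried diffeomorphically onto $\Sigma$ by $\tilde\varphi$ (hence connected), and meets every $G$-orbit; being the fibre of $M_\Sigma\to G/N$ over $eN$ it is closed, hence complete in the complete manifold $M_\Sigma$. For property~\ref{pr:4}: if $p\in\tilde\Sigma$ and $g\cdot p\in\tilde\Sigma$, then $g$ carries the fibre over $eN$ into the fibre over $gN$, forcing $g\in N$, and then $g\cdot\tilde\Sigma=\tilde\Sigma$; the same observation gives $N_G(\tilde\Sigma)=N=N_G(\Sigma)$ and shows that $N$ acts on $\tilde\Sigma$ through the $W$-action on $\Sigma$, so $Z_G(\tilde\Sigma)=Z_G(\Sigma)=H$ and hence $W(\tilde\Sigma)=N/H=W(\Sigma)$. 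For total geodesy: $q^{-1}(\tilde\Sigma)=(N/H)\times\Sigma$, and by the defining properties of a $(G$-$W)$-invariant metric (Section~\ref{s:invariant}) the fibre $N/H$ is totally geodesic in $(G/H,\beta)$, so $(N/H)\times\Sigma$ is totally geodesic in the product; since $q$ is a Riemannian submersion and this is the full $q$-preimage of $\tilde\Sigma$, every $M_\Sigma$-geodesic tangent to $\tilde\Sigma$ lifts to a horizontal geodesic tangent to $(N/H)\times\Sigma$, hence stays inside it and projects back into $\tilde\Sigma$. For property~\ref{pr:3} and the value $k$: a computation in $q$ shows that at a regular $p=[eH,s]$ the $\beta$-orthocomplement $\gothm$ of $\gothn/\gothh$ in $\gothg/\gothh$ is horizontal, lies in the $G$-orbit directions, and corresponds exactly to $\nu_p\tilde\Sigma$; hence $\nu_p\tilde\Sigma\subseteq T_p(G\cdot p)$, i.e.\ $\nu_p(G\cdot p)\subseteq T_p\tilde\Sigma$, and a dimension count (using $\dim\tilde\Sigma=\dim\Sigma$ and $\cohom(G,M_\Sigma)=\cohom(G,M)$, both via the $G$-equivariant diffeomorphism $(M_\Sigma)^\reg\to M^\reg$ of Theorem~\ref{t:weylcover}(iv)) identifies the codimension there with $\dim\Sigma-\cohom(G,M)=k$. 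Finally the ``in particular'' follows from Theorem~\ref{t:weylcover}(vii) together with the fact that $G$ acts by isometries, so each leaf $g\cdot\tilde\Sigma$ of the foliation is again totally geodesic.

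Parts (iii) and (iv) are then quick. For (iii), since $\tilde\Sigma$ is now a fat section of $(G,M_\Sigma)$, I would apply Theorem~\ref{t:weylcover}(vi) to the resolution of $(G,M_\Sigma)$ with respect to $\tilde\Sigma$: for every $p=[eH,s]\in\tilde\Sigma$ the isotropy group $G_{[eH,s]}=N\cap G_s$ (Theorem~\ref{t:weylcover}(ii)) lies in $N=N_G(\tilde\Sigma)$, so condition $(*)$ holds at every point of $\tilde\Sigma$, whence the canonical map $(M_\Sigma)_{\tilde\Sigma}\to M_\Sigma$ is a $G$-equivariant diffeomorphism. For (iv), if $\Sigma$ is minimal then $k=\copol(G,M)$, and (ii) exhibits $\tilde\Sigma$ as a $k$-section of $(G,M_\Sigma)$, so $\copol(G,M_\Sigma)\le k=\copol(G,M)$. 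The only genuinely non-formal ingredient in all this is the total geodesy of $\tilde\Sigma$: everything reduces to the fibres of $G/H\to G/N$ being totally geodesic, so the main obstacle is to make sure that the notion of $(G$-$W)$-invariant metric supplied by Section~\ref{s:invariant} — e.g.\ a normal homogeneous, hence naturally reductive, metric — really does have this property, since a metric that is merely left $G$- and right $W$-invariant need not.
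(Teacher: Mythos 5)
Your argument is correct and follows essentially the same route as the paper: Theorem \ref{t:weylcover} for the set-theoretic properties of $\tilde\Sigma$, the Riemannian submersion $G\!/\!H\times\Sigma\to M_\Sigma$ with the vertical/horizontal splitting for properties \ref{pr:3} and \ref{pr:4}, and Theorem \ref{t:weylcover} (ii) and (vi) for part (iii). The worry in your last sentence is unfounded: Corollary \ref{c:G-W-invariant} (via the final Lemma of the appendix, using skew-symmetry of $\ad_{\gothn}$ in the Koszul formula) shows that \emph{any} left-$G$- and right-$W$-invariant metric on $G\!/\!H$ makes the fibres of $G\!/\!H\to G\!/\!N$ totally geodesic, which is exactly the ingredient you invoke.
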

\begin{proof}
(i) is clear by the assumptions made on the metric on $G\!/\!H$.

(ii): By Theorem \ref{t:weylcover} (iii) we have that $\tilde\Sigma$ is complete, connected and embedded into $M_\Sigma$ and intersects every $G$-orbit. Consider the principal bundle
$$\psi:G\!/\!H\times\Sigma\to M_\Sigma,\ (gH,s)\mapsto [gH,s],$$
which maps a point $(gH,s)$ to its $W$-orbit $[gH,s]=\{(gn^{-1}H,n\cdot s)\mid nH\in W\}$.
By our choice of metric, $\psi$ is a Riemannian submersion.

We claim that $\tilde\Sigma$ is totally geodesic in $M_\Sigma$. In fact, $\psi^{-1}(\tilde\Sigma)=W\times\Sigma$ and since $W$ is totally geodesic in $G\!/\!H$ by Corollary \ref{c:G-W-invariant}, it follows that $W\times\Sigma$ is totally geodesic in $G\!/\!H\times\Sigma$. Thus $\tilde\Sigma=\psi(W\times\Sigma)$ is totally geodesic in $M_\Sigma$. This already yields properties \ref{pr:1} and \ref{pr:2} of a fat section. The fibre of $\psi$ over $[eH,s]$ is
$$\psi^{-1}([eH,s])=\{(nH,n^{-1}\cdot s)\mid nH\in W\}.$$
In order to speak about metric relations in the tangent spaces of $M_\Sigma$ we have to determine the vertical and horizontal distributions, $\kv$ and $\kh$, of $\psi$ along $\{eH\}\times\Sigma$.
$$\kv_{(eH,s)}:=T_{(eH,s)}\psi^{-1}([eH,s]) \text{ and } \kh_{(eH,s)}:=(\kv_{(eH,s)})^\bot.$$
The definition of the fibre yields
$$\kv_{(eH,s)}=\{(X+\gothh,-X_s)\mid X+\gothh\in\gothn/\gothh\}\subseteq\gothn/\gothh\times T_s(W\cdot s),$$
and a computation shows that
$$\kh_{(eH,s)}=((\gothn/\gothh)^\bot\times\nu^\Sigma_s(W\cdot s))\oplus A_s,$$
where $A_s:=\kh_{(eH,s)}\cap(\gothn/\gothh\times T_s(W\cdot s))$. In fact, $A_s$ corresponds to the tangent space of the $W$-orbit through $[eH,s]$ (induced by the left action of $G$) and one can show that
$$A_s=\{(f_s(v),v)\mid v\in T_s(W\cdot s)\},$$
for some linear monomorphism $f_s:T_s(W\cdot s)\to\gothn/\gothh$ (we do not need this fact in the following).
By our assumptions on the Riemannian metric on $G\!/\!H\times\Sigma$ and $M_\Sigma$, we have that $\psi$ is a Riemannian submersion. Hence, we may identify subspaces of $T_{[eH,s]}M_\Sigma$ with certain subspaces of $\kh_{(eH,s)}$. More precisely,
$$T_{[eH,s]}(G\cdot[eH,s])\simeq \kh_{(eH,s)}\cap(\underbrace{T_{(eH,s)}(G\cdot(eH,s))+\kv_{(eH,s)}}_{=\gothg/\gothh\times T_s(W\cdot s)})=A_s\oplus((\gothn/\gothh)^\bot\times\{0\}),$$
and it follows that
$$\nu_{[eH,s]}(G\cdot[eH,s])\simeq\{0\}\times\nu_s^\Sigma(W\cdot s)\subseteq(\{0\}\times\nu_s^\Sigma(W\cdot s))\oplus A_s\simeq T_{[eH,s]}\tilde\Sigma.$$
We therefore have for \emph{all} points $[eH,s]\in\tilde\Sigma$ (and not just the $G$-regular ones) that $$\nu_{[eH,s]}(G\cdot[eH,s])\subseteq T_{[eH,s]}\tilde\Sigma.$$
This shows property \ref{pr:3} of a fat section. We now come to property \ref{pr:4}.
If $[eH,s]\in\tilde\Sigma$ and $g\in G$ with $g\cdot[eH,s]=[gH,s]\in\tilde\Sigma$, it follows that $g\in N$ (again this holds not only in the $G$-regular points). We have therefore shown that $\tilde\Sigma$ is a $k$-section of $(G,M_\Sigma)$ if $\Sigma$ is a $k$-section of $(G,M)$. It is also not difficult to show $W(\tilde\Sigma)=W(\Sigma)$. In fact, $N_G(\tilde\Sigma)=N_G(\Sigma)$ and $Z_G(\tilde\Sigma)=Z_G(\Sigma)$.

(iii): Let $\tilde{\tilde\varphi}:(M_\Sigma)_{\tilde\Sigma}\to M_\Sigma$ denote the canonical $G$-equivariant surjection. That is
$$\tilde{\tilde\varphi}:G\!/\!H\times_W \tilde\Sigma\to M_\Sigma,\ [gH,[eH,s]]\mapsto [gH,s].$$
If $\tilde{\tilde\varphi}([gH,[eH,s]])=\tilde{\tilde\varphi}([\tilde gH,[eH,\tilde s]])$, then $[gH,s]=[\tilde gH,\tilde s]$. Now $\tilde gH=gn^{-1}H$ and $\tilde s=n\cdot s$ for some $n\in N$. But this implies
\begin{eqnarray*}
[\tilde gH,[eH,\tilde s]]&=&[gn^{-1}H,[eH,n\cdot s]]\\
&=&[gH,n\cdot[eH,n\cdot s]]\\
&=&[gH,[nH,n\cdot s]]\\
&=&[gH,[eH,s]].
\end{eqnarray*}
This shows that $\tilde{\tilde\varphi}$ is injective. By Theorem \ref{t:weylcover} (ii) we have $G_{[eH,s]}\subseteq N$ for all $s\in\Sigma$ and then (vi) of the same Theorem implies that $\tilde{\tilde\varphi}$ is a submersion. It follows that the map is a $G$-equivariant diffeomorphism.

(iv) is an immediate consequence of (ii).
\end{proof}
\pagebreak
\begin{remark}{\ }
\begin{enumerate}
\item
We do not know whether for a minimal section $\Sigma$ of $(G,M)$ it
is actually possible that $\copol(G,M_\Sigma)<\copol(G,M)$, or not.
\item
According to Proposition \ref{p:G-W-invariant} (iv), the assumptions
in the above Proposition above can be satisfied if $N$ is
compact. 

\item There are other natural ways to endow $M_\Sigma$ with a Riemannian metric such that
$\tilde\Sigma$ is totally geodesic, see for instance \cite[Theorem
9.59]{Bes}. We do not know if $G$ then still acts
isometrically on $M_\Sigma$ though.
\end{enumerate}
\end{remark}

The next result generalizes \cite[Proposition 2.6]{GS}, basically using the same proof.
\begin{proposition}\label{p:curvature}
Let $(G,M),\, G$ compact, be an isometric action with fat section $\Sigma$. If $\sec(M)\ge k$ for a $k\le 0$, then $\sec(M_\Sigma)\ge k$ for some Riemannian $G$-metric on $M_\Sigma$.
\end{proposition}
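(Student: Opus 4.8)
The plan is to equip $M_\Sigma = G/H \times_W \Sigma$ with the submersion metric coming from a suitable product metric on $G/H \times \Sigma$, exactly as in Proposition \ref{p:resolution}, and then argue that one can choose the $(G\text{-}W)$-invariant metric on $G/H$ so that the lower curvature bound $k \le 0$ is preserved under the associated bundle construction. Since $G$ is compact, Proposition \ref{p:G-W-invariant} (iv) guarantees that such $(G\text{-}W)$-invariant metrics on $G/H$ exist, and by Proposition \ref{p:resolution} (i) the resulting $M_\Sigma$ is an isometric $G$-space. So the only thing to prove is the curvature estimate, and here I would follow the Grove--Searle argument from \cite[Proposition 2.6]{GS} essentially verbatim, adapted to a fat section in place of the core.

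The key steps, in order: First, fix a bi-invariant metric $Q$ on $G$ (possible since $G$ is compact) and use it to build a one-parameter family of $(G\text{-}W)$-invariant metrics $g_t$ on $G/H$ by scaling the fibre directions of $G/H \to G/N$ (equivalently, rescaling in the directions tangent to the $W$-orbits) by a factor $t$, keeping the horizontal directions fixed. Second, form the product metric $g_t \oplus g_\Sigma$ on $G/H \times \Sigma$, where $g_\Sigma$ is the metric induced from $M$, and push it down via the Riemannian submersion $\psi: G/H \times \Sigma \to M_\Sigma$. By O'Neill's formula, $\sec(M_\Sigma) \ge \sec(G/H \times \Sigma)$ pointwise on horizontal planes, so it suffices to bound $\sec(G/H, g_t)$ from below by $k$. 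Third, observe that as $t \to 0$ the metric $g_t$ collapses the $W$-orbit directions; a standard computation (the submersion $G/H \to G/N$ with totally geodesic fibres $W$, which is flat or at worst has bounded curvature, and base $G/N$ with fixed nonnegative curvature since it is a normal homogeneous space) shows that $\sec(G/H, g_t) \to \sec(G/N) \ge 0$ away from the collapsing directions, while the potentially-negative mixed terms are $O(t)$ or $O(t^2)$. Hence for $t$ small enough, $\sec(G/H, g_t) \ge k$ (using $k \le 0$), and therefore $\sec(M_\Sigma) \ge k$ for the corresponding submersion metric; the factor $\Sigma$ contributes $\sec(\Sigma) \ge \sec(M) \ge k$ since $\Sigma$ is totally geodesic in $M$ by property \ref{pr:1}.

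I expect the main obstacle to be making the curvature-of-$g_t$ estimate genuinely uniform and precise: one must control all the O'Neill correction terms for the submersion $G/H \to G/N$ as the fibre is shrunk, and verify that the $(G\text{-}W)$-invariance is preserved throughout the rescaling (this is where the bi-invariance of $Q$ on $G$, hence compactness of $G$, is used, rather than mere compactness of $N$). The remaining verifications --- that $\psi$ is a Riemannian submersion, that $G$ still acts isometrically, that the product structure interacts correctly with the $W$-action --- are exactly as in the proof of Proposition \ref{p:resolution} and require no new ideas.
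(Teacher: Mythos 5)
Your overall strategy --- product metric on $G/H\times\Sigma$, push down via the Riemannian submersion $\psi$, and apply O'Neill --- is exactly the route the paper intends (it simply defers to \cite[Proposition 2.6]{GS}), and your reduction to bounding $\sec(G/H)$ from below, together with the product estimate and the observation that $\Sigma$ inherits $\sec\ge k$ by being totally geodesic, is sound. The place where your write-up goes astray is the third step: the canonical-variation argument $t\to 0$ is both the point you yourself flag as the ``main obstacle'' (the assertion that the mixed curvature terms are $O(t)$ is exactly what would need a careful proof, and collapsing fibres does not in general preserve lower curvature bounds without such an argument) and, more importantly, entirely unnecessary. At $t=1$ your metric $g_1$ on $G/H$ is the normal homogeneous metric induced by the bi-invariant metric $Q$ on the compact group $G$; by O'Neill applied to $G\to G/H$ it already satisfies $\sec(G/H,g_1)\ge 0\ge k$, and it is $(G$-$W)$-invariant because $Q$ is $\Ad_G(G)$-invariant and $N$ preserves $\gothh$, hence $\gothh^\bot$ (so Proposition \ref{p:G-W-invariant} (i) applies directly; you do not even need part (iv)). With this choice the product $G/H\times\Sigma$ has one factor of nonnegative curvature and one of curvature $\ge k\le 0$, so every $2$-plane has curvature $\ge k$ (for orthonormal $u,v$ one has $|u_1\wedge v_1|^2+|u_2\wedge v_2|^2\le 1$), and O'Neill for $\psi$ finishes the proof.

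In short: delete the limiting argument, set $t=1$, and your proof becomes complete and coincides with the Grove--Searle argument the paper is invoking. As written, the proposal leaves its only genuinely nontrivial analytic claim unproved, but that claim is not needed.
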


\begin{remark} As a concluding remark of this section, we show that for every triple $H\unlhd N\leq G$, where $G$ is a lie group, $H$ and $N$ are closed subgroups of $G$ and such that $N$ is compact, there exists some manifold $\Sigma$ on which $W=N\!/\!H$ acts isometrically, with trivial principal isotropy group and such that $M:=G\!/\!H\times_W\Sigma$ is a Riemannian $G$-manifold with fat section $\Sigma$ and fat Weyl group $W$. This generalizes the construction in \cite[5.6.20]{PT2}. In fact, since $W$ is compact, it acts faithfully on some Euclidean vector space $V$. Then $W$ acts with trivial principal isotropy group on the $k$-fold inner direct sum $\Sigma:=k\cdot V$ for some $k\in\N_{>0}$. If $G\!/\!H$ is endowed with a $(G$-$W)$-invariant Riemannian metric, then $M:=G\!/\!H\times_W\Sigma$ with the submersed metric from $G\!/\!H\times\Sigma$ is a $G$-manifold. Similarly as in the proof of Proposition \ref{p:resolution} (ii) one can show that $\tilde\Sigma:=\{[eH,s]\mid s\in\Sigma\}$ is a fat section with fat Weyl group $W$.
\end{remark}

\section{On a Generalization of Chevalley's Restriction Theorem}\label{s:chevalley}
Recall that a smooth $p$-form $\omega\in\Omega(M)$ is called
\textbf{$G$-invariant}, if for all $g\in G$ we have that
$g^*\omega=\omega$. The set of all $G$-invariant $p$-forms on $M$
will be denoted by $\Omega^p(M)^G$. A $p$-form $\omega$ is called
\textbf{horizontal}, if for all $X\in\gothg$ we have
$\iota_X(\omega)=0$. Here $\iota_X$ denotes contraction by the
Killing field generated by $X$. The set of all $G$-invariant
horizontal $p$-forms is denoted by $\Omega^p_\hor(M)^G$. These forms
are also called \textbf{basic} forms.

If $\Sigma$ is a fat section with fat Weyl group $W$, then in view
of Corollary \ref{c:c_isom} it is natural to ask whether the
isomorphism $\iota^*$ also yields
$\kc^\infty(M)^G\simeq\kc^\infty(\Sigma)^W$, or if we even have
$\Omega^*_\hor(M)^G\simeq\Omega^*_\hor(\Sigma)^W$. In the polar case
(i.e. $\copol(G,M)=0$) the first statement has been proved by Palais
and Terng in \cite{PT1} and the second statement by Michor in
\cite{M1,M2}. In the general case we note the following:
\begin{proposition}
The map $\iota^*:\kc^\infty(M)^G\to\kc^\infty(\Sigma)^W,\ f\mapsto
f|_\Sigma$, is well defined and injective, and the $G$-invariant
continuous extension $(\iota^*)^{-1}(f)$ of
$f\in\kc^\infty(\Sigma)^W$ to $M$ is smooth on $M^\reg$.
\end{proposition}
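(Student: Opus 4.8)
The plan is to check the three assertions in turn, exploiting the earlier results on reductions. First, $\iota^*$ is well defined: if $f\in\kc^\infty(M)^G$ then $f|_\Sigma$ is certainly smooth, and it is $W$-invariant because $W$ acts on $\Sigma$ through the normalizer $N_G(\Sigma)\subseteq G$, so $W$-invariance of $f|_\Sigma$ is inherited from $G$-invariance of $f$. Injectivity is immediate from Corollary \ref{c:c_isom} (or, more directly, from the fact that $\Sigma$ meets every $G$-orbit, so $f$ is determined on all of $M$ by its values on $\Sigma$ together with $G$-invariance). Alternatively one can argue: $\kc^\infty(M)^G\subseteq\kc^0(M)^G$ and the latter injects into $\kc^0(\Sigma)^W$ by Corollary \ref{c:c_isom}, hence so does the former.

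For the last and main assertion, fix $f\in\kc^\infty(\Sigma)^W$. By Corollary \ref{c:c_isom} there is a unique $\tilde f\in\kc^0(M)^G$ with $\tilde f|_\Sigma=f$; call it $(\iota^*)^{-1}(f)$. I would show $\tilde f$ is smooth on $M^\reg$ by a local trivialization argument on the regular stratum. By Theorem \ref{t:weylcover} (iv) (or Proposition \ref{p:copolarproperties} (v)), $\tilde\varphi$ restricts to a $G$-equivariant diffeomorphism $(M_\Sigma)^\reg\xrightarrow{\ \sim\ }M^\reg$, where $(M_\Sigma)^\reg\cong G/H\times_W\Sigma^\reg$. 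Hence it suffices to show that the pulled-back function $\tilde f\circ\tilde\varphi$ is smooth on $G/H\times_W\Sigma^\reg$. But on $G/H\times_W\Sigma$ this function is simply induced by the smooth $W$-invariant function $(gH,s)\mapsto f(s)$ on $G/H\times\Sigma$ (it is $W$-invariant since $f$ is, and it is constant in the $G/H$-direction, hence in particular $G$-invariant after passing to the quotient); and the quotient map $G/H\times\Sigma^\reg\twoheadrightarrow G/H\times_W\Sigma^\reg$ is a submersion (indeed a principal $W$-bundle, since $W$ acts freely on $\Sigma^\reg$ by Proposition \ref{p:cohom_copol}), so a function on the base is smooth iff its pullback is. Therefore $\tilde f\circ\tilde\varphi$ is smooth on $(M_\Sigma)^\reg$, and transporting along the diffeomorphism $\tilde\varphi$ gives smoothness of $\tilde f$ on $M^\reg$.

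One point to watch: Proposition \ref{p:cohom_copol} guarantees the free $W$-action and the principal isotropy identification only for a \emph{sufficiently small} section, whereas the statement here is phrased for an arbitrary fat section $\Sigma$. For the argument above one only needs that $W$ acts freely on $\Sigma^\reg$, i.e. that $Z_G(\Sigma)$ is a principal isotropy group along $\Sigma$; if this is not assumed, one should either restrict to sufficiently small (e.g. canonical or minimal) sections, or instead invoke that the $G$-translates of $\Sigma$ foliate $M^\reg$ (Proposition \ref{p:copolarproperties} (v)) to build local smooth sections of $\pi_G:M^\reg\to G\backslash M$ adapted to $\Sigma$, and use these directly to express $\tilde f$ locally as $f$ composed with a smooth retraction onto $\Sigma$. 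The slightly delicate part — and the only real obstacle — is making this local triviality near the boundary of $\Sigma^\reg$ in $\Sigma$ precise, i.e. checking that smoothness of $\tilde f$ propagates up to (but, as the statement correctly allows, not beyond) the singular set; this is handled cleanly by the diffeomorphism in Theorem \ref{t:weylcover} (iv), which is why I route the argument through the resolution $M_\Sigma$ rather than working on $M$ directly.
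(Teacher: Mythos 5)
Your proof is correct, but it takes a genuinely different route from the paper's. The paper argues locally: for $p\in M^\reg$, translated into $\Sigma$, it takes a tube $U=G\cdot S_p$ around the orbit and uses the standard fact that a $G$-invariant function on $U$ is smooth at $p$ iff its restriction to the slice $S_p$ is; since $p$ is regular and $\Sigma$ is a fat section, property \ref{pr:3} together with total geodesy forces $S_p\subseteq\Sigma$, so $F|_{S_p}=f|_{S_p}$ is smooth and one is done. You instead argue globally through the resolution: pull $\tilde f$ back along the $G$-equivariant diffeomorphism $(M_\Sigma)^\reg\simeq M^\reg$ of Theorem \ref{t:weylcover} (iv), observe that the result is induced by the manifestly smooth $W$-invariant function $(gH,s)\mapsto f(s)$ on $G/H\times\Sigma^\reg$, and descend along the submersion onto $G/H\times_W\Sigma^\reg$. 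Both arguments ultimately rest on checking smoothness after pulling back along a submersion, but the paper's version is more elementary (it needs only the slice theorem, not the resolution machinery of Section \ref{s:resolution}), while yours makes the geometric content of the regular stratum — that it is exactly the image of the "trivial" part of the resolution — more transparent. One remark on your final caveat: the worry about freeness of $W$ on $\Sigma^\reg$ is unnecessary. The diagonal $W$-action on $G/H\times\Sigma$ is already free via the first factor ($gn^{-1}H=gH$ forces $n\in H$), which is precisely what makes $G/H\times\Sigma\twoheadrightarrow M_\Sigma$ a principal $W$-bundle, hence a submersion, for an \emph{arbitrary} fat section; this is also what the paper uses in the proof of Theorem \ref{t:weylcover} (i). So your argument needs neither Proposition \ref{p:cohom_copol} nor any restriction to sufficiently small sections, and the fallback via Proposition \ref{p:copolarproperties} (v) can be dropped.
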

\begin{proof}
First note that $\iota^*|_{\kc^\infty(M)^G}$ is well defined,
because $\Sigma$ is an embedded submanifold of $M$. The injectivity
is also clear, since $\iota^*$ as a map on $\kc^0(M)^G$ is already
injective due to Corollary \ref{c:c_isom}. Let now
$f\in\kc^\infty(\Sigma)^W$ be arbitrary and denote its $G$-invariant
extension to $M$ by $F$. Smoothness of $F$ is a local condition.
Thus, let $p\in M$ be an arbitrary point and let $U$ be a tubular
neighborhood of $G\cdot p$. Since $F$ is $G$-invariant, we may
assume that $p\in\Sigma$. Let furthermore $S_p$ be a slice through p
such that $U=G\cdot S_p$. It is known that $F|_U$ is smooth in $p$
if and only if $F|_{S_p}$ is smooth in $p$. Since $\Sigma$ is a fat
section we have $S_p\subseteq \Sigma$ in the case that $p$ is a
$G$-regular point and $S_q$ is also a slice with respect to the
$W$-action on $\Sigma$. Hence $F|_{S_p}=f|_{S_p}$ is smooth in $p$.
\end{proof}
Suppose that for every $q\in\Sigma$ the relation $\nu_q(G\cdot
q)\subseteq T_q\Sigma$ holds. Then the arguments in the above proof
show that the $G$-invariant continuation of a smooth $W$-invariant
function on $\Sigma$ is smooth on the whole of $M$. In particular, if $M_\Sigma$ is the resolution of $(G,M)$ with respect to $\Sigma$ and $\tilde\Sigma=\{[gH,s]\mid s\in\Sigma\}$ is the fat section induced by $\Sigma$ (see Section \ref{s:resolution}), we have
\begin{corollary}
Let $H=Z_G(\Sigma)$. If $G/H$ carries a $(G$-$W)$-invariant
Riemannian metric, then
$\iota^*:\kc^\infty(M_\Sigma)^G\to\kc^\infty(\tilde\Sigma)^W$ is an
isomorphism.
\end{corollary}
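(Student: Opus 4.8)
The plan is to reduce the statement for $M_\Sigma$ to the preceding Proposition together with the extra hypothesis on normal spaces, and then to verify that this hypothesis is automatically satisfied for the induced fat section $\tilde\Sigma$ inside the resolution. First I would recall, from Proposition \ref{p:resolution} (ii), that $\tilde\Sigma$ is indeed a fat section of $(G,M_\Sigma)$ with fat Weyl group $W(\tilde\Sigma)=W(\Sigma)=W$, provided $G/H$ carries a $(G$-$W)$-invariant metric (which is exactly the hypothesis of the corollary). By the preceding Proposition, $\iota^*:\kc^\infty(M_\Sigma)^G\to\kc^\infty(\tilde\Sigma)^W$ is therefore well defined and injective, and the $G$-invariant continuous extension of any $f\in\kc^\infty(\tilde\Sigma)^W$ is smooth on $(M_\Sigma)^\reg$. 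So the only thing left to prove is that this extension is smooth everywhere, i.e. also across the singular set.

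The key observation is contained in the sentence preceding the corollary: if for every $q\in\Sigma$ one has $\nu_q(G\cdot q)\subseteq T_q\Sigma$ (not merely for $G$-regular $q$), then the slice $S_q$ at any point is contained in the fat section, so the local smoothness argument (smoothness of $F|_U$ at $p$ is equivalent to smoothness of $F|_{S_p}$ at $p$, and $F|_{S_p}=f|_{S_p}$) goes through at singular points as well. Thus the second step is: \emph{check that $\tilde\Sigma$ satisfies $\nu_{[eH,s]}(G\cdot[eH,s])\subseteq T_{[eH,s]}\tilde\Sigma$ for all $s\in\Sigma$, regular or not.} But this is precisely what was established in the course of proving Proposition \ref{p:resolution} (ii): there the normal space was identified via the Riemannian submersion $\psi$ as $\nu_{[eH,s]}(G\cdot[eH,s])\simeq\{0\}\times\nu^\Sigma_s(W\cdot s)\subseteq T_{[eH,s]}\tilde\Sigma$ for \emph{all} points $[eH,s]\in\tilde\Sigma$, and it was explicitly remarked that this holds and not just at $G$-regular points. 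Hence the extra hypothesis needed to upgrade the Proposition to an isomorphism is available for free on the resolution.

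Putting this together: given $f\in\kc^\infty(\tilde\Sigma)^W$, its $G$-invariant continuous extension $F$ to $M_\Sigma$ exists by Theorem \ref{t:weylcover} (v) / Corollary \ref{c:c_isom} applied to $(G,M_\Sigma)$ and $\tilde\Sigma$ (or simply by the orbit-space identification, since $\tilde\Sigma$ meets every $G$-orbit by Theorem \ref{t:weylcover} (iii)); it is smooth on $(M_\Sigma)^\reg$ by the previous Proposition; and at an arbitrary point $p=[eH,s]$ we choose a tubular neighbourhood $U=G\cdot S_p$ and use $G$-invariance of $F$ to reduce to smoothness of $F|_{S_p}$ at $p$. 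Since $\nu_s(G\cdot s)\subseteq T_s\tilde\Sigma$ at \emph{this} (possibly singular) point, the slice $S_p$ can be taken inside $\tilde\Sigma$, and $S_p$ is then simultaneously a slice for the $W$-action on $\tilde\Sigma$; consequently $F|_{S_p}=f|_{S_p}$ is smooth in $p$. Therefore $F$ is smooth on all of $M_\Sigma$, so $\iota^*$ is surjective, hence an isomorphism (of algebras; bijectivity and the algebra-homomorphism property are clear).

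The main obstacle is really bookkeeping rather than a deep difficulty: one must be careful that the inclusion $\nu_s(G\cdot s)\subseteq T_s\tilde\Sigma$ for \emph{singular} $s$ — which fails for a general fat section — genuinely holds for the canonically induced $\tilde\Sigma$, and this requires invoking the explicit description of the vertical/horizontal distributions of $\psi$ from the proof of Proposition \ref{p:resolution} (ii). The $(G$-$W)$-invariance of the metric on $G/H$ is what makes $\psi$ a Riemannian submersion, which is in turn what makes that normal-space computation valid, so the hypothesis of the corollary is exactly what is needed and nothing more.
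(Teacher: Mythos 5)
Your proof is correct, and its overall skeleton matches the paper's: establish that $\tilde\Sigma$ is a fat section of $(G,M_\Sigma)$ with fat Weyl group $W$, reduce surjectivity of $\iota^*$ to the pointwise inclusion $\nu_q(G\cdot q)\subseteq T_q\tilde\Sigma$ at \emph{every} $q\in\tilde\Sigma$ (singular points included), and then run the slice argument of the preceding Proposition at all points. Where you genuinely diverge is in how you obtain that inclusion. You read it off from the vertical/horizontal computation in the proof of Proposition \ref{p:resolution} (ii), where it is indeed recorded that $\nu_{[eH,s]}(G\cdot[eH,s])\simeq\{0\}\times\nu^\Sigma_s(W\cdot s)\subseteq T_{[eH,s]}\tilde\Sigma$ for all $[eH,s]$, not only the regular ones; this is legitimate and accurately cited, though it leans on a fact stated only inside a proof (the proposition itself asserts merely that $\tilde\Sigma$ is a $k$-section, i.e. the inclusion at regular points) and ties the argument to the Riemannian-submersion bookkeeping for $\psi$. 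The paper instead re-derives the inclusion from stated results: by the Slice Theorem \ref{t:slicerep}, $V_q=\nu_q(G\cdot q)\cap T_q\tilde\Sigma$ is a fat section of the slice representation at $q$, hence meets every $G_q$-orbit in $\nu_q(G\cdot q)$; and since the $G$-translates of $\tilde\Sigma$ foliate $M_\Sigma$ (Theorem \ref{t:weylcover} (vii)), each $g\in G_q$ preserves the unique leaf $\tilde\Sigma$ through $q$, so $g\cdot V_q=V_q$ and therefore $\nu_q(G\cdot q)=V_q\subseteq T_q\tilde\Sigma$. The paper's route is more modular --- it would apply to any fat section whose $G$-translates foliate the ambient manifold --- while yours is shorter once the explicit normal-space identification is accepted; the concluding smoothness argument is the same in both.
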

\begin{proof} Due to Proposition \ref{p:resolution}, $\tilde\Sigma$
is a fat section of $M_\Sigma$. Let $q=[eH,s]\in\tilde\Sigma$ be an
arbitrary point. According to the Slice Theorem \ref{t:slicerep} the
set $V_q:=\nu_q(G\cdot q)\cap T_q\tilde\Sigma$ is a fat section of
the slice representation in $q$. Hence, for every $v\in\nu_q(G\cdot
q)$ there is some $g\in G_q$ with $v\in g\cdot V_q$. By Theorem
\ref{t:weylcover} $M_\Sigma$ is foliated by
$\{g\cdot\tilde\Sigma\mid g\in G\}$. Therefore,
$$g\cdot V_q=\nu_q(G\cdot q)\cap T_q(g\cdot\tilde\Sigma)=\nu_q(G\cdot q)\cap T_q(\tilde\Sigma)=V_q.$$
Thus $\nu_q(G\cdot q)\subseteq V_q\subseteq T_q\tilde\Sigma$ holds
for every $q\in\tilde\Sigma$.
\end{proof}

\begin{proposition}\label{p:basicform}
Let $(G,M)$ be an isometric action and let $\Sigma$ be a fat section
with fat Weyl group $W=W(\Sigma)$. Then the mapping
$i^*:\Omega^*_\hor(M)^G\to\Omega^*_\hor(\Sigma)^W$, which is
obtained by restriction to $\Sigma$, is injective.
\end{proposition}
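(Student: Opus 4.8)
The plan is to show directly that the kernel of $i^*$ is trivial: if $\omega\in\Omega^r_\hor(M)^G$ satisfies $\omega|_\Sigma=0$, then $\omega=0$. The two facts to exploit are that a smooth form is determined by its values on the dense open set $M^\reg$, and that at a $G$-regular point $p\in\Sigma$ the subspaces $T_p\Sigma$ and $T_p(G\cdot p)$ already span $T_pM$, with $\nu_p\Sigma$ lying inside $T_p(G\cdot p)$.

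First I would reduce the vanishing of $\omega$ to the vanishing of $\omega_p$ at points $p\in\Sigma^\reg$. Since $\omega$ is smooth and $M^\reg$ is open and dense in $M$, it suffices to prove $\omega|_{M^\reg}=0$. By Proposition \ref{p:copolarproperties} (v) the $G$-translates of $\Sigma$ foliate $M^\reg$, and since $p\in\Sigma$ is $G$-regular precisely when $g\cdot p$ is, we get $M^\reg=\bigcup_{g\in G}g\cdot\Sigma^\reg$. As $\omega$ is $G$-invariant, $g^*\omega=\omega$ forces $\omega_{g\cdot p}(dg_p\,v_1,\dots,dg_p\,v_r)=\omega_p(v_1,\dots,v_r)$, and because $dg_p$ is a linear isomorphism, $\omega_p=0$ implies $\omega_{g\cdot p}=0$. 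Hence it is enough to show $\omega_p=0$ for every $p\in\Sigma^\reg$.

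The heart of the argument is the pointwise statement at such a $p$. By property \ref{pr:3} of a fat section we have $\nu_p(G\cdot p)\subseteq T_p\Sigma$, so the orthogonal decomposition of $\nu_p\Sigma$ in Proposition \ref{p:orthogonal_decomp} reduces to $\nu_p\Sigma=\nu_p\Sigma\cap T_p(G\cdot p)$; that is, $\nu_p\Sigma\subseteq T_p(G\cdot p)$ and $T_pM=T_p\Sigma+T_p(G\cdot p)$. Now horizontality of $\omega$ gives $\omega_p(v_1,\dots,v_r)=0$ as soon as some $v_j$ lies in $T_p(G\cdot p)$, hence in particular as soon as some $v_j\in\nu_p\Sigma$. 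Expanding each argument along the orthogonal splitting $T_pM=T_p\Sigma\oplus\nu_p\Sigma$ and using multilinearity, every term with a $\nu_p\Sigma$-component dies, so $\omega_p$ is completely determined by its restriction to $r$-tuples of vectors in $T_p\Sigma$, i.e.\ by $(i^*\omega)_p$. Since $i^*\omega=0$ by hypothesis, $\omega_p=0$, and combined with the reduction above, $\omega$ vanishes on $M^\reg$ and therefore on all of $M$.

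I expect the only genuine subtlety to be this last identification: the hypothesis $i^*\omega=0$ by itself controls $\omega_p$ only on vectors tangent to $\Sigma$, and it is crucial to combine it with horizontality and with the inclusion $\nu_p\Sigma\subseteq T_p(G\cdot p)$, which holds only at $G$-regular points and is exactly where property \ref{pr:3} enters. The reduction steps (density of $M^\reg$, the foliation of $M^\reg$ by $G$-translates of $\Sigma$, and $G$-invariance) are routine given the results already established; well-definedness of $i^*$, i.e.\ that the restriction of a $G$-basic form is $W$-basic, follows because each $n\in N_G(\Sigma)$ preserves $\Sigma$ and the Killing fields on $\Sigma$ generated by $\gothw$ are restrictions of $G$-Killing fields, so $(n|_\Sigma)^*(i^*\omega)=i^*(n^*\omega)=i^*\omega$ and $\iota_X(i^*\omega)=i^*(\iota_{\tilde X}\omega)=0$.
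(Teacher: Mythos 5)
Your proof is correct and follows essentially the same route as the paper's: at a $G$-regular point of $\Sigma$ one uses property \ref{pr:3} to write $T_pM=T_p\Sigma+T_p(G\cdot p)$, lets horizontality kill every component tangent to the orbit and $i^*\omega=0$ kill the rest, and then extends to all of $M$ by $G$-invariance and density of $M^\reg$. The only cosmetic difference is that you phrase the splitting as the orthogonal decomposition $T_pM=T_p\Sigma\oplus\nu_p\Sigma$ together with $\nu_p\Sigma\subseteq T_p(G\cdot p)$, whereas the paper works with the (not necessarily direct) sum $T_p\Sigma+T_p(G\cdot p)$; both are valid.
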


\begin{proof}
The mapping $i^*$ is well defined, since $\Sigma$ is an embedded
submanifold and due to Corollary \ref{c:orbitparam}. Suppose now
that $i^*\omega=0$ for some $p$-form $\omega\in\Omega^*_\hor(M)^G$.
Let $q\in\Sigma\cap M^\reg$ be an arbitrary $G$-regular point in
$\Sigma$. By property \ref{pr:3} of a fat section, we have a (not
necessarily direct) decomposition of $T_q M=T_q \Sigma+T_q (G\cdot
q)$. Let $X_1,\dots, X_p$ be arbitrary vectors in $T_q M$. According
to the above decomposition we can write $X_i=Y_i+Z_i$, where $Y_i\in
T_q \Sigma$ and $Z_i\in T_q(G\cdot q)$ for all $i=1,\dots, p$. Now
$\omega_q(X_1,\dots, X_p)$ decomposes into a sum where each summand
contains either $Y_i$ or $Z_i$ for all $i=1,\dots, p$. If a summand
contains at least one $Z_i$, then it vanishes, since $\omega$ is
horizontal. Otherwise, the summand is $\omega_q(Y_1,\dots, Y_p)$ and
vanishes because $i^*\omega=0$. All in all we thus have that
$\omega_q=0$. Since $\omega$ is $G$-invariant, this holds along the
whole orbit through $q$. Now $q\in M^\reg$ was arbitrary, so
$\omega$ vanishes on the $G$-regular set of $M$, and since the
regular set is dense in $M$, we finally conclude that $\omega=0$ on
all of $M$.
\end{proof}
One would expect that $i^*$ should also be surjective in general.
However, we can show this only under strong assumptions:
\begin{theorem}\label{t:basicform}
Let $(G,M)$ be an isometric action and let $\Sigma\subseteq M$ be a
minimal section. Put $W=W(\Sigma)$. Suppose that the slice
representation $(G_q,\nu_q(G\cdot q))$ is polar for every
$q\in\Sigma$ and that $V_q=\nu_q(G\cdot q)\cap T_q\Sigma$ is a
$0$-section. Then $\Omega^*_\hor(M)^G\simeq\Omega^*_\hor(\Sigma)^W$.
In particular, $\kc^\infty(M)^G\simeq\kc^\infty(\Sigma)^W$ and the
isomorphism in both cases is given by the map $i^*$ from proposition
\ref{p:basicform}.
\end{theorem}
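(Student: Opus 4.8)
The plan is to reduce the statement to the polar case, where the result of Michor is available, via the Slice Theorem and the fact that basic forms are detected locally on slices. By Proposition \ref{p:basicform} the map $i^*$ is injective, so only surjectivity must be shown: given $\beta\in\Omega^*_\hor(\Sigma)^W$, we must produce $\omega\in\Omega^*_\hor(M)^G$ with $i^*\omega=\beta$. The natural candidate is the unique $G$-invariant horizontal extension of $\beta$ along $M^\reg$ (which exists by the foliation of Proposition \ref{p:copolarproperties} (v) together with the transversality property \ref{pr:3}), and the entire content of the proof is to show that this extension is smooth across the singular set.

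\textbf{Step 1: Construction on the regular part.}
First I would fix a $G$-regular point $p\in\Sigma$ and note that, by property \ref{pr:3}, $T_pM=T_p\Sigma\oplus T_p(G\cdot p)$ with $\nu_p(G\cdot p)\subseteq T_p\Sigma$. Since $\omega$ is required to be horizontal, its value at $p$ is forced: $\omega_p(X_1,\dots,X_p)=\beta_p(\mathrm{pr}_{T_p\Sigma}X_1,\dots,\mathrm{pr}_{T_p\Sigma}X_p)$, where one projects off the orbit directions. Using that the $G$-translates of $\Sigma$ foliate $M^\reg$ and that $\beta$ is $W$-invariant (hence the definition is independent of which translate one uses, by Corollary \ref{c:orbitparam}), this defines a smooth $G$-invariant horizontal $p$-form $\omega$ on $M^\reg$ with $i^*\omega=\beta$ there.

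\textbf{Step 2: Smoothness across singular orbits.}
Smoothness of $\omega$ is a local and $G$-invariant condition, so I would fix an arbitrary $q\in M$, and since $\Sigma$ meets every orbit, assume $q\in\Sigma$. Choose a slice $S_q$ through $q$, so a $G$-tubular neighborhood of $G\cdot q$ is $G\cdot S_q$; by the standard theory of basic forms on tubes it suffices to check that $\omega|_{S_q}$ extends smoothly to $q$, and this in turn is governed by the slice representation $(G_q,\nu_q(G\cdot q))$, identified with $S_q$ via $\exp_q$. By the Slice Theorem \ref{t:slicerep}, $V_q=\nu_q(G\cdot q)\cap T_q\Sigma$ is a fat section of this slice representation, and by Proposition \ref{p:orthogonal_decomp} it carries the restricted $W_q$-action whose orbit space matches. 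Now invoke the two hypotheses: the slice representation is \emph{polar}, and $V_q$ is a \emph{$0$-section} of it, i.e. $V_q$ is a genuine section in the polar sense with generalized Weyl group $W(V_q)$. On $V_q$, restriction of $\omega|_{S_q}$ coincides (via the identifications of Theorem \ref{t:slicerep} and Corollary \ref{c:slicerep}, which give $W(V_q)=W_q$) with the restriction of $\beta$ to the totally geodesic submanifold $V_q\cap(\text{nbhd})\subseteq\Sigma$, which is smooth at $q$ since $\beta$ is a smooth form on $\Sigma$. Therefore $\omega|_{S_q}$ is the $G_q$-invariant horizontal extension from a section of a polar representation of a smooth $W(V_q)$-invariant basic form on that section; by Michor's theorem \cite{M1,M2} this extension is smooth at $q$. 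Hence $\omega$ is smooth at $q$, and being smooth, $G$-invariant and horizontal everywhere with $i^*\omega=\beta$ on the dense set $M^\reg$, it satisfies $i^*\omega=\beta$ by continuity.

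\textbf{Main obstacle.}
The delicate point is Step 2: matching the local restriction data on $S_q$ with the polar-case input. Concretely one must verify that, under the exponential identification $S_q\cong\nu_q(G\cdot q)$, the $G_q$-invariant horizontal extension of $\beta|_{V_q}$ in the polar sense really agrees on the nose with $\omega|_{S_q}$ — this uses that the orbit directions of $(G_q,\nu_q(G\cdot q))$ inside $S_q$ are exactly the directions $\omega$ kills (horizontality), that $V_q^\perp\subseteq T_v(G_q\cdot v)$ for regular $v$ (property \ref{pr:3} of $V_q$, established in the proof of Theorem \ref{t:slicerep}), and that $W(V_q)=W_q$ so the $W$-invariance of $\beta$ transports correctly. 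Once the bookkeeping with these identifications is done carefully, the smoothness is not proved by hand but imported wholesale from Michor's basic-form theorem applied fiberwise to the polar slice representations; the two standing hypotheses of the theorem are exactly what make that importation legitimate.
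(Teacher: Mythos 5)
Your proposal is correct and follows essentially the same route as the paper: injectivity is quoted from Proposition \ref{p:basicform}, and surjectivity is reduced, via the Slice Theorem and the two standing hypotheses, to Michor's results for polar representations (basic forms correspond to Weyl-invariant forms on a section, and basic forms on a slice extend to the tube), applied to the slice representation $(G_q,\nu_q(G\cdot q))$ with section $V_q$ and generalized Weyl group $W_q=W(V_q)$. The only difference is organizational --- you define the forced horizontal extension globally on $M^\reg$ and verify smoothness at each singular point, whereas the paper builds the local extensions on tubes first and glues them with a $G$-invariant partition of unity --- and this does not change the substance of the argument.
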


\begin{proof}
All that is left to show is the surjectivity of $i^*$. The proof is
basically the same as Michor's in \cite[4.2]{M1} (see also
\cite[Section 2.7]{Mag1}). Sketch of proof: Given a form
$\tilde\omega\in\Omega^p_\hor(\Sigma)^W$, we have to construct a
form $\omega\in\Omega^p_\hor(M)^G$ with $i^*(\omega)=\tilde\omega$.
In a first step, we locally construct $\omega$ using that the slice
representation is polar in every point and in combination with
\cite[Corollary 3.8 and Lemma 4.1]{M1}. The corollary states that
basic forms correspond to Weyl-invariant forms for polar
representations, and the lemma states that basic forms on a slice
can be extended to basic forms on the corresponding tube. Finally,
the various local forms are glued up via a $G$-invariant partition
of unity.
\end{proof}

\begin{remark}{\ }
\begin{enumerate}
\item
The assumption of polarity of the slice representation in the
theorem above enters in the step where \cite[Corollary 3.8 ]{M1} is
used. For this to work we need that for a polar representation
$(G,V)$ with $G$ compact and section $\Sigma$ with Weyl group $W$, the restriction
$p\mapsto p|_\Sigma$ induces an isomorphism $\R[V]^G\simeq\R[\Sigma]^W$.

\item For examples where the assumptions of Theorem \ref{t:basicform} hold see Theorem
\ref{t:ksection}.

\item The cohomology of the complex $\Omega^*_\hor(M)^G$
is \emph{basic cohomology} $H^*_{G-\basic}(M)$. Theorem \ref{t:basicform} implies $H^*_{G-\basic}(M)\simeq H^*_{W-\basic}(\Sigma)$, under the given assumptions.
Koszul observed in \cite{K} that for $M$ compact,
basic cohomology is isomorphic to the singular
cohomology of $G\backslash M$\begin{footnote}{I
thank Peter W. Michor for this information.}\end{footnote}. Hence,
using $G\backslash M\approx W\backslash\Sigma$ from Theorem \ref{t:orbitspaceisometry}, we obtain the
isomorphism of basic cohomology under the weaker assumption of $M$ being compact.
\end{enumerate}
\end{remark}

\section{Copolarity of Singular Riemannian Foliations}\label{s:srf}
Since pre-sections are purely geometrical objects and since minimal sections can be expressed as connected components of the intersections of certain pre-sections (see Proposition \ref{p:copolarproperties} (iv)), there is a meaningful way to define these notions for singular Riemannian foliations. This also leads to the notion of copolarity for the latter. A reference for the following notions is \cite[Chapter 6]{Mol}.
A \textbf{transnormal system} $\kf$ on a Riemannian manifold $M$ is a partition of $M$ into complete connected immersed submanifolds of $M$ such that every geodesic perpendicular to one leaf is perpendicular to all other leaves it meets. A \textbf{singular Riemannian foliation (SRF)} is a transnormal system such that the module $\Xi_\kf$ of all vector fields, which are tangent to all leaves in $\kf$, spans for every $p\in M$ the tangent space $T_pF$ of the leaf $F\in\kf$ through $p$. A leaf $F$ is called \textbf{regular} if it has maximal dimension, otherwise it is called \textbf{singular}.

The partition of a $G$-manifold $M$ into the $G$-orbits is a transnormal system. Since the tangent space of every orbit is spanned by the $G$-Killing fields, this partition is also an SRF. Note that principal and exceptional orbits are both considered as regular leaves for the singular foliation.

Pre-sections for an SRF with locally closed leaves can be defined as for $G$-manifolds:
\begin{definition}
Let $M$ be a Riemannian manifold and let $\kf$ be singular Riemannian foliation with locally closed leaves on $M$.
A submanifold $\Sigma\subseteq M$ is called a \textbf{pre-section} for $\kf$ if the following three conditions are satisfied:
\begin{enumerate}
\item $\Sigma$ is complete, connected, embedded and totally
geodesic in $M$,

\item $\Sigma$ intersects every leaf of $\kf$,

\item for every regular leaf $F\in\kf$ and all points $p\in\Sigma\cap F$ we have $\nu_p(F)\subseteq T_p\Sigma$.
\end{enumerate}
If $p\in M$ is a point which lies on a regular leaf, then a pre-section of least dimension which contains $p$ is called a \textbf{minimal section through $p$}.
\end{definition}
The properties of a singular Riemannian foliation together with the assumption that the leaves are locally closed in $M$ yield the following generalization of Lemma \ref{l:slice_intersect}:
\begin{lemma}
If $F\in\kf$ is an arbitrary leaf, then for every $q\in F$ the set $\exp_q(\nu_q(F))$ intersects any other leaf of $\kf$.
\end{lemma}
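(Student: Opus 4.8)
The plan is to mimic the proof of Lemma \ref{l:slice_intersect} for $G$-manifolds, replacing orbits by leaves and $G$-Killing fields by the module $\Xi_\kf$ of foliated vector fields. Fix a leaf $F\in\kf$, a point $q\in F$, and an arbitrary leaf $F'\in\kf$; I must produce a point of $F'$ lying in $\exp_q(\nu_q(F))$. The strategy is to take a minimizing geodesic $\gamma$ from $q$ to $F'$: such a $\gamma$ exists and has finite length since $F'$ is closed in a neighbourhood of any of its points (by the locally-closed hypothesis) and $M$ is complete, so by a standard first-variation argument $\gamma$ realizes the distance $d(q,F')$ and meets $F'$ orthogonally at its other endpoint. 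The issue is that $\gamma$ need not start orthogonally to $F$ at $q$; the transnormality property only tells us that a geodesic which \emph{starts} perpendicular to one leaf stays perpendicular to all leaves it meets. So the real content is to arrange that $\gamma(0)=q$ with $\gamma'(0)\perp T_qF$.

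First I would reduce to the case where $\gamma$ starts perpendicularly to $F$. Consider instead the distance function $\rho = d(\,\cdot\,,F')$ restricted to $F$, and pick $q_0\in F$ (in the connected component at hand, or after flowing along $F$) minimizing $\rho$ on $F$; equivalently minimize over all of $F$ the length of geodesics hitting $F'$. Then a minimizing geodesic $\sigma$ from $q_0$ to $F'$ is perpendicular to $F$ at $q_0$ (first variation along $F$) and perpendicular to $F'$ at its far endpoint. Now transnormality applies to $\sigma$: since $\sigma'(0)\in\nu_{q_0}(F)$, the geodesic $\sigma$ is perpendicular to every leaf it meets. Next I use that $\Xi_\kf$ spans tangent spaces of leaves together with the fact that leaves are equidistant along perpendicular geodesics: flowing $q_0$ within $F$ by time-$t$ flows of fields in $\Xi_\kf$ and transporting $\sigma$ accordingly, one moves the foot point of the perpendicular geodesic arbitrarily within $F$ while keeping it a perpendicular geodesic of the same length to $F'$. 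In particular I can move the foot point to the originally prescribed $q$, obtaining a perpendicular geodesic from $q$ to a point of $F'$, i.e.\ a point of $F'$ in $\exp_q(\nu_q(F))$.

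The step I expect to be the main obstacle is the last one: showing that the perpendicular geodesic segment from $q_0$ to $F'$ can be "slid along $F$" to start at an arbitrary prescribed $q\in F$ while remaining a perpendicular geodesic to $F'$ and retaining the same length. For $G$-actions this is immediate because $G$ acts by isometries permuting orbits and $G$ is transitive on the leaf $F$ in the relevant sense; for a general SRF one must instead use that the holonomy of $\kf$ along the segment (or the flows of the fields in $\Xi_\kf$, which preserve $\kf$ and move within leaves) gives a family of local isometries between tubular neighbourhoods of the leaves that carries $\sigma$ to a parallel family of perpendicular geodesics. Concretely one invokes that the normal exponential map $\nu(F)\to M$ intertwines the $\Xi_\kf$-flows on the zero section with flows on a neighbourhood that preserve the foliation, so "radial" geodesics get carried to radial geodesics; the length is preserved since everything in sight is an isometry. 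Once the foot point can be placed at $q$, the far endpoint lies on a leaf that is still perpendicularly met, hence equals (a point of) $F'$, and the proof is complete. I would cite \cite[Chapter 6]{Mol} for the requisite facts about flows preserving an SRF and the behaviour of the normal exponential map, and for completeness note that the locally-closed hypothesis is exactly what guarantees the minimizing geodesic to $F'$ exists and is realized at finite distance.
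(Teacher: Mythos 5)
The paper does not actually prove this lemma: it is stated as a consequence of ``the properties of a singular Riemannian foliation together with the assumption that the leaves are locally closed,'' with no argument supplied (the finite\hyp{}dimensional model, Lemma \ref{l:slice_intersect}, is likewise unproved there), so your proposal can only be judged on its own merits. Your overall architecture --- minimizing geodesic, first variation, transnormality, then transporting the foot point along $F$ --- is the right skeleton. But the step you yourself single out as the main obstacle is justified by a claim that is false: the flows of vector fields in $\Xi_\kf$ preserve the foliation but are \emph{not} isometries of $M$ (unlike the Killing flows in the group case --- for the foliation of $\R^2$ by horizontal lines, $\Xi_\kf$ contains $f(x,y)\,\partial_x$ for arbitrary smooth $f$), so ``the length is preserved since everything in sight is an isometry'' does not hold, and the normal exponential map does not intertwine these flows with anything metrically meaningful. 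The correct substitute is the equidistance of leaves of a transnormal system: $d(\cdot,\overline{F'})$ is locally constant, hence constant, on the connected leaf $F$. That is a genuine theorem resting on the local structure of SRFs (distance cylinders and the homothety lemma in \cite[Chapter 6]{Mol}), not on ambient flows being isometric. Note also that once equidistance is in hand your detour through $q_0$ is unnecessary: every point of $F$, in particular the prescribed $q$, minimizes $d(\cdot,\overline{F'})|_F$, so the first-variation argument applies directly at $q$.

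Two further existence claims are unjustified. First, you pick $q_0\in F$ minimizing $\rho=d(\cdot,F')$ on $F$; but $F$ is only a complete immersed submanifold, possibly non-compact, so this infimum need not be attained (equidistance again makes this moot). Second, and more seriously, local closedness of $F'$ does not guarantee that $d(q,F')$ is realized by a point \emph{of} $F'$: a minimizing sequence in $F'$ may converge to a point of $\overline{F'}\setminus F'$, which is nonempty whenever $F'$ is locally closed but not closed, and then your geodesic ends on a different leaf and the argument does not conclude. Some additional step is needed here --- for instance, minimizing to the closed set $\overline{F'}$ and then using the local structure of $\kf$ near the limit leaf to reach $F'$ itself --- and this is exactly where the locally-closed hypothesis has to do real work rather than merely being invoked.
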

Let $p$ be a point in some regular leaf. Using the above lemma, it is easy to see that, if $\Sigma_1$ and $\Sigma_2$ are two pre-sections through $p$, then the connected component of $\Sigma_1\cap\Sigma_2$ which contains $p$ is again a pre-section. Hence, through every regular point $p$ passes a unique minimal section.
From here on it is seems quite natural to assume that all results on minimal sections of isometric group actions should carry over in one way or another to the case of minimal sections of SFRs with locally closed leaves. 

However, a noteworthy point is that a corresponding definition of canonical fat sections (Definition \ref{d:can_fat}) or cores (\cite{GS}) makes no sense for general singular Riemannian foliations with locally closed leaves. 
Hence, the minimal sections we defined above serve as a generalization of canonical fat sections.
\section[Copolarity of Actions induced by Polar Actions on Symmetric Spaces]{Copolarity of Actions induced by Polar Actions on Symmetric Spaces}\label{s:symspace}
In this section, our aim is to compute the copolarity of actions on compact lie groups which are associated to certain polar actions on symmetric spaces of compact type.

We first recall some notions for symmetric spaces in order to fix our notation (for the details we refer to Helgason's monograph \cite{H1}). A \emph{symmetric pair} $(G,K)$ consists of a Lie group $G$ and a closed subgroup $K$ such that an involutive automorphism $\sigma:G\to G$ exists with $\Fix(\sigma)^\circ\subseteq K\subseteq\Fix(\sigma)$. If in addition $\Ad_G(K)$ is compact, then the pair is called \emph{Riemannian}. 
The involution $\sigma$ induces an involution of the Lie algebra $\gothg$ of $G$ (also denoted by $\sigma$). This yields the so called Cartan-decomposition $\gothg=\gothk\oplus\gothp$, where $\gothk$ is the $(+1)$- and $\gothp$ the $(-1)$-eigenspace of $\sigma$. Note that $\gothk$ is at the same time the Lie algebra of $K$. If $\pi:G\to G\!/\!K$ denotes the canonical projection, then $T_{eK}G\!/\!K$ is identified with $\gothp$ via $d\pi(e)$.


It is well known that the complete connected totally geodesic submanifolds $\Sigma$ of $G\!/\!K$ correspond bijectively to the Lie triple systems $\gothm$ of $\gothp$. 
Furthermore, $\goths:=[\gothm,\gothm]\oplus\gothm$ is a Lie subalgebra of $\gothg$ and its corresponding Lie subgroup $S$ of $G$ together with $L:=S_{eK}=S\cap K$ form a Riemannian symmetric pair. 
We have $\Sigma=\pi(S)\simeq S\!/\!L$, and $S$ is the smallest subgroup of $G$ that acts transitively on $\Sigma$.

Although natural, we could not find a reference for the following statement.
\begin{lemma}\label{l:Sigma_embed}
Let $(G,K)$ be a Riemannian symmetric pair with $G$ compact. Suppose that $\Sigma\subseteq G\!/\!K$ is a complete, connected and totally geodesic submanifold. Then $\Sigma$ is embedded in $G\!/\!K$ if and only if $S$ is closed in $G$.
\end{lemma}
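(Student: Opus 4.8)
The plan is to exploit the tight correspondence between the totally geodesic submanifold $\Sigma$, its transvection-type subgroup $S$ (with $\goths=[\gothm,\gothm]\oplus\gothm$), and the orbit map $\pi|_S:S\to\Sigma\subseteq G\!/\!K$. Since $S$ acts transitively on $\Sigma$ with isotropy $L=S\cap K$, we have a smooth bijection $\bar\pi:S\!/\!L\to\Sigma$, and $\Sigma$ being embedded is the question of whether this bijective immersion is actually an embedding; the key point is to relate the topology of $\Sigma$ as a subspace of $G\!/\!K$ to the topology of $S$ as a subspace of $G$.

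First I would record the easy direction. Suppose $S$ is closed in $G$. Since $G$ is compact, $S$ is then a compact Lie group, $L=S\cap K$ is a closed (hence compact) subgroup, and $S\!/\!L$ is a compact manifold. The map $\bar\pi:S\!/\!L\to G\!/\!K$ is a continuous injection from a compact space into a Hausdorff space, hence a homeomorphism onto its image $\Sigma$; combined with the fact that $\bar\pi$ is an immersion (it is the restriction of the submersion $\pi$ followed by identifying $T_{eK}(G/K)=\gothp$, and $d\pi$ is injective on $\goths/\gothl\cong\gothm$), this gives that $\Sigma$ is an embedded submanifold.

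For the converse I would argue contrapositively: assume $S$ is not closed in $G$, and show $\Sigma$ is not embedded. Let $\bar S$ denote the closure of $S$ in the compact group $G$; then $\bar S$ is a closed, hence compact, Lie subgroup strictly containing $S$, and since $S$ is the path-component structure dictated by $\goths$, the subgroup $S$ is a dense (not closed) connected subgroup of the compact Lie group $\bar S$ of strictly larger dimension — more precisely $\overline{S}$ has Lie algebra $\bar{\goths}\supsetneq\goths$ (equality of Lie algebras would force $S=\bar S^\circ=\bar S$, a contradiction). Now consider $\pi(\bar S)\subseteq G\!/\!K$. I claim $\pi(\bar S)$ has the same closure as $\Sigma=\pi(S)$: indeed $\pi$ is a closed map (proper, since fibers are cosets of the compact $K$ and $G$ is compact), so $\pi(\bar S)$ is closed and contains $\pi(S)$, giving $\overline{\Sigma}\subseteq\pi(\bar S)$; conversely $\bar S\subseteq \overline{SK}=\overline{\pi^{-1}(\Sigma)}$ and continuity of $\pi$ gives $\pi(\bar S)\subseteq\overline{\Sigma}$, so $\pi(\bar S)=\overline{\Sigma}$. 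Thus $\overline{\Sigma}$ is a compact subset properly containing $\Sigma$ when $\dim\pi(\bar S)>\dim\Sigma$; if instead $\dim\pi(\bar S)=\dim\Sigma$ I would still have that $\Sigma$ is a non-closed subset of the compact set $\overline{\Sigma}$, so $\Sigma$ is not locally compact in a uniform way at its boundary points — in either case an embedded submanifold of the complete manifold $G\!/\!K$ that fails to be closed is impossible here, because a complete totally geodesic embedded submanifold of a complete Riemannian manifold is closed (geodesics stay in $\Sigma$, and a Cauchy sequence in $\Sigma$ converges in $M$ to a point which, by completeness of $\Sigma$ in the induced metric and the fact that the inclusion is a closed map on the embedded copy, lies in $\Sigma$). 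Hence $\Sigma$ cannot be embedded.

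The main obstacle I anticipate is the converse direction, specifically making the last step rigorous: one must be careful that "complete, connected, totally geodesic, embedded" for $\Sigma$ really does force $\Sigma$ to be closed in $G\!/\!K$, and then derive the contradiction cleanly from $\overline{\Sigma}=\pi(\bar S)\supsetneq\Sigma$. The cleanest route is probably: (a) prove that an embedded complete totally geodesic submanifold of a complete Riemannian manifold is closed — via the fact that its image under the inclusion is a union of maximal geodesics and any limit point is reached by a geodesic lying in $\Sigma$ by completeness; then (b) observe $\pi(\bar S)$ is a closed set strictly larger than $\Sigma$ (strictness because $S\subsetneq\bar S$ and $S\!/\!L\to\bar S/(\bar S\cap K)$ is injective on the relevant quotient, as $L=S\cap K=\bar S\cap K\cap S$ while $\bar S\cap K$ may be larger — here one checks $\pi(\bar S)\ne\pi(S)$ by a dimension count or by noting a dense one-parameter subgroup of $\bar S$ not in $S$ projects to a point of $\overline\Sigma\setminus\Sigma$), contradicting closedness of $\Sigma$. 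Assembling (a) and (b) completes the proof.
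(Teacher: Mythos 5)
Your forward direction is fine and is essentially the paper's ($S$ closed in the compact group $G$ forces $S$ compact, and the orbit of a compact group is embedded). For the converse you take a genuinely different, contrapositive route via the closure $\bar S$, whereas the paper argues directly: it pushes $\Sigma$ into $G$ by the Cartan map $\phi(gK)=g\sigma(g)^{-1}$ to get the compact set $\exp(\gothm)$, shows $\exp([\gothm,\gothm])$ is closed (Mostow), and then proves by a path argument using $\sigma$ that every $s\in S$ factors as $s=xy$ with $x\in\exp(\gothm)$, $y\in\exp([\gothm,\gothm])$, so that $S$ is a product of two compact sets and hence closed. Your identity $\pi(\bar S)=\overline{\Sigma}$ is correct, and so is the observation that $\dim\bar S>\dim S$ when $S$ is not closed, but the argument has a real gap at its crux.

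The gap is the step $\pi(\bar S)\supsetneq\pi(S)$, which you defer to ``a dimension count.'' The count does not come for free: a priori the extra directions of $\bar{\goths}$ over $\goths$ could all be tangent to the fibres of $\pi$, i.e.\ one could have $\bar{\goths}\cap\gothp=\gothm$ while $\bar{\goths}\cap\gothk\supsetneq[\gothm,\gothm]$, in which case $\pi(\bar S)=\exp_{eK}(\bar{\goths}\cap\gothp)=\Sigma$ and no contradiction with embeddedness arises. Ruling this out is the actual content of the lemma in your formulation: one must show that the failure of closedness is caused by the central torus of $\goths$, that $\gothz(\goths)\subseteq\gothm$ (an element of $[\gothm,\gothm]$ centralizing $\gothm$ has zero norm for the $\Ad$-invariant inner product), and that the closure of $\exp(\gothz(\goths))$ remains tangent to $\gothp$ because $\sigma$ preserves the integer lattice of that closure torus, so its $(-1)$-eigenspace is a rational subspace. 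Only then does $\bar{\goths}\cap\gothp\supsetneq\gothm$ follow; your ``dense one-parameter subgroup'' remark does not substitute for this. Separately, your justification of ``embedded, complete, totally geodesic $\Rightarrow$ closed'' is circular as written: invoking ``the fact that the inclusion is a closed map on the embedded copy'' assumes the conclusion, and a sequence in $\Sigma$ that converges in $M$ need not be Cauchy for the intrinsic metric of $\Sigma$, so completeness of $\Sigma$ cannot be applied the way you apply it. The paper also relies on this closedness (to conclude that $\exp(\gothm)$ is compact), so the statement itself is not the problem, but the proof you sketch for it does not work and would need to be replaced, e.g.\ by noting that an embedded orbit is locally closed, hence open and dense in the compact $\bar S$-orbit $\pi(\bar S)$, and that normality of $S$ in $\bar S$ then forces $\Sigma=\pi(\bar S)$.
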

\begin{proof}
If $S$ is closed in $G$, then $S$ acts isometrically on $G\!/\!K$. Therefore, its orbit $S\cdot eK=\Sigma$ is an embedded submanifold of $G\!/\!K$.

Conversely, $\goths=[\gothm,\gothm]\oplus\gothm$ is a compact Lie algebra because $\gothg$ is. Let $\goths=\gothz(\goths)\oplus[\goths,\goths]$ denote the decomposition of $\goths$ into its center $\gothz(\goths)$ and its semisimple part $[\goths,\goths]$. It follows that $\exp([\goths,\goths])$ is closed in $G$ (\cite{Mo}, p. 615)  and hence compact. The same holds for
$$\exp([\gothm,\gothm])=(\exp([\goths,\goths])\cap \Fix(\sigma))^\circ.$$
Since $\Sigma$ is embedded in $G\!/\!K$, its image under 
$\phi: G\!/\!K\to G,\, gK\mapsto g\sigma(g)^{-1}$ yields the compact submanifold $\exp(\gothm)$ of $G$. 
Note that $\exp(\gothm)$ is closed under forming rational powers of elements. Applying $\sigma$ to an element of $\exp(\gothm)$ has the same effect as forming its inverse.
Clearly, $\exp(\gothm)$ projects onto $\Sigma$ under $\pi$.

We next claim that every element $s\in S$ can be written as a product $s=xy$ where $x\in\exp(\gothm)$ and $y\in\exp([\gothm,\gothm])$. In fact, let $s\in S$ be arbitrary and let $s_t$ be a path from $e$ to $s$. Let then $x_t$ be a path in $\exp(\gothm)$ which starts in $e$ and satisfies
$$x_t^2=s_t\sigma(s_t)^{-1}=\phi\circ\pi(s_t)\in\exp(\gothm)$$
for all $t$. We claim that $y_t:=x_t^{-1}s_t$ is a path in $S$, which is fixed by $\sigma$. In fact, 
\begin{eqnarray*}
\sigma(y_t)&=&\sigma(x_t^{-1})\sigma(s_t)=x_t\sigma(s_t)s_t^{-1}s_t\\
&=&x_t(x_t^2)^{-1}s_t=x_t^{-1}s_t=y_t.
\end{eqnarray*}
This shows $y\in\exp([\gothm,\gothm])$. It follows that $S=\exp(\gothm)\exp([\gothm,\gothm])$ is closed in $G$.
\end{proof}

Now let $G$ be a compact Lie group equipped with a bi-invariant metric. Viewed as a symmetric space, $G$ can be identified with $(G\!\times\! G)\!/\!\Delta(G)$, where $\Delta(G)=\{(g,g)\mid g\in G\}$. So $g\in  G$ is identified with the coset $[g,e]=\{(gh,h)\mid h\in G\}$. Let $N\subseteq G$ be a totally geodesic submanifold of $G$. Then $\gothn:=T_e N$ is a Lie triple system of $\gothg=L(G)$. 
As before, a transitive group of isometries of $N$ can be realized as a subgroup of $G\!\times\! G$: Let $\tilde\gothn:=\{(X,-X)\mid X\in \gothn\}\subset\gothg\times \gothg$. Obviously, $\tilde \gothn$ is a Lie triple system, hence we may consider the Lie subalgebra
$$\goths:=[\tilde\gothn,\tilde\gothn]\oplus\tilde\gothn=\triangle([\gothn,\gothn])\oplus\tilde\gothn=\langle([X,Y]+Z,[X,Y]-Z)\mid X,Y,Z \in\gothn\rangle.$$

\begin{lemma}\label{l:totallytangent}
Let $S\subseteq G\!\times\! G$ be the connected Lie subgroup of $G\!\times\! G$ with $L(S)=\goths$. Then $S$ is a group of isometries of $N$ and we have for all $(g,h)\in S:\ g\cdot N\cdot h^{-1}=N$ and therefore
$$T_{gh^{-1}}N=g\cdot T_e N\cdot h^{-1}=g\cdot \gothn\cdot h^{-1}.$$
In particular, $(\exp(X),\exp(-X))\in S$ for all $X\in \gothn$, and hence
$$T_{\exp(2X)}N=\exp(X)\cdot\gothn\cdot\exp(X).$$
\end{lemma}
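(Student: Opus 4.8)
The plan is to unwind the symmetric-space description of $N$ inside $G$ as $(G\times G)/\Delta(G)$, and to check that the subgroup $S$ with Lie algebra $\goths$ acts on $G$ preserving $N$. First I would recall that $N$, as a complete connected totally geodesic submanifold of $G$, corresponds to the Lie triple system $\gothn = T_e N$, and that under the identification $G \cong (G\times G)/\Delta(G)$ the submanifold $\tilde N := \{(X,-X)\mid X\in\gothn\}$ is the Lie triple system of $\gothp = \gothg\times\gothg / \Delta(\gothg) \cong \gothg$ corresponding to $N$ (with the $(G\times G)$-symmetric structure given by the involution $(g,h)\mapsto(h,g)$). By the general theory recalled just before the lemma, $\goths = [\tilde N,\tilde N]\oplus\tilde N$ is a subalgebra, its connected subgroup $S\subseteq G\times G$ is the smallest subgroup acting transitively on $N$ (viewed as $S/L$ with $L = S\cap\Delta(G)$), and in particular $S$ acts isometrically on $G$ and satisfies $S\cdot eN = N$, i.e.\ $S\cdot[e,e] = N$.

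Next I would translate the statement ``$S$ acts transitively on $N\subseteq G$'' into the concrete left-and-right multiplication action. An element $(g,h)\in G\times G$ acts on the coset space by $(g,h)\cdot[a,b] = [ga,hb]$; under the identification $[a,b]\leftrightarrow ab^{-1}\in G$ this is the action $x\mapsto g x h^{-1}$. Hence $S\cdot[e,e]=N$ says precisely that $g\cdot N\cdot h^{-1}$ meets $N$ for $(g,h)\in S$; but since $S$ is a group acting on the fixed submanifold $N$ (it preserves $N$ because $N$ is the $S$-orbit of $[e,e]$), we in fact get $g\cdot N\cdot h^{-1} = N$ for every $(g,h)\in S$. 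Differentiating the diffeomorphism $x\mapsto gxh^{-1}$ of $N$ at the point $x = e\in N$ — using that left and right translations in $G$ are linear on the Lie-group level and send $T_eN=\gothn$ to $T_{gh^{-1}}N$ — yields $T_{gh^{-1}}N = g\cdot\gothn\cdot h^{-1}$, which is the displayed formula.

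Finally, for the ``in particular'' clause I would exhibit the relevant one-parameter subgroup explicitly: for $X\in\gothn$ the curve $t\mapsto(tX,-tX)$ lies in $\tilde N\subseteq\goths$, so $(\exp(X),\exp(-X)) = \exp_{G\times G}(X,-X)\in S$. Applying the formula just proved with $(g,h) = (\exp(X),\exp(-X))$ gives $gh^{-1} = \exp(X)\exp(X) = \exp(2X)$ and hence $T_{\exp(2X)}N = \exp(X)\cdot\gothn\cdot\exp(X)$, as claimed. I do not anticipate a genuine obstacle here; the only point requiring care is the bookkeeping of the identification $G\cong(G\times G)/\Delta(G)$ and checking that under it the $(G\times G)$-symmetric-space structure really is the one whose totally geodesic submanifold through $[e,e]$ tangent to $\tilde\gothn$ is $N$ (equivalently, that $\phi\circ\pi$ for this pair recovers $x\mapsto x$, up to the square map, compatibly with Lemma \ref{l:Sigma_embed}). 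Once that bridge is in place, everything is a direct consequence of the structure theory for $S$ and $\goths$ recalled above.
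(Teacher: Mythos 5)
Your argument is correct and is exactly the route the paper intends: the lemma is stated there without proof, as an immediate consequence of the structure theory recalled just before it (for a totally geodesic $\Sigma\subseteq G/K$ with Lie triple system $\gothm$, the subgroup $S$ with $L(S)=[\gothm,\gothm]\oplus\gothm$ acts transitively on $\Sigma=\pi(S)$), applied to the symmetric pair $(G\times G,\Delta(G))$ and the Lie triple system $\tilde\gothn$. Your elaboration --- identifying the $(g,h)$-action with $x\mapsto gxh^{-1}$, deducing $gNh^{-1}=N$ from $N$ being the $S$-orbit of $[e,e]$, differentiating at $e$, and exhibiting $(\exp X,\exp(-X))=\exp_{G\times G}(X,-X)\in S$ since $(X,-X)\in\tilde\gothn\subseteq\goths$ --- fills in precisely the bookkeeping the paper leaves implicit, including the harmless factor of $2$ in the identification $d\pi(e,e)(X,-X)=2X$.
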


Let now $(G,K)$ be a Riemannian symmetric pair with $G$ compact.
The reason for all the preliminary work is the following: Whenever $H$ is a subgroup of $G$, the action $\psi$ of $H$ on $G\!/\!K$ by left translation lifts to an action $\varphi$ of $H\!\times\! K$ on $G$ in the following way: $(h,k)\cdot g:= hgk^{-1}$. If $\pr_H:H\!\times\! K\to H$ denotes the projection onto the first factor, then the situation fits into the following commutative diagram:
$$\xymatrix{
{\ }\hspace{-7ex}(H\!\times\! K)\times G\ar@{->}[r]^-{\varphi}\ar@{->}_{\pr_H\times\pi}[d] &G\ar@{->}^\pi[d]\\
{\ }\hspace{1.5ex}H\times G\!/\!K\ar@{->}[r]_-{\psi} &G\!/\!K.
}$$
The lift $\varphi$ has certain distinctive features:
\begin{proposition}\label{p:HK_action}{\ }
\begin{enumerate}
\item $\pi$ maps $\varphi$-orbits onto $\psi$-orbits: $\pi(HgK)=H\cdot(gK)$. The orbit spaces\linebreak $(H\!\times\! K)\backslash G$ and $H\backslash G/\!K$ are canonically homeomorphic via $HgK\mapsto H\cdot(gK)$.

\item For the isotropy subgroups of both actions we have
\begin{eqnarray*}
(H\!\times\! K)_g&=&\{(h,g^{-1}hg)\mid h\in H\cap gKg^{-1}\} \text{ and}\\
H_{gK}&=&H\cap gKg^{-1}.
\end{eqnarray*}
Therefore, both groups are isomorphic via $\pr_H:(H\!\times\! K)_g\to H_{gK},\, (h,k)\mapsto h$.

\item The actions $\psi$ and $\varphi$ have the same cohomogeneity. More precisely, the slice of $\varphi$ through $g\in G$ is given by $\nu_g(HgK)=g\cdot(\Ad_{g^{-1}}(\gothh^\bot)\cap\gothk^\bot)$. The $\varphi$-orbits contain the fibres of $\pi$ and since they are mapped onto the orbits of $\psi$, the slice through $g\cdot p$ is given by $\nu_{gK}(H\cdot(gK))=d\pi(g)(\nu_g(HgK))$. Furthermore, the slice representation $((H\!\times\! K)_g,\, \nu_g(HgK))$ of $\varphi$ is equivariantly isomorphic to the slice representation $(H_{gK},\, \nu_{gK}(H\cdot (gK)))$ of $\psi$.
\end{enumerate}
\end{proposition}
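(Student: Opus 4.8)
The plan is to verify the three assertions essentially by direct computation, organizing the bookkeeping around the lift $\varphi$ of $\psi$ along $\pi:G\to G/\!K$ and the commutative diagram displayed just before the statement. The key structural fact I would use throughout is that $\pi$ is a Riemannian submersion (for the bi-invariant metric on $G$ and the induced metric on $G/\!K$) whose fibres are the cosets $gK$, and that these fibres are exactly the $\{e\}\!\times\!K$-orbits; hence each $\varphi$-orbit $HgK$ is a union of $\pi$-fibres, which is the geometric reason the two actions ``see'' the same quotient and the same normal data.

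\emph{(i)} First I would observe that $\pi(HgK)=\pi(\{hgk\mid h\in H,k\in K\})=\{H\cdot(hg)K\mid h\in H\}=H\cdot(gK)$, which is immediate from $\pi(xk)=\pi(x)$ for $k\in K$. This shows $\pi$ carries $\varphi$-orbits onto $\psi$-orbits, and since $\pi$ is surjective the induced map $HgK\mapsto H\cdot(gK)$ on orbit spaces is a well-defined continuous surjection. For injectivity: if $H\cdot(gK)=H\cdot(g'K)$ then $g'K=hgK$ for some $h\in H$, i.e. $g'=hgk$ for some $k\in K$, so $g'\in HgK$ and the two $\varphi$-orbits coincide. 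Continuity of the inverse follows because $\pi$ is an open map (being a submersion), so the induced bijection on the quotients is a homeomorphism. This step is routine.

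\emph{(ii)} For the isotropy groups I would just unwind the definitions. We have $(h,k)\in(H\!\times\!K)_g$ iff $hgk^{-1}=g$, i.e. $k=g^{-1}hg$; this forces $g^{-1}hg\in K$, i.e. $h\in H\cap gKg^{-1}$, and conversely any such $h$ gives a valid pair. Hence $(H\!\times\!K)_g=\{(h,g^{-1}hg)\mid h\in H\cap gKg^{-1}\}$, and projection to the first factor is visibly a bijection onto $H\cap gKg^{-1}$, which is a group homomorphism since the second coordinate is determined functorially by conjugation. On the other side, $hK=gK$ for $\psi$ means $h\in H$ and $h\in gKg^{-1}$, giving $H_{gK}=H\cap gKg^{-1}$ directly. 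Again routine.

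\emph{(iii)} This is the part requiring the most care, and the one I'd flag as the main obstacle: one must identify the normal space $\nu_g(HgK)$ inside $T_gG$ and check the slice representations match equivariantly. The approach: translate everything to the Lie algebra by left translation $L_{g^{-1}}$. The tangent space $T_g(HgK)$ is spanned by the velocities of curves $t\mapsto \exp(tX)g\exp(-tY)$ with $X\in\gothh$, $Y\in\gothk$; left-translating by $g^{-1}$ these have velocity $\Ad_{g^{-1}}(X)-Y$, so $L_{g^{-1}}T_g(HgK)=\Ad_{g^{-1}}(\gothh)+\gothk$. Taking orthogonal complements with respect to the $\Ad$-invariant inner product on $\gothg$, and using that $(\,U+V\,)^{\perp}=U^{\perp}\cap V^{\perp}$ together with $(\Ad_{g^{-1}}\gothh)^{\perp}=\Ad_{g^{-1}}(\gothh^{\perp})$, one gets $L_{g^{-1}}\nu_g(HgK)=\Ad_{g^{-1}}(\gothh^{\perp})\cap\gothk^{\perp}$, i.e. $\nu_g(HgK)=g\cdot(\Ad_{g^{-1}}(\gothh^{\perp})\cap\gothk^{\perp})$ as claimed. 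That the cohomogeneities agree is then a dimension count combined with (i); and since the $\varphi$-orbits contain the $\pi$-fibres, $d\pi(g)$ maps $\nu_g(HgK)$ isomorphically onto $\nu_{gK}(H\cdot gK)$ because $d\pi(g)$ is an isometry from the horizontal space (a subspace of which is this normal space, as the $\pi$-fibre direction $g\cdot\gothk$ lies inside $T_g(HgK)$) onto $T_{gK}(G/K)$ carrying $T_g(HgK)$ onto $T_{gK}(H\cdot gK)$. Finally, for the equivariant isomorphism of slice representations, I would note that under the isomorphism $\pr_H:(H\!\times\!K)_g\xrightarrow{\sim}H_{gK}$ from (ii) and the linear isometry $d\pi(g):\nu_g(HgK)\xrightarrow{\sim}\nu_{gK}(H\cdot gK)$, equivariance amounts to the identity $d\pi(g)\circ d\varphi_{(h,k)}(g)=d\psi_h(gK)\circ d\pi(g)$ on the normal space, which is just the differential at $g$ of the commuting square $\pi\circ\varphi_{(h,k)}=\psi_h\circ\pi$ from the displayed diagram. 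Thus the proposition follows.
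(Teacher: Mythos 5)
Your argument is correct, and in fact it supplies more than the paper does: the paper gives no proof of Proposition \ref{p:HK_action} at all, deferring to the reference [GorTh]. Parts (i) and (ii) are the routine verifications you describe. The substantive content is in (iii), and your treatment of it is sound: left-translating the orbit tangent space to $\Ad_{g^{-1}}(\gothh)+\gothk$, passing to orthogonal complements via the $\Ad$-invariance of the bi-invariant metric to get $\nu_g(HgK)=g\cdot(\Ad_{g^{-1}}(\gothh^\bot)\cap\gothk^\bot)$, observing that this normal space is horizontal for the Riemannian submersion $\pi$ (because the fibre direction $g\cdot\gothk$ sits inside $T_g(HgK)$), so that $d\pi(g)$ restricts to a linear isometry onto $\nu_{gK}(H\cdot gK)$, and finally differentiating the identity $\pi\circ\varphi_{(h,k)}=\psi_h\circ\pi$ at $g$ for $(h,k)\in(H\!\times\!K)_g$ to get equivariance with respect to the isomorphism $\pr_H$ from (ii). No gaps.
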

For the details we refer to \cite{GorTh}.

\begin{theorem}\label{t:ksection}
Let $(G,K)$ be a Riemannian symmetric pair with compact $G$. Let $H$ be a closed subgroup of $G$.
If $(H,G\!/\!K)$ is polar and $\Sigma$ is a section through $eK$ with $\gothm:=T_{eK}\Sigma$, then
$$\copol(H\!\times\! K, G)=\dim([\gothm,\gothm]).$$
A minimal section through $e$ is given by the connected Lie subgroup $S$ corresponding to the Lie subalgebra $\goths:=[\gothm,\gothm]\oplus \gothm$.
\end{theorem}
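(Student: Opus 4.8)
The plan is to prove that $S$ is the minimal section of $(H\!\times\! K,G)$ through the identity $e$; the copolarity formula then follows by counting dimensions. Write $\varphi$ for the action $(h,k)\cdot g=hgk^{-1}$, $\psi$ for the $H$-action on $G\!/\!K$, and $\pi:G\to G\!/\!K$ for the projection, which is a Riemannian submersion, so $\gothg=\gothk\oplus\gothp$ is orthogonal. Since $\Sigma$ passes through $eK$ and $\gothm=T_{eK}\Sigma$, the discussion preceding the theorem gives $\pi(S)=\Sigma\simeq S/(S\cap K)$; note that $\goths=\gothm\oplus[\gothm,\gothm]$ is a direct sum because $\gothm\subseteq\gothp$ and $[\gothm,\gothm]\subseteq\gothk$. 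By Proposition \ref{p:HK_action}(iii) the actions $\varphi$ and $\psi$ have the same cohomogeneity, which by polarity equals $\dim\Sigma=\dim\gothm$; hence $\cohom(H\!\times\! K,G)=\dim\gothm$ and $\dim S=\dim\gothm+\dim[\gothm,\gothm]$.

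I would first check that $S$ is a pre-section of $(H\!\times\! K,G)$ through $e$. Property \ref{pr:1}: $S$ is totally geodesic since $\goths$ is a subalgebra and the metric is bi-invariant, and it is embedded --- hence, being a closed subgroup of the compact group $G$, also compact, connected and complete --- by Lemma \ref{l:Sigma_embed} applied to $(G,K)$, using that the polar section $\pi(S)=\Sigma$ is embedded in $G\!/\!K$. Property \ref{pr:2}: given $g\in G$, the section $\Sigma$ meets $H\cdot(gK)$ in some $sK$ with $s\in S$, and then $s\in HgK\cap S$. For property \ref{pr:3}, let $g\in S$ be $\varphi$-regular; then $gK$ is a regular point of $\psi$ (Proposition \ref{p:HK_action}(ii),(iii)), so $\nu_{gK}(H\cdot(gK))=T_{gK}\Sigma$ since $\Sigma$ is a section of the polar action. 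Both $\nu_g(HgK)$ and $dL_g\gothm$ are horizontal for $\pi$ --- each is orthogonal to the fibre direction $dL_g\gothk=T_g(gK)$, the first since $T_g(gK)\subseteq T_g(HgK)$, the second since $\gothm\subseteq\gothp$ --- and $d\pi(g)$ maps each of them onto $T_{gK}\Sigma$ (for $dL_g\gothm$ use $\pi(S)=\Sigma$ together with $dL_g[\gothm,\gothm]\subseteq\ker d\pi(g)$); since $d\pi(g)$ is injective on horizontal vectors,
$$\nu_g(HgK)=dL_g\gothm\subseteq dL_g\goths=T_gS .$$
In particular $\nu_g(HgK)=dL_g\gothm$ at every $\varphi$-regular $g\in S$; this is the key identity for what follows.

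It remains to show $S=\Sigma_{\min}$, the minimal section through $e$. Since $S$ is a pre-section through $e$, Proposition \ref{p:copolarproperties}(iv) gives $\Sigma_{\min}\subseteq S$; put $\gothm_0:=T_e\Sigma_{\min}\subseteq\goths$. Take a $\varphi$-regular direction $X\in\gothm_0$ (one exists by Lemma \ref{l:sliceregular}); the geodesic $\gamma(t)=\exp(tX)$ lies in $\Sigma_{\min}\subseteq S$ and is $\varphi$-regular for $0<t<\varepsilon$ (Lemma \ref{l:sliceequiv}). The key identity gives $\nu_{\gamma(t)}(H\gamma(t)K)=dL_{\gamma(t)}\gothm$, and property \ref{pr:3} for $\Sigma_{\min}$ gives $dL_{\gamma(t)}\gothm\subseteq T_{\gamma(t)}\Sigma_{\min}$. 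As $\Sigma_{\min}$ is totally geodesic in $G$ and the metric is bi-invariant, $T_{\gamma(t)}\Sigma_{\min}=dL_{\exp(tX/2)}dR_{\exp(tX/2)}\gothm_0$, so, using $dL_a=dR_a\circ\Ad(a)$ on $\gothg$, we get $\Ad(\exp(tX/2))\gothm\subseteq\gothm_0$ for $0\le t<\varepsilon$. Letting $t\to0$ yields $\gothm\subseteq\gothm_0$; re-running the argument with $X$ any $\varphi$-regular direction in $\gothm\subseteq\gothm_0$ and differentiating at $t=0$ yields $[X,\gothm]\subseteq\gothm_0$, and since such $X$ are dense in (hence span) the section $\gothm$ of the slice representation of $\varphi$ at $e$ --- which is polar, being equivalent by Proposition \ref{p:HK_action}(iii) to the slice representation of $\psi$ at $eK$, whose section is $\gothm$ because polar sections meet orbits orthogonally at every point --- we obtain $[\gothm,\gothm]\subseteq\gothm_0$. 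Hence $\goths\subseteq\gothm_0$, so $S\subseteq\Sigma_{\min}$ and therefore $S=\Sigma_{\min}$. Being a minimal $k$-section with $k=\dim\goths-\dim\gothm=\dim[\gothm,\gothm]$, we conclude $\copol(H\!\times\! K,G)=\dim[\gothm,\gothm]$ (equivalently via Proposition \ref{p:cohom_copol}).

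The main obstacle is the key identity $\nu_g(HgK)=dL_g\gothm$ for $\varphi$-regular $g\in S$: this is the step where polarity of $(H,G\!/\!K)$ genuinely enters, and making it precise requires carefully matching the $\varphi$-slice in $G$ with the $\psi$-slice in $G\!/\!K$ via Proposition \ref{p:HK_action} (and Proposition \ref{p:orthogonal_decomp}), together with the fact that $\gothm\subseteq\nu_{eK}(H\cdot eK)$ even though $eK$ need not be a regular point. Two smaller technical points are the density of the $\varphi$-regular directions inside the section $\gothm$ (needed to promote $[X,\gothm]\subseteq\gothm_0$ to $[\gothm,\gothm]\subseteq\gothm_0$), and the fact that property \ref{pr:4} for $S$ is obtained only afterwards, once $S$ is identified with $\Sigma_{\min}$.
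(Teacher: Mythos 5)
Your proof is correct and follows essentially the same route as the paper's: establish that $S$ is a pre-section by identifying $\nu_g(HgK)$ with $dL_g\gothm$ at $\varphi$-regular $g\in S$, conclude via Proposition \ref{p:copolarproperties}(iv) that the minimal section through $e$ is contained in $S$, and then obtain the reverse inclusion $\goths\subseteq T_e\Sigma_{\min}$ by the same $\Ad(\exp(tX/2))$-and-differentiate argument based on Lemma \ref{l:totallytangent}. The one genuine variation is your derivation of the key identity: the paper quotes the explicit formula $\nu_g(HgK)=g\cdot(\Ad_{g^{-1}}(\gothh^\bot)\cap\gothp)$ together with Gorodski's result that $\Ad_{g^{-1}}(\gothh^\bot)\cap\gothp=\gothm$ at regular points of the section, whereas you rederive the same identity from the Riemannian submersion $\pi$ and Proposition \ref{p:HK_action}(iii) by comparing two horizontal spaces of equal dimension projecting onto $T_{gK}\Sigma$; this is a self-contained and perfectly sound alternative. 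One point you must make explicit: the paper assumes without loss of generality that $e$ is $(H\!\times\!K)$-regular, and your appeal to Proposition \ref{p:copolarproperties}(iv) (uniqueness of \emph{the} minimal section through $e$ and its characterization as the connected intersection of all pre-sections through $e$) requires this, since that proposition is stated only at regular points. Once that assumption is in place, your density argument for regular directions inside $\gothm$ becomes superfluous: the regular set is open, so $\exp(tX)$ is regular for all $X\in\gothm$ and small $t$, which is exactly how the paper passes from $[X,Y]\in\gothn$ to $[\gothm,\gothm]\subseteq\gothn$ by linearity.
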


\begin{proof}
We first show that $S$ contains a minimal section. In a second step we show that each minimal section contains $S$. Without loss of generality we may assume that $e$ is regular with respect to the $(H\!\times\! K)$-action.

Clearly, $S$ is totally geodesic and complete as it is a Lie
subgroup of $G$. Since $\Sigma$ is embedded in $G\!/\!K$, Lemma \ref{l:Sigma_embed} shows that $S$ is embedded in $G$. Furthermore, since $S$ maps under the projection
$\pi:G\to G\!/\!K$ onto $\Sigma$, it intersects every orbit. Now
suppose that $g\in S$ is regular with respect to the action $\varphi$. Then $\pi(g)=gK$ is regular with respect to $\psi$ and the normal space $\nu_g(HgK)$ to the orbit $HgK$ in $g$ is given
by
$$(\gothh^\bot\cdot g)\cap(g\cdot\gothp)=g\cdot(\Ad_{g^{-1}}(\gothh^\bot)\cap\gothp).$$
However,
since the $H$-action on $G\!/\!K$ is polar, we know that
$\Ad_{g^{-1}}(\gothh^\bot)\cap\gothp=\gothm$ (see \cite[p. 195]{Gor2}). Since $S$ is a Lie
subalgebra of $G$, its tangent space in $g$ is given by left
translation of $\goths$ with $g$, i.e. $T_gS=g\cdot \goths$.
Combining this with the above, we obtain:
$$\nu_g(HgK)=g\cdot(\Ad_{g^{-1}}(\gothh^\bot)\cap\gothp)=g\cdot \gothm\subseteq g\cdot \goths=T_gS.$$
We have therefore established that any minimal section is
contained in $S$.

Now assume that $N\subseteq S$ is a minimal section through $e$ and
write $\gothn:=T_eN$. In particular we have the inclusion
$\nu_g(HgK)=g\cdot \gothm\subseteq T_gN$ for all regular $g\in N$
and therefore $\gothm\subseteq\gothn$. Since the set of regular points
of the $H\!\times\! K$-action on $G$ is open and dense in $G$ and $e$
is assumed to be a regular point, there is a small $\varepsilon>0$, such that for all $t\in (-\varepsilon,\varepsilon)$ and
$X\in \gothm$ with unit length, the value of $g^2=\exp(t\cdot X)$
is regular. Applying the tangent space formula from lemma
\ref{l:totallytangent} it follows that $g^2\cdot \gothm\subseteq T_{g^2}N=g\cdot \gothn\cdot g$, or in other words:
$$\Ad_g(\gothm)=\Ad_{\exp(t\!/\!2\cdot X)}(\gothm)\subseteq\gothn.$$
Since $\Ad_{\exp(X)}=e^{\ad_X}$, it follows for all
$Y\in\gothm$ and $t\in\R$:
$$\Ad_{\exp(t\!/\!2\cdot X)}(Y)=e^{t\!/\!2\cdot\ad_X}(Y)\in\gothn.$$
Differentiating in $t=0$ yields that $\ad_XY=[X,Y]\in\gothn$. By
linearity of the Lie bracket we may thus conclude that
$[\gothm,\gothm]\subseteq \gothn$ and therefore
$\goths\subseteq\gothn$ which in turn implies $S\subseteq N$.
\end{proof}
\begin{remark}
We have proved along the lines that even if the action of $H$ on
$G\!/\!K$ is not polar, the following inequality still holds:
$$\copol(H\!\times\! K,G)\le\copol(H,G\!/\!K)+\dim([\gothm,\gothm]).$$
Here $\gothm$ is the tangent space of a minimal section through $eK$. To be more precise, if $\Sigma\subseteq G\!/\!K$ denotes a minimal section with respect to the action $\psi$ and $\gothm=T_{eK}\Sigma$, then $S:=\exp([\gothm,\gothm]\oplus\gothm)$ contains a minimal section of the action $\varphi$.
\end{remark}

\begin{corollary}
With the assumptions and notation as in Theorem \ref{t:ksection}:
\begin{enumerate}
\item Assuming that $\psi$ is polar, then $\varphi$ is polar if and only if it is hyperpolar.
\item If $H=\{e\}$, then $\copol(K,G)=\dim([\gothp,\gothp])$ (the action is by right translation), and the copolarity is trivial. 
\end{enumerate}
\end{corollary}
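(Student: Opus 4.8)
The plan is to obtain both assertions as direct consequences of Theorem~\ref{t:ksection}, which identifies $\copol(H\!\times\!K,G)=\dim([\gothm,\gothm])$ and exhibits a minimal section through $e$ as the connected Lie subgroup $S\subseteq G$ with $L(S)=\goths=[\gothm,\gothm]\oplus\gothm$ — this is exactly the subgroup that acts transitively on the section $\Sigma$ of $\psi$. The only extra ingredients are two standard facts about a compact Lie group $G$ with bi-invariant metric: every connected Lie subgroup is totally geodesic in $G$, and such a subgroup is flat if and only if its Lie algebra is abelian.

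For (i): by definition $\varphi$ is polar precisely when $\copol(H\!\times\!K,G)=0$, which by Theorem~\ref{t:ksection} happens iff $[\gothm,\gothm]=0$, i.e.\ iff the section $\gothm=T_{eK}\Sigma$ of the polar action $\psi$ is abelian. In that case the minimal section $S=\exp(\gothm)$ is a connected abelian Lie subgroup of $G$; it is embedded since $\Sigma$ is embedded in $G\!/\!K$ (Lemma~\ref{l:Sigma_embed}, exactly as in the proof of Theorem~\ref{t:ksection}), and, carrying the induced bi-invariant metric with abelian Lie algebra, it is flat and totally geodesic. Hence $\varphi$ admits a flat section, so it is hyperpolar; the reverse implication ``hyperpolar $\Rightarrow$ polar'' is immediate. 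This disposes of (i).

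For (ii): with $H=\{e\}$ the lift $\varphi$ of $\{e\}\!\times\!K$ on $G$ is $(e,k)\cdot g=gk^{-1}$, i.e.\ $K$ acting by right translation, while $\psi$ is the trivial action of $\{e\}$ on $G\!/\!K$, which is polar with $\Sigma=G\!/\!K$ itself as a section, so $\gothm=\gothp$. Theorem~\ref{t:ksection} then yields $\copol(K,G)=\dim([\gothp,\gothp])$ with minimal section $S=\exp([\gothp,\gothp]\oplus\gothp)$. Since $(G,K)$ is an effective symmetric pair of compact type, $\gothg$ is semisimple and $[\gothp,\gothp]\oplus\gothp$ is an ideal of $\gothg$; its orthogonal complement would be an ideal contained in $\gothk$, which by effectiveness must vanish, so $[\gothp,\gothp]=\gothk$, whence $\goths=\gothg$ and $S=G$. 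Thus $G$ is itself a minimal section and the copolarity is trivial.

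I expect the only genuinely nontrivial point to be this last Lie-algebraic step, $[\gothp,\gothp]=\gothk$ for an effective compact-type pair: without it Theorem~\ref{t:ksection} still delivers the copolarity formula, but not the assertion that the minimal section is all of $G$. Everything else is bookkeeping with the definitions of polar and hyperpolar and with the (vanishing of the) curvature of bi-invariant metrics on abelian subgroups.
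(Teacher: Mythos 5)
Your proof is correct and follows the route the paper intends: the corollary is stated there without proof as an immediate consequence of Theorem \ref{t:ksection}, and both of your arguments are exactly that deduction (for (i), $\copol(H\!\times\!K,G)=0$ iff $\gothm$ is abelian iff the minimal section $S=\exp(\gothm)$ is a flat totally geodesic subgroup). Two small remarks on (ii): the paper assumes only that $G$ is compact, so $\gothg$ need not be semisimple --- but your argument survives if you replace the Killing form by any $\Ad_G$- and $\sigma$-invariant inner product, with respect to which the orthogonal complement of the ideal $[\gothp,\gothp]\oplus\gothp$ is still an ideal contained in $\gothk$. Moreover, the (almost-)effectiveness of the pair that you invoke is genuinely needed for $[\gothp,\gothp]=\gothk$ (for instance $K=G$ yields a transitive action whose minimal section is a point, so its copolarity is zero but not trivial), so you have correctly made explicit a hypothesis the paper leaves implicit.
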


We can also describe the relation between the generalized Weyl group of $\Sigma$ and the fat Weyl group of $S$:
\begin{proposition}\label{p:normalizer}
In addition to the assumptions of Theorem \ref{t:ksection} let $e$ be regular.
\begin{enumerate}
\item $N_{H\!\times\! K}(S)=\{(h,k)\in H\!\times\! K\mid hk^{-1}\in S\}$ and $Z_{H\!\times\! K}(S)=\triangle(H\cap K)$.

\item $\pr_H(N_{H\!\times\! K}(S))=N_H(\Sigma)$ and $\pr_H(Z_{H\!\times\! K}(S))=Z_H(\Sigma)$.

\item The following diagram is commutative
$$\xymatrix{
N(S)\ar@{->>}[r]^{\pr_H}\ar@{->>}[d]_{p_1} & N(\Sigma)\ar@{->>}[d]^{p_2}\\
W(S)\ar@{->>}[r]_{\pr_W} & W(\Sigma),
}$$
where $\pr_W$ denotes the homomorphism induced by $p_2\circ\pr_H$. Hence, $W(S)$ is mapped canonically onto $W(\Sigma)$ and has at least as many connected components as the latter.

\item $N(\Sigma)\simeq N(S)/(\{e\}\!\times\!(K\cap S))$ and $W(\Sigma)\simeq W(S)/p_1(\{e\}\!\times\!(K\cap S))$.
\end{enumerate}
\end{proposition}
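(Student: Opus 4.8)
The plan is to settle part~(i) first and then obtain parts~(ii)--(iv) formally, by pushing the descriptions of $N_{H\!\times\! K}(S)$ and $Z_{H\!\times\! K}(S)$ through $\pr_H$ and applying the first isomorphism theorem.

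\emph{Part~(i).} For the normalizer: if $(h,k)\cdot S=hSk^{-1}=S$, then since $e\in S$ we get $hk^{-1}=hek^{-1}\in S$. Conversely, if $hk^{-1}\in S$ then $(h,k)\cdot e=hk^{-1}\in S$; since $e$ is a regular point of the minimal section $S$ (minimality by Theorem~\ref{t:ksection}, regularity by hypothesis), property~\ref{pr:4} of a fat section forces $(h,k)\cdot S=S$. For the centralizer I would avoid a direct calculation: $S$ is minimal, hence sufficiently small, so by Proposition~\ref{p:cohom_copol} the group $Z_{H\!\times\! K}(S)$ is a principal isotropy group of $(H\!\times\! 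K,G)$ and all principal isotropy groups along $S$ coincide; as $e\in S$ is regular, $Z_{H\!\times\! K}(S)=(H\!\times\! K)_e$, and $(H\!\times\! K)_e=\triangle(H\cap K)$ by Proposition~\ref{p:HK_action}(ii) applied with $g=e$.

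\emph{Parts~(ii) and~(iii).} Applying $\pr_H$ to the formulas of~(i) gives $\pr_H(N_{H\!\times\! K}(S))=H\cap SK$ and $\pr_H(Z_{H\!\times\! K}(S))=H\cap K$. On the base, $eK\in\Sigma=\pi(S)$, and $eK$ is $\psi$-regular because $e$ is $\varphi$-regular and the slice representations of $\varphi$ at $e$ and of $\psi$ at $eK$ are isomorphic (Proposition~\ref{p:HK_action}(iii)). For $h\in H$ we have $h\cdot eK=hK\in\pi(S)$ exactly when $h\in SK$, so property~\ref{pr:4} for the section $\Sigma$ shows $H\cap SK\subseteq N_H(\Sigma)$, the reverse inclusion being immediate from $eK\in\Sigma$; hence $N_H(\Sigma)=H\cap SK=\pr_H(N_{H\!\times\! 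K}(S))$. Likewise $Z_H(\Sigma)=H\cap K$: the inclusion $Z_H(\Sigma)\subseteq H\cap K$ is clear, and conversely by~(i) every $h\in H\cap K$ centralizes $S$ in $G$, so (lying in $K$) it fixes $\pi(S)=\Sigma$ pointwise. This proves~(ii). For~(iii), the homomorphism $p_2\circ\pr_H\colon N_{H\!\times\! K}(S)\to W(\Sigma)$ is surjective by~(ii) and kills $Z_{H\!\times\! K}(S)=\ker p_1$ (since $\pr_H$ sends it into $Z_H(\Sigma)=\ker p_2$), hence factors as a surjective homomorphism $\pr_W\colon W(S)\to W(\Sigma)$ with $\pr_W\circ p_1=p_2\circ\pr_H$; this is the asserted commuting square, and a surjective continuous homomorphism induces a surjection on sets of connected components, so $W(S)$ has at least as many components as $W(\Sigma)$.

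\emph{Part~(iv).} I would apply the first isomorphism theorem to the surjection $\pr_H\colon N_{H\!\times\! K}(S)\to N_H(\Sigma)$ of~(ii). Its kernel is $\{(e,k)\mid k^{-1}\in S\}=\{e\}\!\times\!(K\cap S)$, so that subgroup is normal and $N_H(\Sigma)\simeq N_{H\!\times\! K}(S)/(\{e\}\!\times\!(K\cap S))$. For the Weyl groups, $p_1(x)\in\ker\pr_W$ iff $\pr_H(x)\in Z_H(\Sigma)=\pr_H(Z_{H\!\times\! K}(S))$, i.e. iff $x\in Z_{H\!\times\! K}(S)\cdot(\{e\}\!\times\!(K\cap S))$; applying $p_1$ and using $Z_{H\!\times\! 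K}(S)=\ker p_1$ yields $\ker\pr_W=p_1(\{e\}\!\times\!(K\cap S))$, so $W(\Sigma)\simeq W(S)/p_1(\{e\}\!\times\!(K\cap S))$. The one place that needs care is part~(i): the normalizer identity is exactly what axiom~\ref{pr:4} provides once $e$ is known to be regular, and the centralizer is cleanest through the principal-isotropy description of Propositions~\ref{p:cohom_copol} and~\ref{p:HK_action}(ii) rather than a hands-on computation with $\goths=[\gothm,\gothm]\oplus\gothm$; everything else is routine diagram-chasing.
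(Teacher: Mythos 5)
Your proposal is correct and follows essentially the same route as the paper: part (i)'s normalizer identity via axiom \ref{pr:4} at the regular point $e$, the centralizer via the principal-isotropy description and Proposition \ref{p:HK_action}(ii), part (ii) by matching $\pr_H(N_{H\!\times\!K}(S))$ with $N_H(\Sigma)$ using $\pi(S)=\Sigma$, and parts (iii)--(iv) by computing $\ker(\pr_H)=\{e\}\!\times\!(K\cap S)$ and quotienting. The only cosmetic difference is that for $\pr_H(N(S))\subseteq N_H(\Sigma)$ the paper simply applies $\pi$ to $hSk^{-1}=S$, whereas you reroute through $H\cap SK$ and axiom \ref{pr:4} for $\Sigma$ at $eK$; both are valid.
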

\begin{proof}
The description of the normalizer in (i) follows from property \ref{pr:4} of a fat section. The centralizer of a minimal section coincides with the isotropy group of any $(H\!\times\! K)$-regular point of $S$. Since $e$ is a regular point, $Z_{H\!\times\! K}(S)=\Delta(H\cap K)$ follows from Proposition \ref{p:HK_action} (ii).

Let $(h,k)\in N_{H\!\times\! K}(S)$ be arbitrary. If we apply $\pi$ to the equation $hSk^{-1}=S$ we obtain $h\cdot\Sigma=\Sigma$. This proves $h\in N_H(\Sigma)$. Conversely, assume that $h\in N_H(\Sigma)$ is an arbitrary element. In particular, $hK\in\Sigma$. Since $\pi(S)=\Sigma$, we can find an element $s\in S$ with $hK=sK$. It follows that $k:=s^{-1}h\in K$, which we rewrite as $hk^{-1}=s\in S$. Since $e$ is a regular point for the action $\varphi$, by assumption, we conclude that $(h,k)\in N_{H\!\times\! K}(S)$ by property \ref{pr:4} of a fat section. This completes the proof that $\pr_H$ maps $N_{H\!\times\! K}(S)$ onto $N_H(\Sigma)$.

The statement in (iii) is easily verified. The same is true in the case of (iv). In fact, the kernel of $\pr_H$ is given by
$$\ker(\pr_H)=\{(h,k)\in N(S)\mid h=1,\ k\in S\}=\{e\}\!\times\!(K\cap S).$$
\end{proof}
\begin{remark}
With the assumptions of Theorem \ref{t:ksection}, Proposition \ref{p:HK_action} (iii) shows that the assumptions made in Theorem \ref{t:basicform} are satisfied. I.e. the basic forms on $S$ and $G$ are naturally isomorphic to each other. In particular, the smooth $(H\!\times\! K)$-invariant functions on $G$ correspond to the smooth $N(S)$-invariant functions on $S$.
\end{remark}
The polar, non-hyperpolar actions on compact rank one symmetric
spaces yield interesting examples where Theorem \ref{t:ksection} is
applicable. These actions have been classified in \cite{PTh}. As an
example, consider the action of the torus $T^2\subset\Sun(3)$ on
$\mathbf{P}_2(\C)=\Sun(3)/S(\Un(1)\times\Un(2))$. It is polar with
Weyl group $\Z_2$, but not hyperpolar as it has $\mathbf{P}_2(\R)$
as a section. Its lift to the action of $T^2\times
S(\Un(1)\times\Un(2))$ on $\Sun(3)$ has nontrivial copolarity $1$
with minimal section given by $\Son(3)$. The fat Weyl group is
isomorphic to $\Z_2\times\Z_2\times\On(2)$ in this case.


\section{An Infinite Dimensional Isometric Action}\label{s:infinite}
In \cite{GOT} it is shown
that one may easily construct actions with prescribed fat sections
in the following way: Take a polar action $(G_1,M_1)$ with section
$\Sigma_1$ and any action $(G_2,M_2)$ whose principal orbit has
dimension $k$. Then
$$(G,M):=(G_1\!\times\! G_2,M_1\!\times\! M_2)$$
has $\Sigma:=\Sigma_1\!\times\! M_2$ as a $k$-section. If $(G_1,M_1)$ is an infinite dimensional isometric Fredholm action\begin{footnote}{A (proper) isometric action $(G,M)$ is called \textbf{Fredholm} if $\cohom(G,M)<\infty$.}\end{footnote} and $G_2$ and $M_2$ are finite dimensional, then it follows that $\Sigma_1\!\times\! M_2$ has finite dimension. Hence, $\copol(G,M)$ is also finite in this case. Besides these
constructed examples, one might ask if there exist isometric Fredholm actions of infinite
dimensional Lie groups on infinite dimensional manifolds with finite dimensional minimal sections. A natural candidate is
the action by gauge transformation, which we describe in the following (see \cite{PT2, TT1}). Let $G$ be a compact Lie group with a bi-invariant Riemannian metric and let $H$ and $K$ be closed subgroups of $G$. The action by \textbf{gauge transformation} is defined as:
$$*:\kp(G,H\!\times\! K)\times V\to V,\ (g,u)\mapsto \Ad_g(u)-g'g^{-1}=gug^{-1}-g'g^{-1}.$$
Here $\kp(G,H\!\times\! K)$ is the Hilbert-Lie group of $H^1$
paths $g:I\to G,\, (g(0),g(1))\in H\times K$, and let
$V=H^0(I;\gothg)=L^2(I;\gothg)$ be the Hilbert space of $L^2$
integrable paths $u:I\to\gothg$ in $\gothg=L(G)$, equipped with
the inner product
$$\langle u,v\rangle_0:=\int_0^1\langle u(t),v(t)\rangle_1\ dt\text{ with } \langle\cdot,\cdot\rangle_1\ \text{$\Ad_G$-invariant.}$$

We briefly summarize some facts concerning the gauge transformation without proofs:
\begin{enumerate}
\item $*$ is a smooth isometric Fredholm action by affine transformations.


\item The action of $\kp(G,e\!\times\! G)$ on $V$ is simply
transitive. In other words, the orbit map $\alpha:\kp(G,e\!\times\! G)\to V,\ g\mapsto g*\hat 0=-g'g^{-1}$ is a
diffeomorphism. 

\item The map $\phi:V\to G,\ u\mapsto \alpha^{-1}(u)(1)$, obtained by
mapping $u$ into $\kp(G,e\!\times\! G)$ and then evaluating in $t=1$, is a surjective Riemannian submersion.




\item The following diagram commutes:
$$\xymatrix{
{\ }\hspace{-11.5ex}\kp(G,H\!\times\! K)\times V\ar[rr]^-{*}\ar[d]_{\pi\times\phi} && V\ar[d]^\phi\\
{\ }\hspace{-7ex}(H\!\times\! K)\times G\ar[rr]_-{\varphi} && G,
}$$
where $\pi$
denotes the map $\pi:\kp(G,H\!\times\! K)\to H\!\times\! K,\ g\mapsto
(g(0),g(1))$. Thus, $\phi$ is equivariant with respect to $\pi$. Furthermore, the isotropy subgroups of both actions are isomorphic via $\pi$.

\item For $u\in V$ we have that $\kp(G,H\!\times\!
K)*u=\phi^{-1}((H\!\times\! K)\cdot\phi(u))$.

\item The fibres of $\phi$ coincide with the orbits of
$\Omega_e(G)=\kp(G,e\!\times\! e)$. That is
$$\phi^{-1}(\phi(u))=\Omega_e(G)*u,\ \text{for all } u\in V.$$
In particular, we have $\phi^{-1}(\exp(Y))=\Omega_e(G)*\hat Y$ for all
$Y\in\gothg$.

\item For $u\in V$ let $\tilde M:=\kp(G,H\!\times\! K)*u$. The tangent
space on $\tilde M$ in $u$ is:
$$T_u(\tilde M)=\{[\xi, u]-\xi'\mid \xi\in H^1(I;\gothg),\xi(0)\in\gothh,\xi(1)\in\gothk\}.$$

\item If $h\in \kp(G,H\!\times\! K)$ with $u=h*\hat 0$ and
$x=\phi(u)$, then:
$$\nu_u(\tilde M)=\{hbx^{-1}h^{-1}\mid b\in\nu_x(HxK)\}=\Ad_{hx}(\Ad_x^{-1}(\gothh^\bot) \cap\gothk^\bot).$$
Hence, $\nu_{\hat 0}(\tilde M)$ is the set of constant paths in $\Ad_x^{-1}(\gothh^\bot)\cap\gothk^\bot=\nu_x(HxK)$.
\end{enumerate}

Lemma \ref{l:slice_intersect} also holds for the action by gauge transformation:
\begin{lemma}
$\nu_{\hat 0}(\kp(G,H\!\times\! K)*\hat 0)$ intersects all orbits of $(\kp(G,H\!\times\! K),V)$.
\end{lemma}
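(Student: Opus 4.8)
The plan is to transport the statement, via the equivariant Riemannian submersion $\phi:V\to G$, to the finite-dimensional fact that $\exp_e(\nu_e(HeK))$ meets every $(H\!\times\! K)$-orbit of $G$. To this end I would first make $\nu_{\hat 0}(\tilde M)$, where $\tilde M:=\kp(G,H\!\times\! K)*\hat 0$, completely explicit. Since $\hat 0$ is the constant zero path, $\alpha^{-1}(\hat 0)$ is the constant path $e$, so $\phi(\hat 0)=e$; hence, by the description of $\nu_{\hat 0}(\tilde M)$ recalled above, this normal space is exactly the set of constant paths $\hat Y$ with $Y\in\nu_e(HeK)=\gothh^\bot\cap\gothk^\bot$. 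A direct computation from the definitions of $\alpha$ and $\phi$ gives $\phi(\hat Y)=\exp(-Y)$, so, $\nu_e(HeK)$ being a linear subspace, $\phi$ maps $\nu_{\hat 0}(\tilde M)$ onto $\exp_e(\nu_e(HeK))$ (the Riemannian and Lie-group exponentials of $G$ coincide for the bi-invariant metric).

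Now fix $u\in V$ and put $x:=\phi(u)$. Since $\kp(G,H\!\times\! K)*u=\phi^{-1}\bigl((H\!\times\! K)\cdot x\bigr)$, the $\phi$-preimage of the $\varphi$-orbit of $x$ for the finite-dimensional action of Proposition~\ref{p:HK_action}, it suffices to find $Y_0\in\nu_e(HeK)$ with $\exp(Y_0)\in(H\!\times\! K)\cdot x$: taking $Y:=-Y_0\in\nu_e(HeK)$ one then has $\phi(\hat Y)=\exp(Y_0)$, hence $\hat Y\in\phi^{-1}(\exp Y_0)\subseteq\kp(G,H\!\times\! K)*u$, while $\hat Y\in\nu_{\hat 0}(\tilde M)$; so the orbit of $u$ meets $\nu_{\hat 0}(\tilde M)$.

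Everything thus comes down to the finite-dimensional input: $\exp_e(\nu_e(HeK))$ intersects every $(H\!\times\! K)$-orbit of $G$. If $e$ were an $(H\!\times\! K)$-regular point this would be exactly Lemma~\ref{l:slice_intersect}; in general $e$ need not be regular, and this is the one step that requires care. I would argue directly: $G$ being compact, the orbits $O_1:=(H\!\times\! K)\cdot e$ and $O_2:=(H\!\times\! K)\cdot x$ are compact, so some pair $(a_0,b_0)\in O_1\times O_2$ realizes $d(O_1,O_2)$, and a minimizing geodesic $\gamma$ from $a_0$ to $b_0$ is, by the first variation formula, perpendicular to $O_1$ at $a_0$ and to $O_2$ at $b_0$. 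Writing $a_0=h_0k_0^{-1}$ with $(h_0,k_0)\in H\!\times\! K$ and applying the isometry $y\mapsto h_0^{-1}yk_0$ of $G$, which preserves both $O_1$ and $O_2$, moves the initial point of $\gamma$ to $e$ while keeping it perpendicular to $O_1$; its other endpoint then lies in $O_2\cap\exp_e(\nu_e(HeK))$. (Alternatively one may transfer the problem to the $H$-action on $G\!/\!K$ via Proposition~\ref{p:HK_action} and argue there.) The main obstacle of the whole argument is precisely this: ensuring that the ``slice'' $\exp_e(\nu_e(HeK))$ still catches every orbit although its base point $e$ may be singular --- a mild strengthening of Lemma~\ref{l:slice_intersect}.
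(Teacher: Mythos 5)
Your proof is correct and follows essentially the same route as the paper: identify $\nu_{\hat 0}(\kp(G,H\!\times\! K)*\hat 0)$ with the constant paths in $\nu_e(HeK)$, use that orbits are $\phi$-preimages of $(H\!\times\! K)$-orbits, and reduce to the fact that $\exp_e(\nu_e(HeK))$ meets every $(H\!\times\! K)$-orbit in $G$. The only difference is that you explicitly justify this last finite-dimensional step (which the paper asserts without comment, and which indeed needs an argument beyond Lemma~\ref{l:slice_intersect} since $e$ need not be regular); your minimizing-geodesic argument for it is correct.
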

\begin{proof} 
Let $\kp(G,H\!\times\! K)*u$ be an arbitrary orbit and put $x:=\phi(u)$.
Now consider $X\in\Ad_x^{-1}(\gothh^\bot)\cap\gothk^\bot=\nu_x(HxK)$ such that
$(H\!\times\! K)\cdot x=(H\!\times\! K)\cdot\exp(X)$.
Such an $X$ exists, because $\exp(\nu_x(HxK))$
intersect every $(H\!\times\!K)$-orbit on $G$. Using (v) above we obtain
$$\kp(G,H\!\times\! K)*u=\phi^{-1}((H\!\times\! K)\cdot\phi(u))=\phi^{-1}((H\!\times\! K)\cdot\exp(X)).$$
It follows, using (vi) above, that $\phi^{-1}(\exp(X))=\Omega_e(G)*\hat
X\subseteq\kp(G,H\!\times\! K)*u$.
\end{proof}

In the following, we assume that $(G,K)$ is a Riemannian symmetric pair with compact $G$ and that $H\subseteq G$ is a closed subgroup. As usual, we identify $T_{eK}G\!/\!K$ with $\gothp$ from the Cartan decomposition $\gothg=\gothk\oplus\gothp$. Our aim is to show that if $H$ acts polarly on $G\!/\!K$, then the action by gauge transformation is either polar (and hence hyperpolar), or it has infinite dimensional minimal sections and hence infinite copolarity. This gives a partial negative answer to the question we asked at the beginning of this section.

If $\gothl\subseteq\gothg$ is an arbitrary subset of $\gothg$, we
denote by $\hat\gothl\subseteq V$ the set of constant paths in
$V$ with value in $\gothl$. It is clear that if $\gothl$ is a
subspace (or subalgebra) of $\gothg$, then $\hat\gothl$ is a
subspace (resp. subalgebra) of $V$ which is canonically isomorphic
to $\gothl$. In particular, $\gothg$ is embedded into $V$ via
$\hat\gothg$.

\begin{lemma}\label{l:copolar}
Suppose that $H$ acts polarly on $G\!/\!K$ and let $\gothm\subseteq\gothp$ be the tangent space of a section through $eK$. If $eK$ is $H$-regular, then
every fat section $\ks\subseteq V$ of the $\kp(G,H\!\times\! K)$-action on $V$
through $\hat 0$ contains the linear subspace
$$\span\{t\mapsto e^{(1-t)\cdot\ad_X}(Y)\mid X\in \gothm \text{ regular},\ Y\in \gothm\}.$$
Here we call an element $X\in \gothg$ \textbf{regular}, if $\exp(X)\in G$
is regular with respect to the $(H\!\times\! K)$-action on $G$.
\end{lemma}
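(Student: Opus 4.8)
The plan is to reduce the claim to showing that for each regular $X\in\gothm$ and each $Y\in\gothm$ the path $Z_{X,Y}(t):=e^{(1-t)\ad_X}(Y)$ lies in $\ks$; since $\ks$ is a complete, connected, embedded, totally geodesic submanifold of the flat Hilbert space $V$ passing through $\hat 0=0$, it is a closed linear subspace, so the span of all such $Z_{X,Y}$ then lies in $\ks$ as well. In particular $T_p\ks=\ks$ (under the canonical identification $T_pV=V$) for every $p\in\ks$.

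First I would produce a supply of regular points of the gauge action inside $\ks$. Because $eK$ is $H$-regular, $\hat 0$ is a regular point of $(\kp(G,H\!\times\! K),V)$: its image $\phi(\hat 0)=e$ is $\varphi$-regular by Proposition \ref{p:HK_action}, and the isotropy groups of the two actions correspond under $\pi$. By fact (ix) above, $\nu_{\hat 0}\bigl(\kp(G,H\!\times\! K)*\hat 0\bigr)$ is the space of constant paths with values in $\nu_e(HeK)$, which equals $\gothm$ since $eK$ is regular; so property \ref{pr:3} of a fat section applied at $\hat 0$ yields $\hat\gothm\subseteq T_{\hat 0}\ks=\ks$. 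Thus $\hat X\in\ks$ for every $X\in\gothm$, and for regular $X$ the point $\hat X$ is again a regular point of the gauge action, since $\phi(\hat X)=\exp X$ is $\varphi$-regular (this is the meaning of ``$X$ regular'', via Proposition \ref{p:HK_action}) and the isotropy groups correspond under $\pi$.

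The heart is then to verify, for regular $X\in\gothm$ and arbitrary $Y\in\gothm$, that $Z:=Z_{X,Y}$ is orthogonal to the orbit through $\hat X$. Note that $Z\in V$ satisfies $Z'=[Z,X]$, $Z(1)=Y$ and $Z(0)=\Ad_{\exp X}(Y)$. By (vii), a typical tangent vector of $\kp(G,H\!\times\! K)*\hat X$ at $\hat X$ is $[\xi,\hat X]-\xi'$ with $\xi\in H^1(I;\gothg)$, $\xi(0)\in\gothh$, $\xi(1)\in\gothk$. Using the $\ad_X$-skew-symmetry of $\langle\cdot,\cdot\rangle_1$ together with $Z'=[Z,X]$, the integrand of $\langle Z,[\xi,\hat X]-\xi'\rangle_0$ collapses to $-\frac{d}{dt}\langle Z(t),\xi(t)\rangle_1$, so that
\[
\langle Z,\,[\xi,\hat X]-\xi'\rangle_0=\langle Z(0),\xi(0)\rangle_1-\langle Z(1),\xi(1)\rangle_1 .
\]
The second term vanishes because $Z(1)=Y\in\gothp$ and $\xi(1)\in\gothk$; the first vanishes because $Z(0)=\Ad_{\exp X}(Y)\in\Ad_{\exp X}(\gothm)\subseteq\gothh^\bot$ while $\xi(0)\in\gothh$. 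This last inclusion is exactly where polarity of $(H,G\!/\!K)$ is used: for the regular element $\exp X\in S$ one has $\Ad_{(\exp X)^{-1}}(\gothh^\bot)\cap\gothp=\gothm$ (see \cite[p.\ 195]{Gor2}), hence $\Ad_{\exp X}(\gothm)=\gothh^\bot\cap\Ad_{\exp X}(\gothp)\subseteq\gothh^\bot$. Thus $Z\in\nu_{\hat X}\bigl(\kp(G,H\!\times\! K)*\hat X\bigr)$, and property \ref{pr:3} of $\ks$ at the regular point $\hat X$ gives $\nu_{\hat X}\bigl(\kp(G,H\!\times\! K)*\hat X\bigr)\subseteq T_{\hat X}\ks=\ks$, so $Z_{X,Y}\in\ks$. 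Since $\ks$ is linear, the span in the statement is contained in $\ks$. The displayed computation is the only genuine content; the points that need care are that $\ks$ really is a closed linear subspace (the standard fact about complete totally geodesic submanifolds of a flat Hilbert space through the origin) and the transfer of regularity through $\phi$, which I expect to be the main obstacle to make fully rigorous, although it follows from Proposition \ref{p:HK_action} and the facts listed above.
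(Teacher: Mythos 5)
Your proposal is correct and follows essentially the same route as the paper: identify $\ks$ as a linear subspace, use property \ref{pr:3} at the regular point $\hat 0$ to get $\hat\gothm\subseteq\ks$, and then use property \ref{pr:3} again at the regular point $\hat X$ together with the polarity identity $\Ad_{\exp(-X)}(\gothh^\bot)\cap\gothp=\gothm$. The only difference is that where the paper invokes the listed formula (viii) for $\nu_{\hat X}(\kp(G,H\!\times\! K)*\hat X)$ and conjugates by the gauge element $h(t)=\exp(-tX)$, you verify the normality of $t\mapsto e^{(1-t)\ad_X}(Y)$ directly from the tangent-space description (vii) by an integration by parts; both computations are valid and amount to the same thing.
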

\begin{proof}
Since $\ks\subseteq V$ is supposed to be a fat section through $\hat 0$,
it is complete, connected, and totally geodesic in $V$. Hence,
$\ks$ has to be a linear subspace of $V$.

From $\hat\gothm=\nu_{\hat 0}(\kp(G,H\!\times\! K)*\hat 0)\subseteq T_{\hat 0}\ks=\ks$ and property \ref{pr:3} of a fat section we may conclude
that for all regular $\hat X\in\hat\gothm$ we have $\nu_{\hat
X}(\kp(G,H\!\times\! K)*\hat X)\subseteq \ks$. Let $h\in\kp(G,e\!\times\!
G)$ be the path defined by $h(t):=\exp(-t\cdot X)$. Then $X=h*\hat
0$ and $\phi(\hat X)=h(1)^{-1}=\exp(X)$ is a regular element for
the $(H\!\times\! K)$-action on $G$. Since the action of $H$ on $G\!/\!K$
is polar, it follows that $\Ad_{\exp(-X)}(\gothh^\bot)
\cap\gothp=\gothm$. From (viii) above we thus conclude that
$$\nu_{\hat X}(\kp(G,H\!\times\! K)*\hat X)=\Ad_{h\exp(X)}(\Ad_{\exp(-X)}(\gothh^\bot)\cap(\gothp))=\Ad_{h\exp(X)}(\gothm).$$
Since $h\exp(X)=\exp(-t\cdot X)\exp(X)=\exp((1-t)X)$ and
$\Ad_{\exp(X)}=e^{\ad_X}$, we obtain
$\Ad_{h\exp(X)}(\gothm)=\{t\mapsto e^{(1-t)\cdot\ad_X}(Y) \mid
Y\in \gothm\}$. This fact together with $\ks$ being linear
completes the proof.
\end{proof}

\begin{theorem}
Let $(G,K)$ be a Riemannian symmetric pair with compact $G$ and let $H\subseteq G$ be a
closed subgroup. Supposed that the action of
$H$ on $G\!/\!K$ is polar, then the following are equivalent:
\begin{enumerate}
\item $\copol(\kp(G,H\!\times\! K),V)<\infty$.

\item $\copol(\kp(G,H\!\times\! K),V)=0$.

\item The action of $\kp(G,H\!\times\! K)$ on $V$ is hyperpolar.

\item The action of $H\!\times\! K$ on $G$ is hyperpolar.

\item The action of $H$ on $G\!/\!K$ is hyperpolar.
\end{enumerate}
\end{theorem}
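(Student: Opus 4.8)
The plan is to run the cycle $(i)\Rightarrow(v)\Rightarrow(iii)\Rightarrow(ii)\Rightarrow(i)$ and to establish $(iv)\Leftrightarrow(v)$ separately. Throughout I would arrange, without loss of generality, that $eK$ is $H$-regular (this can be achieved by conjugating $K$, which replaces the three relevant actions by equivariantly isometric ones), so that $\hat0\in V$ is a regular point of the gauge action and Lemma~\ref{l:copolar}, as well as the lemma preceding it, are available. The implications $(iii)\Rightarrow(ii)$ (a hyperpolar action is polar, hence of copolarity $0$) and $(ii)\Rightarrow(i)$ are immediate; recall also that the gauge action is Fredholm, so $\cohom(\kp(G,H\!\times\! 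K),V)<\infty$.

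For $(iv)\Leftrightarrow(v)$ I would combine Theorem~\ref{t:ksection}, which gives $\copol(H\!\times\! K,G)=\dim([\gothm,\gothm])$ for $\gothm=T_{eK}\Sigma$ a section of $\psi$, with the standard curvature formula for symmetric spaces of compact type, according to which the sectional curvature of $\Sigma$ in the plane spanned by $X,Y\in\gothm$ vanishes if and only if $[X,Y]=0$. Hence $\Sigma$ is flat $\iff$ $[\gothm,\gothm]=0$ $\iff$ $\copol(\varphi)=0$ $\iff$ $\varphi$ is polar, and by the corollary following Theorem~\ref{t:ksection} (valid since $\psi$ is polar) this is equivalent to $\varphi$ being hyperpolar; on the other hand $\psi$ is hyperpolar iff $\Sigma$ is flat. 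This gives $(iv)\Leftrightarrow(v)$, and it also shows that when $[\gothm,\gothm]=0$ the flat torus $S=\exp(\gothm)$ is a section for $\varphi$.

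The heart of the proof is $(i)\Rightarrow(v)$, which I would prove by contraposition. Assume $\psi$ is polar but not hyperpolar. By the curvature statement above $[\gothm,\gothm]\ne0$, so there are $X_0,Y_0\in\gothm$ with $[X_0,Y_0]\ne0$; since $e$ is $\varphi$-regular, $\tau X_0$ is regular in the sense of Lemma~\ref{l:copolar} for all sufficiently small $\tau$. Suppose for contradiction that $\copol(\kp(G,H\!\times\! K),V)=k<\infty$. Then any $k$-section has dimension $\cohom(\kp(G,H\!\times\! K),V)+k<\infty$; since a fat section meets every orbit (property~\ref{pr:2}), translating a $k$-section by a suitable group element produces a \emph{finite-dimensional} fat section $\ks$ through $\hat0$. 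Being complete, connected and totally geodesic and containing $\hat0$, the subspace $\ks$ is a closed linear subspace of the flat space $V$, and by Lemma~\ref{l:copolar} it contains all paths $g_\tau(t):=e^{\tau(1-t)\ad_{X_0}}(Y_0)$ with $\tau$ small. The assignment $\tau\mapsto g_\tau$ is real-analytic into $V$, so the closed subspace $\ks$ contains each of its Taylor coefficients at $\tau=0$, namely $\tfrac1{n!}(1-t)^{n}\ad_{X_0}^{n}(Y_0)$, $n\ge0$. Since $\gothg$ is compact, $\ad_{X_0}$ is skew-symmetric, hence semisimple with purely imaginary spectrum; as $\ad_{X_0}(Y_0)=[X_0,Y_0]\ne0$, the vectors $\ad_{X_0}^{n}(Y_0)$ are all nonzero, and the $\gothg$-valued polynomials $(1-t)^{n}\ad_{X_0}^{n}(Y_0)$, $n\ge0$, are linearly independent in $V$. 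Thus $\dim\ks=\infty$, a contradiction; hence $\copol(\kp(G,H\!\times\! K),V)=\infty$, which proves $(i)\Rightarrow(v)$.

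For $(v)\Rightarrow(iii)$ I would produce an explicit flat section. If $\psi$ is hyperpolar then $[\gothm,\gothm]=0$, so $\goths=\gothm$, $S=\exp(\gothm)$, and $e^{(1-t)\ad_X}|_{\gothm}=\id$ for all $X\in\gothm$. The slice of the gauge action at the regular point $\hat0$ is the space $\hat\gothm$ of constant paths with values in $\gothm=\gothh^\bot\cap\gothp$, and I claim $\hat\gothm$ is a $0$-section: properties~\ref{pr:1} and~\ref{pr:4} are clear since $\hat\gothm$ is an affine subspace of $V$ invariant under the group elements in question; property~\ref{pr:2} is exactly the content of the lemma preceding Lemma~\ref{l:copolar}; and property~\ref{pr:3}, in the sharp form $\nu_{\hat X}(\kp(G,H\!\times\! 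K)*\hat X)=T_{\hat X}\hat\gothm$ for regular $\hat X\in\hat\gothm$, follows from $\nu_{\hat X}(\kp(G,H\!\times\! K)*\hat X)=\Ad_{t\mapsto\exp((1-t)X)}(\gothm)$ together with $e^{(1-t)\ad_X}|_{\gothm}=\id$. Hence the gauge action is polar with the flat section $\hat\gothm$, i.e.\ hyperpolar. The main obstacle is $(i)\Rightarrow(v)$: one must turn the purely quantitative hypothesis $\copol<\infty$ into the rigidity $[\gothm,\gothm]=0$, and the mechanism for this is Lemma~\ref{l:copolar} combined with the analyticity-and-semisimplicity argument that manufactures infinitely many independent directions inside any fat section through $\hat0$.
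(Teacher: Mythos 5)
Your proposal is correct, and its overall skeleton matches the paper's: the substantive content is the implication ``finite copolarity $\Rightarrow$ copolarity zero,'' proved by showing that if $[\gothm,\gothm]\neq 0$ then Lemma~\ref{l:copolar} forces every fat section through $\hat 0$ to contain an infinite linearly independent family, while the remaining equivalences are either immediate or standard. Where you genuinely diverge is in \emph{how} the infinite independent family is produced. The paper first normalizes the pair: it chooses $X,Y\in\gothm$ with $(\ad_X)^2(Y)=-\delta Y\neq 0$ (an eigenvector of $(\ad_X)^2$, available because $\gothm$ is a non-abelian Lie triple system), takes the family $t\mapsto e^{(1-t)/p_n\cdot\ad_X}(Y)$ indexed by odd primes $p_n$, and proves independence by pairing against $Y$, which collapses the exponential series to $\sum_k\lambda_k\cos(s\delta/p_k)$ and then evaluating at cleverly chosen $s_k$. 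You instead exploit that $\tau\mapsto e^{\tau(1-t)\ad_{X_0}}(Y_0)$ is an analytic curve inside the closed linear subspace $\ks$, so $\ks$ contains all Taylor coefficients $\tfrac1{n!}(1-t)^n\ad_{X_0}^n(Y_0)$; skew-symmetry of $\ad_{X_0}$ (so that $\ad_{X_0}^n(Y_0)\neq0$ follows from $[X_0,Y_0]\neq 0$, since $\im(\ad_{X_0})\cap\ker(\ad_{X_0})=0$) plus the distinct polynomial degrees then give independence for free. Your route is more elementary and more robust --- it needs no eigenvector normalization, no primes, and no trigonometric identity --- at the cost of invoking closedness of $\ks$ and differentiability of the family in the scaling parameter, both of which you justify adequately. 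You also supply arguments for the equivalences (iii)$\Leftrightarrow$(iv)$\Leftrightarrow$(v), which the paper simply declares well known, by routing them through Theorem~\ref{t:ksection} and the curvature formula $\sec(\Sigma)=0\Leftrightarrow[\gothm,\gothm]=0$; this is consistent with the paper and adds useful detail, though your verification of property~\ref{pr:4} for $\hat\gothm$ in (v)$\Rightarrow$(iii) is stated somewhat loosely (for a candidate $0$-section, orthogonality at all intersection points is the standard way to conclude polarity, after which \ref{pr:4} is automatic).
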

\begin{proof}
The equivalence of (iii), (iv) and (v) is well known.
Furthermore, since sections in $V$ are automatically flat and
$\copol=0$ implies that an action is polar, (iii) is equivalent to
(ii). Certainly, (ii) implies (i).

Let $\Sigma$ be a section of $(H,G\!/\!K)$ through $eK$ and assume that $eK$ is $H$-regular. Then $e$ is $H\!\times\! K$-regular and $\hat 0$ is regular with respect to the action by gauge transformation. Put $\gothm:=T_{eK}\Sigma$. We now show that if $\copol(\kp(G,H\!\times\! K),V)\ne 0$ then the copolarity must already be infinite.
Let $X,Y\in\gothm$ be elements with
$$(\ad_X)^2(Y)=-\delta Y\ne 0 \text{ and } \|Y\|=1.$$
Such elements exist, since $\gothm$ is a
Lie triple system and $\gothm$ is not Abelian. Otherwise, $\hat\gothm$ would
be a $0$-section, which contradicts our assumption. Recalling that
$0$ is regular, there is a ball of regular elements around $0\in
\gothm$. In fact, this is clear since $e\in G$ is regular and the
set of regular points is open in $G$. We may thus further assume
that $\varepsilon\cdot X$ is regular for all $\varepsilon\in [0,1]$. By lemma \ref{l:copolar}, every minimal section $\ks$ of $(\kp(G,H\!\times\! K),V)$ contains the
infinite family
$$\km:=\{t\mapsto e^{(1-t)\!/\!p_n\cdot\ad_X}(Y)\mid n\in \textbf{N}\}\subseteq V,$$
where $p_n$ denotes the $n$-th odd prime number. We claim that
$\km$ is linearly independent. Since every equivalence class in
$L^2(I;\gothg)$ has at most one continuous representative, it suffices to
show that the family $\km$ is linearly independent as a subset of
$\kc(I;\gothg)$. Furthermore, all members of $\km$ are analytic maps
which can be extended analytically to $\textbf{R}$ and so we only
need to show that they are linearly independent when viewed as
functions on $\textbf{R}$.

Now assume there exist $\lambda_1,\dots,\lambda_n\in \textbf{R}$
such that
$$t\mapsto \sum_{k=1}^n \lambda_k e^{(1-t)\!/\!p_k\cdot \ad_X}(Y)=0.$$
For any $s:=(1-t)\in \textbf{R}$, we then have:
\begin{eqnarray*}
0&=&\left\langle \sum_{k=1}^n \lambda_k e^{s\!/\!p_k\cdot
\ad_X}(Y),Y\right\rangle=
\sum_{k=1}^n \lambda_k \langle e^{s\!/\!p_k\cdot \ad_X}(Y),Y\rangle\\
 &=& \sum_{k=1}^n \lambda_k \sum_{l=0}^\infty\frac{s^l}{p_k^l\cdot l!}
 \langle (\ad_X)^l(Y),Y\rangle\\
 &=&\sum_{k=1}^n \lambda_k \sum_{l=0}^\infty\frac{s^{2l}}{p_k^{2l}\cdot
 (2l)!} \delta^{2l}(-1)^l\\
 &=&\sum_{k=1}^n \lambda_k \cos\left(\frac{s\delta}{p_k}\right),
\end{eqnarray*}
where we made use of the continuity of the $\Ad$-invariant inner
product $\langle\cdot,\cdot\rangle$ and the fact that
$$\langle
(\ad_X)^l(Y),Y\rangle=\left\{
\begin{array}{cl}
0 & \text{, for $l$ odd}\\
(-1)^{l\!/\!2}\delta^l & \text{, for $l$ even.}
\end{array}\right.$$
By choosing
$$s_k:=\pi\cdot (\prod_{l=1}^np_l)\!/\!(2 \delta p_k)$$
for $k=1,\dots, n$, we obtain that $\lambda_k=0$. 
We have thus shown, that for any $n\in
\textbf{N}$ the first $n$ members of $\km$ are linearly
independent, which completes our proof.
\end{proof}


\section{Appendix - Invariant Metrics}\label{s:invariant}
We are interested in left-$G$-invariant metrics on a homogeneous space $G\!/\!H$ which are also right-invariant under a certain group $W$. This concept generalizes that of a $G$-invariant metric on $G\!/\!H$ and is used in Section \ref{s:resolution}. First recall that any triple $(H\unlhd N\leq G)$, where $G$ is a Lie group, $H$ and $N$ are closed subgroups of $G$ and $H$ is normal in $N$, gives rise to a $W$-principal bundle:
$$W\hookrightarrow G\!/\!H\twoheadrightarrow G\!/\!N,$$
where $W=N\!/\!H$. In this situation $G$ acts on $G\!/\!H$ from the left and $W$ acts properly and freely on $G\!/\!H$ from the right by $(gH,nH)\mapsto gnH$. We are interested in the case that these actions are isometric, so we are lead to consider Riemannian metrics on $G\!/\!H$ which are left-$G$- and right-$W$-invariant.

\begin{definition}
A Riemannian metric on $G\!/\!H$ which is both left-$G$- and right-$W$-invariant is called \textbf{$(G$-$W)$-invariant}.
\end{definition}

\begin{proposition}\label{p:G-W-invariant}{\ }
\begin{enumerate}
\item The $(G$-$W)$-invariant Riemannian metrics on $G\!/\!H$ are in $1-1$ correspondence with the $\Ad_G(N)$-invariant scalar products on $\gothg/\gothh$.
\item If $N$ is connected, then a scalar product $\langle\cdot|\cdot\rangle$ on $\gothg/\gothh$ is $\Ad_G(N)$-invariant if and only if $\ad_\gothn$ is skew-symmetric with respect to $\langle\cdot|\cdot\rangle$.
\item If $\gothg/\gothh$ admits a decomposition $\gothg/\gothh=\gothn/\gothh\oplus\gothp$ with $\Ad_G(N)(\gothp)\subseteq\gothp$, then the $\Ad_G(N)$-invariant scalar products on $\gothg/\gothh$, which satisfy 
$(\gothn/\gothh)\bot\gothp$, are in $1-1$ correspondence with pairs $(\langle\cdot|\cdot\rangle_{\gothn/\gothh},\langle\cdot|\cdot\rangle_\gothp)$ 
of $\Ad_W$-invariant scalar products on $\gothn/\gothh$, resp. 
$\Ad_G(N)$-invariant scalar products on $\gothp$.

Such a pair exists if and only if $W$ is covered by a product of a compact Lie group with a vector group and if the image of $N$ under $n\mapsto\Ad_n|_\gothp$ in $\Gln(\gothp)$ is relatively compact.

Conversely, if $\langle\cdot|\cdot\rangle$ on $\gothg/\gothh$ is $\Ad_G(N)$-invariant, then $\langle\cdot|\cdot\rangle|_{\gothn/\gothh}$ is $\Ad_W$-in\-vari\-ant. 
If $\gothp:=(\gothn/\gothh)^\bot$, then $\Ad_G(N)(\gothp)\subseteq\gothp$ and $\langle\cdot|\cdot\rangle|_{\gothp}$ is $\Ad_G(N)$-invariant. 
\item If $N$ is compact, then $G\!/\!H$ admits a $(G$-$W)$-invariant Riemannian metric.
\end{enumerate}
\end{proposition}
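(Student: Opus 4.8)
The plan is to reduce every assertion to linear algebra at the base point $o:=eH$, using the standard dictionary between invariant Riemannian metrics on a homogeneous space and invariant scalar products on the isotropy representation $T_o(G\!/\!H)\cong\gothg/\gothh$.

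For (i), I would first recall that a $G$-invariant metric on $G\!/\!H$ is the same datum as an $\Ad_G(H)$-invariant scalar product on $\gothg/\gothh$. To incorporate right-$W$-invariance, observe that for $n\in N$ the map $c_n\colon gH\mapsto n^{-1}gnH$ is the composition $L_{n^{-1}}\circ R_{nH}$ of the left translation by $n^{-1}$ (an isometry of any $G$-invariant metric) with the right translation $R_{nH}$ by $nH\in W$; since $H\unlhd N$, the map $c_n$ is well defined, fixes $o$, and has differential $d(c_n)_o=\Ad_G(n^{-1})$ on $\gothg/\gothh$. Hence a $G$-invariant metric is in addition right-$W$-invariant if and only if its value at $o$ is $\Ad_G(n)$-invariant for every $n\in N$; as $H\subseteq N$ this single condition already contains $\Ad_G(H)$-invariance, giving the claimed bijection.

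For (ii), I would differentiate the identity $\langle\Ad_G(\exp tX)\,\cdot\,,\Ad_G(\exp tX)\,\cdot\,\rangle=\langle\cdot,\cdot\rangle$ at $t=0$, for $X\in\gothn$, to get that $\ad_X$ is skew-symmetric on $\gothg/\gothh$; conversely, if every $\ad_X$ with $X\in\gothn$ is skew-symmetric then every $\exp(\ad_X)=\Ad_G(\exp_N X)$ is orthogonal, and since $N$ is connected it is generated by $\exp_N(\gothn)$, so all of $\Ad_G(N)$ preserves the scalar product. For (iii), given an $\Ad_G(N)$-invariant scalar product with $\gothn/\gothh\perp\gothp$, restrict it to each summand: the restriction to $\gothp$ is $\Ad_G(N)$-invariant by hypothesis, and the restriction to $\gothn/\gothh$ is $\Ad_W$-invariant because the projection $N\to W$ induces an isomorphism $\gothn/\gothh\cong\lie(W)$ intertwining $\Ad_G(n)|_{\gothn/\gothh}$ with $\Ad_W(nH)$; conversely, the orthogonal direct sum of such a pair is again $\Ad_G(N)$-invariant, since $\Ad_G(n)$ preserves $\gothn/\gothh$ (as $n$ normalizes $\gothn$ and $\gothh$) and preserves $\gothp$ by the standing hypothesis. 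For the existence statement I would invoke the classical fact that a linear group preserves some scalar product if and only if its closure is compact: an $\Ad_W$-invariant scalar product on $\lie(W)$ exists iff $\overline{\Ad_W(W)}$ is compact, which is equivalent to $\lie(W)=\gothz(\lie(W))\oplus[\lie(W),\lie(W)]$ with semisimple summand of compact type, i.e. to the universal cover of $W$ being a product of a compact Lie group with a vector group; and an $\Ad_G(N)$-invariant scalar product on $\gothp$ exists iff the image of $N$ in $\Gln(\gothp)$ is relatively compact, in which case one averages over its (compact) closure. The last ("conversely") part of (iii) follows by setting $\gothp:=(\gothn/\gothh)^\bot$: orthogonality and invariance of $\gothn/\gothh$ force $\Ad_G(N)\gothp\subseteq\gothp$, and the two restrictions are invariant as above. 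Finally, (iv) is immediate from (i): if $N$ is compact, then $\Ad_G(N)$ is a compact subgroup of $\Gln(\gothg/\gothh)$, so averaging any scalar product over Haar measure on $N$ yields an $\Ad_G(N)$-invariant one.

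The main obstacle I anticipate is the existence characterization in (iii): one has to pin down exactly when an $\Ad_W$-invariant scalar product exists on $\lie(W)$ and translate the analytic condition "$\overline{\Ad_W(W)}$ compact" into the structural statement "$W$ is covered by a product of a compact Lie group with a vector group", which requires the structure theory of Lie groups with relatively compact adjoint image, together with the (easy but necessary) observation that the invariant scalar products on the two summands $\gothn/\gothh$ and $\gothp$ may be chosen independently and then combined orthogonally. Everything else is a routine unwinding of the isotropy-representation dictionary and standard averaging arguments.
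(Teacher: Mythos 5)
Your proposal is correct and follows essentially the same route as the paper: reduce everything to $\Ad_G(N)$-invariant scalar products at $eH$, restrict/patch along the orthogonal decomposition $\gothn/\gothh\oplus\gothp$, characterize existence via the bi-invariant-metric criterion for $W$ (the paper cites Cheeger--Ebin here) and relative compactness of the image of $N$ in $\Gln(\gothp)$. The only cosmetic differences are that you package the right-$W$-invariance computation in (i) via the conjugation map $c_n=L_{n^{-1}}\circ R_{nH}$ rather than the paper's direct pullback calculation, and you prove (iv) by averaging directly from (i) instead of deducing it from (iii) and complete reducibility; both variants are equally valid.
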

\begin{proof}
(i): Let $h$ be a Riemannian metric on $G\!/\!H$ and put $\langle\cdot|\cdot\rangle:=h_{eH}$. Then it is well known that $h$ is left-$G$-invariant if and only if $\langle\cdot|\cdot\rangle$ is $\Ad_G(H)$-invariant. If additionally $h$ is right-$W$-invariant, then $r_n^*h=h$ for all $n\in N$. Hence, we have for all $X,Y\in T_{gH}G\!/\!H$:
$$h_{gnH}(X\cdot n,Y\cdot n)=h_{gH}(X,Y).$$
Using the $G$-invariance we obtain
$$h_{eH}(n^{-1}g^{-1}\cdot X\cdot n,n^{-1}g^{-1}\cdot X\cdot n)=h_{eH}(g^{-1}\cdot X,g^{-1}\cdot Y).$$
Under the natural identification $T_{eH} G\!/\!H\simeq\gothg/\gothh$ this is equivalent to
$$\langle \Ad_n(X)\mid\Ad_n(Y)\rangle=\langle X \mid Y\rangle, \text{ for all } X,Y\in\gothg/\gothh.$$
Conversely, if we are given an $\Ad_G(N)$-invariant scalar product on $\gothg/\gothh$ then, in particular, it is $\Ad_G(H)$-invariant. It therefore gives rise to  a left-$G$-invariant Riemannian metric on $G\!/\!H$. Furthermore, it is easy to see, that it is right-$W$-invariant.

(ii): This is a standard consideration.

(iii): If $\langle\cdot|\cdot\rangle$ is an $\Ad_G(N)$-invariant scalar product on $\gothg/\gothh$ satisfying $(\gothn/\gothh)\bot\gothp$, then its restriction to $\gothn/\gothh$ resp. $\gothp$ clearly yields the stated pair of invariant scalar products. Conversely, we may patch such a pair of invariant scalar products together to form an $\Ad_G(N)$-invariant scalar product $\langle\cdot|\cdot\rangle$ on $\gothg/\gothh$ by defining:
$$\langle X+Y| Z+W\rangle:=\langle X|Z\rangle_{\gothn/\gothh}+\langle Y| W\rangle_\gothp,\text{ for all } X,Z\in\gothn/\gothh,\, Y,W\in\gothp.$$
The $\Ad_W$-invariance of $\langle\cdot|\cdot\rangle_{\gothn/\gothh}$ is equivalent to the existence of a bi-invariant Riemannian metric on $W$. Using \cite[Proposition 3.34]{CE} yields that this is the case if and only if $W$ is covered by the product of a compact Lie group and a vector group. Also, if $\langle\cdot|\cdot\rangle_\gothp$ is $\Ad_G(N)$-invariant, then the image of $N$ under $f:N\to\Gln(\gothp),\, n\mapsto\Ad_n|_\gothp$ is contained in the compact group $\On(\gothp)$ and therefore relatively compact. Conversely, if $K:=\overline{f(N)}\subseteq\Gln(\gothp)$ is compact, then we may define by an averaging process a $K$-invariant scalar product on $\gothp$, which in turn is $\Ad_G(N)$-invariant.

(iv) follows from (iii) and the fact that a representation of a compact Lie group is completely reducible.
\end{proof}
The following Proposition shows that the concept of a $(G$-$W)$-invariant metric on $G\!/\!H$ is actually the same as that of a left-$G$-invariant metric on $G\!/\!N$. 

\begin{proposition}
The $(G$-$W)$-invariant metrics on $G\!/\!H$ correspond to the left-$G$-invariant metrics on $G\!/\!N$.
\end{proposition}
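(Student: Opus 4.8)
The plan is to pass to the infinitesimal models on both sides and to route the correspondence through the projection $\pi:G\!/\!H\twoheadrightarrow G\!/\!N$ (equivalently, through the free and proper right $W$-action on $G\!/\!H$). By Proposition~\ref{p:G-W-invariant}(i) a $(G$-$W)$-invariant metric on $G\!/\!H$ is the same datum as an $\Ad_G(N)$-invariant scalar product $\langle\cdot\,|\,\cdot\rangle$ on $\gothg/\gothh$, whereas by the usual dictionary for homogeneous spaces a left-$G$-invariant metric on $G\!/\!N$ is the same datum as an $\Ad_G(N)$-invariant scalar product on $T_{eN}(G\!/\!N)\simeq\gothg/\gothn$. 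So it suffices to set up a bijection, compatible with $\pi$, between $\Ad_G(N)$-invariant scalar products on $\gothg/\gothh$ and on $\gothg/\gothn$.

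Next I would construct the two maps. In the direction $G\!/\!H\to G\!/\!N$: the vertical space of $\pi$ at $eH$ is $\ker d\pi_{eH}=\gothn/\gothh\subseteq\gothg/\gothh$; given $\langle\cdot\,|\,\cdot\rangle$ on $\gothg/\gothh$ put $\gothp:=(\gothn/\gothh)^\bot$, which by the last paragraph of Proposition~\ref{p:G-W-invariant}(iii) is $\Ad_G(N)$-invariant, and note that $d\pi_{eH}$ restricts to an $\Ad_G(N)$-equivariant isomorphism $\gothp\to\gothg/\gothn$; transporting $\langle\cdot\,|\,\cdot\rangle|_\gothp$ along it gives an $\Ad_G(N)$-invariant scalar product on $\gothg/\gothn$. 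Geometrically this is just the unique metric on $G\!/\!N=(G\!/\!H)/W$ for which $\pi$ is a Riemannian submersion, which exists because $W$ acts on $G\!/\!H$ freely, properly and isometrically while $G$ acts isometrically and commutes with $W$. In the opposite direction, given an $\Ad_G(N)$-invariant scalar product $q$ on $\gothg/\gothn$, choose an $\Ad_G(N)$-invariant complement $\gothp$ to $\gothn/\gothh$ in $\gothg/\gothh$ and an $\Ad_W$-invariant scalar product on $\gothn/\gothh$, declare $\gothn/\gothh\bot\gothp$, and place $q$ on $\gothp\simeq\gothg/\gothn$; Proposition~\ref{p:G-W-invariant}(iii) then certifies that the resulting scalar product on $\gothg/\gothh$ is $\Ad_G(N)$-invariant, hence corresponds to a $(G$-$W)$-invariant metric on $G\!/\!H$ lying over the given metric on $G\!/\!N$. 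One finally checks that restricting a $(G$-$W)$-metric to the horizontal complement and re-extending returns the original, so that the two maps match up.

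The main obstacle is precisely this reverse passage and its canonicity: it requires an $\Ad_G(N)$-invariant horizontal complement $\gothp$ together with an $\Ad_W$-invariant (i.e.\ bi-invariant on $W$) scalar product on $\gothn/\gothh=\lie(W)$ --- in effect a fibre metric and a principal connection for the $W$-bundle --- and for a completely general triple $H\unlhd N\leq G$ neither of these need exist. This is exactly what Proposition~\ref{p:G-W-invariant}(iii)--(iv) controls: both ingredients are present under the compactness-type conditions recorded there, in particular whenever $N$ is compact, which is the regime relevant to the resolution construction of Section~\ref{s:resolution}; once such data has been fixed, the two constructions above are inverse to one another and yield the asserted correspondence between $(G$-$W)$-invariant metrics on $G\!/\!H$ and left-$G$-invariant metrics on $G\!/\!N$.
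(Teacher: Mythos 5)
Your forward direction coincides with the paper's: both take the submersed metric on $G\!/\!N$, i.e.\ transport $\langle\cdot|\cdot\rangle|_{(\gothn/\gothh)^\bot}$ along $d\pi_{eH}$ to $\gothg/\gothn$, using the last part of Proposition~\ref{p:G-W-invariant}(iii) for the $\Ad_G(N)$-invariance of the horizontal space. For the converse you take a genuinely different route. The paper argues: a left-$G$-invariant metric on $G\!/\!N$ yields a left-invariant, right-$N$-invariant metric on $G$, hence an $\Ad_G(N)$-invariant scalar product on all of $\gothg$, which descends to $\gothg/\gothh$ since $\gothh$ is $\Ad_G(N)$-invariant, and then Proposition~\ref{p:G-W-invariant}(i) finishes. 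You instead build the scalar product on $\gothg/\gothh$ directly from a splitting $\gothg/\gothh=\gothn/\gothh\oplus\gothp$, placing the given product on $\gothp\simeq\gothg/\gothn$ and an $\Ad_W$-invariant product on the fibre direction, with Proposition~\ref{p:G-W-invariant}(iii) certifying invariance. The two constructions ultimately need the same ingredients --- a bi-invariant fibre metric on $W$ and an $\Ad_G(N)$-invariant horizontal complement --- but you make explicit that these are extra data whose existence is only guaranteed under the compactness-type conditions of \ref{p:G-W-invariant}(iii)--(iv), whereas the paper compresses exactly this into the step ``then $G$ admits a left-invariant metric which is right-$N$-invariant'', which does not follow from the mere existence of an $\Ad_G(N)$-invariant product on $\gothg/\gothn$. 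Your version is therefore more careful about the hypotheses under which the converse holds (and this is the regime actually used in Section~\ref{s:resolution}, where $N$ is compact), at the price of proving the literal unconditional statement only in that regime; note also that, just as in the paper, the reverse map depends on the choice of fibre metric, so both arguments establish existence in each direction rather than a canonical bijection.
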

\begin{proof}
If we are given a $(G$-$W)$-invariant metric on $G\!/\!H$, then the submersed metric on $G\!/\!N$ under the canonical $G$-equivariant mapping $gH\mapsto gN$ is left-$G$-invariant. Conversely, if we start with a left-$G$-invariant metric on $G\!/\!N$, then $G$ admits a left invariant metric which is right-$N$-invariant. This induces an $\Ad_G(N)$-invariant scalar product on $\gothg$ and since $\gothh$ is $\Ad_G(N)$-invariant, the induced scalar product on $\gothg/\gothh$ is $\Ad_G(N)$-invariant. Using Proposition \ref{p:G-W-invariant} (i), this yields a $(G$-$W)$-invariant Riemannian metric on $G\!/\!H$.
\end{proof}

The next result is basically \cite[Theorem 9.80]{Bes}.
\begin{corollary}\label{c:G-W-invariant}
If $G\!/\!H$ carries a $(G$-$W)$-invariant Riemannian metric, then the principal fibre bundle $G\!/\!H\twoheadrightarrow G\!/\!N$ is a Riemannian submersion, where $G\!/\!N$ is endowed with the quotient metric. Its fibres are totally geodesic. In particular $W$, viewed as a subset of $G\!/\!H$, is totally geodesic in $G\!/\!H$. Furthermore, the map
$$(\gothn/\gothh)^\bot\to\gothg/\gothn,\ X+\gothh\mapsto X+\gothn$$
is a linear isometry.
\end{corollary}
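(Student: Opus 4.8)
The plan is to read off everything from Proposition~\ref{p:G-W-invariant}(i) together with two standard facts, one about Riemannian submersions and one about reductive homogeneous spaces.

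First I would record the underlying linear algebra. By Proposition~\ref{p:G-W-invariant}(i) the given $(G$-$W)$-invariant metric corresponds to an $\Ad_G(N)$-invariant inner product $\langle\cdot|\cdot\rangle$ on $\gothg/\gothh$. Since $H\unlhd N$, the subspace $\gothn/\gothh\subseteq\gothg/\gothh$ is $\Ad_G(N)$-invariant, hence so is its orthogonal complement $\gothp:=(\gothn/\gothh)^\bot$; differentiating this invariance yields $[\gothn,\gothp]\subseteq\gothp$ in $\gothg/\gothh$. Moreover $\gothn$ is a subalgebra of $\gothg$, so $[\gothn/\gothh,\gothn/\gothh]\subseteq\gothn/\gothh$. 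Thus $\gothg/\gothh=\gothn/\gothh\oplus\gothp$ is an orthogonal $\Ad_G(N)$-invariant reductive splitting in which the vertical summand $\gothn/\gothh$ is a subalgebra and is bracket-orthogonal to the horizontal summand $\gothp$.

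Next the submersion. The right $W$-action on $G/H$ is free, proper and, by right-$W$-invariance, isometric; hence $G/N=(G/H)/W$ carries a unique metric --- the quotient metric --- for which $\pi\colon G/H\twoheadrightarrow G/N$ is a Riemannian submersion, and its fibres are precisely the $W$-orbits. At $eH$ the vertical space of $\pi$ is $\gothn/\gothh$, the horizontal space is $\gothp$, and $d\pi_{eH}$ is the canonical projection $\gothg/\gothh\to\gothg/\gothn$; being a Riemannian submersion, it restricts to a linear isometry of $\gothp=(\gothn/\gothh)^\bot$ onto $T_{eN}(G/N)=\gothg/\gothn$, which on $\gothp$ is $X+\gothh\mapsto X+\gothn$. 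This is the final assertion of the corollary, and the first two assertions are now immediate modulo the totally geodesic claim.

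Finally the fibres. Since $G$ acts on $G/H$ by isometries and carries the fibre $F:=N/H$ over $eN$ onto the fibre through any prescribed point, it suffices to show the second fundamental form of $F$ vanishes at $eH$; this step is exactly (a special case of) \cite[Theorem~9.80]{Bes}, whose hypotheses are supplied by the splitting found above. Concretely, using the standard formula for the Levi--Civita connection of a reductive homogeneous space, for vertical $X,Y\in\gothn/\gothh$ the vector $(\nabla_{X^{*}}Y^{*})_{eH}$ (with $X^{*},Y^{*}$ the fundamental vector fields of the left $G$-action) can acquire a horizontal component only through the scalars $\langle[X,Y]_{\gothm}|Z\rangle$, $\langle[X,Z]_{\gothm}|Y\rangle$, $\langle[Y,Z]_{\gothm}|X\rangle$ with $Z\in\gothp$ horizontal; each of these vanishes because $[\gothn/\gothh,\gothn/\gothh]\subseteq\gothn/\gothh\perp\gothp$ and $[\gothn/\gothh,\gothp]\subseteq\gothp\perp\gothn/\gothh$. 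Hence $(\nabla_{X^{*}}Y^{*})_{eH}$ is vertical, so $F$ --- and therefore every fibre --- is totally geodesic; in particular $W$, which sits in $G/H$ as the fibre $F$ over $eN$, is totally geodesic. The only delicate point is fixing the signs and projection conventions in the homogeneous connection formula correctly, but since the computation uses only which of the two summands each bracket lands in, the precise conventions do not affect the conclusion.
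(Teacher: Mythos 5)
Your proof is correct, but it follows a different route from the paper. The paper does not compute anything on $G/H$ itself: it lifts the whole problem to $G$, endows $G$ with a left-invariant, right-$N$-invariant metric (via \cite[Proposition 3.16]{CE}) compatible with the submersion $G\to G/H$, invokes the concluding lemma of the appendix to see that $N$ is totally geodesic in $G$ with such a metric, and then pushes the fibre $W=\pi(N)$ down through the Riemannian submersion $G\to G/H$; the left-$G$-invariance then carries the conclusion to all fibres. You instead stay downstairs and verify the vanishing of the second fundamental form of the fibre at $eH$ directly, using the orthogonal $\Ad_G(N)$-invariant splitting $\gothg/\gothh=\gothn/\gothh\oplus\gothp$ together with $[\gothn/\gothh,\gothn/\gothh]\subseteq\gothn/\gothh$ and $[\gothn/\gothh,\gothp]\subseteq\gothp$ --- which is essentially the mechanism behind \cite[Theorem 9.80]{Bes} that the paper only cites. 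Your version is more self-contained (it never needs to construct a compatible metric upstairs on $G$), at the cost of invoking the homogeneous-space connection formula; as you note, since the argument only uses which summand each bracket lands in and the orthogonality of the summands, the sign and projection conventions are immaterial, and for the same reason you do not actually need to fix a reductive complement $\gothm$ in $\gothg$ --- the Koszul formula for the Killing fields of the left action, evaluated at $eH$ and read in $\gothg/\gothh$, suffices. Your treatment of the Riemannian-submersion and linear-isometry claims agrees with the paper's (both are immediate from the free, proper, isometric right $W$-action and the definition of the quotient metric).
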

\begin{proof}
By left-$G$-invariance, the fibres of the principal bundle $G\!/\!H\twoheadrightarrow G\!/\!N$ are all isometric to the fibre $W$ over $eN$. Now $W$ is the image of $N$ under the canonical projection $G\to G\!/\!H$, which is a Riemannian submersion if $G$ is endowed with a left-invariant metric that is right-$N$-invariant. Such a metric exists due to, \cite[Proposition 3.16]{CE}. By the following lemma, $N$ is a totally geodesic submanifold of $G$. Hence, its image $W$ under the Riemannian submersion $G\to G\!/\!H$ is totally geodesic in $G\!/\!H$.
\end{proof}

\begin{lemma}
Let $G$ be a Lie group and $H\subseteq G$ a closed subgroup. If $G$ carries a left-invariant metric which is right $H$-invariant, then the induced metric on $H$ is bi-invariant and $H$ is a totally geodesic submanifold of $G$.
\end{lemma}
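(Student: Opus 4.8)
The plan is to establish the two assertions in turn. For the bi-invariance of the induced metric on $H$: left translations $L_h$ by elements $h\in H$ are isometries of $G$ which map $H$ onto $H$, so they restrict to isometries of $H$ with its induced metric; hence this metric is left-invariant. By hypothesis the right translations $R_h$, $h\in H$, are isometries of $G$ as well, and they too map $H$ onto $H$, so they restrict to isometries of $H$; hence the induced metric is also right-invariant, and therefore bi-invariant.

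For the claim that $H$ is totally geodesic I would argue at the Lie algebra level. Combining left-$G$-invariance and right-$H$-invariance, for every $h\in H$ the conjugation $c_h=L_h\circ R_{h^{-1}}$ is an isometry of $G$ fixing $e$, so its differential $\Ad_h$ is an orthogonal transformation of $(\gothg,\langle\cdot,\cdot\rangle_e)$; differentiating in $h$ along $H$ shows that $\ad_X$ is skew-symmetric with respect to $\langle\cdot,\cdot\rangle_e$ for every $X\in\gothh$. Now extend vectors of $\gothh$ to left-invariant vector fields on $G$; since $(dL_h)_e(\gothh)=T_hH$ for $h\in H$, such fields are tangent to $H$ all along $H$, and for two of them, $X$ and $Y$, the field $\nabla_X Y$ is again left-invariant. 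It therefore suffices to check that $(\nabla_X Y)_e\in\gothh$, i.e.\ that $\langle\nabla_X Y,Z\rangle_e=0$ for all $Z\in\gothh^\perp$. This follows from the Koszul formula for left-invariant fields,
\[
2\langle\nabla_X Y,Z\rangle=\langle[X,Y],Z\rangle-\langle[X,Z],Y\rangle-\langle[Y,Z],X\rangle,
\]
together with $[\gothh,\gothh]\subseteq\gothh$: the first term vanishes because $[X,Y]\in\gothh\perp Z$, and each of the remaining two terms is turned, via the skew-symmetry of $\ad_X$ resp.\ $\ad_Y$, into a multiple of $\langle Z,[X,Y]\rangle=0$. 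Hence $\nabla_X Y$ is everywhere tangent to $H$ along $H$, the second fundamental form of $H$ in $G$ vanishes, and $H$ is totally geodesic.

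I do not expect a genuine obstacle here. The only points requiring a little care are that left-invariant extensions of $\gothh$-vectors remain tangent to $H$ along all of $H$, and that the Levi-Civita connection applied to a pair of left-invariant vector fields is again left-invariant, which together reduce the verification of the vanishing of the second fundamental form to the single point $e$. Alternatively, one could observe that the same Koszul computation gives $\nabla_X X=0$ for $X\in\gothh$, so that every one-parameter subgroup $t\mapsto\exp(tX)$, $X\in\gothh$ --- which is a geodesic of $H$ for the bi-invariant metric --- is already a geodesic of $G$; since $H$ is homogeneous under the isometries $L_h$, $h\in H$, it follows that all geodesics of $H$ are geodesics of $G$, whence $H$ is totally geodesic.
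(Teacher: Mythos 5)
Your proof is correct. The paper states this lemma without any proof at all (it is the closing statement of the appendix, used only to justify Corollary \ref{c:G-W-invariant}), so there is nothing to compare against; your argument supplies exactly the standard verification one would expect. Both halves check out: the restriction argument for bi-invariance is immediate, the orthogonality of $\Ad_h$ for $h\in H$ correctly yields skew-symmetry of $\ad_X$ for $X\in\gothh$, and the reduced Koszul formula for left-invariant fields together with $[\gothh,\gothh]\subseteq\gothh$ gives $(\nabla_XY)_e\in\gothh$, which by left-invariance of $\nabla_XY$ and tensoriality of the second fundamental form suffices. The alternative closing argument via $\nabla_XX=0$ and homogeneity of $H$ is equally valid.
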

\bibliography{Literaturverzeichnis}
\bibliographystyle{alpha}
\end{document}